\documentclass[reqno]{amsart}


\usepackage{amsmath,amssymb,amsthm,enumerate} 
\usepackage{amsfonts} 
\usepackage{color}
\usepackage{dsfont} 

\theoremstyle{plain}
\newtheorem{thm}{Theorem}[section]
\newtheorem{asm}[thm]{Assumption}

\newtheorem{dfn}[thm]{Definition}

\newtheorem{lem}[thm]{Lemma}
\newtheorem{prp}[thm]{Proposition}
\newtheorem{rmk}[thm]{Remark}

\DeclareMathOperator{\dv}{div}


\newcommand{\intd}{\,d}

\newcommand{\lc}{{\rm loc}}
\newcommand{\pd}{\partial}
\newcommand{\wht}[1]{\widehat{#1}}
\newcommand{\wtd}[1]{\widetilde{#1}}


\newcommand{\Ba}{\mathbf{a}}
\newcommand{\Bb}{\mathbf{b}}

\newcommand{\Be}{\mathbf{e}}
\newcommand{\Bf}{\mathbf{f}}
\newcommand{\Bg}{\mathbf{g}}
\newcommand{\Bh}{\mathbf{h}}

\newcommand{\Bn}{\mathbf{n}}

\newcommand{\Bu}{\mathbf{u}}
\newcommand{\Bv}{\mathbf{v}}
\newcommand{\Bw}{\mathbf{w}}

\newcommand{\BC}{\mathbf{C}}
\newcommand{\BD}{\mathbf{D}}
\newcommand{\BE}{\mathbf{E}}

\newcommand{\BI}{\mathbf{I}}
\newcommand{\BJ}{\mathbf{J}}
\newcommand{\BK}{\mathbf{K}}

\newcommand{\BN}{\mathbf{N}}

\newcommand{\BQ}{\mathbf{Q}}
\newcommand{\BR}{\mathbf{R}}

\newcommand{\CA}{\mathcal{A}}
\newcommand{\CB}{\mathcal{B}}

\newcommand{\CD}{\mathcal{D}}
\newcommand{\CE}{\mathcal{E}}
\newcommand{\CF}{\mathcal{F}}

\newcommand{\CK}{\mathcal{K}}
\newcommand{\CL}{\mathcal{L}}

\newcommand{\CU}{\mathcal{U}}


\newcommand{\Fg}{\mathfrak{g}}

\newcommand{\Fp}{\mathfrak{p}}
\newcommand{\Fq}{\mathfrak{q}}


\newcommand{\SSD}{\mathsf{D}}

\newcommand{\SSF}{\mathsf{F}}
\newcommand{\SSG}{\mathsf{G}}
\newcommand{\SSH}{\mathsf{H}}

\newcommand{\SSK}{\mathsf{K}}
\newcommand{\SSL}{\mathsf{L}}

\newcommand{\SST}{\mathsf{T}}

\numberwithin{equation}{section} 
\allowdisplaybreaks[4]

\begin{document}
\title[Viscous free surface flows of infinite depth]
{Global solvability for viscous free surface flows of infinite depth in three and higher dimensions}


\author[H. Saito]{Hirokazu Saito}
\address[H. Saito]{Graduate School of Informatics and Engineering,
The University of Electro-Communications,
5-1 Chofugaoka 1-chome, Chofu, Tokyo 182-8585, Japan}
\email{hsaito@uec.ac.jp}

\author[Y. Shibata]{Yoshihiro Shibata}
\address[Y. Shibata]{Professor Emeritus of Waseda University;
adjunct faculty member
in the Department of Mechanical Engineering and Materials Science, University of Pittsburgh}
\email{yshibata325@gmail.com}

\subjclass[2010]{Primary: 35Q30; Secondary: 76D05.}
\keywords{}

\thanks{This work was supported by JSPS KAKENHI Grant Numbers JP21K13817, JP22H01134.}



\begin{abstract}
This paper is concerned with the global solvability for the Navier-Stokes equations
describing viscous free surface flows of infinite depth in three and higher dimensions.
We first prove time weighted estimates of solutions to a linearized system of the Navier-Stokes equations
by time decay estimates of a $C_0$-analytic semigroup
and maximal regularity estimates in an $L_p$-in-time and $L_q$-in-space setting
with suitable $p$, $q$. 
The time weighted estimates then enable us to show
the global solvability of the Navier-Stokes equations for small initial data by the contraction mapping principle.
\end{abstract}

\maketitle


\section{Introduction}\label{sec:intro}

Let $\Omega(\tau)$ and $\Gamma(\tau)$, $\tau>0$, be given by 
\begin{align}
\Omega(\tau)
&=\{(y',y_N) : y'=(y_1,\dots,y_{N-1})\in\BR^{N-1}, y_N<\eta(y',\tau)\}, \notag \\ 
\Gamma(\tau)
&=\{(y',y_N) : y'=(y_1,\dots,y_{N-1})\in\BR^{N-1}, y_N=\eta(y',\tau)\},  \label{om-gam-2}
\end{align}
where $N\geq 3$ and $\eta=\eta(y',\tau)$ needs to be determined as part of the problem.
For $T>0$ or $T=\infty$, we set
\begin{equation}\label{om-gam-1}
\Omega_T=\bigcup_{0<\tau<T}\Omega(\tau)\times\{\tau\}, \quad 
\Gamma_T=\bigcup_{0<\tau<T}\Gamma(\tau)\times\{\tau\}.
\end{equation}

This paper is concerned with the motion of an incompressible viscous fluid occupying $\Omega(\tau)$.
We denote the fluid velocity by $\Bv=\Bv(y,\tau)=(v_1(y,\tau),\dots,v_N(y,\tau))^\SST$,
where the superscript $\SST$ denotes the transpose,
and the pressure by $\Fq=\Fq(y,\tau)$ at position $y\in\Omega(\tau)$ with time $\tau>0$.
The motion is governed by the Navier-Stokes equations as follows: 
\begin{equation}\label{nonl:eq1}
\left\{\begin{aligned}
\pd_\tau\eta -v_N &=-\Bv'\cdot\nabla'\eta 
&& \text{on $\Gamma_\infty$,} \\
\pd_\tau\Bv+ (\Bv\cdot\nabla)\Bv&=\mu \Delta \Bv-\nabla \Fq 
&& \text{in $\Omega_\infty$,} \\
\dv\Bv&=0 
&& \text{in $\Omega_\infty$,} \\
(\mu\BD(\Bv)-\Fq\BI)\Bn_{\Gamma(\tau)}+c_g\eta\Bn_{\Gamma(\tau)}
&=c_\sigma\kappa_{\Gamma(\tau)}\Bn_{\Gamma(\tau)} 
&& \text{on $\Gamma_\infty$,}
\end{aligned}\right.
\end{equation}
subject to the initial conditions
\begin{equation}\label{initial-cond}
\eta|_{\tau=0}=\eta_0 \quad \text{on $\BR^{N-1}$}, \quad \Bv|_{\tau=0}=\Bv_0 \quad \text{in $\Omega_0$.} 
\end{equation}

Here the positive constants $\mu$, $c_g$, and $c_\sigma$ describe the viscosity coefficient,
the acceleration of gravity, and the surface tension coefficient, respectively.
The $\eta_0=\eta_0(y')$ and $\Bv_0=\Bv_0(y)=(v_{01}(y),\dots,v_{0N}(y))^\SST$ are given initial date,
and
\begin{equation}\label{omega-zero}
\Omega_0=\{(y',y_N) : y'=(y_1,\dots,y_{N-1})\in\BR^{N-1}, y_N<\eta_0(y')\}.
\end{equation}
The boundary of $\Omega_0$ is denoted by $\Gamma_0$, i.e.,
\begin{equation}\label{gam-zero}
\Gamma_0 =\{(y',y_N) : y'=(y_1,\dots,y_{N-1})\in\BR^{N-1}, y_N=\eta_0(y')\}.
\end{equation}

Let $\pd_\tau=\pd/\pd \tau$, $D_j=\pd/\pd y_j$, and $\Delta=\sum_{j=1}^ND_j^2$. We set
\begin{align*}
&\Bv'\cdot\nabla'\eta=\sum_{j=1}^{N-1}v_j D_j \eta, \quad
(\Bv\cdot\nabla)\Bv
=\bigg(\sum_{j=1}^N v_j D_ jv_1,\dots, \sum_{j=1}^N v_j D_j v_N\bigg)^\SST, \quad
\\
&\Delta\Bv=(\Delta v_1,\dots,\Delta v_N)^\SST, \quad
\nabla\Fq =(D_1 \Fq,\dots,D_N\Fq)^\SST, \quad
\dv\Bv=\sum_{j=1}^N D_jv_j.
\end{align*}
In addition, $\BI$ is the $N\times N$ identity matrix and
$\BD(\Bv)$ is the doubled deformation rate tensor, i.e.,
$\BD(\Bv)=\nabla\Bv+(\nabla\Bv)^\SST$ with
\begin{equation*}
\nabla\Bv=(D_j v_i)_{1\leq i,j\leq N}=
\begin{pmatrix}
D_1 v_1 & \dots & D_N v_1 \\
\vdots & \ddots & \vdots \\
D_1 v_N & \dots & D_N v_N
\end{pmatrix}.
\end{equation*}
The $\Bn_{\Gamma(\tau)}$ and $\kappa_{\Gamma(\tau)}$
stand for the unit outward normal vector to $\Gamma(\tau)$
and the mean curvature of $\Gamma(\tau)$, respectively,
and they are given by
\begin{align}
\Bn_{\Gamma(\tau)}
&=\frac{1}{\sqrt{1+|\nabla'\eta(y',\tau)|^2}}
\begin{pmatrix}
-\nabla'\eta(y',\tau) \\
1
\end{pmatrix}, \label{normal-vec} \\
\kappa_{\Gamma(\tau)}
&=\sum_{j=1}^{N-1} D_j \bigg(\frac{D_j\eta(y',\tau)}{\sqrt{1+|\nabla'\eta(y',\tau)|^2}}\bigg), \label{mean-cuv}
\end{align}
with $\nabla' \eta=(D_1 \eta, \dots, D_{N-1} \eta)^\SST$ and $|\nabla'\eta|^2=\sum_{j=1}^{N-1}(D_j\eta)^2$.

Let us introduce short history of mathematical studies for free surface flows of incompressible viscous fluids.

Beale \cite{Beale81} considered finite-depth flows bounded above by a free surface and below by a solid surface
in the presence of a uniform gravitational field in three dimensions,
where surface tension was not taken into account.
He proved in \cite{Beale81} a local existence theorem for large initial data in an $L_2$-based Sobolev space. 
Along with this direction based on Hilbert space settings,
Sylvester \cite{Sylvester90} proved a global existence theorem for small initial data,
while Hataya and Kawashima \cite{HK09} considered large time decay of solutions.
The free surface becomes more regular if surface tension is taken into account.
Using this fact, Beale \cite{Beale83} proved a global existence theorem for small initial data.
Furthermore, large time decay of solutions of \cite{Beale83} was shown by Beale and Nishida \cite{BN85}.
For the problem including surface tension,
we also refer to Allain \cite{Allain87}, which treats the two-dimensional problem,
Tani \cite{Tani96}, Hataya \cite{Hataya11}, and Bae \cite{Bae11}.
Tani and Tanaka \cite{TT95} proved global existence theorems with or without surface tension,
which relaxed the regularity assumption on the initial data.

Guo and Tice \cite{GT13b, GT13} considered the same problem as in \cite{Beale81},
i.e.,  finite-depth flows without surface tension in three dimensions,
and proved that the problem is locally and globally well-posed
and that solutions decay to equilibrium at an algebraic rate
by introducing a new  approach based on a high-regularity energy method.
Wu \cite{Wu14} extended their result in local well-posedness from the small data case to the general data case,
while Wang \cite{Wang20} proved global well-posedness for the two-dimensional problem
based on the approach of Guo and Tice.

Abels \cite{Abels05} introduced an $L_q$-based Sobolev space 
to prove local well-posedness of the same problem as in \cite{Beale81}.
Furthermore,  Saito \cite{Saito18} proved global well-posedness
in an $L_p$-in-time and $L_q$-in-space setting  and exponential decay of solutions
in the case where surface tension is not taken into account
and also gravity does not work.

All of the above results are interested in non-compact free surfaces,
while compact cases are also important in applications.
Solonnikov did a lot of works for problems on bounded domains
in $L_2$-based Sobolev spaces, $L_q$-based Sobolev spaces, or H$\ddot{\rm o}$lder spaces,
see \cite{Solonnikov03} and references therein.
On the other hand, Shibata \cite{Shibata20} proved the global well-posedness
in an $L_p$-in-time and $L_q$-in-space setting
for the free surface flow in an exterior domain
under the condition that neither surface tension nor gravity takes into account.

If we consider the case where surface tension is not taken into account
and also gravity does not work,
then the free surface flow becomes a parabolic system in Lagrangian coordinates.
In this situation, infinite-depth flows were considered in 
Oishi and Shibata \cite{OS22};
Danchin, Hieber, Mucha, and Tolksdorf \cite{DHMTpreprint};
Ogawa and Shimizu \cite{OSpreprint};
Shibata and Watanabe \cite{SWpreprint}.
The first paper proved the global well-posedness in a similar way to \cite{Shibata20},
while the other three papers developed maximal $L_1$-regularity theory
for the Stokes equations in the half-space.
On the other hand,
if we include surface tension or gravity, 
then some hyperbolic effect appears from the transport equation of the height function,
i.e., from the first equation of \eqref{nonl:eq1}, see 
Pr$\ddot{\rm u}$ss and Simonett \cite{PS09},
Saito and Shibata \cite{SaS16} for more details.
Our system \eqref{nonl:eq1} is thus completely different from
the parabolic system as above.

The aim of this paper is to show the global solvability of \eqref{nonl:eq1}--\eqref{initial-cond}
and large time decay of the solution.
To this end, we first transform \eqref{nonl:eq1}--\eqref{initial-cond} to 
a system in a fixed domain, see \eqref{nonl:fix1}--\eqref{initial-cond-flat} below.
We establish a global existence result for the transformed system in Theorem \ref{thm:transformed} below
in an $L_p$-in-time and $L_q$-in-space setting with a suitable choice of $p$ and $q$.
Our main results for \eqref{nonl:eq1}--\eqref{initial-cond} as stated above
are then proved by the inverse transformation of the solution to the system in the fixed domain,
see Subsection \ref{subsec:2-4} below for more details.

At this point, we introduce differences between this paper and our previous work \cite{SaSpreprint}.
Although \cite{SaSpreprint} treats the three-dimensional case only, 
this paper treats the $N$-dimensional case for $N\geq 3$.
Furthermore, we establish in this paper
a new estimate of solutions to the linearized system \eqref{lin-eq:1} below for \eqref{nonl:eq1}--\eqref{initial-cond}.
To introduce the estimate, we define for $\langle t\rangle =\sqrt{1+t^2}$
\begin{equation*}
\|\langle t\rangle^a f\|_{L_p(\BR_+,X)}=
\bigg(\int_0^\infty \big(\langle t\rangle^a\|f(t)\|_X\big)^p \intd t\bigg)^{1/p},
\end{equation*}
where $a\geq 0$, $1\leq p<\infty$, and $X$ is a Banach space.
In the case $N\geq 4$, 
\begin{align*}
&\|\langle t\rangle^{1/2}(\pd_t\eta,\pd_t\Bu)\|_{L_p(\BR_+,W_q^{2-1/q}(\BR^{N-1})\times L_q(\BR_-^N)^N)} \\
&+\|\langle t\rangle^{1/2}(\eta,\Bu)\|_{L_p(\BR_+,W_q^{3-1/q}(\BR^{N-1})\times H_q^2(\BR_-^N)^N)} \\
&\leq C\Big(\text{(norm of the initial data)} \notag  \\
&+ \text{(norm of the right members with time weight $\langle t\rangle$)}\Big)
\end{align*}
holds for suitable $p, q\in(1,\infty)$, see Proposition \ref{prp:linear4d} below,
where $\BR_-^N$ is the lower half-space and
the right members stand for $d$, $\Bf$, $\Fg$, $g$, and $\Bh$ in the linearized system.
Since the right members are replaced by nonlinear terms, e.g., $\Bu\cdot\nabla\Bu$,
in application to the nonlinear problem \eqref{nonl:fix1}--\eqref{initial-cond-flat} below,
one writes
\begin{equation}\label{nonl-est-intro}
\langle t\rangle \Bu\cdot\nabla\Bu=\langle t\rangle^{1/2}\Bu\cdot(\langle t\rangle^{1/2}\nabla\Bu)
\end{equation}
and estimates $\langle t\rangle \Bu\cdot\nabla\Bu$ as follows:
\begin{align*}
&\|\langle t\rangle \Bu\cdot\nabla\Bu\|_{L_p(\BR_+,L_r(\BR_-^N)^N)} \\
&\leq C\|\langle t\rangle^{1/2}\Bu\|_{L_\infty(\BR_+,L_{r_1}(\BR_-^N)^N)}
\|\langle t\rangle^{1/2}\nabla \Bu\|_{L_p(\BR_+,L_{r_2}(\BR_-^N)^{N\times N})}
\end{align*}
for $r_1,r_2\in[1,\infty]$ satisfying $1/r=1/r_1+1/r_2$.
Combining this with an embedding 
shows that the nonlinear term can be estimated from above by 
norm of solutions with time weight $\langle t\rangle^{1/2}$.
This enables us to construct global-in-time solutions to the nonlinear problem by the contraction mapping principle
for small initial data.
In the case $N=3$, we use another time weight for solutions to the linearized system,
see Proposition \ref{prp:linear3d} below for more details,\footnote{
In the proposition, $\langle t\rangle^{1/3}\Bu$ and $\langle t\rangle^{2/3}\nabla\Bu$
are estimated from above by norm of the right members with time weight $\langle t\rangle$.
}
and then we replace \eqref{nonl-est-intro} by
\begin{equation*}
\langle t\rangle \Bu\cdot\nabla\Bu=\langle t\rangle^{1/3}\Bu \cdot(\langle t\rangle^{2/3}\nabla \Bu)
\end{equation*}
and estimate it as above in application to the nonlinear problem.

We think that the above approach is much simpler than \cite{SaSpreprint}
and useful for treating other quasilinear systems of parabolic-hyperbolic equations.
In addition, we would liked to emphasize that 
Proposition \ref{prp:duha} below plays a crucial role
in proving time weighted estimates of solutions to the linearized system in this paper.
That proposition and our linear theory given by Section \ref{sec:lin-theory}
are based on the approach of Shibata \cite{Shibata22}.

This paper is organized as follows.
The next section first transforms \eqref{nonl:eq1}--\eqref{initial-cond} to a half-space problem.
Secondly, we introduce the notation.
Global solvability results, i.e., our main results of this paper,
are then stated for the transformed problem and the original problem \eqref{nonl:eq1}--\eqref{initial-cond}.
Section \ref{sec:prelim} introduces fundamental tools of our analysis.
Section \ref{sec:duhamel} proves an estimate for Duhamel's integral.
Section \ref{sec:sg} considers a $C_0$-analytic semigroup associated with 
the linearized system of the transformed problem
and shows several properties of the semigroup.
Section \ref{sec:lin-theory} introduces our linear theory for the transformed problem.
Section \ref{sec:nonl-terms} estimates nonlinear terms.
Section \ref{sec:nonlinear1} proves, for the transformed problem,
the global solvability result stated in Section \ref{sec:main}
by means of results obtained in Sections \ref{sec:lin-theory} and \ref{sec:nonl-terms}
together with the contraction mapping principle.
Section \ref{sec:nonlinear2} proves, for the original problem \eqref{nonl:eq1}--\eqref{initial-cond},
the global solvability result stated in Section \ref{sec:main} by the solution of the transformed problem.

\section{Main results}\label{sec:main}

Let $\BR_-^N$ be the lower half space and $\BR_0^N$ its boundary, i.e.,
\begin{align*}
\BR_-^N
&=\{(x',x_N) : x'=(x_1,\dots,x_{N-1})\in\BR^{N-1},x_N<0\}, \\
\BR_0^N
&=\{(x',x_N) : x'=(x_1,\dots,x_{N-1})\in\BR^{N-1}, x_N=0\}.
\end{align*}
In addition, we set 
\begin{equation}\label{dfn:Q-Q0}
\BQ_-=\BR_-^N\times(0,\infty), \quad \BQ_0=\BR_0^N\times(0,\infty).
\end{equation}

\subsection{The transformed problem}
Let us define the partial Fourier transform with respect to $x'=(x_1,\dots,x_{N-1})$
and its inverse transform by
\begin{align}\label{fourier-t}
\wht  f(\xi')&=\int_{\BR^{N-1}} e^{-ix'\cdot\xi'} f(x') \intd x', \notag \\
\CF_{\xi'}^{-1}[g(\xi')](x')&=\frac{1}{(2\pi)^{N-1}}\int_{\BR^{N-1}}e^{ix'\cdot\xi'} g(\xi') \intd \xi',
\end{align}
where $\xi'=(\xi_1,\dots,\xi_{N-1})$. 
Define the extension $\CE_N\eta$ of $\eta$ by
\begin{equation}\label{ext:eta}
\CE_N\eta
=
\left\{\begin{aligned}
&\CF_{\xi'}^{-1}[e^{|\xi'| x_N}\wht \eta(\xi',t)](x') && \text{when $N=3$,} \\
&\CF_{\xi'}^{-1}[e^{\sqrt{1+|\xi'|^2} x_N}\wht \eta(\xi',t)](x') && \text{when $N\geq 4$,}
\end{aligned}\right.
\end{equation}
where $x_N<0$.
Notice that $u=\CE_N\eta$ is the solution to
\begin{equation*}
\Delta u=0 \quad \text{in $\BQ_-$,} \quad 
u=\eta \quad \text{on $\BQ_0$}
\end{equation*}
for $N=3$, while it is the solution to 
\begin{equation*}
(1-\Delta)u=0 \quad \text{in $\BQ_-$,} \quad u=\eta \quad \text{on $\BQ_0$}
\end{equation*}
for $N\geq 4$.
By using the extension $\CE_N\eta$,
we define the mapping
\begin{align}\label{dfn:theta}
&\Theta:  
\BQ_- \ni (x',x_N,t)\mapsto \Theta(x',x_N,t)  \in \Omega_\infty, \notag \\
&\Theta(x',x_N,t)  :=(x',x_N+(\CE_N\eta)(x',x_N,t),t),
\end{align}
which is a diffeomorphism from $\BQ_-$ onto $\Omega_\infty$
under the assumption that $\eta$ is smooth enough and 
\begin{equation}\label{cond-diffeo}
\sup_{(x,t)\in \BR_-^N \times (0,\infty)}|(\pd_N \CE_N\eta)(x,t)|\leq \frac{1}{2},
\end{equation}
where $\pd_N=\pd/\pd x_N$.

Let $x=(x',x_N)\in\BR_-^N$ and $t>0$. We set
\begin{equation}\label{dfn:u-p}
\Bu(x,t)=\Bv(\Theta(x,t)), \quad \Fp(x,t)=\Fq(\Theta(x,t)).
\end{equation}
Then $(\eta,\Bu,\Fp)$ satisfies 
\begin{equation}\label{nonl:fix1}
\left\{\begin{aligned}
\pd_t\eta-u_N &=\SSD(\eta,\Bu) && \text{on $\BQ_0$, }\\
\pd_t\Bu-\mu\Delta \Bu +\nabla \Fp&=\SSF(\eta,\Bu) && \text{in $\BQ_-$,} \\
\dv\Bu=\SSG(\eta,\Bu)&=\dv\wtd\SSG(\eta,\Bu) && \text{in $\BQ_-$,} \\
(\mu\BD(\Bu)-\Fp\BI)\Be_N+(c_g-c_\sigma\Delta')\eta\Be_N
&=\SSH(\eta,\Bu) 
&& \text{on $\BQ_0$,} 
\end{aligned}\right.
\end{equation}
subject to the initial conditions
\begin{equation}\label{initial-cond-flat}
\eta|_{t=0}=\eta_0 \quad \text{on $\BR^{N-1}$,}  \quad
\Bu|_{t=0}=\Bu_0  \quad \text{in $\BR_-^N$.}
\end{equation}

Here and subsequently, $u_j$ denotes the $j$th component of $\Bu=\Bu(x,t)$ for $j=1,\dots,N$,
while $\Be_N=(0,\dots,0,1)^\SST$ and $\Delta'=\sum_{j=1}^{N-1}\pd^2/\pd x_j^2$.
The $\SSD$, $\SSF$, $\SSG$, $\wtd\SSG$, and $\SSH$ consist of nonlinear terms and
they are given as follows.\footnote{
See the appendix below for the derivation of \eqref{nonl:fix1}.}

{\it Nonlinear terms}.
Let $j,k=1,\dots,N$ and set for $N$-vectors $\Ba=(a_1,\dots,a_N)^\SST$ and $\Bb=(b_1,\dots,b_N)^\SST$
\begin{equation*}
\Ba\cdot\Bb=\sum_{j=1}^N a_j b_j, \quad 
\Ba\otimes\Bb=(a_ib_j)_{1\leq i,j\leq N}
=\begin{pmatrix}
a_1 b_1 & \dots & a_1 b_N \\
\vdots & \ddots & \vdots \\
a_N b_1 & \dots & a_Nb_N
\end{pmatrix}.
\end{equation*}
Let $\pd_j=\pd/\pd x_j$ and  
$\CD_{jk}(\eta)$ be the second order differential operator defined as
\begin{align}\label{deriv-second}
\CD_{jk}(\eta)
&=(1+\pd_N\CE_N\eta)^2(\pd_k\CE_N\eta)\pd_j\pd_N \notag \\
& +(1+\pd_N\CE_N\eta)^2(\pd_j\CE_N\eta)\pd_N\pd_k
-(1+\pd_N\CE_N\eta)(\pd_j\CE_N\eta)(\pd_k\CE_N\eta)\pd_N^2 \notag \\
& +\Big\{(1+\pd_N\CE_N\eta)^2(\pd_j\pd_k\CE_N\eta) 
-(1+\pd_N\CE_N\eta)(\pd_k\CE_N\eta)(\pd_j\pd_N\CE_N\eta) \notag \\
& -(1+\pd_N\CE_N\eta)(\pd_j\CE_N\eta)(\pd_N\pd_k\CE_N\eta)
+(\pd_j\CE_N\eta)(\pd_k\CE_N\eta)(\pd_N^2\CE_N\eta)
\Big\}\pd_N.
\end{align}
Define the $N\times N$ matrices $\BJ(\eta)$ and $\BK(\eta)$ by
\begin{equation}\label{matrix:JK}
\BJ(\eta)=
\begin{pmatrix}
0 &  \dots & 0 & \pd_1\CE_N\eta \\
\vdots &  \ddots &\vdots & \vdots \\
0 & \dots & 0 & \pd_{N-1}\CE_N\eta \\
0 & \dots & 0 & \pd_N\CE_N\eta
\end{pmatrix}, \quad
\BK(\eta)= 
\begin{pmatrix}
0 &  \dots & 0 & \pd_1\CE_N\eta \\
\vdots &  \ddots &\vdots & \vdots \\
0 & \dots & 0 & \pd_{N-1}\CE_N\eta \\
0 & \dots & 0 & 0
\end{pmatrix}.
\end{equation}
In addition,
\begin{align}\label{dfn:E}
\wtd \BE(\eta,\Bu) &=\nabla\CE_N\eta\otimes\pd_N\Bu+(\nabla\CE_N\eta\otimes\pd_N\Bu)^\SST, \notag \\
\BE(\eta,\Bu)&= \frac{1}{1+\pd_N\CE_N\eta}\wtd \BE(\eta,\Bu),
\end{align}
and also
\begin{equation}\label{normal-vec2}
\Bn(\eta)=
\begin{pmatrix}
-\nabla'\CE_N\eta \\ 1
\end{pmatrix}, \quad 
\wht \Bn(\eta) = 
\begin{pmatrix}
-\nabla'\CE_N\eta \\ 0
\end{pmatrix}
\end{equation}
with $\nabla'\CE_N\eta=(\pd_1\CE_N\eta,\dots,\pd_{N-1}\CE_N\eta)^\SST$.
Then
\begin{align}
\SSD(\eta,\Bu) 
&=-\Bu'\cdot\nabla'\CE_N\eta  =-\sum_{j=1}^{N-1} u_j\pd_j\CE_N\eta, \label{def:D} \\
\SSG(\eta,\Bu)
&=-(\pd_N\CE_N\eta)\dv\Bu+\nabla\CE_N\eta\cdot \pd_N\Bu, \label{def:G} \\
\wtd\SSG(\eta,\Bu)
&=-(\pd_N\CE_N\eta)\Bu+\BJ(\eta)^\SST\Bu,  \label{def:G-tilde}
\end{align}
and 
\begin{equation}\label{dfn:F}
\SSF(\eta,\Bu)
=\frac{1}{(1+\pd_N\CE_N\eta)^3}(\BI+\BJ(\eta))\wtd \SSF(\eta,\Bu) -\BJ(\eta)(\pd_t\Bu-\mu\Delta\Bu),
\end{equation}
with
\begin{align}\label{dfn:F-tilde}
\wtd \SSF(\eta,\Bu)&=
(1+\pd_N\CE_N\eta)^2(\pd_t\CE_N\eta)\pd_N\Bu-
(1+\pd_N\CE_N\eta)^3(\Bu\cdot\nabla)\Bu \notag \\
&+(1+\pd_N\CE_N\eta)^2\bigg(\sum_{j=1}^N\Bu_j\pd_j\CE_N\eta\bigg)\pd_N\Bu+\mu\sum_{j=1}^N\CD_{jj}(\eta)\Bu.
\end{align}
Furthermore,
\begin{equation} \label{dfn:H}
\SSH(\eta,\Bu)=\wtd \SSH(\eta,\Bu)-\sigma\SSH_\kappa(\eta)\Be_N,
\end{equation}
where
\begin{align}\label{dfn:H-1}
\wtd \SSH(\eta,\Bu) =-\mu\BD(\Bu)\wht\Bn(\eta)+\mu\BE(\eta,\Bu)\Bn(\eta)
-\mu\BK(\eta)(\BD(\Bu)-\BE(\eta,\Bu))\Bn(\eta)
\end{align}
for $\BD(\Bu)=(\pd_i u_j+\pd_j u_i)_{1\leq i,j\leq N}$ and 
\begin{align}\label{dfn:H-2}
\SSH_\kappa(\eta)&=\frac{|\nabla'\CE_N\eta|^2\Delta'\CE_N\eta}{(1+\sqrt{1+|\nabla'\CE_N\eta|^2})\sqrt{1+|\nabla'\CE_N\eta|^2}} \notag \\
&+\sum_{j,k=1}^{N-1}\frac{(\pd_j\CE_N\eta)(\pd_k\CE_N\eta)(\pd_j\pd_k\CE_N\eta)}{(1+|\nabla'\CE_N\eta|^2)^{3/2}}.
\end{align}

\subsection{Notation}

Let $\BN$ be the set of all positive integers and $\BN_0=\BN\cup\{0\}$.
For any multi-index $\alpha=(\alpha_1,\dots,\alpha_N)\in\BN_0^N$
and scalar function $u=u(x_1,\dots,x_N)$
\begin{equation*}
\pd^\alpha u =\pd_x^\alpha u 
=\frac{\pd^{|\alpha|}}{\pd x_1^{\alpha_1}\dots \pd x_N^{\alpha_N}} u(x_1,\dots,x_N), \quad 
|\alpha|=\alpha_1+\dots+\alpha_N.
\end{equation*}

Let $X$ be a Banach space.
Then $X^M$, $M\geq 2$, denotes the $M$-product space of $X$,
while norm of $X^M$ is usually denoted by $\|\cdot \|_X$ instead of $\|\cdot\|_{X^M}$ for the sake of simplicity. 
Let $Y$ be another Banach space. 
Then $\CL(X,Y)$ is the Banach space of all bounded linear operators from $X$ to $Y$,
and $\CL(X)$ is the abbreviation of $\CL(X,X)$. In addition,
norm of $X\cap Y$ is given by $\|\cdot\|_{X\cap Y}=\|\cdot\|_X+\|\cdot\|_Y$.

Let $p\in[1,\infty]$, $q\in(1,\infty)$, $m\in\BN$, and $s\in\BR_+:=(0,\infty)$.
Let $G$ be a domain in $\BR^N$. 
Then $L_p(G)$, $H_p^m(G)$, and $B_{q,p}^s(G)$
stand for the usual Lebesgue spaces, Sobolev spaces, and Besov spaces on $G$, respectively.
Their respective norms are denoted by $\|\cdot\|_{L_p(G)}$, $\|\cdot\|_{H_p^m(G)}$, and $\|\cdot\|_{B_{q,p}^s(G)}$. 
We set $H_p^0(G)=L_p(G)$ and $W_q^s(G)=B_{q,q}^s(G)$ for $s\in\BR_+\setminus\BN$.
In addition, $C_0^\infty(G)$ stands for the set of all $C^\infty$ functions
whose supports are compact and contained in $G$,
while $C^n(G)$, $n\in\BN_0$, the set of all $C^n$ functions in $G$.
Let
\begin{align*}
C_B^n(G) &=\{f\in C^n(G) : \max_{0\leq|\alpha|\leq n}\sup_{x\in G}|\pd_x^\alpha f(x)|<\infty\}. 
\end{align*}
Analogously, $C^n(\overline{G})$ and $C_B^n(\overline{G})$ are defined for the closure $\overline{G}$ of $G$.
We set $C(G)=C^0(G)$, $C_B(G)=C_B^0(G)$, $C(\overline{G})=C^0(\overline{G})$, and $C_B(\overline{G})=C_B^0(\overline{G})$.
Define
\begin{equation}\label{dfn:hat-0}
\wht H_{q,0}^1(\BR_-^N)=\{f\in\wht H_q^1(\BR_-^N) : f=0 \text{ on $\BR_0^N$}\}.
\end{equation}
For $\Ba=\Ba(x)=(a_1(x),\dots,a_N(x))^\SST$ and $\Bb=\Bb(x)=(b_1(x),\dots,b_N(x))^\SST$,
we set
\begin{equation*}
(\Ba,\Bb)_{\BR_-^N}=\int_{\BR_-^N}\Ba(x)\cdot\Bb(x)\intd x=\sum_{j=1}^N\int_{\BR_-^N}a_j(a)b_j(x)\intd x.
\end{equation*}
Let $q'=q/(q-1)$ and define the space of solenoidal vector fields by
\begin{equation}\label{dfn:solenoidal}
J_q(\BR_-^N)=\{\Bf\in L_q(\BR_-^N)^N : (\Bf,\nabla\varphi)_{\BR_-^N}=0 \text{ for any $\varphi\in\wht H_{q',0}^1(\BR_-^N)$}\}.
\end{equation}

Let us introduce the function space $I_{q,p}$ of initial data as follows:
\begin{equation}\label{dfn:ini-sp}
I_{q,p}
=B_{q,p}^{3-1/p-1/q}(\BR^{N-1})\times B_{q,p}^{2-2/p}(\BR_-^N)^N
\end{equation}
endowed with the norm
\begin{equation*}
\|(\eta_0,\Bu_0)\|_{I_{q,p}}=\|\eta_0\|_{B_{q,p}^{3-1/p-1/q}(\BR^{N-1})}+\|\Bu_0\|_{B_{q,p}^{2-2/p}(\BR_-^N)}.
\end{equation*}

Let $J$ be an interval of $\BR$.
Then $L_p(J,X)$ and $H_p^m(J,X)$, $m\in\BN$, denote the $X$-valued Lebesgue spaces on $J$
and the $X$-valued Sobolev spaces on $J$, respectively.
Their respective norms are denoted by $\|\cdot\|_{L_p(J,X)}$ and $\|\cdot\|_{H_p^m(J,X)}$.
We denote the set of all $C^n$ functions $f:J\to X$ by $C^n(J,X)$ for $n\in\BN_0$ and set $C(J,X)=C^0(J,X)$.
By the complex interpolation functor $[\cdot,\cdot]_\theta$ with $\theta\in(0,1)$, we set
\begin{equation*}
H_p^\theta(J,X)=[L_p(J,X),H_p^1(J,X)]_\theta.
\end{equation*}

\subsection{Global solvability for the transformed problem}
For an $N$-vector $\Ba$, we set
\begin{equation}\label{dfn:tng}
\Ba_\tau = \Ba-(\Ba\cdot\Be_N)\Be_N.
\end{equation}
Our main result for the transformed problem \eqref{nonl:fix1}--\eqref{initial-cond-flat} reads as follows.

\begin{thm}\label{thm:transformed}
Let $N\geq 3$.
Then there exist a large positive number $p_0$ and small positive numbers
$q_0$, $r_0$, and $\varepsilon_0$
such that the following assertion holds for any $p$, $q_1$, and $q_2$ satisfying
\begin{equation}\label{pq-cond}
p_0 \leq p <\infty, \quad 2 < q_1\leq 2+q_0, \quad 
N<q_2\left\{\begin{aligned}
&\leq 6 && \text{when $N=3$,} \\
&<\infty && \text{when $N\geq 4$,}
\end{aligned}\right.
\end{equation} 
and
\begin{equation}\label{pq-cond-2}
\frac{2}{p}+\frac{1}{(q_1/2)} \neq 1.
\end{equation}

For any $(\eta_0,\Bu_0)\in I_{q_1/2,p}\cap I_{q_2,p}$ with the smallness condition
\begin{equation}\label{small-1}
\|(\eta_0,\Bu_0)\|_{I_{q_1/2,p}} + \|(\eta_0,\Bu_0)\|_{I_{q_2,p}} \leq \varepsilon_0
\end{equation}
and the compatibility conditions
\begin{alignat}{2}
\dv\Bu_0&=\SSG(\eta_0,\Bu_0) && \quad \text{in $\BR_-^N$,}  \label{comp-1} \\
(\mu\BD(\Bu_0)\Be_N)_\tau &= (\SSH(\eta_0,\Bu_0))_\tau && \quad  \text{on $\BR_0^N$,}  \label{comp-2}
\end{alignat}
the system \eqref{nonl:fix1}--\eqref{initial-cond-flat} admits a unique solution $(\eta,\Bu)$
in $K_{p,q_1,q_2}^N(r_0;\eta_0,\Bu_0)$ with some pressure $\Fp$.
Here $K_{p,q_1,q_2}^N(r_0;\eta_0,\Bu_0)$ is given by \eqref{dfn:ul-space} below.
\end{thm}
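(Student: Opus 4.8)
The plan is to construct the solution via the contraction mapping principle applied to the linear theory of Sections~\ref{sec:lin-theory}. First I would fix the underlying solution space $K_{p,q_1,q_2}^N(r_0;\eta_0,\Bu_0)$ as the set of pairs $(\eta,\Bu)$ matching the prescribed initial data, with $(\eta,\Bu)$ controlled in the time‑weighted norms appearing in Propositions~\ref{prp:linear3d} and~\ref{prp:linear4d}: that is, $\langle t\rangle^{\alpha}$‑weighted bounds in $L_p(\BR_+;W_{q}^{3-1/q}(\BR^{N-1})\times H_q^2(\BR_-^N)^N)$ for the pair and in $L_p(\BR_+; W_q^{2-1/q}(\BR^{N-1})\times L_q(\BR_-^N)^N)$ for its time derivative, simultaneously for $q=q_1/2$ and $q=q_2$, where $\alpha=1/2$ when $N\ge 4$ and $\alpha\in\{1/3,2/3\}$ when $N=3$. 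On this space, define the map $\Phi$ that sends $(\eta,\Bu)$ to the solution $(\tilde\eta,\tilde\Bu)$ of the \emph{linearized} problem \eqref{lin-eq:1} with right members $d=\SSD(\eta,\Bu)$, $\Bf=\SSF(\eta,\Bu)$, $\Fg=\SSG(\eta,\Bu)$, $g=\wtd\SSG(\eta,\Bu)$, $\Bh=\SSH(\eta,\Bu)$, and the same initial data. A fixed point of $\Phi$ is a solution of \eqref{nonl:fix1}--\eqref{initial-cond-flat}.

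The second step is the self-map estimate. Apply the linear estimates of Propositions~\ref{prp:linear3d}/\ref{prp:linear4d} to bound $\|\Phi(\eta,\Bu)\|_{K}$ by (norm of the initial data, controlled by $\varepsilon_0$ via \eqref{small-1}) plus (norm of the right members with time weight $\langle t\rangle$). The latter is handled by Section~\ref{sec:nonl-terms}: the key structural point, already sketched in the introduction, is the splitting $\langle t\rangle\,\Bu\cdot\nabla\Bu=(\langle t\rangle^{\alpha}\Bu)\cdot(\langle t\rangle^{1-\alpha}\nabla\Bu)$, followed by a H\"older split $1/r=1/r_1+1/r_2$ and a Sobolev embedding $B_{q,p}^{2-2/p}\hookrightarrow L_\infty$ (using $q>N$ and $p$ large) to convert one factor into an $L_\infty(\BR_+;L_{r_1})$ bound via the trace/embedding $L_p(\BR_+;H_q^2)\cap H_p^1(\BR_+;L_q)\hookrightarrow C(\overline{\BR_+};B_{q,p}^{2-2/p})$. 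All nonlinear terms in $\SSD,\SSF,\SSG,\wtd\SSG,\SSH$ are at least quadratic in $(\eta,\Bu)$ (modulo the harmless factor $(1+\pd_N\CE_N\eta)^{-k}$, bounded by \eqref{cond-diffeo} once $r_0$ is small), so each is estimated by $C r_0^2$ in the two values $q=q_1/2,q_2$; the need to run the argument at both exponents is exactly what forces the assumption $(\eta_0,\Bu_0)\in I_{q_1/2,p}\cap I_{q_2,p}$. Choosing $\varepsilon_0$ and $r_0$ small, with $C\varepsilon_0+Cr_0^2\le r_0$, gives $\Phi:K_{p,q_1,q_2}^N(r_0)\to K_{p,q_1,q_2}^N(r_0)$.

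The third step is contraction: for $(\eta^{(i)},\Bu^{(i)})\in K$, $i=1,2$, the difference $\Phi(\eta^{(1)},\Bu^{(1)})-\Phi(\eta^{(2)},\Bu^{(2)})$ solves the linearized system with right members the differences of the nonlinear terms; since these are bilinear-type expressions, their differences are estimated by $C r_0\,\|(\eta^{(1)}-\eta^{(2)},\Bu^{(1)}-\Bu^{(2)})\|_{K}$ using the same product and embedding estimates, giving a contraction constant $Cr_0<1$ for $r_0$ small. Then Banach's fixed point theorem yields a unique $(\eta,\Bu)\in K_{p,q_1,q_2}^N(r_0;\eta_0,\Bu_0)$; the associated pressure $\Fp$ comes with the linear solution operator. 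One must also check at the outset that $\Phi$ is well-defined, i.e.\ that the compatibility conditions \eqref{comp-1}--\eqref{comp-2} required by the linear theory at $t=0$ are preserved: this follows since the right members of $\Phi(\eta,\Bu)$ at $t=0$ are $\SSG(\eta_0,\Bu_0),\SSH(\eta_0,\Bu_0)$, matching \eqref{comp-1}--\eqref{comp-2} exactly, and $\SSG=\dv\wtd\SSG$ with $\wtd\SSG|_{t=0}=\wtd\SSG(\eta_0,\Bu_0)$ so the divergence side is consistent as well.

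The main obstacle I anticipate is the time-weighted nonlinear estimate at the borderline exponents, specifically controlling the highest-order terms $\mu\sum_j\CD_{jj}(\eta)\Bu$ and $\BJ(\eta)(\pd_t\Bu-\mu\Delta\Bu)$ in $\SSF$, and the curvature term $\SSH_\kappa(\eta)$ in $\SSH$, which contain \emph{two} derivatives hitting the solution together with first derivatives of $\CE_N\eta$; one must place both top-order factors in $L_p$-in-time while still gaining the full weight $\langle t\rangle$, which is only possible because of the specific bookkeeping $\langle t\rangle=\langle t\rangle^{\alpha}\cdot\langle t\rangle^{1-\alpha}$ and because $\CE_N$ gains regularity (it solves an elliptic problem, so $\nabla\CE_N\eta$ enjoys the same $W_q^{3-1/q}\to H_q^2$-type bounds as $\eta$ on the trace). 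The constraint $q_2\le 6$ when $N=3$ and the exact relation \eqref{pq-cond-2} enter here, ensuring the relevant H\"older exponents $r_1,r_2$ and embedding indices are admissible and that the two weight exponents $1/3$ and $2/3$ (rather than a single $1/2$) close the estimate in three dimensions; verifying that every individual nonlinear term fits this scheme at both $q=q_1/2$ and $q=q_2$ is the technical heart of the argument, and is exactly what Section~\ref{sec:nonl-terms} is set up to deliver.
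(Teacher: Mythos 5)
Your proposal is correct and follows essentially the same route as the paper's proof in Section \ref{sec:nonlinear1}: a contraction mapping on $K_{p,q_1,q_2}^N(r_0;\eta_0,\Bu_0)$, with the self-map and Lipschitz bounds supplied by the time-weighted linear estimates (Propositions \ref{prp:linear4d}, \ref{prp:linear3d}, \ref{prp:linear3d-v2}) combined with the quadratic nonlinear estimates of Section \ref{sec:nonl-terms}, the smallness of $r_0,\varepsilon_0$, and the compatibility conditions checked at $t=0$. The only bookkeeping detail to correct is that the solution itself is measured at the exponents $q_1$ and $q_2$ (plus, for $N=3$, the $\langle t\rangle^{2/3}$-weighted gradient norms and the $L_\infty$-in-time $H_2^1$ bound on $\langle t\rangle^{1/3}\pd_t\CE_N\eta$), while only the initial data and the nonlinear right members are taken at $q_1/2$ and $q_2$; this asymmetry is precisely what lets products of two solution factors land back in $L_{q_1/2}$.
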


\begin{rmk}\label{rmk:rest-q2}
\begin{enumerate}[$(1)$]
\item
The $p_0$ and $q_0$ are independent of $N$
and given by Propositions $\ref{prp:decay}$ and $\ref{prp:linear4d}$, respectively, below. 
\item
The restriction $q_2\leq 6$ comes from the embedding $H_2^1(\BR_-^N)\subset L_{q_2}(\BR_-^N)$.
We use this embedding to estimate $(\pd_t\CE_N\eta)\pd_N\Bu$ in $\SSF(\eta,\Bu)$ when $N=3$,
see Lemma $\ref{lem:nonl-F-0}$ below for more details.
\item
The condition \eqref{pq-cond-2} guarantees that the assumption $2/p+1/q\neq 1$ of Proposition $\ref{prp:t-shift}$ below
is satisfied for $q=q_1/2$,
while $2/p+1/q<1$ for $q=q_2$ because $p$ is large enough.
Here $2/p+1/q\neq 1$ is related to the existence of the boundary trace in \eqref{comp-2},
see e.g. \cite[Remark 3.4.19]{Shibata20}. 
\end{enumerate}
\end{rmk}

\subsection{Global solvability for the original problem}\label{subsec:2-4}

Let us introduce the definition of solutions of the original problem \eqref{nonl:eq1}--\eqref{initial-cond},
see e.g. \cite[page 32]{EPS03}.

\begin{dfn}
We say that the system \eqref{nonl:eq1}--\eqref{initial-cond} is globally solvable in time 
if the following assertions hold for some $p,q\in(1,\infty)$
and 
\begin{equation*}
(\eta_0,\Bv_0)\in B_{q,p}^{3-1/p-1/q}(\BR^{N-1})\times B_{q,p}^{2-2/p}(\Omega_0)^N
\end{equation*}
with $\Omega_0$ given by \eqref{omega-zero}.
\begin{enumerate}[$(1)$]
\item
There exists a diffeomorphism $\Theta_0$ from $\BR_-^N$ onto $\Omega_0$
such that
the transformed problem \eqref{nonl:fix1}--\eqref{initial-cond-flat} 
with the initial data $\eta_0$ and $\Bu_0=\Bv_0\circ\Theta_0$
admits a solution $(\eta,\Bu,\Fp)$ satisfying
\begin{align*}
\eta&\in H_p^1(\BR_+,W_q^{2-1/q}(\BR^{N-1}))\cap L_p(\BR_+,W_q^{3-1/q}(\BR^{N-1})), \\
\Bu&\in H_p^1(\BR_+,L_q(\BR_-^N)^N)\cap L_p(\BR_+,H_q^2(\BR_-^N)^N), \\
\Fp&\in L_p(\BR_+,\wht H_q^1(\BR_-^N)).
\end{align*}
Here $\Bv_0\circ\Theta_0=(\Bv_0\circ\Theta_0)(x)=\Bv_0(\Theta_0(x))$ for $x\in\BR_-^N$.
\item
Let $\Omega_\infty$ and $\Theta$ be as in \eqref{om-gam-1} and \eqref{dfn:theta}, respectively.
Then $\Theta$ is a diffeomorphism from $\BQ_-$ onto $\Omega_\infty$. 
\end{enumerate}

In this case,
setting $\Bv=\Bu(\Theta^{-1}(y,\tau))$ and $\Fq=\Fp(\Theta^{-1}(y,\tau))$ for $(y,\tau)\in\Omega_\infty$,
we call $(\eta,\Bv,\Fq)$ an $L_p\text{-}L_q$ solution global in time to \eqref{nonl:eq1}--\eqref{initial-cond}.
\end{dfn}

Recall $\Omega_0$ and $\Gamma_0$ are given by 
\eqref{omega-zero} and \eqref{gam-zero}, respectively.
Let $\Bn_0$ be the unit outward normal vector to $\Gamma_0$, i.e.,
\begin{equation*}
\Bn_0=
\frac{1}{\sqrt{1+|\nabla'\eta_0|^2}}
\begin{pmatrix}
-\nabla'\eta_0 \\ 1
\end{pmatrix}
=
\frac{1}{\sqrt{1+|\nabla'\CE_N\eta_0|^2}}
\begin{pmatrix}
-\nabla'\CE_N\eta_0 \\ 1
\end{pmatrix} \quad \text{on $\BR_0^N$.}
\end{equation*}
Our main result for the original problem \eqref{nonl:eq1}--\eqref{initial-cond} reads as follows.

\begin{thm}\label{thm:original}
Let $N\geq 3$ and $p_0,q_0$ be as in Theorem $\ref{thm:transformed}$.
Suppose that $p$, $q_1$, and $q_2$ satisfy \eqref{pq-cond}, \eqref{pq-cond-2}, and
\begin{equation}\label{add-pq}
\frac{2}{p}+\frac{N}{q_2}<1.
\end{equation}
Then there exists a small positive number $\varepsilon_1$
such that for any 
\begin{equation*}
\eta_0\in\bigcap_{r\in\{q_1/2,q_2\}}B_{r,p}^{3-1/p-1/q}(\BR^{N-1}), \quad 
\Bv_0\in\bigcap_{r\in\{q_1/2,q_2\}}B_{r,p}^{2-2/p}(\Omega_0)^N
\end{equation*}
with the smallness condition
\begin{equation}\label{small-original}
\sum_{r\in\{q_1/2,q_2\}}
\Big(\|\eta_0\|_{B_{r,p}^{3-1/p-1/r}(\BR^{N-1})}+\|\Bv_0\|_{B_{r,p}^{2-2/p}(\Omega_0)}\Big)
\leq\varepsilon_1
\end{equation}
and the compatibility conditions
\begin{align}
\dv\Bv_0&=0 \quad \text{in $\Omega_0$,} \label{comp-ori-1}\\
\mu\BD(\Bv_0)\Bn_0-\{(\mu\BD(\Bv_0)\Bn_0)\cdot\Bn_0\}\Bn_0&=0 \quad  \text{on $\Gamma_0$,} \label{comp-ori-2}
\end{align}
the system \eqref{nonl:eq1}--\eqref{initial-cond} is globally solvable in time,
i.e., the following assertions hold.
\begin{enumerate}[$(1)$]
\item
Let $\Theta_0(x)=(x',x_N+(\CE_N\eta_0)(x))$ for $x=(x',x_N)\in\BR_-^N$.
Then $\Theta_0$ is a $C^2$ diffeomorphism from $\BR_-^N$ onto $\Omega_0$.
\item
Let $\Bu_0=\Bv_0\circ\Theta_0$.
Then $(\eta_0,\Bu_0)$ satisfies \eqref{small-1}--\eqref{comp-2}, i.e.,
the transformed problem \eqref{nonl:fix1}--\eqref{initial-cond-flat} admits
a global-in-time solution $(\eta,\Bu,\Fp)$ by Theorem $\ref{thm:transformed}$.
\item
$\Theta$ is a $C^1$ diffeomorphism from $\BQ_-$ onto $\Omega_\infty$.
Furthermore, $\Theta(\cdot,t)$ is a $C^2$ diffeomorphism from $\BR_-^N$ onto $\Omega(t)$,
given by \eqref{om-gam-2}, for each $t>0$.
\end{enumerate}
\end{thm}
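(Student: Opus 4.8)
The plan is to deduce Theorem~\ref{thm:original} from Theorem~\ref{thm:transformed} by carefully tracking the change of variables $\Theta$. First I would verify assertion~(1): given the smallness~\eqref{small-original}, the extension $\CE_N\eta_0$ lies in $B_{r,p}^{3-1/p-1/r}(\BR_-^N)$ with norm controlled by $\|\eta_0\|_{B_{r,p}^{3-1/p-1/r}(\BR^{N-1})}$ (boundedness of the Poisson-type operators $e^{|\xi'|x_N}$, resp. $e^{\sqrt{1+|\xi'|^2}x_N}$, which are standard Fourier multiplier estimates), so by Sobolev embedding $\pd_N\CE_N\eta_0\in C_B(\overline{\BR_-^N})$ with $\sup|\pd_N\CE_N\eta_0|\le C\varepsilon_1$. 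Choosing $\varepsilon_1$ small makes this $\le 1/2$, which is precisely the condition guaranteeing that $\Theta_0(x)=(x',x_N+(\CE_N\eta_0)(x))$ has Jacobian $1+\pd_N\CE_N\eta_0\ge 1/2>0$; together with properness (the map is a vertical shift bounded uniformly in $x$) this gives that $\Theta_0$ is a bijection, and the inverse function theorem upgrades it to a $C^2$ diffeomorphism onto $\Omega_0$, since $B_{r,p}^{3-1/p-1/r}\hookrightarrow C^2$ for the admissible range of $p,q$ (here $p$ large and $q>N$ make $3-1/p-1/q>2+\text{(positive)}$).

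Next, for assertion~(2), I would translate the hypotheses of Theorem~\ref{thm:original} into the hypotheses of Theorem~\ref{thm:transformed} applied to $(\eta_0,\Bu_0)$ with $\Bu_0=\Bv_0\circ\Theta_0$. The smallness~\eqref{small-1} follows because the pullback by a near-identity $C^2$ diffeomorphism is bounded on each Besov space $B_{r,p}^{2-2/p}$ (with operator norm $\le 1+C\varepsilon_1$), so $\|\Bu_0\|_{B_{r,p}^{2-2/p}(\BR_-^N)}\le C\|\Bv_0\|_{B_{r,p}^{2-2/p}(\Omega_0)}$, and one sets $\varepsilon_0 = c\varepsilon_1$. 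The compatibility conditions need a short computation: the divergence-free condition $\dv\Bv_0=0$ in $\Omega_0$ transforms, under the chain rule through $\Theta_0$, exactly into $\dv\Bu_0=\SSG(\eta_0,\Bu_0)$ in $\BR_-^N$ — this is the time-independent instance of the third line of~\eqref{nonl:fix1}, whose derivation is recorded in the appendix. Likewise, the tangential part of the stress boundary condition~\eqref{comp-ori-2} on $\Gamma_0$ pulls back to~\eqref{comp-2} on $\BR_0^N$: one checks that $\Bn_0 = \Bn(\eta_0)/|\Bn(\eta_0)|$ and that the nonlinear correction terms in $\SSH$ are exactly those produced by rewriting $\BD(\Bv_0)$ in terms of $\BD(\Bu_0)$, $\BE(\eta_0,\Bu_0)$, etc. Then Theorem~\ref{thm:transformed} yields the global-in-time solution $(\eta,\Bu,\Fp)$ with the stated regularity.

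For assertion~(3), I would show $\Theta$ is a $C^1$ diffeomorphism from $\BQ_-$ onto $\Omega_\infty$ by verifying~\eqref{cond-diffeo}: the solution class $K_{p,q_1,q_2}^N(r_0;\eta_0,\Bu_0)$ controls $\eta$ in $H_p^1(\BR_+,W_{q}^{2-1/q})\cap L_p(\BR_+,W_q^{3-1/q})$ for $q=q_2>N$, and the mixed-norm embedding $H_p^1(\BR_+,X_0)\cap L_p(\BR_+,X_1)\hookrightarrow BUC(\BR_+,(X_0,X_1)_{1-1/p,p})$ together with~\eqref{add-pq} (which ensures $2/p+N/q_2<1$, i.e.\ the trace space embeds into $C^1(\overline{\BR_-^N})$) gives $\sup_{(x,t)}|\pd_N\CE_N\eta(x,t)|\le Cr_0$; shrinking $r_0$ (and, if needed, $\varepsilon_0$) makes this $\le 1/2$. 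Space-time continuity of $\pd_N\CE_N\eta$ then gives that $\Theta$ is $C^1$ in $(x,t)$ and a diffeomorphism on each time slice; since for each fixed $t>0$ the trace $\eta(\cdot,t)\in W_q^{3-1/q}(\BR^{N-1})\hookrightarrow C^2$, the slice map $\Theta(\cdot,t)$ is a $C^2$ diffeomorphism onto $\Omega(t)$, as in the $t=0$ case. Finally $\Bv=\Bu\circ\Theta^{-1}$, $\Fq=\Fp\circ\Theta^{-1}$ inherit the regularity in the definition of an $L_p$-$L_q$ solution because $\Theta^{-1}$ is $C^1$ with bounded derivatives.

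The main obstacle is not any single estimate but the \emph{bookkeeping} of assertion~(2): one must confirm that the nonlinear compatibility conditions~\eqref{comp-1}--\eqref{comp-2}, which are stated in terms of the somewhat intricate operators $\SSG$, $\SSH$, $\SSH_\kappa$, $\BE$, $\BJ$, $\BK$, are the \emph{exact} pullbacks of the natural physical conditions~\eqref{comp-ori-1}--\eqref{comp-ori-2}. This requires redoing, at $t=0$, the change-of-variables computation that produced~\eqref{nonl:fix1} from~\eqref{nonl:eq1}, paying attention to the fact that $\Theta_0$ is built from $\CE_N\eta_0$ in precisely the same way $\Theta$ is built from $\CE_N\eta$, so that the algebra matches termwise; I would lean on the appendix's derivation to keep this clean. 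A secondary, more quantitative point is ensuring all the smallness parameters are chosen in the right order: first $q_0,q_1,q_2,p$ are fixed to satisfy~\eqref{pq-cond}, \eqref{pq-cond-2}, \eqref{add-pq}; then $r_0,\varepsilon_0$ are supplied by Theorem~\ref{thm:transformed}; then $\varepsilon_1$ is chosen small enough that~\eqref{small-original} implies both the $1/2$-bound on $\pd_N\CE_N\eta_0$ and the transferred smallness~\eqref{small-1}; and finally one checks a posteriori that the solution's $r_0$-bound in $K_{p,q_1,q_2}^N$ delivers~\eqref{cond-diffeo} for $\Theta$ itself.
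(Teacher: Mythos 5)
Parts (1) and (2) of your proposal follow the paper's route: extend $\eta_0$ by $\CE_N$, use the embedding of the half-space Besov space into $C_B^2(\overline{\BR_-^N})$ (valid since \eqref{add-pq} gives $1/p+N/q_2<1$) to get $\sup|\pd_N\CE_N\eta_0|\le C\varepsilon_1\le 1/2$ and hence a $C^2$ diffeomorphism $\Theta_0$; then transfer the norm of $\Bv_0$ to $\Bu_0=\Bv_0\circ\Theta_0$ (the paper does this by change of variables in $L_r$ and $H_r^2$ followed by real interpolation, which is the precise form of your ``pullback is bounded on $B^{2-2/p}_{r,p}$'' claim), choose $\varepsilon_1$ with $\wtd C_1\varepsilon_1\le\varepsilon_0$, and observe that \eqref{comp-ori-1}--\eqref{comp-ori-2} transform into \eqref{comp-1}--\eqref{comp-2} by the same change-of-variables algebra as in the appendix. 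Two harmless slips: the extension lies in $B_{r,p}^{3-1/p}(\BR_-^N)$ (it gains $1/r$), not $B_{r,p}^{3-1/p-1/r}$, and the embedding of the trace space into $C^2(\BR^{N-1})$ needs $1/p+N/q_2<1$, not merely $3-1/p-1/q_2>2$; both are covered by \eqref{add-pq}.

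In part (3) there is a genuine gap. You argue that the sup bound on $\pd_N\CE_N\eta$ plus ``space-time continuity of $\pd_N\CE_N\eta$'' makes $\Theta$ a $C^1$ diffeomorphism of $\BQ_-$ onto $\Omega_\infty$. But $\Theta(x,t)=(x',x_N+(\CE_N\eta)(x,t),t)$ is jointly $C^1$ only if $\pd_t\CE_N\eta$ is also continuous on $\overline{\BR_-^N}\times[0,\infty)$ (locally), and this does \emph{not} follow from membership in the solution class, which only gives $\pd_t\eta\in L_p(\BR_+,W_{q_2}^{2-1/q_2}(\BR^{N-1}))$ — an $L_p$-in-time quantity with no pointwise time continuity. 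This is exactly where the paper spends its effort and where \eqref{add-pq} is really used: it writes $\pd_t\CE_N\eta=\CE_N(u_N|_{\BR_0^N})+\CE_N\SSD(\eta,\Bu)$ from the first equation of \eqref{nonl:fix1}, shows $u_N|_{\BR_0^N}$ and $\SSD(\eta,\Bu)=-\Bu'\cdot\nabla'\CE_N\eta$ belong to $C([0,\infty),B_{q_2,p}^{2-2/p-1/q_2}(\BR_0^N))$ (the product estimate uses that this Besov space is a multiplication algebra, which needs $2/p+N/q_2<1$), and then uses Lemma \ref{lem:extra-embed} to conclude $\pd_t\CE_N\eta\in C([0,\infty),C_B^1(\overline{\BR^{N-1}\times(-L,0)}))$, hence $\CE_N\eta\in C^1([0,\infty),C_B^1)$ locally. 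Note also that you invoke \eqref{add-pq} only to obtain $\sup|\pd_N\CE_N\eta|\le 1/2$; that bound is already built into the class $K_{p,q_1,q_2}^N(r_0;\eta_0,\Bu_0)$ in \eqref{dfn:ul-space} (or follows from Lemma \ref{lem:height-func} with $q_2>N$ alone), so no shrinking of $r_0$ is needed, whereas the time-regularity argument just described is indispensable and is missing from your proposal.
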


Let $(\eta,\Bv,\Fq)$ be the $L_p\text{-}L_{q_2}$ solution
global in time to \eqref{nonl:eq1}--\eqref{initial-cond},
i.e.,
$\Bv=\Bu(\Theta^{-1}(y,\tau))$ and $\Fq=\Fp(\Theta^{-1}(y,\tau))$ for $(y,\tau)\in \Omega_\infty$ 
with the solution $(\eta,\Bu,\Fp)$ of \eqref{nonl:fix1}--\eqref{initial-cond-flat} introduced in Theorem \ref{thm:original}.

\begin{thm}\label{thm:decay}
Let $\eta$ and $\Bv$ be as above.
Then they satisfy the following time decay properties for $q\in\{q_1,q_2\}$.
\begin{enumerate}[$(1)$]
\item
Let $N=3$. Then 
\begin{align*}
\|\Bv(\tau)\|_{B_{q,p}^{2-2/p}(\Omega(t))}&=O(\tau^{-1/3}) \quad (\tau\to \infty), \\
\|\eta(\tau)\|_{B_{q,p}^{3-1/p-1/q}(\BR^{N-1})}&=O(\tau^{-1/3}) \quad (\tau\to\infty).
\end{align*}
\item
Let $N\geq 4$. Then 
\begin{align*}
\|\Bv(\tau)\|_{B_{q,p}^{2-2/p}(\Omega(\tau))}&=O(\tau^{-1/2}) \quad (\tau\to \infty), \\
\|\eta(\tau)\|_{B_{q,p}^{3-1/p-1/q}(\BR^{N-1})}&=O(\tau^{-1/2}) \quad (\tau\to\infty).
\end{align*}
\end{enumerate}
\end{thm}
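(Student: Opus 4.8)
The plan is to derive Theorem~\ref{thm:decay} as a direct corollary of Theorems~\ref{thm:transformed} and~\ref{thm:original}, transporting the time-weighted estimates for the transformed problem back to $\Omega(\tau)$ via the diffeomorphism $\Theta(\cdot,\tau)$. First I would recall that, by Theorem~\ref{thm:original}, the solution $(\eta,\Bu,\Fp)$ of the transformed problem \eqref{nonl:fix1}--\eqref{initial-cond-flat} lies in the solution class $K_{p,q_1,q_2}^N(r_0;\eta_0,\Bu_0)$ for both $q\in\{q_1,q_2\}$ (using the inclusion $I_{q_1/2,p}\cap I_{q_2,p}$ of the initial data and the smallness \eqref{small-original}). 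By the very definition of that class (see \eqref{dfn:ul-space} below) and by Propositions~\ref{prp:linear3d}--\ref{prp:linear4d}, the solution satisfies the time-weighted bounds
\begin{equation*}
\langle t\rangle^{\beta_N}\big(\|\eta(t)\|_{W_q^{3-1/q}(\BR^{N-1})}+\|\Bu(t)\|_{H_q^2(\BR_-^N)}\big)\in L_p(\BR_+)
\end{equation*}
together with the corresponding bound for $\partial_t$, where $\beta_N=1/3$ for $N=3$ and $\beta_N=1/2$ for $N\ge4$.

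Next I would upgrade this $L_p$-in-time integrability to a pointwise decay rate. The standard device is: if $g\in L_p(\BR_+)$ with $g\ge0$ and, in addition, $g$ has enough regularity in $t$ — which here is furnished by $\eta\in H_p^1(\BR_+,W_q^{2-1/q})$ and $\Bu\in H_p^1(\BR_+,L_q)$, hence $t\mapsto\|\eta(t)\|_{B_{q,p}^{3-1/p-1/q}}$ and $t\mapsto\|\Bu(t)\|_{B_{q,p}^{2-2/p}}$ are (after real interpolation between the two endpoints of $K$) absolutely continuous with $L_p$ derivative — then $g(t)\to0$; combined with the weight this yields the rate. Concretely, for $\Phi(t):=\|\Bu(t)\|_{B_{q,p}^{2-2/p}(\BR_-^N)}^p$ one has $\langle t\rangle^{p\beta_N}\Phi\in L_1(\BR_+)$ and $\langle t\rangle^{p\beta_N}\Phi\in W_1^1((T,\infty))$ for $T>0$ by the product rule and the $H_p^1$-in-time membership; hence $\langle t\rangle^{p\beta_N}\Phi(t)\to0$ as $t\to\infty$, i.e. $\|\Bu(t)\|_{B_{q,p}^{2-2/p}}=o(t^{-\beta_N})=O(t^{-\beta_N})$, and likewise for $\eta$. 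This is where the real-interpolation characterization of the trace space $B_{q,p}^{2-2/p}=(L_q,H_q^2)_{1-1/p,p}$ is used, so that the $B_{q,p}^{2-2/p}$-norm of $\Bu(t)$ is controlled by the $K$-functional built from $\|\Bu(t)\|_{L_q}$ and $\|\Bu(t)\|_{H_q^2}$, both of which carry the weight; the same comment applies to $\eta$ with the endpoints $W_q^{2-1/q}$ and $W_q^{3-1/q}$.

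Finally I would transfer the estimate from the reference domain to the physical domain. Since $\Bv(y,\tau)=\Bu(\Theta(\cdot,\tau)^{-1}(y),\tau)$ and $\Theta(\cdot,\tau)$ is a $C^2$ diffeomorphism from $\BR_-^N$ onto $\Omega(\tau)$ whose Jacobian and derivatives are controlled uniformly in $\tau$ by \eqref{cond-diffeo} and the bound on $\CE_N\eta$ — indeed $\nabla\Theta=\BI+\nabla\CE_N\eta\otimes\Be_N$ with $\|\nabla\CE_N\eta\|_{L_\infty}$ small for all $\tau$, by the embedding $W_q^{3-1/q}(\BR^{N-1})\subset C_B^1$ used with $q>N-1$ — the change of variables is bi-Lipschitz with second-order control, so
\begin{equation*}
\|\Bv(\tau)\|_{B_{q,p}^{2-2/p}(\Omega(\tau))}\le C\|\Bu(\tau)\|_{B_{q,p}^{2-2/p}(\BR_-^N)},
\end{equation*}
with $C$ independent of $\tau$. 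The $\eta$-component needs no transfer since the height function is the same object in both formulations. Combining the two displayed inequalities gives exactly the asserted rates $O(\tau^{-1/3})$ for $N=3$ and $O(\tau^{-1/2})$ for $N\ge4$, for each $q\in\{q_1,q_2\}$.

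The main obstacle is the passage from $L_p$-in-time summability with weight $\langle t\rangle^{\beta_N}$ to the pointwise decay $O(\tau^{-\beta_N})$: one must verify that the scalar function $t\mapsto\langle t\rangle^{p\beta_N}\|\Bu(t)\|_{B_{q,p}^{2-2/p}}^p$ is not merely integrable but has an integrable distributional derivative on $(T,\infty)$, which is what forces it to vanish at infinity. This uses the $H_p^1$-in-time regularity of $\Bu$ and $\eta$ in the two endpoint spaces together with the interpolation inequality for the trace norm, plus Hölder to handle the product of $\langle t\rangle^{p\beta_N-1}$ against $\|\Bu(t)\|^p$; the argument is routine once the mixed-derivative bound on the real-interpolation norm is in place, so I do not expect a genuine difficulty, only bookkeeping. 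A secondary point to check is the uniform-in-$\tau$ control of the transformation constants, but this follows immediately from \eqref{cond-diffeo} and the smallness built into $K_{p,q_1,q_2}^N(r_0;\eta_0,\Bu_0)$.
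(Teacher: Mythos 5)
Your overall architecture is the same as the paper's: take the weighted bounds encoded in the solution class $K_{p,q_1,q_2}^N(r_0;\eta_0,\Bu_0)$, upgrade them to pointwise-in-time decay of the trace-space norms, and transfer the velocity estimate to $\Omega(\tau)$ by the (uniformly controlled) diffeomorphism $\Theta(\cdot,\tau)$; your first and third steps are correct. The problem is the mechanism you propose for the second step. You want to show that $g(t)=\langle t\rangle^{p\beta_N}\|\Bu(t)\|_{B_{q,p}^{2-2/p}(\BR_-^N)}^p$ belongs to $W_1^1((T,\infty))$ and hence tends to zero, but this requires an integrable bound on $\tfrac{d}{dt}\|\Bu(t)\|_{B_{q,p}^{2-2/p}}$, and the available regularity only gives $\pd_t\Bu\in L_p(\BR_+,L_q(\BR_-^N)^N)$: the time derivative does not take values in the trace space, and the $K$-functional characterization of $(L_q,H_q^2)_{1-1/p,p}$ at each fixed $t$ controls the size of the norm, not its time variation. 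So the asserted absolute continuity of the trace norm with an $L_p$(or $L_1$)-controlled derivative is precisely the unproven point; it is not supplied by any lemma of the paper, it does not follow from $\Bu\in H_p^1(\BR_+,L_q)\cap L_p(\BR_+,H_q^2)$ in any standard way, and calling it ``routine bookkeeping'' hides a genuine gap. The same objection applies to the $\eta$-component.

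The repair is simpler than your route and is exactly what the paper does: apply the trace embedding with sup bound (Lemma \ref{lem:tanabe}, in the concrete form of Lemma \ref{lem:time-sp}) to the \emph{weighted} functions $\langle t\rangle^{\beta(N)}\eta$ and $\langle t\rangle^{\beta(N)}\Bu$ themselves. Since
\begin{equation*}
\pd_t\big(\langle t\rangle^{\beta}f\big)=\beta t\langle t\rangle^{\beta-2}f+\langle t\rangle^{\beta}\pd_t f,
\qquad \big|\beta t\langle t\rangle^{\beta-2}\big|\le \beta\langle t\rangle^{\beta},
\end{equation*}
the weighted functions lie in $H_p^1(\BR_+,X_0)\cap L_p(\BR_+,X_1)$ with the endpoint pairs $(W_q^{2-1/q},W_q^{3-1/q})$ and $(L_q,H_q^2)$, and their norms are bounded by $C\|(\eta,\Bu)\|_{K_{p,q_1,q_2}^N}\le Cr_0$. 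Lemma \ref{lem:time-sp} then yields at once $\sup_{\tau}\langle\tau\rangle^{\beta(N)}\|\eta(\tau)\|_{B_{q,p}^{3-1/p-1/q}(\BR^{N-1})}\le Cr_0$ and $\sup_{\tau}\langle\tau\rangle^{\beta(N)}\|\Bu(\tau)\|_{B_{q,p}^{2-2/p}(\BR_-^N)}\le Cr_0$, which, combined with your uniform change-of-variables inequality $\|\Bv(\tau)\|_{B_{q,p}^{2-2/p}(\Omega(\tau))}\le C\|\Bu(\tau)\|_{B_{q,p}^{2-2/p}(\BR_-^N)}$ (obtained, as in the proof of \eqref{int-initial-data}, by interpolating the $L_q$ and $H_q^2$ change-of-variable estimates, with constants uniform in $\tau$ thanks to \eqref{cond-diffeo} and the uniform $C^1$-bounds on $\CE_N\eta$), gives the stated rates. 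Equivalently, you could apply Lemma \ref{lem:tanabe} on each interval $(t,\infty)$ and use that the weight is at least $\langle t\rangle^{\beta(N)}$ there; either way, the differentiation of the Besov norm in time is avoided entirely.
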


\section{Preliminaries}\label{sec:prelim}

Lemma \ref{lem:ext-interp} holds for $N\geq 3$,
while the other results of this section hold for $N\geq 2$.

\subsection{Interpolation inequalities}\label{sec:int-ineq}
Let us start with the following lemma.

\begin{lem}\label{lem:int-p}
Let $r$, $r_0$, and $r_1$ satisfy
\begin{equation*}
1< r_0<r<r_1 < \infty, \quad \frac{1}{r}=\frac{1-\theta}{r_0}+\frac{\theta}{r_1}
\end{equation*}
for some $\theta\in(0,1)$.
Then the following assertions hold.
\begin{enumerate}[$(1)$]
\item\label{lem:int-p-1}
Let $m\in\BN_0$ and $f\in H_{r_0}^m(\BR_-^N)\cap H_{r_1}^m(\BR_-^N)$. Then $f\in H_r^m(\BR_-^N)$ with
\begin{equation*}
\|f\|_{H_r^m(\BR_-^N)}\leq C\|f\|_{H_{r_0}^m(\BR_-^N)}^{1-\theta}\|f\|_{H_{r_1}^m(\BR_-^N)}^\theta
\end{equation*}
for some positive constant $C$ independent of $f$.
\item\label{lem:int-p-2}
Let $s\in\BR_+\setminus\BN$ and $f\in W_{r_0}^s(\BR^{N-1})\cap W_{r_1}^s(\BR^{N-1})$.
Then $f\in W_r^s(\BR^{N-1})$ with
\begin{equation*}
\|f\|_{W_r^s(\BR^{N-1})}\leq 
C\|f\|_{W_{r_0}^s(\BR^{N-1})}^{1-\theta}\|f\|_{W_{r_1}^s(\BR^{N-1})}^\theta
\end{equation*}
for some positive constant $C$ independent of $f$.
\end{enumerate}
\end{lem}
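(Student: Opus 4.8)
The statement is a standard interpolation-inequality lemma, and the plan is to reduce both assertions to classical results on $\BR^N$ (or $\BR^{N-1}$) via extension operators, then apply the log-convexity of $L_r$-norms in scales of Sobolev and Besov spaces. For part \eqref{lem:int-p-1}, first I would fix a bounded linear extension operator $\CE : H_{\rho}^m(\BR_-^N)\to H_\rho^m(\BR^N)$ that works simultaneously for all $\rho\in(1,\infty)$ (e.g. the Stein total extension operator, which is bounded on $H_\rho^m$ for every $\rho$ and every $m$ up to a fixed order); apply it to $f$. It then suffices to prove the inequality on the whole space $\BR^N$, because $\|f\|_{H_r^m(\BR_-^N)}\le \|\CE f\|_{H_r^m(\BR^N)}$ while $\|\CE f\|_{H_\rho^m(\BR^N)}\le C\|f\|_{H_\rho^m(\BR_-^N)}$ for $\rho=r_0,r_1$.

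On $\BR^N$ the inequality $\|g\|_{H_r^m}\le C\|g\|_{H_{r_0}^m}^{1-\theta}\|g\|_{H_{r_1}^m}^\theta$ follows from the identification $H_\rho^m(\BR^N) = [\text{Bessel potential space}]$ and the complex interpolation identity $[H_{r_0}^m(\BR^N), H_{r_1}^m(\BR^N)]_\theta = H_r^m(\BR^N)$ with $1/r = (1-\theta)/r_0 + \theta/r_1$ and the \emph{same} smoothness index $m$; this is classical (Triebel, Bergh--L\"ofstr\"om). The norm estimate with the product structure on the right is exactly the interpolation-norm inequality $\|g\|_{[X_0,X_1]_\theta}\le \|g\|_{X_0}^{1-\theta}\|g\|_{X_1}^\theta$. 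Alternatively, and perhaps cleaner to cite, one can argue directly: write $H_\rho^m = \{g : \pd^\alpha g\in L_\rho,\ |\alpha|\le m\}$ and apply the three-lines/Riesz--Thorin log-convexity $\|\pd^\alpha g\|_{L_r}\le \|\pd^\alpha g\|_{L_{r_0}}^{1-\theta}\|\pd^\alpha g\|_{L_{r_1}}^\theta$ to each derivative, then sum using H\"older's inequality for the $\ell^1$-sum over $\alpha$; this gives the claim with a constant depending only on $N,m,\theta$.

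For part \eqref{lem:int-p-2}, the same scheme applies with $\BR^{N-1}$ in place of $\BR_-^N$ and $W_\rho^s = B_{\rho,\rho}^s$ in place of $H_\rho^m$; no extension operator is needed since we are already on the full space $\BR^{N-1}$. The required ingredient is the complex interpolation formula $[B_{r_0,r_0}^s(\BR^{N-1}), B_{r_1,r_1}^s(\BR^{N-1})]_\theta = B_{r,r}^s(\BR^{N-1})$ with $1/r=(1-\theta)/r_0+\theta/r_1$ (again same smoothness $s$, varying integrability and summability indices simultaneously), which is standard; combined with $\|g\|_{[X_0,X_1]_\theta}\le\|g\|_{X_0}^{1-\theta}\|g\|_{X_1}^\theta$ this yields the stated bound. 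Note $s\notin\BN$ is assumed so that $W_r^s$ is unambiguously the Besov space $B_{r,r}^s$ as defined in the Notation subsection.

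The only mildly delicate point — the ``main obstacle,'' such as it is — is making sure the interpolation is performed with a \emph{fixed} smoothness parameter while only the integrability (and, in part (2), summability) index is interpolated, so that one lands in $H_r^m$ resp. $W_r^s$ rather than in a space of different smoothness; this is precisely why the classical formulas $[H_{r_0}^m,H_{r_1}^m]_\theta=H_r^m$ and $[B_{r_0,r_0}^s,B_{r_1,r_1}^s]_\theta=B_{r,r}^s$ are the right tools, and why the elementary derivative-by-derivative Riesz--Thorin argument in part (1) is a safe fallback that avoids invoking interpolation-space machinery at all. Everything else is bookkeeping: the uniform boundedness of the extension operator across the relevant range of $\rho$, and the $\ell^1$ H\"older step when summing over multi-indices.
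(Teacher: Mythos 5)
Your proposal is correct, and for part (2) it coincides with the paper's argument: the paper invokes Triebel's complex interpolation formula $[B_{r_0,r_0}^{s}(\BR^{N-1}),B_{r_1,r_1}^{s}(\BR^{N-1})]_\theta=B_{r,r}^{s}(\BR^{N-1})$ (Theorem 2.4.1 (d) with $t_0=t_1=s$) together with the norm inequality $\|f\|_{[X_0,X_1]_\theta}\leq C\|f\|_{X_0}^{1-\theta}\|f\|_{X_1}^{\theta}$, exactly as you do. For part (1) the paper is slightly more direct than your main route: it cites the interpolation identity on the half-space itself, $[H_{r_0}^m(\BR_-^N),H_{r_1}^m(\BR_-^N)]_\theta=H_r^m(\BR_-^N)$ (Triebel, Theorem 2.10.1), so no extension operator is needed; your Stein-extension reduction to $\BR^N$ is a harmless extra step that buys you the ability to quote only whole-space interpolation, at the cost of having to check uniform boundedness of $\CE$ in $\rho$. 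Your elementary fallback (log-convexity $\|\pd^\alpha f\|_{L_r}\leq\|\pd^\alpha f\|_{L_{r_0}}^{1-\theta}\|\pd^\alpha f\|_{L_{r_1}}^{\theta}$ via H\"older, then H\"older again for the $\ell^1$-sum over $|\alpha|\leq m$) is genuinely different and in fact simpler than either: it needs no extension at all, since the $L_p$-norm log-convexity holds on the half-space directly and $H_r^m(\BR_-^N)$ is defined intrinsically through distributional derivatives; it avoids interpolation-space machinery entirely, though of course it does not generalize to the fractional-order setting of part (2), where the Besov interpolation formula remains the right tool.
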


\begin{proof}
(1)
By \cite[Theorem 1.9.3 (f)]{Triebel78} 
\begin{equation}\label{fund-int-prp1}
\|f\|_{[X_0,X_1]_\theta}\leq C\|f\|_{X_0}^{1-\theta}\|f\|_{X_1}^\theta
\quad \text{for any $f\in X_0\cap X_1$,}
\end{equation}
where $\theta\in(0,1)$ and $\{X_0,X_1\}$ is an interpolation couple of Banach spaces $X_0$ and $X_1$.
It holds by \cite[Theorem 2.10.1]{Triebel78} that
\begin{equation}\label{fund-int-prp2}
[H_{r_0}^m(\BR_-^N),H_{r_1}^m(\BR_-^N)]_\theta=H_r^m(\BR_-^N),
\end{equation}
and thus the desired inequality follows from \eqref{fund-int-prp1}
with $X_0=H_{r_0}^m(\BR_-^N)$ and $X_1=H_{r_1}^m(\BR_-^N)$.

(2) Suppose that
\begin{equation*}
 t=(1-\theta)t_0+\theta t_1\quad \text{with $t_0,t_1>0$}
\end{equation*}
and 
\begin{equation*}
\frac{1}{q}=\frac{1-\theta}{q_0}+\frac{\theta}{q_1} \quad \text{with $1<q_0,q_1<\infty$.}
\end{equation*}
By \cite[Theorem 2.4.1 (d)]{Triebel78}
\begin{equation*}
[B_{r_0,q_0}^{t_0}(\BR^{N-1}),B_{r_1,q_1}^{t_1}(\BR^{N-1})]_{\theta}=B_{r,q}^t(\BR^{N-1}).
\end{equation*}
Choose $t_0=t_1=s$, $q_0=r_0$, and $q_1=r_1$ in this equation, and then
\begin{equation}\label{int-ss-sp}
[W_{r_0}^s(\BR^{N-1}),W_{r_1}^s(\BR^{N-1})]_\theta = W_{r}^s(\BR^{N-1}).
\end{equation}
The desired inequality thus follows from \eqref{int-ss-sp}
and \eqref{fund-int-prp1} with $X_0=W_{r_0}^s(\BR^{N-1})$ and $X_1=W_{r_1}^s(\BR^{N-1})$.
This completes the proof of Lemma \ref{lem:int-p}.
\end{proof}

Furthermore, we have

\begin{lem}\label{lem:int-p-v2}
Let $p\in(1,\infty)$.
Let $r$, $r_0$, and $r_1$ satisfy
\begin{equation*}
1< r_0<r<r_1 < \infty, \quad \frac{1}{r}=\frac{1-\theta}{r_0}+\frac{\theta}{r_1}
\end{equation*}
for some $\theta\in(0,1)$.
Then the following assertions hold.
\begin{enumerate}[$(1)$]
\item\label{lem:int-p-1-2}
Let $m\in\BN_0$ and $f\in L_p(\BR_+,H_{r_0}^m(\BR_-^N))\cap L_p(\BR_+,H_{r_1}^m(\BR_-^N))$. 
Then 
\begin{align*}
&f\in L_p(\BR_+,H_r^m(\BR_-^N)) \text{ with} \\
&\|f\|_{L_p(\BR_+,H_r^m(\BR_-^N))}\leq C\|f\|_{L_p(\BR_+,H_{r_0}^m(\BR_-^N))}^{1-\theta}
\|f\|_{L_p(\BR_+,H_{r_1}^m(\BR_-^N))}^\theta
\end{align*}
for some positive constant $C$ independent of $f$.
\item\label{lem:int-p-5}
Let $s\in\BR_+\setminus\BN$ and $f\in L_p(\BR_+,W_{r_0}^s(\BR^{N-1}))\cap L_p(\BR_+,W_{r_1}^s(\BR^{N-1}))$.
Then 
\begin{align*}
&f\in L_p(\BR_+,W_r^s(\BR^{N-1})) \text{ with} \\
&\|f\|_{L_p(\BR_+,W_r^s(\BR^{N-1}))}\leq 
C\|f\|_{L_p(\BR_+,W_{r_0}^s(\BR^{N-1}))}^{1-\theta}\|f\|_{L_p(\BR_+,W_{r_1}^s(\BR^{N-1}))}^\theta
\end{align*}
for some positive constant $C$ independent of $f$.
\end{enumerate}
\end{lem}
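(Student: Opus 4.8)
\textbf{Proof proposal for Lemma \ref{lem:int-p-v2}.}

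The plan is to reduce both assertions to the pointwise-in-time interpolation inequalities already established in Lemma \ref{lem:int-p}, followed by an application of H\"older's inequality in the time variable. First I would prove part (1). By Lemma \ref{lem:int-p} (1), for almost every $t\in\BR_+$ we have
\begin{equation*}
\|f(t)\|_{H_r^m(\BR_-^N)}\leq C\|f(t)\|_{H_{r_0}^m(\BR_-^N)}^{1-\theta}\|f(t)\|_{H_{r_1}^m(\BR_-^N)}^\theta,
\end{equation*}
with $C$ independent of $t$. Raising this to the $p$th power and integrating over $\BR_+$ gives
\begin{equation*}
\|f\|_{L_p(\BR_+,H_r^m(\BR_-^N))}^p\leq C^p\int_0^\infty \|f(t)\|_{H_{r_0}^m(\BR_-^N)}^{(1-\theta)p}\|f(t)\|_{H_{r_1}^m(\BR_-^N)}^{\theta p}\intd t.
\end{equation*}
Now I would apply H\"older's inequality with the conjugate exponents $1/(1-\theta)$ and $1/\theta$ to the two factors, which yields
\begin{equation*}
\int_0^\infty \|f(t)\|_{H_{r_0}^m}^{(1-\theta)p}\|f(t)\|_{H_{r_1}^m}^{\theta p}\intd t
\leq \Big(\int_0^\infty\|f(t)\|_{H_{r_0}^m}^p\intd t\Big)^{1-\theta}\Big(\int_0^\infty\|f(t)\|_{H_{r_1}^m}^p\intd t\Big)^{\theta}.
\end{equation*}
Taking $p$th roots gives exactly the claimed estimate, and in particular finiteness of the right-hand side shows $f\in L_p(\BR_+,H_r^m(\BR_-^N))$.

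Part (2) is proved by the identical argument, invoking Lemma \ref{lem:int-p} (2) in place of Lemma \ref{lem:int-p} (1) to get the pointwise bound
\begin{equation*}
\|f(t)\|_{W_r^s(\BR^{N-1})}\leq C\|f(t)\|_{W_{r_0}^s(\BR^{N-1})}^{1-\theta}\|f(t)\|_{W_{r_1}^s(\BR^{N-1})}^\theta
\end{equation*}
for a.e.\ $t$, and then applying the same raise-to-the-$p$, integrate, and H\"older sequence in the time variable. One small point worth recording is the measurability of $t\mapsto\|f(t)\|_{H_r^m}$ (resp.\ $\|f(t)\|_{W_r^s}$): this follows from the pointwise interpolation inequality together with the hypothesis that $f$ lies in both $L_p(\BR_+,H_{r_0}^m)$ and $L_p(\BR_+,H_{r_1}^m)$ (resp.\ the Besov analogues), since a dominating measurable majorant is available, or alternatively from density of simple functions. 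There is no real obstacle here; the only thing to be slightly careful about is simply that the constant $C$ from Lemma \ref{lem:int-p} is genuinely independent of $t$, which it is, so that it pulls out of the time integral cleanly. This completes the plan.
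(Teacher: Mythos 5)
Your proposal is correct, but it follows a different route from the paper. The paper deduces Lemma \ref{lem:int-p-v2} abstractly: it invokes the identity $[L_p(\BR_+,X_0),L_p(\BR_+,X_1)]_\theta=L_p(\BR_+,[X_0,X_1]_\theta)$ from \cite[Theorem 1.18.4]{Triebel78} and then combines it with \eqref{fund-int-prp1}, \eqref{fund-int-prp2}, and \eqref{int-ss-sp}, so the time variable is handled by vector-valued complex interpolation rather than by hand. You instead apply the pointwise-in-time inequalities of Lemma \ref{lem:int-p} for a.e.\ $t$ and then use H\"older's inequality in $t$ with the conjugate exponents $1/(1-\theta)$ and $1/\theta$; this computation is correct and yields exactly the stated bounds, with the same constant as in Lemma \ref{lem:int-p}. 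Your argument is more elementary, needing only H\"older once the fixed-time lemma is available, while the paper's argument is shorter given that the interpolation machinery is already being cited and generalizes immediately to any interpolation couple $\{X_0,X_1\}$ without a pointwise multiplicative inequality in hand. Your remark on measurability of $t\mapsto\|f(t)\|_{H_r^m(\BR_-^N)}$ (equivalently, that $f$ is strongly measurable as an $H_r^m$-valued function) is the only technical point your route adds; since all the spaces involved are separable and $f$ takes values a.e.\ in the continuously embedded intersection $H_{r_0}^m(\BR_-^N)\cap H_{r_1}^m(\BR_-^N)$, this is standard and your brief justification is acceptable.
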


\begin{proof}
By \cite[Theorem 1.18.4]{Triebel78}
\begin{equation*}
[L_p(\BR_+,X_0),L_p(\BR_+,X_1)]_\theta =L_p(\BR_+,[X_0,X_1]_\theta), 
\end{equation*}
where $\{X_0,X_1\}$ is an interpolation couple of Banach spaces. 
Combining this property with \eqref{fund-int-prp1}--\eqref{int-ss-sp}
yields the desired inequalities of \eqref{lem:int-p-1-2} and \eqref{lem:int-p-5}.
This completes the proof of Lemma \ref{lem:int-p-v2}.
\end{proof}

\subsection{Embeddings}\label{subsec:embed}

Let $(\cdot, \cdot)_{\theta,p}$ be the real interpolation functor for $\theta\in(0,1)$ and $p\in(1,\infty)$.
Recall the argumentation in  \cite[Section 1.4]{Tanabe97}, i.e.,

\begin{lem}\label{lem:tanabe}
Let $p\in(1,\infty)$.
Let $X_0$ and $X_1$ be Banach spaces such that $X_1$ is a dense subspace of $X_0$
and $X_1$ is continuously embedded into $X_0$. Then
\begin{equation*}
H_p^1(\BR_+,X_0)\cap L_p(\BR_+,X_1)
\subset C([0,\infty),(X_0,X_1)_{1-1/p,p})
\end{equation*}
with
\begin{equation*}
\sup_{t\in [0,\infty)}\|f(t)\|_{(X_0,X_1)_{1-1/p,p}}
\leq \Big(\|f\|_{H_p^1(\BR_+,X_0)}^p+\|f\|_{L_p(\BR_+,X_1)}^p
\Big)^{1/p}
\end{equation*}
for any $f\in H_p^1(\BR_+,X_0)\cap L_p(\BR_+,X_1)$, where
\begin{equation}\label{dfn:norm}
\|f\|_{H_p^1(I,X_0)}=\Big(\|f\|_{L_p(I,X_0)}^p+\|\pd_t f\|_{L_p(I,X_0)}^p\Big)^{1/p}
\end{equation}
for a time interval $I$. 
\end{lem}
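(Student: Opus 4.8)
The plan is to read Lemma~\ref{lem:tanabe} as the trace characterization of the real interpolation space $(X_0,X_1)_{1-1/p,p}$: a function in $H_p^1(\BR_+,X_0)\cap L_p(\BR_+,X_1)$ admits a boundary trace at each time $t\ge0$ lying in $(X_0,X_1)_{1-1/p,p}$, the map $t\mapsto f(t)$ is continuous into that space, and the trace obeys the stated bound. First I would settle the bound at $t=0$. After modifying $f$ on a null set we may regard it as an element of $C([0,\infty),X_0)$ with $f(t)=f(0)+\int_0^t\pd_s f(s)\intd s$ and $f(s)\in X_1$ for a.e.\ $s$. With the trace-method normalization of the interpolation norm adopted in \cite[Section~1.4]{Tanabe97} — namely $\|a\|_{(X_0,X_1)_{1-1/p,p}}$ is the infimum of $\big(\|u\|_{H_p^1(\BR_+,X_0)}^p+\|u\|_{L_p(\BR_+,X_1)}^p\big)^{1/p}$ over extensions $u$ with $u(0)=a$, where $\|\cdot\|_{H_p^1}$ is as in \eqref{dfn:norm} — the bound at $t=0$ is immediate because $f$ is itself an admissible extension of $f(0)$. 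If one prefers the $K$-method normalization, the same conclusion (up to a $p$-dependent constant, which is harmless since the two norms are equivalent) follows from the splitting $f(0)=(f(0)-f(s))+f(s)$, which gives $K(t,f(0);X_0,X_1)\le\int_0^t\|\pd_s f(\sigma)\|_{X_0}\intd\sigma+t\|f(s)\|_{X_1}$ for $0<s\le t$; averaging in $s\in(0,t)$ and then using that the interpolation exponent equals $1-1/p$ reduces the defining integral to $\int_0^\infty t^{-p}K(t,f(0);X_0,X_1)^p\intd t$, which Hardy's inequality (valid since $p>1$) controls by $\int_0^\infty(\|\pd_s f\|_{X_0}+\|f\|_{X_1})^p\intd s$.

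Next I would upgrade this to all times and to continuity. Applying the $t=0$ estimate to the shifted function $s\mapsto f(t_0+s)$ on $\BR_+$ yields $\|f(t_0)\|_{(X_0,X_1)_{1-1/p,p}}\le\big(\|f\|_{H_p^1((t_0,\infty),X_0)}^p+\|f\|_{L_p((t_0,\infty),X_1)}^p\big)^{1/p}$ for every $t_0\ge0$, and taking the supremum over $t_0$ gives the asserted norm inequality. For continuity, extend $f$ to $\BR$ by even reflection (which preserves membership in $H_p^1\cap L_p(X_1)$ since $f$ is continuous at $0$), mollify in time to get $f_\varepsilon\in C^\infty(\BR,X_1)$, and restrict to $[0,\infty)$; since $X_1=X_0\cap X_1$ embeds continuously into $(X_0,X_1)_{1-1/p,p}$, each $f_\varepsilon$ lies in $C([0,\infty),(X_0,X_1)_{1-1/p,p})$, and $f_\varepsilon\to f$ in $H_p^1(\BR_+,X_0)\cap L_p(\BR_+,X_1)$. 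Applying the preceding estimate to the differences $f_\varepsilon-f_{\varepsilon'}$ shows $(f_\varepsilon)$ is Cauchy in $C([0,\infty),(X_0,X_1)_{1-1/p,p})$; its limit agrees with the continuous $X_0$-representative of $f$ (compare the two convergences in $L_p(\BR_+,X_0)$), so that representative belongs to $C([0,\infty),(X_0,X_1)_{1-1/p,p})$. The hypothesis that $X_1$ is dense in $X_0$ is what makes the interpolation-trace correspondence and this approximation legitimate.

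I do not expect a genuine obstacle here, since this is a classical fact; the only points requiring care are bookkeeping ones. One is matching the constant to exactly $1$, which forces one to work with the trace-method normalization of $(X_0,X_1)_{1-1/p,p}$ (any other equivalent choice turns the $1$ into a $p$-dependent constant); this is why I would phrase the argument so that $f$ restricted to $[t_0,\infty)$ is literally an admissible competitor in the infimum defining the norm. The other is that the continuity statement is not purely formal: the pointwise bound alone does not give it, and one needs the mollification/density step above to transfer continuity from the smooth approximants to $f$ in the finer topology of $(X_0,X_1)_{1-1/p,p}$.
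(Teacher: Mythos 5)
Your proof is correct, and it is essentially the argument behind the result the paper simply quotes: Lemma \ref{lem:tanabe} is stated with a citation to \cite[Section 1.4]{Tanabe97} and given no proof, and what you wrote is precisely the classical trace-method characterization of $(X_0,X_1)_{1-1/p,p}$ (trace bound at $t=0$, translation to arbitrary $t_0$, continuity by mollification and the uniform estimate) that the cited reference develops. Your remark that the constant $1$ is tied to the trace-method normalization of the interpolation norm, with only an equivalent-norm constant in the $K$-method formulation, is exactly the right reading of the statement.
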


The first application of Lemma \ref{lem:tanabe} is as follows.

\begin{lem}\label{lem:H^1-embed}
Let $p\in(1,\infty)$ and $X$ be a Banach space with the norm $\|\cdot\|_X$.
Then $H_p^1(\BR_+,X)\subset C([0,\infty),X)$ with
\begin{equation*}
\sup_{t\in [0,\infty)}\|f(t)\|_X \leq C\|f\|_{H_p^1(\BR_+,X)}
\end{equation*}
for any $f\in H_p^1(\BR_+,X)$ and some positive constant $C$ independent of $f$.
\end{lem}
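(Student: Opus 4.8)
The plan is to deduce this from Lemma~\ref{lem:tanabe} by a suitable choice of the interpolation couple, together with the trivial observation that a space interpolated between a Banach space and itself is again that space. First I would set $X_0=X_1=X$. Then $X_1$ is trivially a dense subspace of $X_0$ and is continuously embedded into $X_0$, so the hypotheses of Lemma~\ref{lem:tanabe} are met, and we have $H_p^1(\BR_+,X)\cap L_p(\BR_+,X)\subset C([0,\infty),(X,X)_{1-1/p,p})$.

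Next I would use that the real interpolation space $(X,X)_{\theta,p}$ coincides with $X$ (up to equivalence of norms) for any $\theta\in(0,1)$ and $p\in(1,\infty)$; this is a standard fact about the real interpolation functor. Also $H_p^1(\BR_+,X)\cap L_p(\BR_+,X)=H_p^1(\BR_+,X)$ since $H_p^1(\BR_+,X)\subset L_p(\BR_+,X)$ by definition of the norm \eqref{dfn:norm}. Combining these two identifications with the conclusion of Lemma~\ref{lem:tanabe} gives $H_p^1(\BR_+,X)\subset C([0,\infty),X)$ together with the bound
\begin{equation*}
\sup_{t\in[0,\infty)}\|f(t)\|_X\leq C\Big(\|f\|_{H_p^1(\BR_+,X)}^p+\|f\|_{L_p(\BR_+,X)}^p\Big)^{1/p}\leq C\|f\|_{H_p^1(\BR_+,X)},
\end{equation*}
where in the last step the $L_p$ norm is absorbed into the $H_p^1$ norm. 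This is exactly the claimed estimate.

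There is no real obstacle here; the only point requiring a word of care is the identification $(X,X)_{\theta,p}=X$ with equivalent norms, which is where the constant $C$ in the statement enters (the norm on $(X,X)_{1-1/p,p}$ is equivalent to, but not equal to, $\|\cdot\|_X$). Alternatively, one could give a fully self-contained proof by the classical one-dimensional Sobolev embedding argument: writing $f(t)-f(s)=\int_s^t\pd_\tau f(\tau)\intd\tau$ in the Bochner sense, taking $X$-norms, and applying H\"older's inequality to bound $\|f(t)\|_X$ by $\|f\|_{L_p(\BR_+,X)}$ plus $\|\pd_t f\|_{L_p(\BR_+,X)}$ over a unit interval containing $t$; the continuity statement then follows from absolute continuity of the Bochner integral. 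Either route is routine, so I would simply invoke Lemma~\ref{lem:tanabe} with $X_0=X_1=X$.
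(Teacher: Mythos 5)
Your proposal is correct and follows essentially the same route as the paper: the paper also writes $H_p^1(\BR_+,X)=H_p^1(\BR_+,X)\cap L_p(\BR_+,X)$, uses $(X,X)_{\theta,p}=X$, and applies Lemma \ref{lem:tanabe} with $X_0=X_1=X$. The extra remarks on norm equivalence and the alternative elementary argument are fine but not needed.
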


\begin{proof}
Let us write 
\begin{equation*}
H_p^1(\BR_+,X)=H_p^1(\BR_+,X)\cap L_p(\BR_+,X).
\end{equation*}
Since $(X,X)_{\theta,p}=X$ for $\theta\in(0,1)$,
Lemma \ref{lem:tanabe} with $X_0=X_1=X$  yields the desired property.
This completes the proof of Lemma \ref{lem:H^1-embed}.
\end{proof}

The second application of Lemma \ref{lem:tanabe} is as follows.

\begin{lem}\label{lem:time-sp}
Let $p,q\in (1,\infty)$.
Then the following assertions hold.
\begin{enumerate}[$(1)$]
\item\label{lem:time-sp-1}
There holds
\begin{align*}
&H_p^1(\BR_+,W_q^{2-1/q}(\BR^{N-1}))\cap L_p(\BR_+,W_q^{3-1/q}(\BR^{N-1})) \\
&\subset C([0,\infty),B_{q,p}^{3-1/p-1/q}(\BR^{N-1}))
\end{align*}
with
\begin{align*}
&\sup_{t\in [0,\infty)}\|f(t)\|_{B_{q,p}^{3-1/p-1/q}(\BR^{N-1})} \\
&\leq 
C\Big(
\|f\|_{H_p^1(\BR_+,W_q^{2-1/q}(\BR^{N-1}))}
+\|f\|_{L_p(\BR_+,W_q^{3-1/q}(\BR^{N-1}))}
\Big)
\end{align*}
for any $f\in H_p^1(\BR_+,W_q^{2-1/q}(\BR^{N-1}))\cap L_p(\BR_+,W_q^{3-1/q}(\BR^{N-1}))$
and some positive constant $C$ independent of $f$.
\item\label{lem:time-sp-2}
There holds
\begin{equation*}
H_p^1(\BR_+,L_q(\BR_-^N))\cap L_p(\BR_+,H_q^{2}(\BR_-^{N})) 
\subset C([0,\infty),B_{q,p}^{2-2/p}(\BR_-^{N}))
\end{equation*}
with
\begin{equation*}
\sup_{t\in [0,\infty)}\|f(t)\|_{B_{q,p}^{2-2/p}(\BR_-^{N})} 
\leq 
C\Big(
\|f\|_{H_p^1(\BR_+,L_q(\BR_-^N))}
+\|f\|_{L_p(\BR_+,H_q^{2}(\BR_-^{N}))}
\Big)
\end{equation*}
for any $f \in H_p^1(\BR_+,L_q(\BR_-^N))\cap L_p(\BR_+,H_q^{2}(\BR_-^{N}))$
and some positive constant $C$ independent of $f$.
\end{enumerate}
\end{lem}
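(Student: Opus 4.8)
Both assertions are instances of Lemma \ref{lem:tanabe}, so the plan is simply to identify the right Banach couple $\{X_0,X_1\}$ in each case and to verify that the real interpolation space $(X_0,X_1)_{1-1/p,p}$ is the claimed Besov space on the boundary (part (1)) or on the half-space (part (2)). First I would recall that mixed-derivative spaces of the form $H_p^1(\BR_+,X_0)\cap L_p(\BR_+,X_1)$ embed continuously into $C([0,\infty),(X_0,X_1)_{1-1/p,p})$ with the trace estimate quoted in Lemma \ref{lem:tanabe}; the hypotheses there (density and continuous embedding of $X_1$ in $X_0$) are routine to check in both situations, since $W_q^{3-1/q}(\BR^{N-1})$ is dense in $W_q^{2-1/q}(\BR^{N-1})$ and $H_q^2(\BR_-^N)$ is dense in $L_q(\BR_-^N)$, with the obvious continuous inclusions. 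So the whole content of the lemma is the computation of the interpolation space.

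For part (1), I take $X_0=W_q^{2-1/q}(\BR^{N-1})=B_{q,q}^{2-1/q}(\BR^{N-1})$ and $X_1=W_q^{3-1/q}(\BR^{N-1})=B_{q,q}^{3-1/q}(\BR^{N-1})$. Using the standard real-interpolation identity for Besov scales, $(B_{q,q}^{s_0},B_{q,q}^{s_1})_{\theta,p}=B_{q,p}^{s}$ with $s=(1-\theta)s_0+\theta s_1$ (see e.g.\ \cite[Theorem 2.4.2]{Triebel78}), and inserting $\theta=1-1/p$, $s_0=2-1/q$, $s_1=3-1/q$, gives $s=(1/p)(2-1/q)+(1-1/p)(3-1/q)=3-1/p-1/q$, hence $(X_0,X_1)_{1-1/p,p}=B_{q,p}^{3-1/p-1/q}(\BR^{N-1})$, which is exactly the target space. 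Lemma \ref{lem:tanabe} then yields both the embedding and the stated bound.

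For part (2), I take $X_0=L_q(\BR_-^N)=H_q^0(\BR_-^N)$ and $X_1=H_q^2(\BR_-^N)$. Here I would use the characterization of the real interpolation of Bessel-potential (Sobolev) spaces on the half-space, $(L_q(\BR_-^N),H_q^2(\BR_-^N))_{\theta,p}=B_{q,p}^{2\theta}(\BR_-^N)$ for $0<\theta<1$ (again from \cite[Theorem 2.4.2]{Triebel78}, transferred to the half-space by the standard extension argument). With $\theta=1-1/p$ this is $B_{q,p}^{2-2/p}(\BR_-^N)$, the claimed space, and Lemma \ref{lem:tanabe} closes the argument. The only mildly delicate point — and the one I would be most careful about — is making sure the half-space interpolation identity is invoked in a form valid for all $q\in(1,\infty)$ and all $p\in(1,\infty)$ (in particular that $2-2/p$ never hits an integer in a way that would change the interpolation space), but since $B_{q,p}^{2-2/p}(\BR_-^N)$ with $p\neq q$ is genuinely a Besov space and the identity is stated for the full range of parameters, no restriction is needed. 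This completes the proof of Lemma \ref{lem:time-sp}.
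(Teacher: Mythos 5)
Your proposal is correct and follows essentially the same route as the paper: apply Lemma \ref{lem:tanabe} to the couples $\{W_q^{2-1/q}(\BR^{N-1}),W_q^{3-1/q}(\BR^{N-1})\}$ and $\{L_q(\BR_-^N),H_q^{2}(\BR_-^N)\}$ and identify the real interpolation spaces $(\cdot,\cdot)_{1-1/p,p}$ as $B_{q,p}^{3-1/p-1/q}(\BR^{N-1})$ and $B_{q,p}^{2-2/p}(\BR_-^N)$ via the standard identities from \cite{Triebel78}. Your interpolation arithmetic checks out, and your remark on the half-space case is exactly the point the paper settles by citing Subsection 2.10.1 of \cite{Triebel78}.
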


\begin{proof}
It holds that
\begin{align}
(W_q^{2-1/q}(\BR^{N-1}),W_q^{3-1/q}(\BR^{N-1}))_{1-1/p,p} &= B_{q,p}^{3-1/p-1/q}(\BR^{N-1}), \label{int-p-SS-sp-1} \\
(H_q^j(\BR_-^{N}),H_q^2(\BR_-^{N}))_{1-1/p,p} &= B_{q,p}^{2(1-1/p)+j/p}(\BR_-^{N}), \label{int-p-SS-sp-2}
\end{align}
where $j=0,1$,
see Subsections 2.4.2 and 2.10.1 of \cite{Triebel78}.
Combining \eqref{int-p-SS-sp-1} and \eqref{int-p-SS-sp-2} for $j=0$
with Lemma \ref{lem:tanabe} yields
the desired properties of \eqref{lem:time-sp-1} and \eqref{lem:time-sp-2}.
This completes the proof of Lemma \ref{lem:time-sp}.
\end{proof}

We next consider embeddings related to $H_p^{1/2}(\BR_+,H_q^1(\BR_-^N))$.

\begin{lem}\label{int-H-1/2}
Let $p,q\in (1,\infty)$.
Then the following assertions hold.
\begin{enumerate}[$(1)$]
\item\label{int-H-1/2-1}
There holds
\begin{equation*}
H_p^1(\BR_+,L_q(\BR_-^N))\cap L_p(\BR_+,H_q^{2}(\BR_-^{N})) 
\subset H_p^{1/2}(\BR_+,H_q^1(\BR_-^N))
\end{equation*}
with
\begin{equation*}
\|f\|_{H_p^{1/2}(\BR_+,H_q^1(\BR_-^N))}
\leq C\Big(
\|f\|_{H_p^1(\BR_+,L_q(\BR_-^N))}
+\|f\|_{L_p(\BR_+,H_q^{2}(\BR_-^{N}))}
\Big)
\end{equation*}
for any $f\in H_p^1(\BR_+,L_q(\BR_-^N))\cap L_p(\BR_+,H_q^{2}(\BR_-^{N}))$
and some positive constant $C$ independent of $f$.
\item\label{int-H-1/2-2}
For any $f\in H_p^1(\BR_+,L_q(\BR_-^N))\cap L_p(\BR_+,H_q^{2}(\BR_-^{N}))$ and for $j=1,\dots,N$,
\begin{equation*}
\|\pd_j f\|_{H_p^{1/2}(\BR_+,L_q(\BR_-^N))}
\leq C
\Big(
\|f\|_{H_p^1(\BR_+,L_q(\BR_-^N))}
+\|f\|_{L_p(\BR_+,H_q^{2}(\BR_-^{N}))}
\Big),
\end{equation*}
where $\pd_j=\pd /\pd x_j$ and $C$ is a positive constant independent of $f$.
\end{enumerate}
\end{lem}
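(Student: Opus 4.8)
The plan is to derive both parts from the complex interpolation identity for ``mixed smoothness'' Bochner--Sobolev spaces,
\[
[H_p^1(\BR_+,L_q(\BR_-^N)),\,L_p(\BR_+,H_q^2(\BR_-^N))]_{1/2}
= H_p^{1/2}(\BR_+,[L_q(\BR_-^N),H_q^2(\BR_-^N)]_{1/2}),
\]
combined with the elementary inclusion $X\cap Y\hookrightarrow[X,Y]_{1/2}$ already quoted in \eqref{fund-int-prp1}. First I would record, exactly as in the proof of Lemma~\ref{lem:int-p}, that $[L_q(\BR_-^N),H_q^2(\BR_-^N)]_{1/2}=H_q^1(\BR_-^N)$: since $H_q^m(\BR_-^N)$ is a retract of $H_q^m(\BR^N)$ through a Stein-type extension operator $E$ and the restriction $R$ (with $RE=\mathrm{id}$), this follows from $[L_q(\BR^N),H_q^2(\BR^N)]_{1/2}=H_q^1(\BR^N)$ (\cite[Section~2.10.1]{Triebel78}) and the stability of complex interpolation under retractions. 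Granting the displayed mixed identity, part (1) is then immediate: viewing $\{H_p^1(\BR_+,L_q(\BR_-^N)),L_p(\BR_+,H_q^2(\BR_-^N))\}$ as an interpolation couple inside $\CD'(\BR_+\times\BR_-^N)$, \eqref{fund-int-prp1} gives, for $f$ in the intersection,
\[
\|f\|_{H_p^{1/2}(\BR_+,H_q^1(\BR_-^N))}
=\|f\|_{[H_p^1(\BR_+,L_q),\,L_p(\BR_+,H_q^2)]_{1/2}}
\le\|f\|_{H_p^1(\BR_+,L_q(\BR_-^N))}^{1/2}\,\|f\|_{L_p(\BR_+,H_q^2(\BR_-^N))}^{1/2},
\]
which is the asserted estimate after $\sqrt{ab}\le a+b$.

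The one non-routine ingredient is the mixed interpolation identity, and this is where I expect the real work to lie. I would prove it by reduction to the whole space $\BR^{1+N}$. Extend $f$ in time from $\BR_+$ to $\BR$ by a bounded extension operator acting simultaneously on $L_p(\BR_+,Z)$ and $H_p^1(\BR_+,Z)$ for every Banach space $Z$, and extend $f$ in $x$ from $\BR_-^N$ to $\BR^N$ by the operator $E$ above applied pointwise in $t$; since $E$ is bounded $H_q^j(\BR_-^N)\to H_q^j(\BR^N)$ for $j=0,1,2$ and commutes with $\pd_t$, it is bounded between the corresponding space--time spaces, with the restriction maps bounded in the reverse direction. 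Hence it suffices to prove the identity on $\BR\times\BR^N$, where it amounts to the operator-valued Fourier-multiplier statement that $\pd_t^{1/2}(1-\Delta_x)^{1/2}$ maps $H_p^1(\BR,L_q(\BR^N))\cap L_p(\BR,H_q^2(\BR^N))$ boundedly into $L_p(\BR,L_q(\BR^N))$ --- equivalently, the mixed derivative theorem --- which rests on $L_q(\BR^N)$ being a UMD space for $1<q<\infty$ together with Stein's interpolation of analytic families of operators. Alternatively, one may simply cite this embedding from the maximal regularity literature. I regard this identification as the main technical burden; the rest of the argument is bookkeeping with bounded operators and the interpolation property.

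Part (2) is then a corollary of part (1). The operator $\pd_j$ is bounded from $H_q^1(\BR_-^N)$ to $L_q(\BR_-^N)$; applying it pointwise in $t$ yields bounded operators $L_p(\BR_+,H_q^1(\BR_-^N))\to L_p(\BR_+,L_q(\BR_-^N))$ and $H_p^1(\BR_+,H_q^1(\BR_-^N))\to H_p^1(\BR_+,L_q(\BR_-^N))$, since it commutes with $\pd_t$. Therefore, by the interpolation property of bounded operators and the definition $H_p^{1/2}(J,X)=[L_p(J,X),H_p^1(J,X)]_{1/2}$, it is bounded from $H_p^{1/2}(\BR_+,H_q^1(\BR_-^N))$ to $H_p^{1/2}(\BR_+,L_q(\BR_-^N))$. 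Combining this with part (1) gives
\[
\|\pd_j f\|_{H_p^{1/2}(\BR_+,L_q(\BR_-^N))}
\le C\|f\|_{H_p^{1/2}(\BR_+,H_q^1(\BR_-^N))}
\le C\big(\|f\|_{H_p^1(\BR_+,L_q(\BR_-^N))}+\|f\|_{L_p(\BR_+,H_q^2(\BR_-^N))}\big),
\]
which is the assertion of part (2).
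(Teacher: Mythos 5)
Your argument is correct in substance and, at its core, follows the same route as the paper: the only genuinely nontrivial ingredient is the mixed-derivative-type estimate on the full time line, which the paper simply cites as \cite[Proposition 1]{Shibata18} (already stated on $\BR\times\BR_-^N$, so no extension in the $x$-variable is needed), and then transfers to $\BR_+$ by the even extension $E^e$ together with complex interpolation of $E^e$ and the restriction $R$ between the $L_p$ and $H_p^1$ levels; your part (2) coincides with the paper's proof verbatim (interpolate the boundedness of $\pd_j$ at the $L_p$ and $H_p^1$ levels and combine with part (1)). The one place where you take on more than is necessary is the claimed identity $[H_p^1(\BR_+,L_q),L_p(\BR_+,H_q^2)]_{1/2}=H_p^{1/2}(\BR_+,H_q^1)$ with equality of norms: for the asserted inequality you only need the chain ``intersection $\hookrightarrow$ complex interpolation space $\hookrightarrow H_p^{1/2}(\BR_+,H_q^1)$,'' i.e.\ the embedding direction, which is exactly the Sobolevskii mixed derivative theorem you invoke (UMD plus bounded imaginary powers/Stein interpolation, or a citation such as \cite[Proposition 1]{Shibata18}); the reverse inclusion is never used and would require a separate justification, so you should either drop it or state only the one-sided embedding. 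With that weakening, and with the whole-line ingredient cited rather than reproved, your proof is a valid variant whose extra cost is the Stein-type extension in $x$ and the retraction bookkeeping, and whose benefit is that it works from generic whole-space results instead of the tailor-made half-space proposition used in the paper.
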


\begin{proof}
\eqref{int-H-1/2-1}
Let $E^e f$ be the even extension of $f$ with respect to the time variable $t$, i.e.,
\begin{equation*}
E^e f=
\left\{\begin{aligned}
&f(t) && (t>0), \\
& f(-t) && (t<0),
\end{aligned}\right.
\end{equation*}
and let $R$ be the restriction operator from $\BR$ to $\BR_+$.
Suppose $X$ is a Banach space and recall \eqref{dfn:norm}.
Then
\begin{align*}
E^e\in \CL(L_p(\BR_+,X),L_p(\BR,X))\cap \CL(H_p^1(\BR_+,X),H_p^1(\BR,X))
\end{align*}
with 
\begin{align}\label{est:odd-ext}
&\|E^e\|_{\CL(L_p(\BR_+,X),L_p(\BR,X))}
\leq 2^{1/p}, \notag \\
&\|E^e\|_{\CL(H_p^1(\BR_+,X),H_p^1(\BR,X))}
\leq  2^{1/p},
\end{align}
while 
\begin{equation*}
R\in \CL(L_p(\BR,X),L_p(\BR_+,X))\cap \CL(H_p^1(\BR,X),H_p^1(\BR_+,X))
\end{equation*}
with
\begin{equation*}
\|R\|_{\CL(L_p(\BR,X),L_p(\BR_+,X))}\leq 1, \quad 
\|R\|_{\CL(H_p^1(\BR,X),H_p^1(\BR_+,X))} \leq 1.
\end{equation*}
The complex interpolation method\footnote{
Here and subsequently, the complex interpolation method
stands for \cite[Theorem 2.6]{Lunardi18}.
}
thus yields
\begin{align}\label{comp-int:1}
&E^e\in\CL(H_p^{1/2}(\BR_+,X),H_p^{1/2}(\BR,X)), \notag \\
&\|E^e\|_{\CL(H_p^{1/2}(\BR_+,X),H_p^{1/2}(\BR,X))} \leq  2^{1/p},
\end{align}
and also
\begin{align}\label{comp-int:2}
&R\in \CL(H_p^{1/2}(\BR,X),H_p^{1/2}(\BR_+,X)), \notag \\
&\|R\|_{\CL(H_p^{1/2}(\BR,X),H_p^{1/2}(\BR_+,X))}\leq 1.
\end{align}

Recall \cite[Proposition 1]{Shibata18}, which tells us that 
\begin{equation}\label{lem:Shibata18}
\|g\|_{H_p^{1/2}(\BR,H_q^1(\BR_-^N))}
\leq C\Big(\|g\|_{H_p^1(\BR,L_q(\BR_-^N))}+\|g\|_{L_p(\BR,H_q^2(\BR_-^N))}\Big)
\end{equation}
for any $g\in H_p^1(\BR,L_q(\BR_-^N))\cap L_p(\BR,H_q^2(\BR_-^N))$.

Let $f\in H_p^1(\BR_+,L_q(\BR_-^N))\cap L_p(\BR_+,H_q^2(\BR_-^N))$.
From \eqref{est:odd-ext}--\eqref{lem:Shibata18}, we see that
\begin{align*}
\|f\|_{H_p^{1/2}(\BR_+,H_q^1(\BR_-^N))}
&=\|R E^e f\|_{H_p^{1/2}(\BR_+,H_q^1(\BR_-^N))} \\
&\leq \|E^e f\|_{H_p^{1/2}(\BR,H_q^1(\BR_-^N))} \\
&\leq 
C\Big(\|E^e f\|_{H_p^1(\BR,L_q(\BR_-^N))}+\|E^e f\|_{L_p(\BR,H_q^2(\BR_-^N))}\Big) \\
&\leq 
C\Big(\|f\|_{H_p^1(\BR_+,L_q(\BR_-^N))}+\|f\|_{L_p(\BR_+,H_q^2(\BR_-^N))}\Big). 
\end{align*}
The desired property thus holds.

\eqref{int-H-1/2-2} It holds that
\begin{align*}
\pd_j &\in \CL(H_p^1(\BR_+,H_q^1(\BR_-^N)),H_p^1(\BR_+,L_q(\BR_-^N))) \\
&\cap \CL(L_p(\BR_+,H_q^1(\BR_-^N)),L_p(\BR_+,L_q(\BR_-^N)))
\end{align*}
and their operator norms are bounded by $1$. The complex interpolation method then gives us
\begin{align*}
&\pd_j \in\CL(H_p^{1/2}(\BR_+,H_q^1(\BR_-^N)),H_p^{1/2}(\BR_+,L_q(\BR_-^N))), \\
&\|\pd_j f\|_{H_p^{1/2}(\BR_+,L_q(\BR_-^N))}\leq \|f\|_{H_p^{1/2}(\BR_+,H_q^1(\BR_-^N))},
\end{align*}
for any $f\in H_p^{1/2}(\BR_+,H_q^1(\BR_-^N))$.
Combining the last inequality with \eqref{int-H-1/2-1} furnishes the desired inequality.
This completes the proof of Lemma \ref{int-H-1/2}.
\end{proof}

We here collect fundamental embeddings in the following two lemmas.

\begin{lem}\label{lem:fund-embed}
Let $p,q\in(1,\infty)$.
Then the following assertions hold.
\begin{enumerate}[$(1)$]
\item\label{lem:fund-embed-1}
$B_{q,p}^{3-1/p-1/q}(\BR^{N-1})\subset W_q^{2-1/q}(\BR^{N-1})$ with
\begin{equation*}
\|f\|_{W_q^{2-1/q}(\BR^{N-1})}
\leq C\|f\|_{B_{q,p}^{3-1/p-1/q}(\BR^{N-1})}
\end{equation*}
for any $f\in B_{q,p}^{3-1/p-1/q}(\BR^{N-1})$ and some positive constant $C$ independent of $f$.
\item\label{lem:fund-embed-2}
If $p>2$ additionally, then
$B_{q,p}^{2-2/p}(\BR_-^N)\subset H_q^1(\BR_-^N)$ with
\begin{equation*}
\|f\|_{H_q^1(\BR_-^N)}\leq C\|f\|_{B_{q,p}^{2-2/p}(\BR_-^N)}
\end{equation*}
for any $f\in B_{q,p}^{2-2/p}(\BR_-^N)$ and some positive constant $C$ independent of $f$.
\item\label{lem:fund-embed-3}
Let $m\in\BN$.
If $q>N$ additionally, then $H_q^m(\BR_-^N)\subset C_B^{m-1}(\overline{\BR_-^N})$ with
\begin{equation*}
\|f\|_{H_\infty^{m-1}(\BR_-^N)}\leq C\|f\|_{H_q^m(\BR_-^N)}
\end{equation*}
for any $f\in H_q^m(\BR_-^N)$ and some positive constant $C$ independent of $f$. 
\end{enumerate}
\end{lem}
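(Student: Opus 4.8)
The plan is to prove Lemma~\ref{lem:fund-embed} by reducing each of the three assertions to standard embedding theorems for Besov and Sobolev spaces, as found in Triebel~\cite{Triebel78}. The three statements are independent of one another, so I would treat them in turn; none is expected to present a serious obstacle, and the ``hard part'' is merely bookkeeping with the interpolation/embedding parameters.

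\emph{Assertion (1).} The point is that $B_{q,p}^{3-1/p-1/q}(\BR^{N-1})\hookrightarrow W_q^{2-1/q}(\BR^{N-1})=B_{q,q}^{2-1/q}(\BR^{N-1})$. First I would observe that the smoothness index drops strictly: $3-1/p-1/q-(2-1/q)=1-1/p>0$ for $p\in(1,\infty)$. With the same integrability $q$ and strictly larger smoothness on the left, the embedding $B_{q,p}^{s_0}(\BR^{N-1})\hookrightarrow B_{q,q}^{s_1}(\BR^{N-1})$ holds whenever $s_0>s_1$, regardless of the fine indices $p$ and $q$; this is the elementary Besov embedding (see \cite[Section~2.3.2]{Triebel78}). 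The norm inequality is exactly the statement of continuity of this embedding.

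\emph{Assertion (2).} Here I would use that $B_{q,p}^{2-2/p}(\BR_-^N)\hookrightarrow H_q^1(\BR_-^N)=F_{q,2}^1(\BR_-^N)$ when $p>2$. Since $p>2$ the smoothness index satisfies $2-2/p>1$, so again the smoothness on the left strictly exceeds that on the right. One then invokes the embedding $B_{q,p}^{s}(\BR_-^N)\hookrightarrow H_q^1(\BR_-^N)$ valid for $s>1$ and any $p\in(1,\infty)$ (by first embedding $B_{q,p}^s\hookrightarrow B_{q,1}^1$ for $s>1$ and then $B_{q,1}^1\hookrightarrow F_{q,2}^1=H_q^1$, or directly from \cite[Section~2.3.2]{Triebel78} together with the coincidence $H_q^1=F_{q,2}^1$ in \cite[Section~2.3.5]{Triebel78}). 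For a half-space one reduces to $\BR^N$ via a bounded extension operator. The condition $p>2$ is precisely what makes $2-2/p>1$, so it cannot be dropped by this argument, which matches the hypothesis.

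\emph{Assertion (3).} This is the Sobolev embedding into bounded continuous functions. For $q>N$ one has $H_q^1(\BR^N)\hookrightarrow C_B(\overline{\BR^N})$ with $\|f\|_{L_\infty}\le C\|f\|_{H_q^1}$; applying this to $\pd^\alpha f$ for $|\alpha|\le m-1$ and summing yields $H_q^m(\BR^N)\hookrightarrow C_B^{m-1}(\overline{\BR^N})$. For the half-space $\BR_-^N$ one again uses a bounded Stein-type extension operator $E:H_q^m(\BR_-^N)\to H_q^m(\BR^N)$, applies the whole-space result, and restricts. The norm $\|f\|_{H_\infty^{m-1}(\BR_-^N)}$ on the left is $\sum_{|\alpha|\le m-1}\|\pd^\alpha f\|_{L_\infty(\BR_-^N)}$, which is controlled by $\|Ef\|_{C_B^{m-1}(\overline{\BR^N})}\le C\|Ef\|_{H_q^m(\BR^N)}\le C\|f\|_{H_q^m(\BR_-^N)}$.

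The only mildly delicate point is ensuring the extension operators used in (2) and (3) are simultaneously bounded on the relevant scales; this is classical (Stein's extension operator works for all $H_q^m$ scales at once), so I expect no real difficulty, and the proof is essentially an assembly of cited facts.
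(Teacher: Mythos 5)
Your proposal is correct and follows essentially the same route as the paper: all three parts are reduced to standard Besov/Sobolev embeddings (strict drop in the smoothness index for (1) and (2), the Sobolev embedding into $C_B^{m-1}$ for (3)), with the half-space cases handled by a bounded extension operator, exactly as the paper does via citations to Triebel, Rychkov, and Adams--Fournier. The only cosmetic difference is in (2), where you pass through $B_{q,1}^1\hookrightarrow F_{q,2}^1=H_q^1$ while the paper loses an $\varepsilon$ of smoothness to land in $W_q^{2-2/p-\varepsilon}\hookrightarrow H_q^1$; both exploit the same strict inequality $2-2/p>1$.
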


\begin{proof}
(1) See Remark 1 (a) in Subsection 2.8.1 of \cite{Triebel78}.

(2) We first prove the whole space case, i.e.,
\begin{equation}\label{embed-whole-besov}
B_{q,p}^{2-2/p}(\BR^N)\subset H_q^1(\BR^N)
\text{ with }
\|f\|_{H_q^1(\BR^N)}\leq C\|f\|_{B_{q,p}^{2-2/p}(\BR^N)}
\end{equation}
for any $f\in B_{q,p}^{2-2/p}(\BR^N)$.
Let $s\in\BR$ and $\varepsilon>0$. Recall the well-known property
\begin{equation*}
B_{q,p}^{s}(\BR^N) \subset B_{q,r}^{s-\varepsilon}(\BR^N) \text{ with }
\|f\|_{B_{q,q}^{s-\varepsilon}(\BR^N)}\leq C\|f\|_{B_{q,p}^{s}(\BR^N)}
\end{equation*}
for any $f\in B_{q,p}^{s}(\BR^N)$ and $r\in(1,\infty)$. 
Choose $r=q$ and $s=2-2/p$ together with $\varepsilon>0$ so small that $2/p+\varepsilon<1$. Then
$B_{q,r}^{s-\varepsilon}(\BR^N)=W_q^{2-2/p-\varepsilon}(\BR^N)$
and $W_q^{2-2/p-\varepsilon}(\BR^N)$ is continuously embeded into $H_q^1(\BR^N)$.
Thus \eqref{embed-whole-besov} hold.

The desired property for the half space $\BR_-^N$ follows from \eqref{embed-whole-besov}
and a suitable extension operator, e.g. \cite[Theorem 2.2]{Rychkov99}.
This completes the proof of \eqref{lem:fund-embed-2}.

(3) See \cite[Theorem 4.12]{AF03}.
\end{proof}

\begin{lem}\label{lem:imbed-bexov-conti}
Let $p\in(1,\infty)$ and $q\in(N,\infty)$.
Suppose that $m\in\BN$ and that
$G=\BR_-^N$ or $G=\BR^{N-1}\times (-L,0)$ for $L>0$.
Then the following assertions hold.
\begin{enumerate}[$(1)$]
\item\label{lem:imbed-bexov-conti-1}
$B_{q,p}^{m+1-1/p}(G) \subset C_B^{m-1}(\overline{G})$ with
\begin{equation*}
\|f\|_{H_\infty^{m-1}(G)}\leq C_G\|f\|_{B_{q,p}^{m+1-1/p}(G)} \quad  \text{for any $f\in B_{q,p}^{m+1-1/p}(G)$,}
\end{equation*}
where $C_G$ is a positive constant depending on $G$, but independent of $f$.
\item\label{lem:imbed-bexov-conti-2}
Let $1/p+N/q<1$ additionally. Then
$B_{q,p}^{m-1/p}(G) \subset C_B^{m-1}(\overline{G})$ with
\begin{equation*}
\|f\|_{H_\infty^{m-1}(G)}\leq C_G\|f\|_{B_{q,p}^{m-1/p}(G)} \quad  \text{for any $f\in B_{q,p}^{m-1/p}(G)$,}
\end{equation*}
where $C_G$ is a positive constant depending on $G$, but independent of $f$.
\item\label{lem:imbed-bexov-conti-3}
Let $p\in(2,\infty)$ and $2/p+N/q<1$ additionally.
Then $B_{q,p}^{m-2/p}(G)\subset C_B^{m-1}(\overline{G})$
with
\begin{equation*}
\|f\|_{H_\infty^{m-1}(G)}
\leq C_G\|f\|_{B_{q,p}^{m-2/p}(G)} \quad \text{for any $f\in B_{q,p}^{m-2/p}(G)$,}
\end{equation*}
where  $C_G$ is a positive constant depending on $G$, but independent of $f$.
\end{enumerate}
\end{lem}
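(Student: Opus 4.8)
The plan is to reduce all three items to one whole-space Besov--Morrey embedding, transported to $G$ by a bounded extension operator, after which the only thing left is to check that the hypotheses of each item provide the required gap between the smoothness index and $N/q$.

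First I would record the whole-space statement: for $1<q<\infty$, $1\le p\le\infty$, $k\in\BN_0$ and $s$ with $s-N/q>k$, one has $B_{q,p}^s(\BR^N)\subset C_B^k(\overline{\BR^N})$ together with a norm bound. This is proved exactly in the spirit of the proof of Lemma~\ref{lem:fund-embed}: choose $s'\in(k+N/q,s)\setminus\BN$, use the elementary embedding $B_{q,p}^s(\BR^N)\subset B_{q,q}^{s'}(\BR^N)=W_q^{s'}(\BR^N)$ (monotonicity of Besov spaces in the smoothness and fine indices, cf.\ Subsection~2.8.1 of \cite{Triebel78}), and then the classical Sobolev--Morrey embedding $W_q^{s'}(\BR^N)\subset C_B^k(\overline{\BR^N})$, valid precisely because $s'-N/q>k$. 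Next I would transfer this to $G$: for $G=\BR_-^N$ the universal extension operator of \cite{Rychkov99} gives a bounded linear map $E\colon B_{q,p}^s(G)\to B_{q,p}^s(\BR^N)$, and for the slab $G=\BR^{N-1}\times(-L,0)$ the same reference (or a finite-order reflection in the $x_N$-variable followed by a cutoff) provides such an $E$, now with operator norm depending on $L$. Composing with the restriction $R\colon C_B^k(\overline{\BR^N})\to C_B^k(\overline{G})$ and using $\|f\|_{H_\infty^k(G)}\le C\|f\|_{C_B^k(\overline{G})}$ yields
\[
\|f\|_{H_\infty^k(G)}\le C\|REf\|_{C_B^k(\overline{G})}\le C\|Ef\|_{C_B^k(\overline{\BR^N})}\le C_G\|Ef\|_{B_{q,p}^s(\BR^N)}\le C_G\|f\|_{B_{q,p}^s(G)},
\]
with $C_G$ depending on $G$ (through $L$ in the slab case) but not on $f$.

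Finally I would check the index gap with $k=m-1$ and $N=\dim G$. In item~(1), $s=m+1-1/p$, so $s-k=2-1/p>1$ since $p>1$, while $N/q<1$ since $q>N$; hence $s-N/q>k$ with no further hypothesis, and this is exactly where the standing assumption $q\in(N,\infty)$ enters. In item~(2), $s=m-1/p$ and $s-k=1-1/p$, which exceeds $N/q$ precisely when $1/p+N/q<1$. In item~(3), $s=m-2/p$ and $s-k=1-2/p$, which exceeds $N/q$ precisely when $2/p+N/q<1$. In each case the whole-space embedding applies and the displayed chain of inequalities concludes the proof.

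The only genuine obstacle is minor bookkeeping: arranging a single extension operator valid simultaneously for the half-space and for slabs, of arbitrarily high order of smoothness, while tracking that the slab constant may depend on $L$; and noting in item~(1) that it is the hypothesis $q>N$, not any relation coupling $p$ and $q$, that forces $2-1/p>N/q$. Everything else is a routine combination of standard interpolation and embedding facts already cited in this section.
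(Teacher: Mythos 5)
Your proposal is correct and follows essentially the same route as the paper: establish the whole-space embedding $B_{q,p}^s(\BR^N)\subset C_B^{m-1}$ under $s-N/q>m-1$ (which the paper simply cites from Remark 1 (b) in Subsection 2.8.1 of \cite{Triebel78}, while you re-derive it via monotonicity of Besov scales and the Sobolev--Morrey embedding), then transfer to $G$ by the extension operator of \cite{Rychkov99} exactly as in the proof of Lemma \ref{lem:fund-embed} \eqref{lem:fund-embed-2}, and verify the index gap $2-1/p$, $1-1/p$, $1-2/p$ versus $N/q$ for the three items. The index checks and the treatment of the slab constant $C_L$ match the paper's intent, so no gap.
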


\begin{proof}
See Remark 1 (b) in Subsection 2.8.1 of \cite{Triebel78} for the whole space case.
Then, similarly to the proof of Lemma \ref{lem:fund-embed} \eqref{lem:fund-embed-2},
we can prove by the extension operator
the desired properties of \eqref{lem:imbed-bexov-conti-1}--\eqref{lem:imbed-bexov-conti-3}.
This completes the proof of Lemma \ref{lem:imbed-bexov-conti}.
\end{proof}

The following lemma then holds.

\begin{lem}\label{lem:height-func}
Let $p\in(1,\infty)$ and $q\in(N,\infty)$. Then 
\begin{equation*}
H_p^1(\BR_+,H_q^1(\BR_-^N))\cap L_p(\BR_+,H_q^2(\BR_-^N))
\subset C([0,\infty),C_B(\overline{\BR_-^N}))
\end{equation*}
with
\begin{equation*}
\sup_{t\in [0,\infty)}\|f(t)\|_{L_\infty(\BR_-^N)}
\leq M_0
\Big(\|\pd_t f\|_{L_p(\BR_+,H_q^1(\BR_-^N))}+\|f\|_{L_p(\BR_+,H_q^2(\BR_-^N))}\Big)
\end{equation*}
for any $f\in H_p^1(\BR_+,H_q^1(\BR_-^N))\cap L_p(\BR_+,H_q^2(\BR_-^N))$
and some positive constant $M_0$ independent of $f$.
\end{lem}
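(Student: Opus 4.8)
The plan is to combine the time-trace embedding of Lemma~\ref{lem:tanabe} (applied on the half-space $\BR_-^N$) with the pointwise embedding $H_q^1(\BR_-^N)\subset C_B(\overline{\BR_-^N})$ from Lemma~\ref{lem:fund-embed}~\eqref{lem:fund-embed-3} (valid since $q>N$). First I would apply Lemma~\ref{lem:tanabe} with $X_0=H_q^1(\BR_-^N)$ and $X_1=H_q^2(\BR_-^N)$; since $X_1$ is dense in $X_0$ and continuously embedded, this yields
\begin{equation*}
H_p^1(\BR_+,H_q^1(\BR_-^N))\cap L_p(\BR_+,H_q^2(\BR_-^N))\subset C([0,\infty),(H_q^1(\BR_-^N),H_q^2(\BR_-^N))_{1-1/p,p})
\end{equation*}
together with the bound of the sup-in-time norm of $\|f(t)\|_{(H_q^1,H_q^2)_{1-1/p,p}}$ by $\big(\|f\|_{H_p^1(\BR_+,H_q^1)}^p+\|f\|_{L_p(\BR_+,H_q^2)}^p\big)^{1/p}$. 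By the identification \eqref{int-p-SS-sp-2} (with the roles of the Sobolev orders shifted up by one), the real interpolation space is $B_{q,p}^{2-2/p+1}(\BR_-^N)=B_{q,p}^{3-2/p}(\BR_-^N)$, or in any case a Besov space of smoothness strictly greater than $1$ whose embedding into $C_B(\overline{\BR_-^N})$ I would obtain from Lemma~\ref{lem:imbed-bexov-conti} (the relevant smoothness exceeds $1-1/p+N/q$ isn't needed; one just needs smoothness $>N/q$, which holds since $p>1$ forces $3-2/p>1>N/q$... actually one should be slightly careful here, so I would instead argue directly).

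A cleaner route, which I would actually carry out, avoids computing the interpolation space explicitly: apply the embedding $H_q^1(\BR_-^N)\hookrightarrow L_\infty(\BR_-^N)$ from Lemma~\ref{lem:fund-embed}~\eqref{lem:fund-embed-3} (case $m=1$, $q>N$) \emph{first}, turning the $H_q^1$-valued data into $L_\infty$-valued data; but this loses too much regularity in the $L_p(\BR_+,H_q^2)$ term. So instead I keep the interpolation-in-time step and note that $(H_q^1,H_q^2)_{1-1/p,p}=B_{q,p}^{1+(1-1/p)}(\BR_-^N)=B_{q,p}^{2-1/p}(\BR_-^N)$ by \eqref{int-p-SS-sp-2} shifted, and then apply Lemma~\ref{lem:imbed-bexov-conti}~\eqref{lem:imbed-bexov-conti-1} with $m=1$: $B_{q,p}^{2-1/p}(G)\subset C_B^{0}(\overline{G})$ for $G=\BR_-^N$, giving $\|g\|_{L_\infty(\BR_-^N)}\leq C\|g\|_{B_{q,p}^{2-1/p}(\BR_-^N)}$. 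Chaining the two estimates and using $\|f\|_{L_p(\BR_+,H_q^1)}\le\|f\|_{L_p(\BR_+,H_q^2)}$ produces the stated bound with $M_0$ depending only on $p,q,N$, and the continuity $f\in C([0,\infty),C_B(\overline{\BR_-^N}))$ follows from $C([0,\infty),B_{q,p}^{2-1/p}(\BR_-^N))$ composed with the continuous embedding into $C_B(\overline{\BR_-^N})$.

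The main obstacle — really the only delicate point — is getting the interpolation identity \eqref{int-p-SS-sp-2} in the shifted form $(H_q^1(\BR_-^N),H_q^2(\BR_-^N))_{1-1/p,p}=B_{q,p}^{2-1/p}(\BR_-^N)$ on the half-space rather than on $\BR^N$; this is standard (Triebel, Subsections~2.4.2 and 2.10.1, or via an extension operator as in the proof of Lemma~\ref{lem:fund-embed}~\eqref{lem:fund-embed-2}), but it should be invoked explicitly. Everything else is a routine concatenation of Lemma~\ref{lem:tanabe}, the interpolation identity, and Lemma~\ref{lem:imbed-bexov-conti}~\eqref{lem:imbed-bexov-conti-1}.
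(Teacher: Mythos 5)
Your final route—Lemma~\ref{lem:tanabe} with $X_0=H_q^1(\BR_-^N)$, $X_1=H_q^2(\BR_-^N)$, the identification $(H_q^1,H_q^2)_{1-1/p,p}=B_{q,p}^{2-1/p}(\BR_-^N)$ (which is exactly \eqref{int-p-SS-sp-2} with $j=1$, no shifting needed), and then Lemma~\ref{lem:imbed-bexov-conti}~\eqref{lem:imbed-bexov-conti-1} with $m=1$—is precisely the paper's proof, and it is correct. Your first-paragraph computation $B_{q,p}^{3-2/p}$ was a slip, but you correctly discard it in the route you actually carry out.
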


\begin{proof}
We use \eqref{int-p-SS-sp-2} for $j=1$ and Lemma \ref{lem:tanabe} in order to obtain
\begin{equation*}
H_p^1(\BR_+,H_q^1(\BR_-^N))\cap L_p(\BR_+,H_q^2(\BR_-^N))
\subset C([0,\infty),B_{q,p}^{2-1/p}(\BR_-^N))
\end{equation*}
with
\begin{equation*}
\sup_{t\in [0,\infty)}\|f(t)\|_{B_{q,p}^{2-1/p}(\BR_-^N)}
\leq C\Big(\|f\|_{H_p^1(\BR_+,H_q^1(\BR_-^N))}+\|f\|_{L_p(\BR_+,H_q^2(\BR_-^N))}\Big).
\end{equation*}
It then follows from Lemma \ref{lem:imbed-bexov-conti} \eqref{lem:imbed-bexov-conti-1}
with $m=1$ that the desired property holds.
This completes the proof of Lemma \ref{lem:height-func}.
\end{proof}

\subsection{Estimates of $H_p^{1/2}$ norm in time}

This subsection proves

\begin{lem}\label{lem:H-half}
Let $p,q\in(1,\infty)$ and suppose that $r,s$ satisfy
$q\leq  r,s \leq \infty$ and  $1/q=1/r+1/s$.
Then for any $f\in H_p^1(\BR_+,L_r(\BR_-^N))$ and $g\in H_p^{1/2}(\BR_+,L_s(\BR_-^N))$
\begin{equation*}
\|fg\|_{H_p^{1/2}(\BR_+,L_q(\BR_-^N))}
\leq C\|f\|_{H_p^1(\BR_+,L_r(\BR_-^N))}\|g\|_{H_p^{1/2}(\BR_+,L_s(\BR_-^N))},
\end{equation*}
where $C$ is a positive constant independent of $f$ and $g$.
\end{lem}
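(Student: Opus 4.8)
The plan is to reduce the claimed product estimate to the characterization of $H_p^{1/2}(\BR,X)$ by the Gagliardo (Slobodeckij-type) seminorm together with the standard fractional Leibniz/product estimate. First I would pass to the full line: using the even extension $E^e$ and the restriction $R$ as in the proof of Lemma \ref{int-H-1/2}, with the bounds \eqref{est:odd-ext}, \eqref{comp-int:1}, \eqref{comp-int:2}, it suffices to prove the estimate on $\BR$ in place of $\BR_+$. (One must check $E^e(fg)=(E^e f)(E^e g)$, which is immediate since even extension is multiplicative pointwise in $t$.) So the goal becomes
\begin{equation*}
\|fg\|_{H_p^{1/2}(\BR,L_q(\BR_-^N))}\leq C\|f\|_{H_p^1(\BR,L_r(\BR_-^N))}\|g\|_{H_p^{1/2}(\BR,L_s(\BR_-^N))}.
\end{equation*}

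Next I would split the $H_p^{1/2}$ norm into the $L_p$ part and the homogeneous seminorm. For the $L_p$ part, Hölder in $x$ with $1/q=1/r+1/s$ and then Hölder in $t$ gives $\|fg\|_{L_p(\BR,L_q)}\le \|f\|_{L_\infty(\BR,L_r)}\|g\|_{L_p(\BR,L_s)}$, and $\|f\|_{L_\infty(\BR,L_r)}\le C\|f\|_{H_p^1(\BR,L_r)}$ by Lemma \ref{lem:H^1-embed}. For the seminorm, I would use that for $p\in(1,\infty)$ the space $H_p^{1/2}(\BR,X)=B_{p,p}^{1/2}(\BR,X)$ with equivalent norm given by $\|h\|_{L_p(\BR,X)}+\big(\int_{\BR}\int_{\BR}\|h(t)-h(\tau)\|_X^p|t-\tau|^{-1-p/2}\,dt\,d\tau\big)^{1/p}$; write $f(t)g(t)-f(\tau)g(\tau)=(f(t)-f(\tau))g(t)+f(\tau)(g(t)-g(\tau))$, take $L_q$ norms in $x$ via Hölder, and estimate the two resulting double integrals. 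The second one is bounded by $\|f\|_{L_\infty(\BR,L_r)}$ times the Gagliardo seminorm of $g$ in $L_s$, which is $\le C\|f\|_{H_p^1(\BR,L_r)}\|g\|_{H_p^{1/2}(\BR,L_s)}$. For the first, I would bound $\|f(t)-f(\tau)\|_{L_r}\le |t-\tau|^{1/2}\,\|\pd_t f\|_{L_r}^{1/2\text{-averaged}}$ — more precisely use the elementary inequality $\|f(t)-f(\tau)\|_{L_r}\le \int_\tau^t\|\pd_\sigma f(\sigma)\|_{L_r}\,d\sigma$ and absorb one power $|t-\tau|^{1/2}$ into the kernel, leaving $\int\int |t-\tau|^{-1-p/2}\big(\int_\tau^t\|\pd_\sigma f\|_{L_r}d\sigma\big)^p\|g(t)\|_{L_s}^p\,dt\,d\tau$; a one-dimensional Hardy-type / fractional-integration argument in $t$ then bounds this by $\|\pd_t f\|_{L_p(\BR,L_r)}^p\,\|g\|_{L_\infty(\BR,L_s)}^p$ up to a constant, and $\|g\|_{L_\infty(\BR,L_s)}$ is controlled — here I would instead keep $\|g(t)\|_{L_s}$ inside and use Hölder in $t$ to pair it against $\|g\|_{L_p(\BR,L_s)}$ while the $f$-factor supplies the fractional smoothing, so that the whole term is $\le C\|f\|_{H_p^1(\BR,L_r)}\|g\|_{H_p^{1/2}(\BR,L_s)}$.

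Alternatively, and perhaps more cleanly, I would invoke the known bilinear estimate $H_p^1\cdot H_p^{1/2}\hookrightarrow H_p^{1/2}$ for Banach-space-valued functions: since $H_p^1(\BR,L_r)\hookrightarrow L_\infty(\BR,L_r)\cap H_p^1(\BR,L_r)$, multiplication by such an $f$ is bounded on $L_p(\BR,L_s)$ (by Hölder) and on $H_p^1(\BR,L_s)$ (by the Leibniz rule, using $\|\pd_t f\|_{L_p(\BR,L_r)}$ and $\|f\|_{L_\infty(\BR,L_r)}$), both with the Hölder exponent relation $1/q=1/r+1/s$; then complex interpolation between these two mapping properties (as already used in Lemma \ref{int-H-1/2}) yields boundedness on $H_p^{1/2}(\BR,L_q)$ with the stated norm bound, and pulling back by $R,E^e$ finishes the proof on $\BR_+$. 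The main obstacle is the $H_p^1$-boundedness of the multiplication operator $g\mapsto fg$ with the right constant: one needs $\pd_t(fg)=(\pd_t f)g+f\pd_t g$ to land in $L_p(\BR,L_q)$, which forces exactly the Hölder split $\|(\pd_t f)g\|_{L_q}\le\|\pd_t f\|_{L_r}\|g\|_{L_s}$ and $\|f\pd_t g\|_{L_q}\le\|f\|_{L_\infty(x)}\|\pd_t g\|_{L_q}$ — and here $\|f\|_{L_\infty(\BR_-^N)}$ in $x$ is \emph{not} available from $H_p^1(\BR,L_r)$ alone, so one must arrange the $x$-Hölder as $\|f\|_{L_r}\|\pd_t g\|_{L_s}$ instead, which is consistent with $1/q=1/r+1/s$ but requires care that all four terms in the interpolation diagram use the same exponent pairing. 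Handling this bookkeeping consistently, rather than any deep analytic difficulty, is where the real work lies.
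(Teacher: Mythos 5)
Your ``alternative'' argument is essentially the paper's proof: for fixed $f$ one studies the multiplication operator $T_fg=fg$, shows it is bounded from $L_p(\BR_+,L_s(\BR_-^N))$ to $L_p(\BR_+,L_q(\BR_-^N))$ and from $H_p^1(\BR_+,L_s(\BR_-^N))$ to $H_p^1(\BR_+,L_q(\BR_-^N))$, in both cases with operator norm $\leq C\|f\|_{H_p^1(\BR_+,L_r(\BR_-^N))}$, and then complex-interpolates; your resolution of the Leibniz-rule issue is exactly the paper's: the spatial H\"older split is always $L_r\times L_s\to L_q$, and the $L_\infty$ norm is taken in \emph{time} (via Lemma \ref{lem:H^1-embed}), never in $x$, so the ``bookkeeping'' worry you raise is already settled and the endpoint exponent pairing is the same at both interpolation levels. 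Two caveats. First, the passage to $\BR$ via $E^e$ and $R$ is unnecessary here: since the paper defines $H_p^{1/2}(J,X)=[L_p(J,X),H_p^1(J,X)]_{1/2}$ directly on the half-line, you can interpolate the two mapping properties of $T_f$ on $\BR_+$ itself. Second, your first sketch rests on the identification of $H_p^{1/2}(\BR,X)$ with $B_{p,p}^{1/2}(\BR,X)$ equipped with the Gagliardo seminorm; this is false for $p\neq 2$ (the complex interpolation space is a Bessel-potential/Triebel--Lizorkin-type space, whereas the Slobodeckij seminorm describes the real interpolation space $(L_p,H_p^1)_{1/2,p}$), so that route would need either $p=2$ or additional embeddings with a loss, and should be discarded in favour of the interpolation argument.
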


\begin{proof}
Let us define $T_f g=fg$ for $f\in H_p^1(\BR_+,L_r(\BR_-^N))$.
Then
\begin{equation*}
\|T_f g\|_{L_p(\BR_+,L_q(\BR_-^N))}
\leq \|f\|_{L_\infty(\BR_+,L_r(\BR_-^N))}\|g\|_{L_p(\BR_+,L_s(\BR_-^N))}.
\end{equation*}
It holds by Lemma \ref{lem:H^1-embed} that 
\begin{equation*}
\|f\|_{L_\infty(\BR_+,L_r(\BR_-^N))}\leq C\|f\|_{H_p^1(\BR_+,L_r(\BR_-^N))},
\end{equation*}
which, combined the estimate of $\|T_f g\|_{L_p(\BR_+,L_q(\BR_-^N))}$ above, furnishes
\begin{equation}\label{Tf-est}
\|T_f g\|_{L_p(\BR_+,L_q(\BR_-^N))}
\leq  C\|f\|_{H_p^1(\BR_+,L_r(\BR_-^N))}\|g\|_{L_p(\BR_+,L_s(\BR_-^N))}.
\end{equation}
Hence,
\begin{align}\label{comp-int:3}
&T_f\in\CL(L_p(\BR_+,L_s(\BR_-^N)),L_p(\BR_+,L_q(\BR_-^N))), \notag \\
&\|T_f\|_{\CL(L_p(\BR_+,L_s(\BR_-^N)),L_p(\BR_+,L_q(\BR_-^N)))}\leq C \|f\|_{H_p^1(\BR_+,L_r(\BR_-^N))}.
\end{align}
On the other hand, $\pd_t(T_fg)=(\pd_t f)g+f\pd_t g$ and Lemma \ref{lem:H^1-embed} yield
\begin{align*}
\|\pd_t(T_fg)\|_{L_p(\BR_+,L_q(\BR_-^N))}
&\leq \|\pd_t f\|_{L_p(\BR_+,L_r(\BR_-^N))}\|g\|_{L_\infty(\BR_+,L_s(\BR_-^N))} \\
&+\|f\|_{L_\infty(\BR_+,L_r(\BR_-^N))}\|\pd_t g\|_{L_p(\BR_+,L_s(\BR_-^N))} \\
&\leq C\| f\|_{H_p^1(\BR_+,L_r(\BR_-^N))}\|g\|_{H_p^1(\BR_+,L_s(\BR_-^N))},
\end{align*}
which, combined with \eqref{Tf-est}, furnishes
\begin{equation*}
\|T_f g\|_{H_p^1(\BR_+,L_q(\BR_-^N))}
\leq C\|f\|_{H_p^1(\BR_+,L_r(\BR_-^N))}\|g\|_{H_p^1(\BR_+,L_s(\BR_-^N))}.
\end{equation*}
Hence,
\begin{align}\label{comp-int:4}
&T_f\in \CL(H_p^1(\BR_+,L_s(\BR_-^N)),H_p^1(\BR_+,L_q(\BR_-^N))), \notag \\
&\|T_f\|_{\CL(H_p^1(\BR_+,L_s(\BR_-^N)),H_p^1(\BR_+,L_q(\BR_-^N)))}\leq C\| f\|_{H_p^1(\BR_+,L_r(\BR_-^N))}.
\end{align}

From \eqref{comp-int:3}, \eqref{comp-int:4}, and the complex interpolation method,
we obtain 
\begin{align*}
&T_f\in \CL(H_p^{1/2}(\BR_+,L_s(\BR_-^N)),H_p^{1/2}(\BR_+,L_q(\BR_-^N))), \\
&\|T_f\|_{\CL(H_p^{1/2}(\BR_+,L_s(\BR_-^N)),H_p^{1/2}(\BR_+,L_q(\BR_-^N)))}
\leq C\| f\|_{H_p^1(\BR_+,L_r(\BR_-^N))}.
\end{align*}
These properties imply that the desired inequality holds.
This completes the proof of Lemma \ref{lem:H-half}.
\end{proof}

\subsection{Properties of $\CE_N$}

Let us first introduce the following lemma.

\begin{lem}\label{lem-sym1}
Let $\xi'=(\xi_1,\dots,\xi_{N-1})\in\BR^{N-1}$
and $\alpha'=(\alpha_1,\dots,\alpha_{N-1})\in\BN_0^{N-1}$. Then the following assertions hold.
\begin{enumerate}[$(1)$]
\item\label{lem-sym1-1}
Let $s\in\BR$. Then
\begin{equation*}
|\pd_{\xi'}^{\alpha'}(1+|\xi'|^2)^{s/2}|\leq C(1+|\xi'|)^{s-|\alpha'|},
\end{equation*}
where $C=C(\alpha',s)$ is a positive constant independent of $\xi'$.
\item\label{lem-sym1-2}
Let $\Xi=(i\xi_1,\dots,i\xi_{N-1},\sqrt{1+|\xi'|^2})$ and
$\beta=(\beta_1,\dots,\beta_{N-1},\beta_N)\in\BN_0^N$.
Then
\begin{equation*}
|\pd_{\xi'}^{\alpha'}\Xi^\beta|\leq C(1+|\xi'|)^{|\beta|-|\alpha'|},
\end{equation*}
where $\Xi^\beta=(i\xi_1)^{\beta_1}\dots(i\xi_{N-1})^{\beta_{N-1}}\sqrt{1+|\xi'|^2}^{\,\beta_N}$
and $C=C(\alpha',\beta)$ is a positive constant independent of $\xi'$.
\end{enumerate}
\end{lem}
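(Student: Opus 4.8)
The statement to prove, Lemma~\ref{lem-sym1}, consists of two elementary symbol estimates. My plan is to prove both parts by induction on the order $|\alpha'|$ of the differentiation multi-index, reducing everything to the scalar function $\langle\xi'\rangle := (1+|\xi'|^2)^{1/2}$ and its derivatives.

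\textbf{Part (1).} The plan is to first record that $\pd_{\xi_j}(1+|\xi'|^2)^{s/2} = s\,\xi_j\,(1+|\xi'|^2)^{s/2-1}$, so one derivative lowers the exponent by $1$ (in the sense of the claimed bound, since $|\xi_j| \le (1+|\xi'|^2)^{1/2} \le 1+|\xi'|$). More precisely, I would prove by induction on $|\alpha'|$ that $\pd_{\xi'}^{\alpha'}(1+|\xi'|^2)^{s/2}$ is a finite linear combination of terms of the form $c\, \xi'^{\gamma} (1+|\xi'|^2)^{s/2 - k}$ with $|\gamma| \le k$ and $2k - |\gamma| = |\alpha'|$, where $\gamma \in \BN_0^{N-1}$ and $k \in \BN_0$; the base case $|\alpha'| = 0$ is trivial, and the inductive step follows by differentiating one more time and using the product rule, which either increases $k$ by $1$ and $|\gamma|$ by $1$, or decreases $|\gamma|$ by $1$ (when the derivative hits $\xi'^\gamma$), in both cases increasing $2k - |\gamma|$ by exactly $1$. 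Each such term is bounded by $C(1+|\xi'|)^{|\gamma|}(1+|\xi'|)^{s - 2k} = C(1+|\xi'|)^{s - (2k-|\gamma|)} = C(1+|\xi'|)^{s-|\alpha'|}$, using $|\xi'^\gamma| \le |\xi'|^{|\gamma|} \le (1+|\xi'|)^{|\gamma|}$ and $(1+|\xi'|^2)^{t} \le (1+|\xi'|)^{2t}$ for $t \ge 0$ together with $(1+|\xi'|^2)^{t} \le (1+|\xi'|)^{2t}$ reversed (i.e. $\le$) handled by noting $1+|\xi'|^2 \ge \tfrac12(1+|\xi'|)^2$ to cover negative exponents as well.

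\textbf{Part (2).} I would write $\Xi^\beta = (i\xi_1)^{\beta_1}\cdots(i\xi_{N-1})^{\beta_{N-1}} (1+|\xi'|^2)^{\beta_N/2}$. The first factor $\xi'^{\tilde\beta}$ (with $\tilde\beta = (\beta_1,\dots,\beta_{N-1})$) is a polynomial, so $\pd_{\xi'}^{\alpha''} \xi'^{\tilde\beta}$ is a polynomial of degree $|\tilde\beta| - |\alpha''|$ (or zero), hence bounded by $C(1+|\xi'|)^{|\tilde\beta| - |\alpha''|}$; the second factor is handled by Part~(1) with $s = \beta_N$. Then I apply the Leibniz rule: $\pd_{\xi'}^{\alpha'} \Xi^\beta = \sum_{\alpha'' \le \alpha'} \binom{\alpha'}{\alpha''} (\pd_{\xi'}^{\alpha''} \xi'^{\tilde\beta})(\pd_{\xi'}^{\alpha' - \alpha''}(1+|\xi'|^2)^{\beta_N/2})$, and each summand is bounded by $C(1+|\xi'|)^{|\tilde\beta| - |\alpha''|}(1+|\xi'|)^{\beta_N - |\alpha' - \alpha''|} = C(1+|\xi'|)^{|\tilde\beta| + \beta_N - |\alpha'|} = C(1+|\xi'|)^{|\beta| - |\alpha'|}$, as required.

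There is no real obstacle here; these are standard facts about tempered-symbol classes (one could alternatively just cite the theory of symbols $S^m_{1,0}$). The only point requiring a little care is bookkeeping the two-sided comparison $(1+|\xi'|^2)^{t} \asymp (1+|\xi'|)^{2t}$ for both signs of $t$, so that the argument is valid for the arbitrary real $s$ in Part~(1) and not merely for $s \ge 0$. I would state that comparison once at the outset and use it freely thereafter.
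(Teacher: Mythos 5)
Your argument is correct, and for part (2) it is essentially the paper's proof: write $\Xi^\beta$ as the product of the polynomial part and $(1+|\xi'|^2)^{\beta_N/2}$, bound derivatives of the monomial factors by $C(1+|\xi'|)^{|\tilde\beta|-|\alpha''|}$, invoke part (1) with $s=\beta_N$ for the remaining factor, and combine via the Leibniz rule. The only real difference is in part (1): the paper does not prove it at all but simply cites \cite[Lemma 5.2]{SS12}, whereas you supply a self-contained induction showing that $\pd_{\xi'}^{\alpha'}(1+|\xi'|^2)^{s/2}$ is a sum of terms $c\,\xi'^{\gamma}(1+|\xi'|^2)^{s/2-k}$ with $2k-|\gamma|=|\alpha'|$, each of which is bounded by $C(1+|\xi'|)^{s-|\alpha'|}$; this induction is standard and the invariant is preserved in both branches of the product rule, so the proof is sound and has the advantage of not relying on the external reference. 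One stylistic remark: your sentence about the comparison $(1+|\xi'|^2)^{t}$ versus $(1+|\xi'|)^{2t}$ is garbled as written; what you need (and what your argument in fact uses) is simply the two-sided bound $\tfrac12(1+|\xi'|)^2\leq 1+|\xi'|^2\leq (1+|\xi'|)^2$, which yields $(1+|\xi'|^2)^{t}\leq C_t(1+|\xi'|)^{2t}$ for every real $t$, positive or negative; state it in that form once at the outset, as you propose, and the rest goes through verbatim.
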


\begin{proof}
(1) See \cite[Lemma 5.2]{SS12}.

(2) It holds for $j=1,\dots,N-1$ that 
\begin{align*}
\bigg|\frac{\pd^{\alpha_j}}{\pd \xi_j^{\alpha_j}}(i\xi_j)^{\beta_j}\bigg|
&\leq C\left\{\begin{aligned}
&|\xi'|^{\beta_j-\alpha_j} && (\beta_j\geq \alpha_j) \\
&0 && (\beta_j<\alpha_j) 
\end{aligned}\right. \\
&\leq C (1+|\xi'|)^{\beta_j-\alpha_j},
\end{align*}
which furnishes
\begin{equation*}
|\pd_{\xi'}^{\alpha'}(i\xi_j)^{\beta_j}|\leq C(1+|\xi'|)^{\beta_j-|\alpha'|}.
\end{equation*}
On the other hand, it follows from the inequality of \eqref{lem-sym1-1} with $s=\beta_N$ that
\begin{equation*}
|\pd_{\xi'}^{\alpha'}\sqrt{1+|\xi'|^2}^{\,\beta_N}|
\leq C(1+|\xi'|)^{\beta_N-|\alpha'|}.
\end{equation*}
Combining these two inequalities with the Leibniz rule,
we immediately obtain the desired inequality.
This completes the proof of Lemma \ref{lem-sym1}.
\end{proof}

Recall the partial Fourier transform and its inverse transform given by \eqref{fourier-t}.
The next lemma is proved in \cite[Lemma 5.4]{SS12}.

\begin{lem}\label{lem:tech}
Let $q\in(1,\infty)$ and $m(\xi')$ be a $C^\infty$ function on $\BR^{N-1}\setminus\{0\}$ with
\begin{equation*}
|\pd_{\xi'}^{\alpha'}m(\xi')|\leq M(1+|\xi'|)^{-|\alpha'|} \quad (\xi'\in\BR^{N-1}\setminus\{0\}),
\end{equation*}
where $\alpha'\in\BN_0^{N-1}$ and $M$ is a positive constant independent of $\xi'$.
Define
\begin{equation*}
[K f](x)=\int_{-\infty}^0\CF_{\xi'}^{-1}[m(\xi')\sqrt{1+|\xi'|^2} e^{\sqrt{1+|\xi'|^2}(x_N+y_N)}\wht f(\xi',y_N)](x')\intd y_N 
\end{equation*}
for $x=(x',x_N)\in\BR_-^N$. Then
\begin{equation*}
\|K f\|_{L_q(\BR_-^N)}\leq CM\|f\|_{L_q(\BR_-^N)}
\end{equation*}
for some positive constant $C$ independent of $f$.
\end{lem}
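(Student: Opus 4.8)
The plan is to reduce the boundedness of the operator $K$ on $L_q(\BR_-^N)$ to a one-dimensional convolution estimate in $x_N$ combined with a Fourier multiplier estimate in $x'$, following the standard strategy for half-space Poisson-type operators. First I would fix $y_N<0$ and regard the inner expression
\begin{equation*}
\CF_{\xi'}^{-1}\bigl[m(\xi')\sqrt{1+|\xi'|^2}\,e^{\sqrt{1+|\xi'|^2}(x_N+y_N)}\wht f(\xi',y_N)\bigr](x')
\end{equation*}
as the action on $f(\cdot,y_N)$ of a Fourier multiplier operator with symbol
\begin{equation*}
n_{t}(\xi') := m(\xi')\sqrt{1+|\xi'|^2}\,e^{\sqrt{1+|\xi'|^2}\,t}, \qquad t := x_N+y_N<0.
\end{equation*}
Using the hypothesis on $m$ together with Lemma \ref{lem-sym1} \eqref{lem-sym1-1} (applied with $s=1$ to the factor $\sqrt{1+|\xi'|^2}$) and the elementary bound $|\pd_{\xi'}^{\alpha'} e^{\sqrt{1+|\xi'|^2}\,t}|\leq C(1+|\xi'|)^{-|\alpha'|}(1+|t|(1+|\xi'|))^{|\alpha'|}e^{\sqrt{1+|\xi'|^2}\,t}$, one checks the Mikhlin–Hörmander condition
\begin{equation*}
|\pd_{\xi'}^{\alpha'} n_t(\xi')|\leq C_{\alpha'}\,M\,(1+|\xi'|)^{1-|\alpha'|}\,e^{|t|/2}\quad\text{for }|t|\text{ bounded,}
\end{equation*}
but more usefully one extracts a decay factor: since $\sqrt{1+|\xi'|^2}\,e^{\sqrt{1+|\xi'|^2}\,t}\leq C|t|^{-1}e^{-|t|/2}$ (because $s e^{-s}\le C e^{-s/2}$ for $s\ge0$, with $s=\sqrt{1+|\xi'|^2}|t|$), one gets the $\xi'$-uniform, $t$-integrable Mikhlin constant $C\,M\,|t|^{-1}e^{-|t|/2}$ — however $|t|^{-1}$ is not integrable at $0$, so a slightly more careful splitting is needed, which I address below.

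The cleaner route, and the one I would carry out, is to keep one derivative's worth of the exponential's smallness \emph{and} its decay simultaneously. Write $\sqrt{1+|\xi'|^2}\,e^{\sqrt{1+|\xi'|^2}\,t} = -\pd_t\bigl(e^{\sqrt{1+|\xi'|^2}\,t}\bigr)$ is not quite what I want; instead use that for each fixed $t<0$ the symbol $\xi'\mapsto m(\xi')\sqrt{1+|\xi'|^2}\,e^{\sqrt{1+|\xi'|^2}\,t}$ satisfies the Mikhlin condition with constant $A(t):=C M \min\{1,|t|^{-1}\} e^{-|t|/4}$, which \emph{is} integrable on $(-\infty,0)$. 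The Mikhlin–Hörmander multiplier theorem on $\BR^{N-1}$ then gives, for each $y_N$ and each $t=x_N+y_N$,
\begin{equation*}
\bigl\|\CF_{\xi'}^{-1}[n_t(\xi')\wht f(\xi',y_N)]\bigr\|_{L_q(\BR^{N-1}_{x'})}\leq C\,A(t)\,\|f(\cdot,y_N)\|_{L_q(\BR^{N-1})}.
\end{equation*}
Therefore, by Minkowski's integral inequality in the $y_N$-variable,
\begin{equation*}
\|Kf\|_{L_q(\BR_-^N)}\leq \Bigl(\int_{-\infty}^0\Bigl(\int_{-\infty}^0 C A(x_N+y_N)\|f(\cdot,y_N)\|_{L_q(\BR^{N-1})}\intd y_N\Bigr)^q\intd x_N\Bigr)^{1/q}.
\end{equation*}
The inner integral is a convolution on $(-\infty,0)$ against the kernel $t\mapsto C A(t)\in L_1$, so Young's convolution inequality (with exponents $1\cdot q\to q$) yields $\|Kf\|_{L_q(\BR_-^N)}\leq C\|A\|_{L_1(\BR_-)}\,M\,\|f\|_{L_q(\BR_-^N)}$, which is the claim.

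The main obstacle is the verification of the Mikhlin estimate with an \emph{integrable-in-$t$} constant $A(t)$: one must show that differentiating $e^{\sqrt{1+|\xi'|^2}\,t}$ up to order $\lfloor (N-1)/2\rfloor+1$ in $\xi'$ produces only factors that are controlled by $(1+|\xi'|)^{-|\alpha'|}$ times a polynomial in $|t|(1+|\xi'|)$, and then absorb that polynomial into the exponential decay $e^{\sqrt{1+|\xi'|^2}\,t}$ at the cost of replacing it by $e^{|t|\sqrt{1+|\xi'|^2}/2}$ and an $|\xi'|$-uniform constant; combined with the prefactor $\sqrt{1+|\xi'|^2}\le C|t|^{-1}e^{|t|\sqrt{1+|\xi'|^2}/4}$ near $|\xi'|$ large and the crude bound $\sqrt{1+|\xi'|^2}\le C$ in a fixed compact $\xi'$-region, one obtains $A(t)=CM\min\{1,|t|^{-1}\}e^{-c|t|}$, which lies in $L_1((-\infty,0))$. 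Everything else — Mikhlin's theorem, Minkowski's inequality, Young's inequality — is standard. (Alternatively, one can avoid Mikhlin entirely and argue exactly as in \cite[Lemma 5.4]{SS12}, to which the statement refers, by writing out the kernel of $K$ and estimating it directly; but the multiplier argument above is self-contained given Lemma \ref{lem-sym1}.)
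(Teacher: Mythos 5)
There is a genuine gap, and it sits exactly at the point you flagged and then claimed to have fixed. Your plan hinges on the assertion that for each fixed $t=x_N+y_N<0$ the symbol $n_t(\xi')=m(\xi')\sqrt{1+|\xi'|^2}\,e^{\sqrt{1+|\xi'|^2}\,t}$ satisfies the Mikhlin condition with constant $A(t)=CM\min\{1,|t|^{-1}\}e^{-|t|/4}$, which would be integrable in $t$ and make Young's inequality applicable. That bound is false near $t=0$: taking $|\xi'|\sim |t|^{-1}$ gives $\sqrt{1+|\xi'|^2}\,e^{\sqrt{1+|\xi'|^2}\,t}\sim |t|^{-1}e^{-1}$, so already the zeroth-order Mikhlin quantity is of size $|t|^{-1}$, not $O(1)$, for small $|t|$ (the compact/large-$\xi'$ splitting you sketch does not remove this, since the worst frequency $|\xi'|\sim|t|^{-1}$ lies in the "large" region and the factor $e^{-3|t|\sqrt{1+|\xi'|^2}/4}$ is only $e^{-3/4}$ there). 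The true slicewise constant is $A(t)\le CM\min\{|t|^{-1},e^{-c|t|}\}$, which is \emph{not} integrable at $t=0$, so the final step "convolution against an $L_1$ kernel plus Young" collapses; this is precisely why an operator of this type cannot be handled by a purely one-dimensional convolution estimate in the normal variable.

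The argument can be repaired within your framework, but the last step must change: keep the (correct) bound
\begin{equation*}
\|[Kf](\cdot,x_N)\|_{L_q(\BR^{N-1})}\leq CM\int_{-\infty}^0\frac{\|f(\cdot,y_N)\|_{L_q(\BR^{N-1})}}{|x_N|+|y_N|}\intd y_N ,
\end{equation*}
where you use that $|x_N+y_N|=|x_N|+|y_N|$ for $x_N,y_N<0$, and then invoke not Young's inequality but the Hardy--Littlewood--P\'olya (Hilbert-type) inequality: the integral operator on the half-line with the homogeneous kernel $(|x_N|+|y_N|)^{-1}$ is bounded on $L_q$ exactly for $1<q<\infty$ (e.g.\ by Schur's test with weight $|y_N|^{-1/(qq')}$). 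This also explains the restriction $q\in(1,\infty)$ in the statement. Note finally that the paper itself offers no proof of this lemma; it simply cites \cite[Lemma 5.4]{SS12}, whose argument proceeds through kernel estimates of Calder\'on--Zygmund/Poisson type rather than a slicewise multiplier bound, so your fallback remark (to argue "exactly as in \cite{SS12}") is in effect what the paper does, while your self-contained route needs the correction above.
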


Define the operators $\CA$ and $\CB$ by
\begin{align}\label{dfn:KL}
[\CA f](x)&=\CF_{\xi'}^{-1}[e^{|\xi'|x_N}\wht f(\xi')](x'), \notag \\
[\CB f](x)&=\CF_{\xi'}^{-1}[e^{\sqrt{1+|\xi'|^2}x_N}\wht f(\xi')](x'),
\end{align}
where $x=(x',x_N)\in\BR_-^N$ and $N\geq 2$. We then have

\begin{lem}\label{lem:ext}
Let $q\in(1,\infty)$ and $m\geq 1$.
Then the following assertions hold.
\begin{enumerate}[$(1)$]
\item\label{lem:ext-1}
There exists a positive constant $C$ such that for any $f\in L_q(\BR^{N-1})$
\begin{equation}\label{A-ineq-3}
\sup_{x_N\leq 0}\|[\CA f](\cdot,x_N)\|_{L_q(\BR^{N-1})} \leq C \|f\|_{L_q(\BR^{N-1})}.
\end{equation}
Furthermore, there exists a positive constant $C$ such that for any $f \in H_q^m(\BR_-^N)$
and for any multi-index $\alpha\in\BN_0^N$ with $|\alpha|=m$
\begin{equation}\label{A-ineq-4}
\|\pd_x^\alpha \CA(f|_{\BR_0^N})\|_{L_q(\BR_-^N)}
\leq C\sum_{|\beta|=m}\|\pd_x^\beta f\|_{L_q(\BR_-^N)},
\end{equation}
where $f|_{\BR_0^N}$ denotes the boundary trace of $f$ on $\BR_0^N$.
\item\label{lem:ext-3}
There exists a positive constant $C$ such that
for any $f \in W_q^{m-1/q}(\BR^{N-1})$
\begin{equation}\label{A-ineq-1}
\|\nabla \CA f\|_{H_q^{m-1}(\BR_-^{N})}
\leq C\|f\|_{W_q^{m-1/q}(\BR^{N-1})}.
\end{equation}
Furthermore, for any $L>0$ and $f \in W_q^{m-1/q}(\BR^{N-1})$,
\begin{equation}\label{A-ineq-2}
\|\CA f\|_{H_q^m(\BR^{N-1}\times (-L,0))} \leq C_L\|f\|_{W_q^{m-1/q}(\BR^{N-1})}
\end{equation}
with some positive constant $C_L$ depending on $L$, but independent of $f$.
\item\label{lem:ext-2}
There exist a positive constant $C$ such that
for any $f\in W_q^{m-1/q}(\BR^{N-1})$
\begin{equation*}
C^{-1} \|f\|_{W_q^{m-1/q}(\BR^{N-1})}
\leq \|\CB f\|_{H_q^m(\BR_-^N)}\leq C\|f\|_{W_q^{m-1/q}(\BR^{N-1})}.
\end{equation*}
\end{enumerate}
\end{lem}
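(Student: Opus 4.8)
The plan is to reduce everything to two tools that are already available: Lemma \ref{lem:tech} for $L_q$-boundedness of the $\CB$-type integral operators, and — for the harmonic extension $\CA$ involving $|\xi'|$ rather than $\sqrt{1+|\xi'|^2}$ — the classical estimate for the Poisson kernel of the half-space, which can be obtained from Lemma \ref{lem:tech} by a freezing/homogeneity argument or simply cited in the form already used by \cite{SS12}. The three parts are essentially independent, so I would treat them in turn.

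For part \eqref{lem:ext-1}, the bound \eqref{A-ineq-3} is the statement that the harmonic Poisson extension to the lower half-space is bounded on $L_q(\BR^{N-1})$ uniformly in $x_N\le 0$; one writes $[\CA f](\cdot,x_N)=P_{|x_N|}*f$ with $\wht{P_t}(\xi')=e^{-t|\xi'|}$, and invokes Young's inequality since $\|P_t\|_{L_1(\BR^{N-1})}=1$ for every $t>0$. For \eqref{A-ineq-4}, the point is that $\CA(f|_{\BR_0^N})$ only sees the boundary trace, so I would use the trace estimate $\|f|_{\BR_0^N}\|_{W_q^{m-1/q}(\BR^{N-1})}\le C\|f\|_{H_q^m(\BR_-^N)}$ together with the extension estimate \eqref{A-ineq-1} of part \eqref{lem:ext-3} to dominate $\|\CA(f|_{\BR_0^N})\|_{H_q^m(\BR_-^N)}$; then the top-order derivatives are controlled as claimed. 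One must be slightly careful near $x_N=0$ because $\pd_N^m$ applied to the Poisson extension brings out $|\xi'|^m$, which is only a $W_q^{m-1/q}$-bounded multiplier against the trace — this is precisely what \eqref{A-ineq-1} encodes.

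For part \eqref{lem:ext-3}, I would expand $\pd_x^\alpha \CA f = \CF_{\xi'}^{-1}[(i\xi')^{\alpha'}(-|\xi'|)^{\alpha_N} e^{|\xi'|x_N}\wht f]$ for $|\alpha|\le m$ with $\alpha_N\ge 1$, and split off one factor $|\xi'|e^{|\xi'|x_N}$, which after the change of variables $x_N\mapsto x_N$ and use of $e^{|\xi'|x_N}=e^{|\xi'|x_N/2}e^{|\xi'|x_N/2}$ produces a bounded operator on $L_q(\BR_-^N)$ by a Mikhlin-type argument (the extra $|\xi'|$ is absorbed by one of the exponentials and $\int_{-\infty}^0 |\xi'| e^{|\xi'|x_N}\,dx_N=1$). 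The remaining symbol $(i\xi')^{\alpha'}|\xi'|^{\alpha_N-1}$ has order $|\alpha|-1\le m-1$, and pairing it against $\wht f$ with the $e^{|\xi'|x_N/2}$ factor shows $\|\pd_x^\alpha \CA f\|_{L_q(\BR_-^N)}\le C\||\xi'|^{m-1}\wht f\|$ in a suitable sense, i.e. $\le C\|f\|_{W_q^{m-1/q}(\BR^{N-1})}$ by the definition of the Besov/Slobodeckij trace norm. For \eqref{A-ineq-2}, the finite strip, one additionally needs the zeroth-order term $\|\CA f\|_{L_q(\BR^{N-1}\times(-L,0))}$: here I would use that $\|e^{|\xi'|x_N}\wht f\|$ is controlled, integrate $x_N$ over the bounded interval $(-L,0)$, and accept a constant $C_L$; no decay as $|\xi'|\to 0$ is needed because the interval is bounded. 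Part \eqref{lem:ext-2} is the cleanest: the upper bound $\|\CB f\|_{H_q^m(\BR_-^N)}\le C\|f\|_{W_q^{m-1/q}(\BR^{N-1})}$ follows by the same computation as \eqref{A-ineq-1}–\eqref{A-ineq-2} but now with the symbol $\sqrt{1+|\xi'|^2}$, where Lemma \ref{lem:tech} and Lemma \ref{lem-sym1} apply directly and the strip-versus-halfspace distinction disappears because $e^{\sqrt{1+|\xi'|^2}x_N}$ decays even in $x_N$; the reverse inequality $C^{-1}\|f\|_{W_q^{m-1/q}(\BR^{N-1})}\le\|\CB f\|_{H_q^m(\BR_-^N)}$ is just the trace theorem applied to $u=\CB f$, since $\CB f|_{\BR_0^N}=f$.

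The main obstacle I anticipate is the endpoint behavior at $x_N=0$ in part \eqref{lem:ext-3}: the operator $\CA$ is the harmonic extension with symbol built from $|\xi'|$, which is \emph{not} smooth at the origin, so one cannot directly invoke Lemma \ref{lem:tech} (which is tailored to the $\sqrt{1+|\xi'|^2}$ symbol). One has to either quote the known half-space Poisson-kernel estimates or redo the bootstrap: split the symbol as (derivative-producing monomial) $\times$ ($|\xi'|e^{|\xi'|x_N/2}$) $\times$ ($e^{|\xi'|x_N/2}$), use the $L_1(\BR_-)$-normalization of $|\xi'|e^{|\xi'|x_N}$ in $x_N$ to convert the "extra" derivative into an $L_q$-bounded averaging operator, and recognize the leftover as a trace-norm pairing. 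This bookkeeping, together with correctly matching the homogeneous Besov norm $W_q^{m-1/q}$ to the symbol order $m-1$, is where the real work lies; everything else is a routine consequence of the Fourier-multiplier lemmas and the trace theorem stated in Subsection 2.4–2.9 of \cite{Triebel78}.
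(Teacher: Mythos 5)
Your overall architecture inverts the paper's (the paper takes \eqref{A-ineq-4} as the input, citing the proof of Lemma 7.1 in \cite{SS12}, and then \emph{derives} \eqref{A-ineq-1} by choosing an extension $F\in H_q^m(\BR_-^N)$ of $f$ and applying \eqref{A-ineq-4} to $F$), which is legitimate in principle, but two steps of your route have genuine gaps. First, your proof of \eqref{A-ineq-4} via the trace theorem plus \eqref{A-ineq-1} only yields the inhomogeneous bound $\|\pd_x^\alpha\CA(f|_{\BR_0^N})\|_{L_q(\BR_-^N)}\leq C\|f\|_{H_q^m(\BR_-^N)}$, whereas \eqref{A-ineq-4} asserts control by the top-order seminorm $\sum_{|\beta|=m}\|\pd_x^\beta f\|_{L_q(\BR_-^N)}$ alone. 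This homogeneity is not cosmetic: it is exactly what is used later (in the proof of Proposition \ref{prp:linear3d}, where $\|\nabla\CE_N(u_N|_{\BR_0^N})\|_{H_q^1}$ must be bounded by $\|\nabla u_N\|_{H_q^1}$ with no $\|u_N\|_{L_q}$ term, because $u_N$ itself carries a weaker time weight than $\nabla u_N$). The gap is repairable—$\CA$ commutes with dilations, so a scaling argument $f\mapsto f(\lambda\,\cdot)$, $\lambda\to\infty$, removes the lower-order terms—but you neither state nor use this, and without it the inequality you prove is strictly weaker than the one claimed.

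Second, your direct multiplier proof of \eqref{A-ineq-1} (and of the upper bound for $\CB$ in part (3)) is incomplete at its crucial point: after splitting off $|\xi'|e^{|\xi'|x_N/2}$ you assert that pairing the leftover symbol of order $m-1$ against $\wht f$ is bounded by $\|f\|_{W_q^{m-1/q}(\BR^{N-1})}$ ``by the definition of the Besov/Slobodeckij trace norm.'' For $q\neq 2$ the $W_q^{m-1/q}$ norm is not a weighted $L_q$ norm of $\wht f$, so this identification is not a definition and no argument is supplied. The device that actually closes this step—used in the paper for $\CB$ and in \cite{SS12} for $\CA$—is to pick an extension $F\in H_q^m(\BR_-^N)$ with $F=f$ on $\BR_0^N$ and $\|F\|_{H_q^m}\leq C\|f\|_{W_q^{m-1/q}}$, write $e^{\omega(\xi')x_N}\wht f(\xi')=\int_{-\infty}^0\pd_{y_N}\{e^{\omega(\xi')(x_N+y_N)}\wht F(\xi',y_N)\}\intd y_N$ (with $\omega(\xi')=|\xi'|$ or $\sqrt{1+|\xi'|^2}$), and only then apply the Lemma \ref{lem:tech}/Lemma \ref{lem-sym1} machinery; note Lemma \ref{lem:tech} takes as input a function on $\BR_-^N$, so without introducing $F$ it cannot even be invoked. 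Your treatment of \eqref{A-ineq-3} (Young's inequality with the Poisson kernel), of the zeroth-order term in \eqref{A-ineq-2} (integrate the uniform bound \eqref{A-ineq-3} over $(-L,0)$, which is in fact simpler than the paper's cutoff argument), and of the left inequality in part (3) via the trace theorem are all fine.
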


\begin{proof}
(1) See  the proof of Lemma 7.1 in \cite{SS12}.

(2) Let $F\in H_q^m(\BR_-^N)$ be an extension of $f$ such that $F=f$ on $\BR_0^N$ and 
$\|F\|_{H_q^m(\BR_-^N)}\leq C\|f\|_{W_q^{m-1/q}(\BR^{N-1})}$. 
Since $F|_{\BR_0^N}=f$, one observes by \eqref{A-ineq-4} that
\begin{align*}
\sum_{|\alpha|=1}^m \|\pd_x^\alpha \CA f\|_{L_q(\BR_-^N)}
 \leq C\sum_{|\beta|=1}^m\|\pd_x^\beta F\|_{L_q(\BR_-^N)} 
\leq C\|f\|_{W_q^{m-1/q}(\BR^{N-1})}.
\end{align*}
Thus \eqref{A-ineq-1} holds.

We next prove \eqref{A-ineq-2}. Let $L>0$ and $\varphi$ be a function in $C_0^\infty(\BR)$ such that
$\varphi(x_N)=1$ when $|x_N|\leq 2L$ and $\varphi(x_N)=0$ when $|x_N|\geq 3L$. 
Then  for $x_N\in(-L,0)$
\begin{align*}
&[\CA f](x',x_N)
=\int_{-\infty}^{x_N} \frac{d}{d y_N}\left(\varphi(y_N)[\CA f](x',y_N)\right)\intd y_N \\
&=\int_{-\infty}^{x_N}\varphi'(y_N)[\CA f](x',y_N)\intd y_N 
+\int_{-\infty}^{x_N}\varphi(y_N)[\pd_N\CA f](x',y_N)\intd y_N \\
&=:I_1+I_2. 
\end{align*}
It holds that
\begin{align*}
\|I_1(\cdot,x_N)\|_{L_q(\BR^{N-1})}
&\leq \int_{-\infty}^{x_N}|\varphi'(y_N)|\|[\CA f](\cdot,y_N)\|_{L_q(\BR^{N-1})}\intd y_N \\
&\leq \Big(\sup_{x_N\leq 0}\|[\CA f](\cdot,x_N)\|_{L_q(\BR^{N-1})}\Big) \int_{-\infty}^{0}|\varphi'(y_N)|\intd y_N, 
\end{align*}
which, combined with \eqref{A-ineq-3}, furnishes
\begin{equation*}
\|I_1(\cdot,x_N)\|_{L_q(\BR^{N-1})}
\leq C\|f\|_{L_q(\BR^{N-1})} \quad \text{for $x_N\in(-L,0)$.}
\end{equation*}
This gives us
\begin{equation}\label{est-I1-A}
\|I_1\|_{L_q(\BR^{N-1}\times(-L,0))}\leq C_L\|f\|_{L_q(\BR^{N-1})}
\end{equation}
with some positive constant $C_L$ depending on $L$.
In addition, 
\begin{align*}
\|I_2(\cdot,x_N)\|_{L_q(\BR^{N-1})}
&\leq \int_{-\infty}^{0}|\varphi(y_N)|\|[\pd_N\CA f](\cdot,y_N)\|_{L_q(\BR^{N-1})}\intd y_N \\
&\leq \|\varphi\|_{L_{q'}(\BR_-)}\|\pd_N\CA f\|_{L_q(\BR_-^N)}
\end{align*}
for $q'=q/(q-1)$, which combined with \eqref{A-ineq-1}, furnishes
\begin{equation*}
\|I_2(\cdot,x_N)\|_{L_q(\BR^{N-1})}
\leq C\|f\|_{W_q^{1-1/q}(\BR^{N-1})} \quad \text{for $x_N\in(-L,0)$.}
\end{equation*}
This gives us
\begin{equation*}
\|I_2\|_{L_q(\BR^{N-1}\times(-L,0))}\leq C_L\|f\|_{W_q^{1-1/q}(\BR^{N-1})},
\end{equation*}
which, combined with \eqref{est-I1-A}, furnishes
\begin{equation*}
\|\CA f\|_{L_q(\BR^{N-1}\times(-L,0))}\leq C\|f\|_{W_q^{1-1/q}(\BR^{N-1})}.
\end{equation*}
This inequality together with \eqref{A-ineq-1} yields \eqref{A-ineq-2}.

(3) Let us first prove the right inequality.
Choose an extension $F\in H_q^m(\BR_-^N)$ such that
$F=f$ on $\BR_0^N$
and $\|F\|_{H_q^m(\BR_-^N)}\leq C\|f\|_{W_q^{m-1/q}(\BR^{N-1})}$.
There holds 
\begin{align*}
e^{\sqrt{1+|\xi'|^2}x_N}\wht f(\xi')
&=\int_{-\infty}^0 \frac{\pd}{\pd y_N} \Big(e^{\sqrt{1+|\xi'|^2}(x_N+y_N)}\wht F(\xi',y_N)\Big)\intd y_N \\
&=\int_{-\infty}^0 \sqrt{1+|\xi'|^2} e^{\sqrt{1+|\xi'|^2}(x_N+y_N)}\wht F(\xi',y_N)\intd y_N \\
&+\int_{-\infty}^0e^{\sqrt{1+|\xi'|^2}(x_N+y_N)}\wht{\pd_N F}(\xi',y_N)\intd y_N.
\end{align*}

Let $\Xi=(i\xi_1,\dots,i\xi_{N-1},\sqrt{1+|\xi|^2})$
and $\beta=(\beta_1,\dots,\beta_{N-1},\beta_N)\in\BN_0^N$ with $|\beta|\leq m$.
Then
\begin{align*}
\pd_x^\beta \CB f
&=\int_{-\infty}^0 \CF_{\xi'}^{-1}\Big[\Xi^\beta 
\sqrt{1+|\xi'|^2} e^{\sqrt{1+|\xi'|^2}(x_N+y_N)}\wht F(\xi',y_N)\Big](x')\intd y_N \\
&+\int_{-\infty}^0
\CF_{\xi'}^{-1}
\Big[\Xi^\beta e^{\sqrt{1+|\xi'|^2}(x_N+y_N)}\wht{\pd_N F}(\xi',y_N)\Big](x')\intd y_N \\
&=:I_3+I_4,
\end{align*}
where $\Xi^\beta=(i\xi_1)^{\beta_1}\dots(i\xi_{N-1})^{\beta_{N-1}}\sqrt{1+|\xi|^2}^{\,\beta_N}$.
The $I_3$ can be written as
\begin{align*}
I_3 &=
\int_{-\infty}^0 \CF_{\xi'}^{-1}\Big[\frac{\Xi^\beta}{(1+|\xi'|^2)^{m/2}} 
\sqrt{1+|\xi'|^2} e^{\sqrt{1+|\xi'|^2}(x_N+y_N)} \\
&\times (1+|\xi'|^2)^{m/2} \wht F(\xi',y_N)\Big](x')\intd y_N.
\end{align*}
For any multi-index $\alpha'\in\BN_0^{N-1}$,
we obtain from Lemma \ref{lem-sym1} and the Leibniz rule
\begin{equation}\label{eq:multi-1}
\bigg|\pd_{\xi'}^{\alpha'}\bigg(\frac{\Xi^\beta}{(1+|\xi'|^2)^{m/2}}\bigg)\bigg|
\leq C(1+|\xi'|)^{|\beta|-m-|\alpha'|}
\leq C(1+|\xi'|)^{-|\alpha'|}.
\end{equation}
Lemma \ref{lem:tech} thus yields
\begin{equation*}
\|I_3\|_{L_q(\BR_-^N)}
\leq C\|\CF_{\xi'}^{-1}[(1+|\xi'|^2)^{m/2} \wht F(\xi',\cdot )]\|_{L_q(\BR_-^N)}.
\end{equation*}
Since $\|\CF_{\xi'}^{-1}[(1+|\xi'|^2)^{m/2}\wht u(\xi')]\|_{L_q(\BR^{N-1})}$
is an equivalent norm of $\|u\|_{H_q^m(\BR^{N-1})}$,
one sees from the last inequality that
\begin{align*}
\|I_3\|_{L_q(\BR_-^N)}^q
&= C \int_{-\infty}^0\|\CF_{\xi'}^{-1}[(1+|\xi'|^2)^{m/2} \wht F(\xi',x_N)]\|_{L_q(\BR^{N-1})}^q\intd x_N \\
&\leq C \int_{-\infty}^0\|F(\cdot,x_N)\|_{H_q^{m}(\BR^{N-1})}^q\intd x_N \\
&\leq C\|F\|_{H_q^m(\BR_-^N)}^q
\leq C\|f\|_{W_q^{m-1/q}(\BR^{N-1})}^q.
\end{align*}

We next consider $I_4$. Let us write $I_4$ as follows:
\begin{align*}
I_4 &=\int_{-\infty}^0
\CF_{\xi'}^{-1}\Big[\frac{\Xi^\beta}{(1+|\xi'|^2)^{m/2}}
\sqrt{1+|\xi'|^2}e^{\sqrt{1+|\xi'|^2}(x_N+y_N)} \\
&\times (1+|\xi'|^2)^{(m-1)/2}\wht{\pd_N F}(\xi',y_N)\Big](x')\intd y_N.
\end{align*}
By \eqref{eq:multi-1} and Lemma \ref{lem:tech} 
\begin{equation*}
\|I_4\|_{L_q(\BR_-^N)}
\leq C\|\CF_{\xi'}^{-1}[(1+|\xi'|^2)^{(m-1)/2}\wht{\pd_N F}(\xi',\cdot)]\|_{L_q(\BR_-^N)},
\end{equation*}
and thus the equivalence of the norm
$\|\CF_{\xi'}^{-1}[(1+|\xi'|^2)^{(m-1)/2}\wht u(\xi')]\|_{L_q(\BR^{N-1})}$
with $\|u\|_{H_q^{m-1}(\BR^{N-1})}$ yields
\begin{align*}
\|I_4\|_{L_q(\BR_-^N)}^q
&= C \int_{-\infty}^0\|\CF_{\xi'}^{-1}[(1+|\xi'|^2)^{(m-1)/2} \wht{\pd_N F}(\xi',x_N)]\|_{L_q(\BR^{N-1})}^q\intd x_N \\
&\leq C \int_{-\infty}^0\|\pd_N F(\cdot,x_N)\|_{H_q^{m-1}(\BR^{N-1})}^q\intd x_N \\
&\leq C\|F\|_{H_q^m(\BR_-^N)}^q 
\leq C\|f\|_{W_q^{m-1/q}(\BR^{N-1})}^q.
\end{align*}

Summing up the estimates of $I_3$ and $I_4$ above,
we have obtained the right inequality $\|\CB f\|_{H_q^m(\BR_-^N)}\leq C\|f\|_{W_q^{m-1/q}(\BR^{N-1})}$.

The left inequality follows from $\CB f = f$ on $\BR_0^N$
and  the trace theorem  immediately.
This completes the proof of Lemma \ref{lem:ext}.
\end{proof}

Lemma \ref{lem:ext} can be extended to the following lemma.

\begin{lem}\label{lem:AB-bdd-lin}
Let $p,q\in(1,\infty)$. Then the following assertions hold.
\begin{enumerate}[$(1)$]
\item\label{lem:AB-bdd-lin-1}
Let $f\in B_{q,p}^{3-1/p-1/q}(\BR^{N-1})$.
Then $\CA f\in B_{q,p}^{3-1/p}(\BR^{N-1}\times(-L,0))$ for any $L>0$ and
\begin{equation*}
\|\CA f\|_{B_{q,p}^{3-1/p}(\BR^{N-1}\times(-L,0))}
\leq C_L\|f\|_{B_{q,p}^{3-1/p-1/q}(\BR^{N-1})}, 
\end{equation*}
where $C_L$ is a positive constant depending on $L$, but independent of $f$.
In addition, $\nabla \CA f\in B_{q,p}^{2-1/p}(\BR_-^{N})$ with
\begin{align*}
\|\nabla \CA f\|_{B_{q,p}^{2-1/p}(\BR_-^{N})}
&\leq C\|f\|_{B_{q,p}^{3-1/p-1/q}(\BR^{N-1})}, 
\end{align*}
where $C$ is a positive constant independent of and $f$.
\item\label{lem:AB-bdd-lin-2}
For any $f\in B_{q,p}^{3-1/p-1/q}(\BR^{N-1})$, $\CB f\in B_{q,p}^{3-1/p}(\BR_-^{N})$ with
\begin{equation*}
\|\CB f\|_{B_{q,p}^{3-1/p}(\BR_-^{N})}
\leq C\|f\|_{B_{q,p}^{3-1/p-1/q}(\BR^{N-1})}, 
\end{equation*}
where $C$ is a positive constant independent of $f$.
\item\label{lem:AB-bdd-lin-3}
Let $f\in H_p^1(\BR_+,W_q^{2-1/q}(\BR^{N-1}))\cap L_p(\BR_+,W_q^{3-1/q}(\BR^{N-1}))$.
Then
\begin{equation*}
\CA f\in H_p^1(\BR_+,H_q^2(\BR^{N-1}\times(-L,0)))\cap L_p(\BR_+,H_q^3(\BR^{N-1}\times(-L,0)))
\end{equation*}
for any $L>0$.
\item\label{lem:AB-bdd-lin-4}
Let $f\in H_p^1(\BR_+,W_q^{2-1/q}(\BR^{N-1}))\cap L_p(\BR_+,W_q^{3-1/q}(\BR^{N-1}))$. Then
\begin{equation*}
\CB f\in H_p^1(\BR_+,H_q^{2}(\BR_-^{N}))\cap L_p(\BR_+,H_q^{3}(\BR_-^{N})).
\end{equation*}
\end{enumerate}
\end{lem}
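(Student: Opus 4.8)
The plan is to prove all four assertions by combining the fixed-time estimates of Lemma~\ref{lem:ext} with the vector-valued interpolation identity $[L_p(\BR_+,X_0),L_p(\BR_+,X_1)]_\theta=L_p(\BR_+,[X_0,X_1]_\theta)$ (Theorem~1.18.4 of \cite{Triebel78}) and the fact that the operators $\CA$ and $\CB$ act only in the spatial variables $x'$, hence commute with $\pd_t$. Thus the time-dependent statements \eqref{lem:AB-bdd-lin-3} and \eqref{lem:AB-bdd-lin-4} will reduce to applying the spatial estimates pointwise in $t$ and integrating the $p$-th powers, while the initial-data statements \eqref{lem:AB-bdd-lin-1} and \eqref{lem:AB-bdd-lin-2} will follow from \eqref{A-ineq-2}, \eqref{A-ineq-1}, and part~(3) of Lemma~\ref{lem:ext} by real interpolation, using the characterizations \eqref{int-p-SS-sp-1}, \eqref{int-p-SS-sp-2} of the relevant Besov spaces as real-interpolation spaces of $L_q$ and $H_q^m$, together with the fact that interpolation is an exact functor so that bounded operators between the endpoint spaces remain bounded on the interpolation spaces.

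First I would handle \eqref{lem:AB-bdd-lin-2}, which is the cleanest: by part~(3) of Lemma~\ref{lem:ext}, $\CB:W_q^{m-1/q}(\BR^{N-1})\to H_q^m(\BR_-^N)$ is bounded for every $m\ge 1$. Applying this with $m=2$ and $m=3$ and interpolating by the real functor $(\cdot,\cdot)_{1-1/p,p}$, the identity $(W_q^{2-1/q},W_q^{3-1/q})_{1-1/p,p}=B_{q,p}^{3-1/p-1/q}(\BR^{N-1})$ (from \eqref{int-ss-sp} or Subsection~2.4.2 of \cite{Triebel78}) on the domain side and $(H_q^2,H_q^3)_{1-1/p,p}=B_{q,p}^{3-1/p}(\BR_-^N)$ on the target side give the claim. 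For \eqref{lem:AB-bdd-lin-1} I would argue analogously: \eqref{A-ineq-2} gives $\CA:W_q^{m-1/q}(\BR^{N-1})\to H_q^m(\BR^{N-1}\times(-L,0))$ bounded (with constant depending on $L$) for $m=2,3$, and interpolation yields the first estimate; for the second estimate I use \eqref{A-ineq-1}, namely $\nabla\CA:W_q^{m-1/q}(\BR^{N-1})\to H_q^{m-1}(\BR_-^N)$ bounded for $m=2,3$, and interpolate between $m=2$ and $m=3$ to land in $(H_q^1(\BR_-^N),H_q^2(\BR_-^N))_{1-1/p,p}=B_{q,p}^{2-1/p}(\BR_-^N)$ (the case $j=1$ of \eqref{int-p-SS-sp-2}).

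For \eqref{lem:AB-bdd-lin-3} and \eqref{lem:AB-bdd-lin-4}, the argument is: since $\CA$ and $\CB$ involve only $x'$, for a.e.\ $t>0$ we have $\pd_t(\CA f)(\cdot,t)=\CA(\pd_t f(\cdot,t))$ and likewise for $\CB$; applying \eqref{A-ineq-2} (resp.\ part~(3) of Lemma~\ref{lem:ext}) with $m=2$ to $\pd_t f(\cdot,t)$ and with $m=3$ to $f(\cdot,t)$, then raising to the $p$-th power and integrating over $t\in\BR_+$, gives $\CA f\in H_p^1(\BR_+,H_q^2(\BR^{N-1}\times(-L,0)))\cap L_p(\BR_+,H_q^3(\BR^{N-1}\times(-L,0)))$ and the corresponding statement for $\CB$ with $\BR_-^N$ in place of the slab. (Alternatively one invokes Theorem~1.18.4 of \cite{Triebel78} directly, but the pointwise-in-$t$ route is more elementary and suffices here since no interpolation in time is needed for parts (3)–(4).) The only mild subtlety — and the step I would be most careful about — is the measurability and the interchange of $\CA$ (an $x'$-Fourier multiplier) with the Bochner integral defining $\pd_t$, i.e.\ checking that $\CA$ maps the vector-valued Sobolev space $H_p^1(\BR_+,W_q^{2-1/q}(\BR^{N-1}))$ into $H_p^1(\BR_+,H_q^2(\cdot))$ with the time-derivative commuting through; this is routine (it follows because $\CA\in\CL(W_q^{2-1/q},H_q^2)$ is a fixed bounded linear operator, and bounded linear operators commute with Bochner integrals and with weak derivatives), but worth stating explicitly. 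Everything else is bookkeeping with the interpolation identities already recorded in the excerpt.
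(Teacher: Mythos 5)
Your proposal is correct and follows essentially the same route as the paper: parts (1)--(2) by real interpolation of the fixed-time bounds of Lemma~\ref{lem:ext} using $(W_q^{2-1/q},W_q^{3-1/q})_{1-1/p,p}=B_{q,p}^{3-1/p-1/q}(\BR^{N-1})$ together with \eqref{int-sob-bes-1}--\eqref{int-sob-bes-2}, and parts (3)--(4) by applying the spatial estimates pointwise in $t$ (which is all the paper means by ``follows from Lemma~\ref{lem:ext} immediately''). Only a small citation slip: the domain-side identity is the real-interpolation statement \eqref{int-p-SS-sp-1}, not the complex-interpolation identity \eqref{int-ss-sp}, but your alternative reference to Subsection~2.4.2 of \cite{Triebel78} covers it.
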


\begin{proof}
(1)
Let $G=\BR_-^N$ or $G=\BR^{N-1}\times(-L,0)$.
Then 
\begin{align}
(H_q^2(G),H_q^3(G))_{1-1/p,p} &= B_{q,p}^{3-1/p}(G), \label{int-sob-bes-1} \\
(H_q^1(G),H_q^2(G))_{1-1/p,p} &= B_{q,p}^{2-1/p}(G), \label{int-sob-bes-2}
\end{align}
see \cite[Subsection 7.32]{AF03}.
It follows from Lemma \ref{lem:ext} \eqref{lem:ext-3} that
\begin{align*}
\CA &\in \CL(W_q^{2-1/q}(\BR^{N-1}),H_q^2(\BR^{N-1}\times(-L,0))) \\
&\cap \CL(W_q^{3-1/q}(\BR^{N-1}),H_q^3(\BR^{N-1}\times(-L,0)))
\end{align*}
and
\begin{align*}
\nabla \CA &\in \CL(W_q^{2-1/q}(\BR^{N-1}),H_q^1(\BR_-^{N})) \cap
\CL(W_q^{3-1/q}(\BR^{N-1}),H_q^2(\BR_-^{N})).
\end{align*}
Combining these with \eqref{int-p-SS-sp-1}, \eqref{int-sob-bes-1}, and \eqref{int-sob-bes-2},
we obtain the desired properties from the real interpolation method.\footnote{
Here and subsequently, the real interpolation method
stands for \cite[Theorem 1.6]{Lunardi18}.}

(2) We can prove the desired property from 
\eqref{int-sob-bes-1} and Lemma \ref{lem:ext} \eqref{lem:ext-2} in the same manner as in \eqref{lem:AB-bdd-lin-1},
so that we may omit the detailed proof.

(3), (4)
The desired properties follow from Lemma \ref{lem:ext} immediately.
This completes the proof of Lemma \ref{lem:AB-bdd-lin}.
\end{proof}

We next consider the smoothness of $\CA f$ and $\CB f$ in the classical sense.

\begin{lem}\label{lem:classic-embed-eta}
Let $p\in(1,\infty)$ and $q\in(N,\infty)$ with $1/p+N/q<1$.
Then the following assertions hold.
\begin{enumerate}[$(1)$]
\item
Let $f\in B_{q,p}^{3-1/p-1/q}(\BR^{N-1})$.
Then 
\begin{equation*}
\CA f \in C_B^2(\overline{\BR^{N-1}\times(-L,0)}) \quad \text{for any $L>0$}
\end{equation*}
with
\begin{equation*}
\|\nabla\CA f\|_{H_\infty^1(\BR_-^N)} \leq C_1\|f\|_{B_{q,p}^{3-1/p-1/q}(\BR^{N-1})},
\end{equation*}
where $C_1$ is a positive constant independent of $f$.
In particular, $\CA f\in C^2(\overline{\BR_-^N})$.
\item
Let $f\in B_{q,p}^{3-1/p-1/q}(\BR^{N-1})$.
Then $\CB f \in C_B^2(\overline{\BR_-^N})$ with
\begin{equation*}
\|\CB f\|_{H_\infty^2(\BR_-^N)} \leq C_2\|f\|_{B_{q,p}^{3-1/p-1/q}(\BR^{N-1})},
\end{equation*}
where $C_2$ is a positive constant independent of $f$.
\item
Let $f\in H_p^1(\BR_+,W_q^{2-1/q}(\BR_-^N))\cap L_p(\BR_+,W_q^{3-1/q}(\BR_-^N))$.
Then
\begin{equation*}
\CA f\in C([0,\infty),C_B^2(\overline{\BR^{N-1}\times(-L,0)})) \quad \text{for any $L>0$.}
\end{equation*}
In particular,
$[\CA f](\cdot ,t) \in C^2(\overline{\BR_-^N})$ for each $t\geq 0$.
\item
Let $f\in H_p^1(\BR_+,W_q^{2-1/q}(\BR_-^N))\cap L_p(\BR_+,W_q^{3-1/q}(\BR_-^N))$.
Then
\begin{equation*}
\CB f\in C([0,\infty),C_B^2(\overline{\BR_-^N})).
\end{equation*}
\end{enumerate}
\end{lem}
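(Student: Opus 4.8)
The plan is to reduce every assertion to the Besov regularity already established in Lemma \ref{lem:AB-bdd-lin} and then feed it into the embedding of Lemma \ref{lem:imbed-bexov-conti} \eqref{lem:imbed-bexov-conti-2}, whose hypothesis $1/p+N/q<1$ is precisely the standing assumption here; no new estimates are needed. For (2), Lemma \ref{lem:AB-bdd-lin} \eqref{lem:AB-bdd-lin-2} gives $\CB f\in B_{q,p}^{3-1/p}(\BR_-^N)$ with norm controlled by $\|f\|_{B_{q,p}^{3-1/p-1/q}(\BR^{N-1})}$, and Lemma \ref{lem:imbed-bexov-conti} \eqref{lem:imbed-bexov-conti-2} with $m=3$ and $G=\BR_-^N$ yields the continuous embedding $B_{q,p}^{3-1/p}(\BR_-^N)\subset C_B^2(\overline{\BR_-^N})$; chaining the two produces the stated bound on $\|\CB f\|_{H_\infty^2(\BR_-^N)}$. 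For the slab part of (1) one argues identically on $G=\BR^{N-1}\times(-L,0)$, using the first half of Lemma \ref{lem:AB-bdd-lin} \eqref{lem:AB-bdd-lin-1} and Lemma \ref{lem:imbed-bexov-conti} \eqref{lem:imbed-bexov-conti-2} with $m=3$.

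The remaining claims of (1) require slightly more care, because $\CA f$ itself need not be bounded on all of $\BR_-^N$ (its zeroth frequency is not damped by $e^{|\xi'|x_N}$), so one cannot embed $\CA f$ globally. Instead I would invoke the second half of Lemma \ref{lem:AB-bdd-lin} \eqref{lem:AB-bdd-lin-1}, namely $\nabla\CA f\in B_{q,p}^{2-1/p}(\BR_-^N)$ with the corresponding bound, to which Lemma \ref{lem:imbed-bexov-conti} \eqref{lem:imbed-bexov-conti-2} with $m=2$ and $G=\BR_-^N$ applies, giving $\nabla\CA f\in C_B^1(\overline{\BR_-^N})$ together with $\|\nabla\CA f\|_{H_\infty^1(\BR_-^N)}\leq C\|f\|_{B_{q,p}^{3-1/p-1/q}(\BR^{N-1})}$. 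Finally $\CA f\in C^2(\overline{\BR_-^N})$ follows by patching: every point of $\overline{\BR_-^N}$ lies in some closed slab $\overline{\BR^{N-1}\times(-L,0)}$, on which $\CA f$ is $C^2$ up to the boundary by the slab statement, and these restrictions agree on the overlaps.

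For (3) and (4) I would add a continuity-in-time layer, in the style of the proof of Lemma \ref{lem:height-func}. By Lemma \ref{lem:AB-bdd-lin} \eqref{lem:AB-bdd-lin-3} (resp. \eqref{lem:AB-bdd-lin-4}), $\CA f$ (resp. $\CB f$) lies in $H_p^1(\BR_+,H_q^2(G))\cap L_p(\BR_+,H_q^3(G))$ with $G=\BR^{N-1}\times(-L,0)$ (resp. $G=\BR_-^N$); the interpolation identity \eqref{int-sob-bes-1} together with Lemma \ref{lem:tanabe} gives $\CA f\in C([0,\infty),B_{q,p}^{3-1/p}(\BR^{N-1}\times(-L,0)))$ (resp. $\CB f\in C([0,\infty),B_{q,p}^{3-1/p}(\BR_-^N))$), and composing with the continuous embedding $B_{q,p}^{3-1/p}(G)\subset C_B^2(\overline{G})$ of Lemma \ref{lem:imbed-bexov-conti} \eqref{lem:imbed-bexov-conti-2} promotes these to $\CA f\in C([0,\infty),C_B^2(\overline{\BR^{N-1}\times(-L,0)}))$ and $\CB f\in C([0,\infty),C_B^2(\overline{\BR_-^N}))$; the pointwise-in-$t$ statement $[\CA f](\cdot,t)\in C^2(\overline{\BR_-^N})$ again follows by patching the slabs over $L$. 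There is no genuine obstacle; the only thing demanding attention is the bookkeeping that separates $\CA$, where one must work with slabs or with $\nabla\CA f$, from $\CB$, where the extra factor $e^{x_N}$ in $e^{\sqrt{1+|\xi'|^2}\,x_N}$ makes all statements global on $\BR_-^N$ from the outset.
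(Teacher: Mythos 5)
Your proposal is correct and follows essentially the same route as the paper: reduce to the Besov regularity of Lemma \ref{lem:AB-bdd-lin} (using $\nabla\CA f$ on $\BR_-^N$ and slabs for $\CA f$, the global statement for $\CB f$), apply the embedding of Lemma \ref{lem:imbed-bexov-conti} \eqref{lem:imbed-bexov-conti-2}, and for (3)--(4) combine Lemma \ref{lem:tanabe} with \eqref{int-sob-bes-1} before embedding. The only additions — the remark that $\CA f$ itself cannot be embedded globally and the explicit slab-patching for $\CA f\in C^2(\overline{\BR_-^N})$ — are sound elaborations of steps the paper leaves implicit.
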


\begin{proof}
(1), (2)
The desired properties follow from Lemma \ref{lem:AB-bdd-lin} \eqref{lem:AB-bdd-lin-1}, \eqref{lem:AB-bdd-lin-2}
and  Lemma \ref{lem:imbed-bexov-conti} \eqref{lem:imbed-bexov-conti-2} immediately.

(3), (4)
Let $G=\BR_-^N$ or $G=\BR^{N-1}\times(-L,0)$ for $L>0$.
Lemma \ref{lem:tanabe} and \eqref{int-sob-bes-1} yield
\begin{equation*}
H_p^1(\BR_+, H_q^2(G))\cap L_p(\BR_+,H_q^3(G))
\subset C([0,\infty),B_{q,p}^{3-1/p}(G)),
\end{equation*}
which, combined with Lemma \ref{lem:imbed-bexov-conti} \eqref{lem:imbed-bexov-conti-2}, furnishes
\begin{equation*}
H_p^1(\BR_+, H_q^2(G))\cap L_p(\BR_+,H_q^3(G))
\subset C([0,\infty),C_B^2(\overline{G})).
\end{equation*}
We thus obtain the desired properties from 
Lemma \ref{lem:AB-bdd-lin} \eqref{lem:AB-bdd-lin-3}, \eqref{lem:AB-bdd-lin-4}.
This completes the proof of Lemma \ref{lem:classic-embed-eta}.
\end{proof}

Furthermore, we have

\begin{lem}\label{lem:extra-embed}
Let $p\in(2,\infty)$ and $q\in(N,\infty)$ with $2/p+N/q<1$.
Then the following assertions hold.
\begin{enumerate}[$(1)$]
\item\label{lem:extra-embed-1}
Let $f$ and $g$ satisfy
\begin{align*}
f&\in H_p^1(\BR_+,L_q(\BR_-^N))\cap L_p(\BR_+,H_q^2(\BR_-^N)), \\
g&\in H_p^1(\BR_+,W_q^{2-1/q}(\BR^{N-1}))\cap L_p(\BR_+,W_q^{3-1/q}(\BR^{N-1})).
\end{align*}
Then for $\pd_j=\pd/\pd x_j$ and $j=1,\dots,N-1$
\begin{equation*}
f\pd_j g \in C([0,\infty),B_{q,p}^{2-2/p-1/q}(\BR_0^N)).
\end{equation*}
\item\label{lem:extra-embed-2}
Let $f\in C([0,\infty),B_{q,p}^{2-2/p-1/q}(\BR^{N-1}))$. Then
\begin{equation*}
\CA f \in C([0,\infty), C_B^1(\overline{\BR^{N-1}\times(-L,0)}))
\quad \text{for any $L>0$.}
\end{equation*}
In particular, $[\CA f](\cdot ,t)\in C^1(\overline{\BR_-^N})$ for each $t\geq 0$.
\item\label{lem:extra-embed-3}
Let $f\in C([0,\infty),B_{q,p}^{2-2/p-1/q}(\BR^{N-1}))$. Then
$\CB f\in C([0,\infty),C_B^1(\overline{\BR_-^N}))$.
\end{enumerate}
\end{lem}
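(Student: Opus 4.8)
The plan is to reduce each of the three assertions to a mapping property that is essentially already available and then to lift it to a statement about $C([0,\infty),\cdot)$ by composing with the time-continuity embeddings of Lemma~\ref{lem:time-sp} and using that a bounded linear operator carries continuous curves to continuous curves. For \eqref{lem:extra-embed-1}, Lemma~\ref{lem:time-sp}~\eqref{lem:time-sp-2} gives $f\in C([0,\infty),B_{q,p}^{2-2/p}(\BR_-^N))$; since $2-2/p>1/q$, the boundary trace is a bounded linear map $B_{q,p}^{2-2/p}(\BR_-^N)\to B_{q,p}^{2-2/p-1/q}(\BR_0^N)$, so $f|_{\BR_0^N}\in C([0,\infty),B_{q,p}^{2-2/p-1/q}(\BR_0^N))$. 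Likewise Lemma~\ref{lem:time-sp}~\eqref{lem:time-sp-1} gives $g\in C([0,\infty),B_{q,p}^{3-1/p-1/q}(\BR^{N-1}))$, hence, for $j=1,\dots,N-1$,
\[
\pd_j g\in C\big([0,\infty),B_{q,p}^{2-1/p-1/q}(\BR^{N-1})\big)\subset C\big([0,\infty),B_{q,p}^{2-2/p-1/q}(\BR^{N-1})\big),
\]
the inclusion holding because $2-1/p-1/q\ge 2-2/p-1/q$. Identifying $\BR_0^N$ with $\BR^{N-1}$ and observing that $p>2$ and $2/p+N/q<1$ force $2-2/p-1/q>(N-1)/q$, the space $B_{q,p}^{2-2/p-1/q}(\BR^{N-1})$ is continuously embedded into $L_\infty(\BR^{N-1})$, hence is a Banach algebra under pointwise multiplication; since multiplication on it is bounded and bilinear it is jointly continuous, so $f\,\pd_j g=(f|_{\BR_0^N})(\pd_j g)\in C([0,\infty),B_{q,p}^{2-2/p-1/q}(\BR_0^N))$.

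For \eqref{lem:extra-embed-2} and \eqref{lem:extra-embed-3}, the idea is to upgrade the Sobolev-scale bounds of Lemma~\ref{lem:ext} to the Besov scale by real interpolation, exactly in the spirit of the proof of Lemma~\ref{lem:AB-bdd-lin}, but now interpolating between the levels $m=1$ and $m=2$. Set $\theta=1-2/p\in(0,1)$ (this uses $p>2$). By the standard real-interpolation identities for Sobolev and Besov spaces (cf.\ \eqref{int-p-SS-sp-1}, \eqref{int-sob-bes-2}),
\[
(W_q^{1-1/q}(\BR^{N-1}),W_q^{2-1/q}(\BR^{N-1}))_{\theta,p}=B_{q,p}^{2-2/p-1/q}(\BR^{N-1}),\qquad (H_q^1(G),H_q^2(G))_{\theta,p}=B_{q,p}^{2-2/p}(G)
\]
for $G=\BR^{N-1}\times(-L,0)$ and $G=\BR_-^N$. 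Applying the real interpolation method to $\CA\in\CL(W_q^{m-1/q}(\BR^{N-1}),H_q^m(\BR^{N-1}\times(-L,0)))$, $m=1,2$ (Lemma~\ref{lem:ext}~\eqref{lem:ext-3}, i.e.\ \eqref{A-ineq-2}), and to $\CB\in\CL(W_q^{m-1/q}(\BR^{N-1}),H_q^m(\BR_-^N))$, $m=1,2$ (Lemma~\ref{lem:ext}~\eqref{lem:ext-2}), yields
\[
\CA\in\CL\big(B_{q,p}^{2-2/p-1/q}(\BR^{N-1}),B_{q,p}^{2-2/p}(\BR^{N-1}\times(-L,0))\big),\qquad \CB\in\CL\big(B_{q,p}^{2-2/p-1/q}(\BR^{N-1}),B_{q,p}^{2-2/p}(\BR_-^N)\big).
\]
Since $p\in(2,\infty)$ and $2/p+N/q<1$, Lemma~\ref{lem:imbed-bexov-conti}~\eqref{lem:imbed-bexov-conti-3} with $m=2$ gives $B_{q,p}^{2-2/p}(G)\subset C_B^1(\overline{G})$ for the same domains. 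Composing $\CA$, $\CB$ with the continuous curve $t\mapsto f(t)$ in $B_{q,p}^{2-2/p-1/q}(\BR^{N-1})$ then gives $\CA f\in C([0,\infty),C_B^1(\overline{\BR^{N-1}\times(-L,0)}))$ for every $L>0$ and $\CB f\in C([0,\infty),C_B^1(\overline{\BR_-^N}))$; since this holds for all $L$ and $\CA f$ is harmonic, hence $C^\infty$, in the interior of $\BR_-^N$, it follows that $[\CA f](\cdot,t)\in C^1(\overline{\BR_-^N})$ for each $t\geq 0$.

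The bookkeeping — that $\theta=1-2/p$ lies in $(0,1)$ and produces exactly the exponents $2-2/p-1/q$ and $2-2/p$, and that $p>2$ and $2/p+N/q<1$ are precisely what is needed for the embeddings $B_{q,p}^{2-2/p-1/q}(\BR^{N-1})\hookrightarrow L_\infty(\BR^{N-1})$ in \eqref{lem:extra-embed-1} and $B_{q,p}^{2-2/p}(G)\subset C_B^1(\overline G)$ in \eqref{lem:extra-embed-2}, \eqref{lem:extra-embed-3} — is routine. The one ingredient not already in the excerpt, and the only potential obstacle, is the product estimate used in \eqref{lem:extra-embed-1}, namely the Banach-algebra property of $B_{q,p}^s(\BR^{N-1})$ for $s>(N-1)/q$; this is classical and can be quoted from the literature, after which the argument is complete.
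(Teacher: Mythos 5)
Your argument is correct and follows essentially the same route as the paper: for \eqref{lem:extra-embed-1} the trace theorem plus Lemma \ref{lem:time-sp} and the multiplication-algebra property of $B_{q,p}^{2-2/p-1/q}(\BR^{N-1})$ (the paper quotes the same classical fact from Runst--Sickel), and for \eqref{lem:extra-embed-2}, \eqref{lem:extra-embed-3} real interpolation with parameter $1-2/p$ of the $m=1,2$ mapping properties of $\CA$, $\CB$ from Lemma \ref{lem:ext}, followed by Lemma \ref{lem:imbed-bexov-conti} \eqref{lem:imbed-bexov-conti-3}. No gaps worth noting.
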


\begin{proof}
\eqref{lem:extra-embed-1}
Lemma \ref{lem:time-sp} \eqref{lem:time-sp-2} and the trace theorem yield
\begin{equation}\label{f-trace-est}
f\in C([0,\infty),B_{q,p}^{2-2/p-1/q}(\BR_0^N)),
\end{equation}
while Lemma \ref{lem:time-sp} \eqref{lem:time-sp-1} gives us
\begin{equation*}
\pd_j g \in C([0,\infty),B_{q,p}^{2-1/p-1/q}(\BR^{N-1})).
\end{equation*}
We combine the last property with the fact that 
$B_{q,p}^{2-1/p-1/q}(\BR^{N-1})$ is continuously embeded into $B_{q,p}^{2-2/p-1/q}(\BR^{N-1})$,
and then we have
\begin{equation}\label{g-est-rough}
\pd_j g \in C([0,\infty),B_{q,p}^{2-2/p-1/q}(\BR^{N-1})).
\end{equation}
Notice that $B_{q,p}^{2-2/p-1/q}(\BR^{N-1})$ is a multiplication algebra
by the assumption $2/p+N/q<1$, see Theorem 1 in Subsection 4.6.4 of \cite{RS96}.
The desired property thus follows from \eqref{f-trace-est} and \eqref{g-est-rough} immediately.

\eqref{lem:extra-embed-2}, \eqref{lem:extra-embed-3}
Let $G=\BR_-^N$ or $G=\BR^{N-1}\times(-L,0)$ for $L>0$.
It follows from \cite[Subsection 2.4.2]{Triebel78} and \cite[Subsection 7.32]{AF03} that
\begin{align}\label{add-int-prp}
(W_q^{1-1/q}(\BR^{N-1}),W_q^{2-1/q}(\BR^{N-1}))_{1-2/p,p}
&=B_{q,p}^{2-2/p-1/q}(\BR^{N-1}), \notag \\
(H_q^1(G),H_q^2(G))_{1-2/p,p}&=B_{q,p}^{2-2/p}(G).
\end{align}
By Lemma \ref{lem:ext} \eqref{lem:ext-3}, \eqref{lem:ext-2}
\begin{align*}
\CA
&\in\CL(W_q^{1-1/q}(\BR^{N-1}),H_q^1(\BR^{N-1}\times(-L,0))) \\
&\cap \CL(W_q^{2-1/q}(\BR^{N-1}),H_q^2(\BR^{N-1}\times(-L,0)))
\end{align*}
and
\begin{equation*}
\CB\in\CL(W_q^{1-1/q}(\BR^{N-1}),H_q^1(\BR_-^{N}))
\cap  \CL(W_q^{2-1/q}(\BR^{N-1}),H_q^2(\BR_-^{N}).
\end{equation*}
Combining these with \eqref{add-int-prp} and the real interpolation method furnishes that
for any $f\in B_{q,p}^{2-2/p-1/q}(\BR^{N-1})$ 
\begin{align*}
&\CA f \in B_{q,p}^{2-2/p}(\BR^{N-1}\times (-L,0)) \text{ for any $L>0$ with} \\
&\|\CA f\|_{B_{q,p}^{2-2/p}(\BR^{N-1}\times (-L,0))}
\leq C_L \|f\|_{B_{q,p}^{2-2/p-1/q}(\BR^{N-1})}
\end{align*}
and 
\begin{align*}
\CB f \in B_{q,p}^{2-2/p}(\BR_-^{N}) \text{ with }
\|\CB f\|_{B_{q,p}^{2-2/p}(\BR_-^{N})}\leq C\|f\|_{B_{q,p}^{2-2/p-1/q}(\BR^{N-1})},
\end{align*}
where $C_L$ and $C$ are positive constants and $C_L$ depends on $L$.
These properties together with Lemma \ref{lem:imbed-bexov-conti} \eqref{lem:imbed-bexov-conti-3}
yield the desired properties of \eqref{lem:extra-embed-2} and \eqref{lem:extra-embed-3} immediately.
This completes the proof of Lemma \ref{lem:extra-embed}.
\end{proof}

We finally estimate $\|\CA f\|_{L_2(\BR_-^N)}$. 

\begin{lem}\label{lem:ext-interp}
Suppose  $N\geq 3$ and $s\in(1,4/3)$.
Then, for any $f\in L_s(\BR^{N-1})\cap L_2(\BR^{N-1})$,
$\CA f\in L_2(\BR_-^N)$ with
\begin{equation*}
\|\CA f\|_{L_2(\BR_-^N)}\leq C\Big(\|f\|_{L_{s}(\BR^{N-1})}+\|f\|_{L_2(\BR^{N-1})}\Big),
\end{equation*}
where $C$ is a positive constant independent of $f$.
\end{lem}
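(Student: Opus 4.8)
The plan is to pass to the partial Fourier side in $x'$, carry out the $x_N$-integration explicitly, and then estimate the resulting frequency integral by splitting into low and high frequencies, handling the low-frequency part with the Hausdorff--Young inequality.

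First I would use Plancherel's theorem in $x'$ together with the explicit form \eqref{dfn:KL} of $\CA$: there is a dimensional constant $c_N>0$ such that for each fixed $x_N<0$,
\[
\|[\CA f](\cdot,x_N)\|_{L_2(\BR^{N-1})}^2 = c_N\int_{\BR^{N-1}} e^{2|\xi'|x_N}|\wht f(\xi')|^2\intd\xi'.
\]
Integrating in $x_N$ over $(-\infty,0)$, using $\int_{-\infty}^0 e^{2|\xi'|x_N}\intd x_N=(2|\xi'|)^{-1}$, and applying Tonelli's theorem (all integrands being nonnegative) yields
\[
\|\CA f\|_{L_2(\BR_-^N)}^2 = \frac{c_N}{2}\int_{\BR^{N-1}}\frac{|\wht f(\xi')|^2}{|\xi'|}\intd\xi'.
\]
So it suffices to bound $\int_{\BR^{N-1}}|\xi'|^{-1}|\wht f(\xi')|^2\intd\xi'$ by $C\big(\|f\|_{L_s(\BR^{N-1})}^2+\|f\|_{L_2(\BR^{N-1})}^2\big)$.

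Next I would split this integral over $\{|\xi'|>1\}$ and $\{|\xi'|\le 1\}$. On $\{|\xi'|>1\}$ we have $|\xi'|^{-1}\le 1$, so that part is at most $\int_{\BR^{N-1}}|\wht f(\xi')|^2\intd\xi'\le C\|f\|_{L_2(\BR^{N-1})}^2$ by Plancherel. For the part over $\{|\xi'|\le 1\}$, set $s'=s/(s-1)$; since $1<s<4/3$ we have $s'>4$, and since $1<s\le 2$ the Hausdorff--Young inequality gives $\|\wht f\|_{L_{s'}(\BR^{N-1})}\le C\|f\|_{L_s(\BR^{N-1})}$. Put $p=s'/2$, so $p>2$ with conjugate exponent $p'=s'/(s'-2)\in(1,2)$. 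By Hölder's inequality,
\[
\int_{|\xi'|\le 1}\frac{|\wht f(\xi')|^2}{|\xi'|}\intd\xi' \le \Big(\int_{|\xi'|\le 1}|\wht f(\xi')|^{s'}\intd\xi'\Big)^{2/s'}\Big(\int_{|\xi'|\le 1}|\xi'|^{-p'}\intd\xi'\Big)^{1/p'}.
\]
Because $N\ge 3$ gives $N-1\ge 2>p'$, the radial integral $\int_{|\xi'|\le 1}|\xi'|^{-p'}\intd\xi'$ converges, while the first factor is at most $\|\wht f\|_{L_{s'}(\BR^{N-1})}^2\le C\|f\|_{L_s(\BR^{N-1})}^2$. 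Adding the two regional estimates gives the claim.

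I do not expect a real obstacle: the argument is just bookkeeping of Lebesgue exponents. The one substantive point is integrability of the singular weight $|\xi'|^{-p'}$ near the origin of $\BR^{N-1}$, and this is precisely where the hypotheses are used — $s<4/3$ forces $p'<2$ and $N\ge 3$ gives $N-1\ge 2>p'$ — together with $s>1$, which is what makes Hausdorff--Young available.
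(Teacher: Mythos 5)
Your proposal is correct and follows essentially the same route as the paper: Parseval in $x'$, explicit $x_N$-integration producing the weight $|\xi'|^{-1}$, a low/high frequency split, Plancherel on $\{|\xi'|\ge 1\}$, and Hölder plus Hausdorff--Young on $\{|\xi'|\le 1\}$, with $s<4/3$ and $N\ge 3$ guaranteeing integrability of the singular weight near the origin. The only cosmetic difference is that you apply a two-factor Hölder with exponents $s'/2$ and $s'/(s'-2)$, whereas the paper writes the same estimate as a three-factor Hölder with exponents $s'$, $s'$, and $r$ satisfying $1/r=1-2/s'$, which is the identical computation.
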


\begin{proof}
Parseval's identity yields
\begin{equation*}
\|[\CA f](\cdot,x_N)\|_{L_2(\BR^{N-1})}^2=(2\pi)^{-(N-1)}\|e^{|\xi'|x_N}\wht f(\xi')\|_{L_2(\BR^{N-1})}^2,
\end{equation*}
which implies 
\begin{align*}
\int_{-\infty}^0 \|[\CA f](\cdot,x_N)\|_{L_2(\BR^{N-1})}^2\intd x_N
&=(2\pi)^{-(N-1)}\int_{\BR^{N-1}}\frac{|\wht f(\xi')|^2}{2|\xi'|}\,d\xi' \\
&=(2\pi)^{-(N-1)}\Big(\int_{|\xi'|\leq 1}+\int_{|\xi'|\geq 1}\Big) \frac{|\wht f(\xi')|^2}{2|\xi'|}\intd \xi \\
&=:I_1+I_2.
\end{align*}

We first consider $I_1$. Let $s'=s/(s-1)$ and notice $s'>4$.
We set $p=q=s'$ and $1/r=1-1/p-1/q$.
Since $r\in(1,2)$ and $N\geq 3$, it holds that
\begin{align*}
I_1 
&\leq C
\Big(\int_{|\xi'|\leq 1 } |\wht f(\xi')|^p\intd \xi'\Big)^{1/p}
\Big(\int_{|\xi'|\leq 1 } |\wht f(\xi')|^q\intd \xi'\Big)^{1/q}
\Big(\int_{|\xi'|\leq 1 } \frac{d\xi'}{|\xi'|^r}\Big)^{1/r} \\
&\leq C\|\wht f\|_{L_{s'}(\BR^{N-1})}^2.
\end{align*}
Combining this with Hausdorff-Young's inequality $\|\wht f\|_{L_{s'}(\BR^{N-1})}\leq C\|f\|_{L_s(\BR^{N-1})}$
furnishes
\begin{equation}\label{ineq:ext-I1}
I_1 \leq C\|f\|_{L_s(\BR^{N-1})}^2.
\end{equation}

We next consider $I_2$. There holds
\begin{equation*}
I_2 \leq C\int_{|\xi'|\geq 1}|\wht f(\xi')|^2\intd \xi' \leq C\int_{\BR^{N-1}}|\wht f(\xi')|^2\intd \xi',
\end{equation*}
which, combined with Parseval's identity, furnishes
\begin{equation*}
I_2\leq C\|f\|_{L_2(\BR^{N-1})}^2.
\end{equation*}
This inequality and \eqref{ineq:ext-I1} give us the desired inequality.
This completes the proof of Lemma \ref{lem:ext-interp}.
\end{proof}

\subsection{Properties of $\wht H_{q,0}^1(\BR_-^N)$}
Recall \eqref{dfn:hat-0} and define
\begin{equation*}
H_{q,0}^1(\BR_-^N)=\{f\in H_{q}^1(\BR_-^N) : f=0 \text{ on $\BR_0^N$}\}
\end{equation*}
for $q\in(1,\infty)$. The following lemma then holds.

\begin{lem}\label{lem:dense}
Let $q\in(1,\infty)$. Then the following assertions hold.
\begin{enumerate}[$(1)$]
\item\label{lem:dense-1}
$H_{q,0}^1(\BR_-^N)$ is dense in $\wht H_{q,0}^1(\BR_-^N)$, i.e.,
for any $\varphi\in \wht H_{q,0}^1(\BR\-^N)$ there exists a sequence $\{\varphi_j\}_{j=1}^\infty\subset H_{q,0}^1(\BR_-^N)$
such that $\lim_{j\to \infty}\|\nabla(\varphi_j-\varphi)\|_{L_q(\BR_-^N)}=0$.
\item\label{lem:dense-2}
$C_0^\infty(\BR_-^N)$ is dense in $\wht H_{q,0}^1(\BR\-^N)$, 
i.e.,
for any $\varphi\in \wht H_{q,0}^1(\BR\-^N)$ there exists a sequence $\{\varphi_j\}_{j=1}^\infty\subset C_0^\infty(\BR_-^N)$
such that $\lim_{j\to \infty}\|\nabla(\varphi_j-\varphi)\|_{L_q(\BR_-^N)}=0$.
\end{enumerate}
\end{lem}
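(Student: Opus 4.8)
\emph{Plan.} Both assertions are proved by an explicit cut-off plus mollification argument; the only ingredient beyond routine manipulations is the classical one-dimensional Hardy inequality in the normal variable, which is available precisely because elements of $\wht H_{q,0}^1(\BR_-^N)$ vanish on $\BR_0^N$ and $q>1$. Throughout we use that an element $\varphi\in\wht H_q^1(\BR_-^N)$ lies in $L_{q,\lc}(\overline{\BR_-^N})$, so that for a.e.\ $x'\in\BR^{N-1}$ the slice $\varphi(x',\cdot)$ is absolutely continuous up to $x_N=0$ with $\pd_N\varphi(x',\cdot)\in L_q((-\infty,0))$, and the condition ``$\varphi=0$ on $\BR_0^N$'' forces $\varphi(x',0)=0$ for a.e.\ $x'$; hence Hardy's inequality on the half-line, integrated over $x'$, gives
\begin{equation*}
\bigl\| x_N^{-1}\varphi\bigr\|_{L_q(\BR_-^N)}\leq C\|\pd_N\varphi\|_{L_q(\BR_-^N)}.
\end{equation*}

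\emph{Step 1 (spatial truncation, proof of \eqref{lem:dense-1}).} Fix $\psi\in C_0^\infty(\BR^N)$ with $\psi\equiv1$ on $B_1$ and $\spp\psi\subset B_2$, and for $\varphi\in\wht H_{q,0}^1(\BR_-^N)$ put $\varphi_j(x)=\psi(x/j)\varphi(x)$. Since $\spp\varphi_j\subset\overline{B_{2j}}\cap\overline{\BR_-^N}$ is bounded and $\varphi\in L_{q,\lc}$, we have $\varphi_j\in L_q(\BR_-^N)$; moreover $\nabla\varphi_j=\psi(x/j)\nabla\varphi+j^{-1}(\nabla\psi)(x/j)\,\varphi\in L_q(\BR_-^N)$, and the trace of $\varphi_j$ on $\BR_0^N$ equals $\psi(\cdot/j)$ times the trace of $\varphi$, hence vanishes; thus $\varphi_j\in H_{q,0}^1(\BR_-^N)$. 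The first term converges to $\nabla\varphi$ in $L_q(\BR_-^N)$ by dominated convergence. The second term is supported in $\{j\leq|x|\leq2j\}\cap\BR_-^N$, where $|x_N|\leq|x|\leq 2j$ and hence $j^{-1}\leq2|x_N|^{-1}$, so
\begin{equation*}
\bigl\|j^{-1}(\nabla\psi)(x/j)\,\varphi\bigr\|_{L_q(\BR_-^N)}
\leq C\bigl\|x_N^{-1}\varphi\bigr\|_{L_q(\{|x|\geq j\}\cap\BR_-^N)}\longrightarrow0\quad(j\to\infty),
\end{equation*}
by the displayed Hardy estimate together with absolute continuity of the Lebesgue integral. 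Hence $\nabla\varphi_j\to\nabla\varphi$ in $L_q(\BR_-^N)$, which is \eqref{lem:dense-1}.

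\emph{Step 2 (boundary layer cut-off and mollification, proof of \eqref{lem:dense-2}).} By Step 1 it suffices to approximate, in the seminorm $\|\nabla\cdot\|_{L_q(\BR_-^N)}$, a fixed $\varphi\in H_{q,0}^1(\BR_-^N)$ with compact support in $\overline{\BR_-^N}$; note $x_N^{-1}\varphi\in L_q(\BR_-^N)$ still holds. Choose $\chi\in C^\infty(\BR)$ with $\chi(t)=0$ for $t\geq-1$ and $\chi(t)=1$ for $t\leq-2$, and set $\varphi^\delta(x)=\chi(x_N/\delta)\varphi(x)$, which has compact support contained in $\{x_N\leq-\delta\}$. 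Exactly as in Step 1 — using $\delta^{-1}|\chi'(x_N/\delta)|\leq C|x_N|^{-1}$ on $\{-2\delta\leq x_N\leq-\delta\}$ and $x_N^{-1}\varphi\in L_q$ — one gets $\varphi^\delta\to\varphi$ and $\nabla\varphi^\delta\to\nabla\varphi$ in $L_q(\BR_-^N)$ as $\delta\to0$. Since $\varphi^\delta\in H_q^1(\BR^N)$ after extension by zero and has compact support in the open set $\BR_-^N$, mollification of $\varphi^\delta$ with parameter less than $\delta/2$ produces functions in $C_0^\infty(\BR_-^N)$ converging to $\varphi^\delta$ in $H_q^1$, hence in the gradient seminorm. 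A diagonal argument in $\delta$ and the mollification parameter, combined with Step 1, yields \eqref{lem:dense-2}.

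\emph{Main obstacle.} The crux is that one works with the homogeneous seminorm $\|\nabla\cdot\|_{L_q}$ alone, so there is no global Poincaré-type control of $\varphi$ itself and the error terms generated by the two cut-offs cannot be absorbed naively; they are instead controlled through the Hardy inequality in the $x_N$-direction, which is exactly the place where the vanishing trace on $\BR_0^N$ and the restriction $q>1$ are used. The remaining points — the pointwise identity $\varphi(x',0)=0$ for a.e.\ $x'$, the trace of a product, and the $H_q^1$-convergence of mollifications of compactly supported functions — are routine.
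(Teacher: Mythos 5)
Your argument is correct, but it is a genuinely different route from the paper's: the paper disposes of assertion (1) by citing \cite[Theorem A.3]{Shibata13} and then obtains (2) from (1) together with the standard fact that $C_0^\infty(\BR_-^N)$ is dense in $H_{q,0}^1(\BR_-^N)$, whereas you give a self-contained construction — dilation cut-off $\psi(x/j)\varphi$, boundary-layer cut-off $\chi(x_N/\delta)\varphi$, and mollification — in which both commutator error terms are absorbed through the one-dimensional Hardy inequality in the normal variable. What the citation buys the paper is brevity and no need to discuss traces of $\wht H_q^1$ functions; what your proof buys is transparency about exactly where the hypotheses enter: the vanishing trace on $\BR_0^N$ and $q>1$ are used precisely to get $\|x_N^{-1}\varphi\|_{L_q(\BR_-^N)}\leq C\|\pd_N\varphi\|_{L_q(\BR_-^N)}$, which is what makes cut-off errors controllable in the homogeneous seminorm where no Poincar\'e inequality is available. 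Two small points you should make explicit if this were written out in full: (i) since $\wht H_q^1(\BR_-^N)$ only assumes $\varphi\in L_{q,\lc}$, membership of $\varphi_j$ in $L_q$ on sets touching the boundary is most cleanly obtained from the same Hardy bound (via $|\varphi|\leq 2j\,|x_N^{-1}\varphi|$ on $\spp\varphi_j$) rather than from local integrability alone; and (ii) the interpretation of ``$\varphi=0$ on $\BR_0^N$'' as $\varphi(x',0)=0$ for a.e.\ $x'$ along the absolutely continuous representative on normal lines is exactly the trace notion needed for Hardy, which you correctly flag as routine. With these remarks your proof stands on its own and, unlike the paper's, does not outsource the key density statement.
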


\begin{proof}
(1) See \cite[Theorem A.3]{Shibata13}. 

(2) Combining \eqref{lem:dense-1} with the fact that $C_0^\infty(\BR_-^N)$ is dense in $H_{q,0}^1(\BR_-^N)$
shows the desired property.
This completes the proof of Lemma \ref{lem:dense}.
\end{proof}

\section{An estimate for Duhamel's integral}\label{sec:duhamel}

This section proves the following proposition, which plays a crucial role in establishing
our linear theory in Section \ref{sec:lin-theory} below.

\begin{prp}\label{prp:duha}
Let $X$ be a Banach space and $(T(t))_{t\geq 0}$ be a 
(not necessarily bounded)
$C_0$-analytic semigroup on $X$.
Let $A$ be the infinitesimal generator  of $(T(t))_{t\geq 0}$ and $D(A)$ the domain of $A$ with the graph norm
$\|x\|_{D(A)}=\|x\|_X+\|Ax\|_X$.
Suppose that the following conditions $({\rm a})$, $({\rm b})$, and $({\rm c})$ hold.
\begin{enumerate}[{\rm (a)}]
\item
Let $\delta >0$ and $s_0\in [1,\infty)$ such that $s_0\delta>1$.
\item
There exists a positive constant $M$ such that 
\begin{equation}\label{assum:duha-2}
\|T(t)x\|_{D(A)}\leq M  t^{-a-\delta}\|x\|_Y \quad (t\geq 1)
\end{equation}
for an $a\in(0,1]$ and a Banach space $Y\subset X$. 
\item
Let $p\in [s_0,\infty)$ and $\langle t \rangle f\in L_p(\BR_+,Y\cap D(A))$ with $t=\sqrt{1+t^2}$.
\end{enumerate}
Then 
\begin{equation*}
u(t)=\int_0^t T(t-\tau)f(\tau)\intd \tau \quad (t>0)
\end{equation*}
satisfies
\begin{equation}\label{est-duhamel}
\|\langle t \rangle^a \pd_t u\|_{L_p((2,\infty),X)}+\|\langle t \rangle^a u\|_{L_p((2,\infty),D(A))}
\leq C\|\langle t \rangle f\|_{L_p(\BR_+,Y\cap D(A))}
\end{equation}
for some positive constant $C$.
\end{prp}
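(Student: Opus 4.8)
The plan is to first remove the time derivative by means of the evolution equation, and then to bound $u$ in $D(A)$ by splitting Duhamel's integral into a diagonal and an off‑diagonal part. Since $f(\tau)\in D(A)$ and a $C_0$‑semigroup is bounded on every compact time interval, both in $\CL(X)$ and --- via $AT(s)=T(s)A$ on $D(A)$ --- in $\CL(D(A))$, the Bochner integral defining $u(t)$ converges in $D(A)$, $u\in C(\BR_+,D(A))$, and $u$ solves $\pd_tu=Au+f$ for a.e. $t>0$ (the standard argument: near the diagonal one uses boundedness of $T(s)$ for $s\in[0,1]$, away from it \eqref{assum:duha-2}). Hence
\begin{equation*}
\|\langle t\rangle^a\pd_tu\|_{L_p((2,\infty),X)}\le\|\langle t\rangle^au\|_{L_p((2,\infty),D(A))}+\|\langle t\rangle f\|_{L_p(\BR_+,X)},
\end{equation*}
and it suffices to bound $\|\langle t\rangle^au\|_{L_p((2,\infty),D(A))}$ by $C\|\langle t\rangle f\|_{L_p(\BR_+,Y\cap D(A))}$.

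For $t>2$ (so that $t-1>1$) I would write $u(t)=\int_{t-1}^tT(t-\tau)f(\tau)\intd\tau+\int_0^{t-1}T(t-\tau)f(\tau)\intd\tau=:Q(t)+P(t)$. In $Q$ the elapsed time $t-\tau$ lies in $[0,1]$, so $\|T(t-\tau)\|_{\CL(D(A))}\le C$; using $\langle t\rangle\le2\langle\tau\rangle$ whenever $|t-\tau|\le1$ and $t\ge2$ one gets $\langle t\rangle^a\|Q(t)\|_{D(A)}\le C(\chi_{(0,1)}*g_0)(t)$ with $g_0(\tau):=\langle\tau\rangle\|f(\tau)\|_{D(A)}\in L_p(\BR_+)$, and Young's inequality with the $L_1$‑kernel $\chi_{(0,1)}$ controls $\|Q\|_{L_p((2,\infty),D(A))}$ by $\|\langle t\rangle f\|_{L_p(\BR_+,Y\cap D(A))}$.

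For $P$ the elapsed time lies in $[1,t]$, so \eqref{assum:duha-2} applies throughout and $\langle t\rangle^a\|P(t)\|_{D(A)}\le M\langle t\rangle^a\int_0^{t-1}(t-\tau)^{-a-\delta}\|f(\tau)\|_Y\intd\tau$. Using $\langle t\rangle\le\langle t-\tau\rangle+\langle\tau\rangle$ together with $(x+y)^a\le x^a+y^a$ for $0<a\le1$, I would split $P\le P_1+P_2$, putting the weight on $t-\tau$ in $P_1$ and on $\tau$ in $P_2$. In $P_1$ one has $(t-\tau)^{-a-\delta}\langle t-\tau\rangle^a\le C(t-\tau)^{-\delta}$, so $P_1(t)\le C(k_1*G_1)(t)$ with $k_1(s)=s^{-\delta}\chi_{(1,\infty)}(s)$ and $G_1(\tau)=\langle\tau\rangle^{-1}g(\tau)$, where $g(\tau):=\langle\tau\rangle\|f(\tau)\|_Y\in L_p(\BR_+)$; in $P_2$, writing $\langle\tau\rangle^a\|f(\tau)\|_Y=\langle\tau\rangle^{a-1}g(\tau)$, one has $P_2(t)\le C(k_2*G_2)(t)$ with $k_2(s)=s^{-a-\delta}\chi_{(1,\infty)}(s)$ and $G_2(\tau)=\langle\tau\rangle^{a-1}g(\tau)$.

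The heart of the matter --- and what I expect to be the main obstacle --- is the estimate $\|k_i*G_i\|_{L_p(\BR_+)}\le C\|g\|_{L_p(\BR_+)}$: here $g$ is only in $L_p$, while the kernels $k_i$ need not be integrable (when $\delta\le1$, resp. $a+\delta\le1$), so a plain $L_1$‑kernel Young inequality fails. The remedy is to trade a little of the time weight for integrability of $G_i$: $\langle\cdot\rangle^{-1}\in L_m(\BR_+)$ for every $m>1$ and, when $a<1$, $\langle\cdot\rangle^{a-1}\in L_m(\BR_+)$ for every $m>1/(1-a)$, so $G_1,G_2\in L_q(\BR_+)$ for a range of $q>p$, while $k_1\in L_r$ for $1/r<\delta$ and $k_2\in L_r$ for $1/r<a+\delta$. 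One then applies $\|k_i*G_i\|_{L_p}\le\|k_i\|_{L_r}\|G_i\|_{L_q}$ with $1+1/p=1/r+1/q$; the constraints $1/r<\delta$ (or $a+\delta$), $1/r\ge1/p$ (equivalently $q\ge1$) and $G_i\in L_q$ are simultaneously solvable precisely because $\delta>1/p$, which is exactly hypothesis (a) ($s_0\delta>1$) combined with $p\ge s_0$. When $a=1$, or more generally when the kernel already lies in $L_1$, one simply takes $q=p$. Since $\|g\|_{L_p(\BR_+)}\le\|\langle t\rangle f\|_{L_p(\BR_+,Y\cap D(A))}$, adding the bounds for $Q$, $P_1$, $P_2$ gives \eqref{est-duhamel}; the only genuinely delicate point is the exponent bookkeeping in this last step, everything else being routine once the split at $\tau=t-1$ and the subadditivity of $\langle\cdot\rangle^a$ are in place.
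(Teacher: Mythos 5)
Your proposal is correct in substance, but it reaches the off--diagonal estimate by a genuinely different route than the paper. The paper splits Duhamel's integral at $\tau=t/2$ \emph{and} $\tau=t-1$: the piece over $(t-1,t)$ is treated exactly like your $Q$ (local boundedness of the semigroup plus $\langle t\rangle\leq C\langle\tau\rangle$); over $(0,t/2)$ it pulls $t^{-a-\delta}$ out of the integral and applies H\"older against $\langle\tau\rangle^{-1}$; and over $(t/2,t-1)$ it uses $\langle\tau\rangle\geq Ct$, distributes the kernel $(t-\tau)^{-(a+\delta)}$ between the two H\"older exponents, applies Fubini, and runs a case analysis according to $a+\delta<1$, $=1$, $>1$. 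Your single split at $\tau=t-1$, combined with the subadditivity $\langle t\rangle^a\leq\langle t-\tau\rangle^a+\langle\tau\rangle^a$ and Young's convolution inequality for the kernels $s^{-\delta}\chi_{(1,\infty)}(s)$ and $s^{-a-\delta}\chi_{(1,\infty)}(s)$, avoids both Fubini and the case distinction, at the price of the exponent bookkeeping; both arguments hinge on $p\delta\geq s_0\delta>1$, and both deduce the $\pd_t u$ bound from $\pd_t u=Au+f$ (the paper first reduces to $f\in C_0^\infty(\BR_+,Y\cap D(A))$ by density to justify this identity and the manipulations under the integral, a step you should at least record). One slip to correct: multiplying $g\in L_p$ by $\langle\cdot\rangle^{-1}$ or $\langle\cdot\rangle^{a-1}$ yields $G_i\in L_q$ only for a range of $q\leq p$, not $q>p$, and in any case Young's relation $1+1/p=1/r+1/q$ with $r\geq1$ forces $q\in[1,p]$; your list of constraints is consistent with this, and the system does close: for $P_1$ take $r=p$, $q=1$ (using $p\delta>1$ and $\langle\cdot\rangle^{-1}\in L_{p'}$), and for $P_2$ take any $1/r\in(\max(a,1/p),\min(1,a+\delta))$ when $a<1<p$, with $r=1$, $q=p$ when $a=1$ and $r=q=1$ when $p=1$.
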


\begin{proof}
We consider the case $p>1$ only in this proof.
The case $p=1$ can be proved similarly to the case $p>1$.\footnote{
In the case $p=1$, $s_0=1$ and $\delta>1$.
}

Let $C_0^\infty(\BR_+,Y\cap D(A))$
be the set of all $C^\infty$ functions with values in $Y\cap D(A)$
whose supports are compact and contained in $\BR_+$.
Since there exists $\{f_j\}_{j=1}^\infty \subset C_0^\infty(\BR_+,Y\cap D(A))$ such that
\begin{equation*}
\lim_{j\to \infty}\|\langle t\rangle(f_j-f)\|_{L_p(\BR_+,Y\cap D(A))}=0,
\end{equation*}
it suffices to show \eqref{est-duhamel} for $f\in C_0^\infty(\BR_+,Y\cap D(A))$ in what follows.

Let us write $u(t)=[\CU f](t)$ and let $t\geq 2$.  Then
\begin{align*}
[\CU f](t)
&=\Big(\int_0^{t/2}+\int_{t/2}^{t-1}+\int_{t-1}^t\Big)T(t-\tau)f(\tau)\intd \tau \\
&=:I_1(t)+I_2(t)+I_3(t).
\end{align*}

{\bf Step 1}: Estimate $I_1(t)$.
By \eqref{assum:duha-2}, we see that
\begin{align*}
\|I_1(t)\|_{D(A)}
&\leq C\int_0^{t/2}( t-\tau)^{-a-\delta}\|f(\tau)\|_Y\intd \tau \\
&\leq C t ^{-a-\delta}\int_0^{t/2} \|f(\tau)\|_Y\intd\tau \\
&\leq C t ^{-a-\delta}
\bigg(\int_0^\infty\langle \tau\rangle^{-p'}\intd\tau\bigg)^{1/p'}
\bigg(\int_0^\infty\big(\langle\tau\rangle\|f(\tau)\|_Y\big)^p\intd\tau\bigg)^{1/p},
\end{align*}
where $p'=p/(p-1)$. Thus
\begin{equation}\label{duha:eq-1}
\langle t \rangle^{a}\|I_1(t)\|_{D(A)}
\leq C\langle t \rangle^a t^{-a-\delta}\|\langle t \rangle f\|_{L_p(\BR_+,Y)}.
\end{equation}
Since $\langle t\rangle^a t^{-a-\delta}\leq C t^{-\delta}$ for $t\geq 2$,
it holds by the assumption $s_0\delta>1$ that 
\begin{equation*}
\int_2^\infty  (\langle t\rangle^a t^{-a-\delta})^p\intd t
\leq C \int_2^\infty t^{-p\delta}\intd t
\leq \int_2^\infty  t ^{- s_0\delta}\intd t\leq C.
\end{equation*}
Combining this inequality with \eqref{duha:eq-1} furnishes
\begin{equation}\label{duha:eq-2}
\|\langle t \rangle^{a} I_1\|_{L_p((2,\infty),D(A))}
\leq C\|\langle t \rangle f\|_{L_p(\BR_+,Y)}.
\end{equation}

{\bf Step 2}: Estimate $I_2(t)$.
By \eqref{assum:duha-2}, we see that
\begin{align*}
\|I_2(t)\|_{D(A)}
&\leq C\int_{t/2}^{t-1} (t-\tau)^{-a-\delta}\|f(\tau)\|_Y\intd \tau \\
&\leq C\langle t \rangle^{-1}\int_{t/2}^{t-1} ( t-\tau)^{-a-\delta} \langle \tau\rangle \|f(\tau)\|_Y\intd\tau.
\end{align*}
Since $(t -\tau)^{-a-\delta}=(t -\tau)^{-(a+\delta)(1/p+1/p')}$, 
\begin{align*}
\|I_2(t)\|_{D(A)}&\leq C\langle t \rangle^{-1}
\bigg(\int_{t/2}^{t-1} (t-\tau)^{-(a+\delta)}\intd\tau\bigg)^{1/p'} \\
&\times
\bigg(\int_{t/2}^{t-1}(t-\tau)^{-(a+\delta)}\big(\langle \tau\rangle \|f(\tau)\|_Y\big)^p\intd\tau\bigg)^{1/p}.
\end{align*}
Combining this inequality with
\begin{equation*}
\int_{t/2}^{t-1} (t-\tau)^{-(a+\delta)}\intd\tau\leq CJ(t), \quad 
J(t)=\left\{\begin{aligned}
&t^{1-(a+\delta)} &&(a+\delta<1), \\
&\log t &&(a+\delta =1), \\
&1 && (a+\delta>1),
\end{aligned}\right.
\end{equation*}
furnishes
\begin{align*}
&\langle t \rangle^a\|I_2(t)\|_{D(A)} \\
&\leq C\langle t \rangle^{-(1-a)} J(t)^{1/p'} 
\bigg(\int_{t/2}^{t-1}(t-\tau)^{-(a+\delta)}\big(\langle \tau\rangle \|f(\tau)\|_Y\big)^p\intd\tau\bigg)^{1/p}.
\end{align*}
It thus follows from $J(t)^{1/p'}=J(t)^{1-1/p}$ that 
\begin{align*}
&\|\langle t \rangle^a I_2\|_{L_p((2,\infty),D(A))}^p \\
&\leq C\int_2^\infty \langle t\rangle^{-p(1-a)}J(t)^{p-1}
\bigg(\int_{t/2}^{t-1} (t-\tau)^{-(a+\delta)}\big(\langle \tau\rangle \|f(\tau)\|_Y\big)^p\intd\tau\bigg) dt \\
&=:\text{(RHS)},
\end{align*}
which, combined with Fubini's theorem, yields
\begin{equation*}
\text{(RHS)}=C\int_1^\infty\big(\langle \tau\rangle \|f(\tau)\|_Y\big)^p
\bigg(\int_{\tau+1}^{2\tau} \langle t\rangle^{-p(1-a)}J(t)^{p-1}( t-\tau)^{-(a+\delta)}\intd t\bigg)
\intd\tau.
\end{equation*}

Let us define
\begin{align*}
K(\tau)=\int_{\tau+1}^{2\tau} \langle t\rangle^{-p(1-a)}J(t)^{p-1}( t-\tau)^{-(a+\delta)}\intd t \quad (\tau\geq 1),
\end{align*}
and then
\begin{equation}\label{K-form-est}
\|\langle t \rangle^a I_2\|_{L_p((2,\infty),D(A))}^p
\leq C\int_1^\infty\big(\langle \tau\rangle \|f(\tau)\|_Y\big)^p K(\tau)\intd\tau.
\end{equation}
From this viewpoint, we estimate $K(\tau)$ in what follows.

We first consider the case $a+\delta<1$. In this case, it holds for $t\in[\tau+1,2\tau]$ that
\begin{align*}
\langle t\rangle^{-p(1-a)}J(t)^{p-1}( t-\tau)^{-(a+\delta)}
&\leq t^{-p\delta-\{1-(a+\delta)\}}(t-\tau)^{-(a+\delta)} \\
&\leq (t-\tau)^{-p\delta-1},
\end{align*}
and thus
\begin{equation*}
K(\tau)\leq \int_{\tau+1}^{2\tau}(t-\tau)^{-p\delta-1}\intd t \leq C.
\end{equation*}

We next consider the case $a+\delta=1$. In this case,
\begin{equation*}
K(\tau)=\int_{\tau+1}^{2\tau}\langle t\rangle^{-p\delta}(\log t)^{p-1}(t-\tau)^{-1}\intd t,
\end{equation*}
and also
\begin{equation*}
\langle t\rangle^{-p\delta}(\log t)^{p-1}
\leq \langle t\rangle^{-p\delta/2}\sup_{t\geq 2}\Big(\langle t\rangle^{-p\delta/2}(\log t)^{p-1}\Big).
\end{equation*}
By these properties, we see that
\begin{equation*}
K(\tau)\leq C\int_{\tau+1}^{2\tau}\langle t\rangle^{-p\delta/2}(t-\tau)^{-1}\intd t
\leq C\int_{\tau+1}^{2\tau}(t-\tau)^{-p\delta/2-1}\intd t\leq C.
\end{equation*}

Finally, we consider the case $a+\delta>1$. In this case,
\begin{equation*}
K(\tau)=\int_{\tau+1}^{2\tau}\langle t\rangle^{-p(1-a)}(t-\tau)^{-(a+\delta)}\intd t
\leq \int_{\tau+1}^{2\tau}(t-\tau)^{-(a+\delta)}\intd t\leq C.
\end{equation*}

Summing up the estimates of $K(\tau)$ above,
we have obtained from \eqref{K-form-est}
\begin{equation}\label{duha:eq-3}
\|\langle t \rangle^a I_2\|_{L_p((2,\infty),D(A))}
\leq C\|\langle t \rangle f\|_{L_p(\BR_+,Y)}.
\end{equation}

{\bf Step 3}: Estimate $I_3(t)$.
Notice the following property of analytic $C_0$-semigroups:
\begin{equation*}
\|T(t)x\|_{D(A)}\leq Ce^{\gamma t}\|x\|_{D(A)} \quad (t> 0)
\end{equation*}
for some positive constant $\gamma$. Then
\begin{align*}
\|I_3(t)\|_{D(A)}\leq C\int_{t-1}^t e^{\gamma (t-\tau)}\|f(\tau)\|_{D(A)}\intd \tau
\leq Ce^{\gamma}\int_{t-1}^t \|f(\tau)\|_{D(A)}\intd \tau.
\end{align*}
Since $\langle t \rangle \leq C\langle \tau\rangle $ for $\tau\in[t-1,t]$, it follows from the last inequality that
\begin{align*}
\langle t \rangle ^a \|I_3(t)\|_{D(A)}
&\leq C\int_{t-1}^t\langle \tau\rangle^a\|f(\tau)\|_{D(A)}\intd \tau 
\leq C\int_{t-1}^t\langle \tau\rangle\|f(\tau)\|_{D(A)}\intd \tau \\
&\leq C\bigg(\int_{t-1}^t\intd\tau\bigg)^{1/p'}
\bigg(\int_{t-1}^t \big(\langle \tau\rangle\|f(\tau)\|_{D(A)}\big)^p\intd\tau\bigg)^{1/p}.
\end{align*}
Hence, 
\begin{align*}
\|\langle t\rangle^a I_3\|_{L_p((2,\infty),D(A))}^p
\leq C\int_2^\infty\int_{t-1}^t\big(\langle \tau\rangle\|f(\tau)\|_{D(A)}\big)^p\intd\tau dt.
\end{align*}
By Fubini's theorem
\begin{align*}
&\int_2^\infty\int_{t-1}^t\big(\langle \tau\rangle\|f(\tau)\|_{D(A)}\big)^p\intd\tau dt \\
&=\bigg(\int_{1}^2\int_2^{\tau+1}+\int_{2}^\infty\int_{\tau}^{\tau+1}\bigg)
\big(\langle \tau\rangle\|f(\tau)\|_{D(A)}\big)^p\intd t d\tau \\
&=\int_1^2(\tau-1)\big(\langle \tau\rangle\|f(\tau)\|_{D(A)}\big)^p\intd\tau
+\int_2^\infty\big(\langle \tau\rangle\|f(\tau)\|_{D(A)}\big)^p\intd\tau,
\end{align*}
and thus
\begin{equation*}
\|\langle t\rangle^a I_3\|_{L_p((2,\infty),D(A))}
\leq C\|\langle t\rangle f\|_{L_p(\BR_+,D(A))}.
\end{equation*}

Summing up the last inequality, \eqref{duha:eq-2}, and  \eqref{duha:eq-3}, we have obtained
\begin{equation}\label{duha:eq-5}
\|\langle t\rangle^a\CU f\|_{L_p((2,\infty),D(A))}
\leq C\|\langle t\rangle f\|_{L_p(\BR_+,Y\cap D(A))}.
\end{equation}

{\bf Step 4}: Estimate the time derivative of $\CU f$.
We observe that
\begin{align*}
\pd_t [\CU f](t)
&=f(t)+\int_0^t\pd_t T(t-\tau)f(\tau)\intd \tau \\
&=f(t)+A\int_0^t T(t-\tau)f(\tau)\intd \tau \\
&=f(t)+A [\CU  f](t).
\end{align*}
Then
\begin{equation*}
\|\pd_t[\CU f](t)\|_X \leq \|f(t)\|_X+\|A[\CU  f](t)\|_X 
\leq  \|f(t)\|_{D(A)}+\|[\CU f](t)\|_{D(A)},
\end{equation*}
which furnishes
\begin{align*}
\|\langle t\rangle^a\pd_t\,\CU f\|_{L_p((2,\infty),X)}
&\leq
\|\langle t\rangle^a f\|_{L_p((2,\infty),D(A))}
+\|\langle t\rangle^a \CU f\|_{L_p((2,\infty),D(A))}.
\end{align*}
Combining this inequality with \eqref{duha:eq-5} yields
\begin{equation*}
\|\langle t\rangle^a\pd_t\,\CU f\|_{L_p((2,\infty),X)}
\leq C\|\langle t\rangle f\|_{L_p(\BR_+,Y\cap D(A))}.
\end{equation*}
This completes the proof of Proposition \ref{prp:duha}.
\end{proof}

\section{A $C_0$-analytic semigroup}\label{sec:sg}

This section introduces a $C_0$-analytic semigroup
associated with the linearized system of \eqref{nonl:fix1} and its properties.
All of the results of Subsection \ref{subsec:sg} below hold for $N\geq 2$.

\subsection{Generation of the semigroup}\label{subsec:sg}
Let us start with the following resolvent problem independent of time $t$:
\begin{equation}\label{rp1}
\left\{\begin{aligned}
\lambda \eta -u_N&=d  && \text{on $\BR_0^N$,} \\
\lambda \Bu-\mu\Delta\Bu+\nabla\Fp &=\Bf  && \text{in $\BR_-^N$, } \\
\dv\Bu &= 0 && \text{in $\BR_-^N$, } \\
(\mu\BD(\Bu)-\Fp\BI)\Be_N+(c_g-c_\sigma\Delta')\eta\Be_N &=0  && \text{on $\BR_0^N$,} 
\end{aligned}\right.
\end{equation}
where $\lambda$ is the resolvent parameter varying in the sector 
\begin{align*}
\Sigma_{\omega,\gamma}=\Sigma_\omega+\gamma=\{\lambda+\gamma :\lambda \in \Sigma_\omega\}, \quad
\Sigma_{\omega}=\{\lambda\in\BC\setminus\{0\} : |\arg\lambda|<\pi-\omega \}
\end{align*}
for $\gamma\in\BR$ and $\omega\in(0,\pi/2)$.
By \cite[Theorem 1.3]{SS12}, we have

\begin{lem}\label{lem:SS12}
Let $\omega\in(0,\pi/2)$ and $q\in(1,\infty)$.
Then there exists a positive constant $\gamma_0=\gamma_0(\omega)\geq 1$ such that,
for any $\lambda\in\Sigma_{\omega,\gamma_0}$ and $(d,\Bf)\in W_q^{2-1/q}(\BR^{N-1})\times L_q(\BR_-^N)^N$,
\eqref{rp1} admits a unique solution
$(\eta,\Bu,\Fp)$ with
\begin{equation*}
(\eta,\Bu,\Fp)\in W_q^{3-1/q}(\BR^{N-1})\times H_q^2(\BR_-^N)^N\times \wht H_q^1(\BR_-^N).
\end{equation*}
In addition,
\begin{align*}
&|\lambda|\|(\eta,\Bu)\|_{W_q^{2-1/q}(\BR^{N-1})\times L_q(\BR_-^N)^N}
+\|(\eta,\Bu)\|_{W_q^{3-1/q}(\BR^{N-1})\times H_q^2(\BR_-^N)^N} \\
&\leq C\|(d,\Bf)\|_{W_q^{2-1/q}(\BR^{N-1})\times L_q(\BR_-^N)^N}
\end{align*}
for some positive constant $C=C(N,q,\omega,\gamma_0)$ independent of $\lambda$.
\end{lem}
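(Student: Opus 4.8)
The plan is to construct $(\eta,\Bu,\Fp)$ by an explicit solution formula obtained from the partial Fourier transform in the tangential variables, and to read off the resolvent estimate from Fourier multiplier bounds on the resulting symbols; this is the scheme of \cite{SS12}. Applying $\CF_{\xi'}$ to \eqref{rp1} reduces it to a system of ordinary differential equations in $x_N\in(-\infty,0)$, with parameters $\xi'\in\BR^{N-1}$ and $\lambda\in\Sigma_{\omega,\gamma_0}$, together with the decay condition as $x_N\to-\infty$ and three scalar boundary conditions at $x_N=0$: the kinematic relation $\lambda\wht\eta-\wht u_N=\wht d$, the $N-1$ tangential stress conditions $\mu(\pd_N\wht u_j+i\xi_j\wht u_N)=0$ ($j=1,\dots,N-1$), and the normal condition $2\mu\pd_N\wht u_N-\wht\Fp+(c_g+c_\sigma|\xi'|^2)\wht\eta=0$. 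Taking the divergence of the momentum equation and using $\dv\Bu=0$ gives $\Delta\Fp=\dv\Bf$, so $\wht\Fp$ is the sum of a homogeneous term proportional to $e^{|\xi'|x_N}$ and a particular solution built from $\wht{\dv\Bf}$, while the velocity solves an inhomogeneous second-order ODE whose decaying homogeneous solutions are multiples of $e^{Ax_N}$ with $A=A(\xi',\lambda)=\sqrt{|\xi'|^2+\lambda/\mu}$, $\mathrm{Re}\,A>0$, a particular solution being furnished by the Duhamel kernel in $x_N$. The remaining free constants and $\wht\eta$ are then determined by the three boundary conditions.

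The main obstacle is the uniform invertibility of this boundary system. Eliminating $\wht\eta$ via $\wht\eta=\lambda^{-1}(\wht d+\wht u_N|_{x_N=0})$, one is left with a linear system whose determinant is the Lopatinski--Shapiro symbol $L(\xi',\lambda)$, an explicit function of $|\xi'|$, $A$, $\lambda$ and the lower-order quantity $c_g+c_\sigma|\xi'|^2$. I would show that for $\gamma_0=\gamma_0(\omega)\ge 1$ chosen large enough---so that $|\lambda|$ is bounded below by a large constant throughout $\Sigma_{\omega,\gamma_0}$, which excludes the bounded set where $L$ might vanish---there are a positive weight $n(\xi',\lambda)$ (built from $|\xi'|$, $|\lambda|^{1/2}$ and the order of the boundary operator) and constants $c,C>0$ with $c\,n(\xi',\lambda)\le|L(\xi',\lambda)|$ and $|\pd_{\xi'}^{\alpha'}L(\xi',\lambda)|\le C\,n(\xi',\lambda)(1+|\xi'|)^{-|\alpha'|}$, uniformly in $\xi'\ne0$ and $\lambda\in\Sigma_{\omega,\gamma_0}$, and likewise for the relevant cofactors. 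Verifying these estimates is the genuinely computational step and uses the symbol calculus of Lemma \ref{lem-sym1}.

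Granting this, every coefficient in the solution formula becomes, after factoring out $e^{Ax_N}$ or $e^{|\xi'|x_N}$, a Fourier multiplier in $\xi'$ of Mikhlin type uniformly in $\lambda$, so $L_q$-bounds follow by combining Fourier multiplier theorems in the $x'$-variables with integration in $x_N$, just as in the proofs of Lemma \ref{lem:tech} and Lemma \ref{lem:ext}. Counting the homogeneity of the symbols so that each factor of $|\lambda|$ is paired with two $x_N$-derivatives or one factor of $|\xi'|^2$ yields $|\lambda|\|\Bu\|_{L_q(\BR_-^N)}+\|\Bu\|_{H_q^2(\BR_-^N)}\le C\|(d,\Bf)\|_{W_q^{2-1/q}(\BR^{N-1})\times L_q(\BR_-^N)^N}$ together with the companion bound for $\nabla\Fp$.

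For the height function, the kinematic relation gives $|\lambda|\|\eta\|_{W_q^{2-1/q}(\BR^{N-1})}\le\|d\|_{W_q^{2-1/q}(\BR^{N-1})}+C\|u_N|_{\BR_0^N}\|_{W_q^{2-1/q}(\BR^{N-1})}$, while the normal boundary condition reads $(c_g-c_\sigma\Delta')\eta=\Fp|_{\BR_0^N}-2\mu\pd_N u_N|_{\BR_0^N}$, whose right-hand side lies in $W_q^{1-1/q}(\BR^{N-1})$ with norm bounded by $\|\Bf\|_{L_q(\BR_-^N)}$; elliptic regularity for $c_g-c_\sigma\Delta'$ on $\BR^{N-1}$ then upgrades $\eta$ to $W_q^{3-1/q}(\BR^{N-1})$. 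Combining these with the trace estimates for $\Bu$ gives the asserted inequality. Finally, uniqueness follows by applying that inequality to the homogeneous problem $d=0$, $\Bf=0$, which forces $(\eta,\Bu,\Fp)=0$; alternatively it is immediate from the ODE representation, since the boundary system has only the trivial solution wherever $L(\xi',\lambda)\ne0$.
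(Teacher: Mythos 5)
The paper does not actually prove this lemma: its ``proof'' is the single line ``By \cite[Theorem 1.3]{SS12}'', so the statement is imported as a black box, and your sketch is in effect a reconstruction of the method of that cited reference (partial Fourier transform in $x'$, reduction to an ODE on $(-\infty,0)$ with the root $A=\sqrt{|\xi'|^2+\lambda/\mu}$, elimination of $\wht\eta$ through the kinematic condition, invertibility of the resulting Lopatinski system after a shift $\gamma_0$, and $L_q$ bounds via multiplier arguments of the kind used in Lemmas \ref{lem-sym1}--\ref{lem:ext}). In approach this is exactly right and consistent with the source.

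Two caveats, though. First, as written your proposal is a plan rather than a proof: the entire analytic content of \cite[Theorem 1.3]{SS12} is the pair of bounds you merely assert, namely the uniform lower bound $c\,n(\xi',\lambda)\le|L(\xi',\lambda)|$ on the shifted sector and the Mikhlin-type estimates $|\pd_{\xi'}^{\alpha'}(\cdot)|\le C\,n(\xi',\lambda)(1+|\xi'|)^{-|\alpha'|}$ for the cofactor symbols; deferring these as ``the genuinely computational step'' means the hard part is untouched. Second, the phrase that a large $\gamma_0$ ``excludes the bounded set where $L$ might vanish'' is imprecise: for the gravity/surface-tension problem the zeros of the Lopatinski determinant form a family $\lambda(\xi')$ that is unbounded as $|\xi'|\to\infty$ (with $\mathrm{Re}\,\lambda(\xi')\sim -c|\xi'|^2$), so what has to be verified is that all zeros stay outside $\Sigma_{\omega,\gamma_0}$ \emph{uniformly in} $\xi'$ --- i.e.\ the unbounded branch hugs the negative real axis and only a bounded portion near $\lambda=0$ is removed by the shift. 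That uniformity is precisely part of the computation you postpone, so it should not be presented as an automatic consequence of taking $|\lambda|$ large. Aside from these points, the scheme (including the recovery of the $W_q^{3-1/q}$ regularity of $\eta$ from the normal stress condition and elliptic regularity for $c_g-c_\sigma\Delta'$, and uniqueness from the homogeneous estimate) is the standard and correct one.
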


Let us eliminate the pressure $\Fp$ from \eqref{rp1}
in order to get the semirgoup setting.
To this end, we recall \eqref{dfn:hat-0} and consider the following weak problem:
for a given $\Bf\in L_q(\BR_-^N)^N$ find $u\in\wht H_{q,0}^1(\BR_-^N)$ such that 
\begin{equation}\label{weak-p}
(\nabla u,\nabla\varphi)_{\BR_-^N}=(\Bf,\nabla\varphi)_{\BR_-^N}
\quad \text{for any $\varphi\in\wht H_{q',0}^1(\BR_-^N)$,}
\end{equation}
where $q\in(1,\infty)$ and $q'=q/(q-1)$.

The next lemma has been proved in \cite[Theorem 3.2.12]{Shibata20}.

\begin{lem}\label{lem:weak-D}
Let $q\in(1,\infty)$ and $\Bf\in L_q(\BR_-^N)^N$.
Then \eqref{weak-p} admits a unique solution $u\in\wht H_{q,0}^1(\BR_-^N)$,
which satisfies $\|\nabla u\|_{L_q(\BR_-^N)}\leq C\|\Bf\|_{L_q(\BR_-^N)}$ for some positive constant $C=C(N,q)$.
\end{lem}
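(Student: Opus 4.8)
The plan is to reduce the half-space problem to a whole-space problem by reflection and then invoke the $L_q$-boundedness of the (second-order) Riesz transforms; uniqueness will follow by a duality argument that uses the solution operator for the conjugate exponent $q'$, so existence must be carried out for general $q\in(1,\infty)$ before uniqueness is addressed.

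For existence and the estimate, given $\Bf=(f_1,\dots,f_N)^\SST\in L_q(\BR_-^N)^N$ I would extend $f_1,\dots,f_{N-1}$ to $\BR^N$ oddly in $x_N$ and $f_N$ evenly in $x_N$, obtaining $\wtd\Bf\in L_q(\BR^N)^N$ with $\|\wtd\Bf\|_{L_q(\BR^N)}=2^{1/q}\|\Bf\|_{L_q(\BR_-^N)}$; these parities are chosen precisely so that $\dv\wtd\Bf$ is odd in $x_N$. Let $U$ be the solution of $-\Delta U=-\dv\wtd\Bf$ in $\BR^N$ with $\nabla U\in L_q(\BR^N)^N$; then each $\pd_kU$ equals a finite linear combination of the terms $R_kR_j\wtd f_j$, $R_j$ the Riesz transforms, so $\|\nabla U\|_{L_q(\BR^N)}\leq C\|\wtd\Bf\|_{L_q(\BR^N)}\leq C\|\Bf\|_{L_q(\BR_-^N)}$ by the Mikhlin--H\"ormander multiplier theorem. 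Since $\dv\wtd\Bf$ is odd in $x_N$, so is $U$, whence $u:=U|_{\BR_-^N}$ vanishes on $\BR_0^N$ and belongs to $\wht H_{q,0}^1(\BR_-^N)$. For $\varphi\in\wht H_{q',0}^1(\BR_-^N)$, extending $\varphi$ oddly to $\Phi$ and noting that $\nabla U\cdot\nabla\Phi$ and $\wtd\Bf\cdot\nabla\Phi$ are both even in $x_N$, one gets $(\nabla u,\nabla\varphi)_{\BR_-^N}=\tfrac12(\nabla U,\nabla\Phi)_{\BR^N}=\tfrac12(\wtd\Bf,\nabla\Phi)_{\BR^N}=(\Bf,\nabla\varphi)_{\BR_-^N}$, so $u$ solves \eqref{weak-p} with the claimed bound; to license the identity for every such $\varphi$ I would first take $\varphi\in C_0^\infty(\BR_-^N)$ and pass to the limit via Lemma \ref{lem:dense}.

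For uniqueness, suppose $u\in\wht H_{q,0}^1(\BR_-^N)$ solves \eqref{weak-p} with $\Bf=0$. Given an arbitrary $\Bg\in C_0^\infty(\BR_-^N)^N$, let $w\in\wht H_{q',0}^1(\BR_-^N)$ be the solution produced by the construction above applied with data $\Bg$ and exponent $q'$ (recall $(q')'=q$). Testing the equation for $w$ against $\varphi=u$ and the equation for $u$ against $\psi=w$ gives $(\Bg,\nabla u)_{\BR_-^N}=(\nabla w,\nabla u)_{\BR_-^N}=0$; since $\Bg$ is arbitrary, $\nabla u=0$ a.e., hence $u$ is constant on the connected domain $\BR_-^N$, and $u=0$ on $\BR_0^N$ forces $u=0$.

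The main obstacle is not the multiplier estimate but the functional-analytic bookkeeping: one must confirm that $\|\nabla\cdot\|_{L_q(\BR_-^N)}$ is a norm on $\wht H_{q,0}^1(\BR_-^N)$ and that this space is complete, that the odd/even reflections carry $\wht H_{q,0}^1(\BR_-^N)$ into $\wht H_q^1(\BR^N)$ while preserving the homogeneous boundary trace, and that $u=U|_{\BR_-^N}$ genuinely lies in $\wht H_{q,0}^1(\BR_-^N)$. An alternative that avoids reflection is to solve \eqref{weak-p} explicitly via the partial Fourier transform in $x'$ (in the spirit of the proof of Lemma \ref{lem:SS12}), solving the resulting ODE in $x_N$ with the Dirichlet condition at $x_N=0$ and decay at $-\infty$, and then to estimate the kernel operators by means of Lemma \ref{lem:tech}; the case $q=2$ is in any event immediate from the Lax--Milgram lemma on the Hilbert space $\wht H_{2,0}^1(\BR_-^N)$.
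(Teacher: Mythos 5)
Your argument is correct. Note, however, that the paper does not prove this lemma at all: it simply cites \cite[Theorem 3.2.12]{Shibata20}, so what you have written is a self-contained replacement for that citation rather than a parallel of an argument in the text. Your route --- odd extension of $f_1,\dots,f_{N-1}$ and even extension of $f_N$ so that $\dv\wtd\Bf$ is odd, solving $\Delta U=\dv\wtd\Bf$ in $\BR^N$ by the multipliers $\xi_k\xi_j/|\xi|^2$, restricting the odd solution to $\BR_-^N$, and proving uniqueness by duality with the $q'$-problem --- is the standard mechanism behind the cited result for the half-space, and all the key computations check out: the parity bookkeeping does make $\nabla U\cdot\nabla\Phi$ and $\wtd\Bf\cdot\nabla\Phi$ even, the identity for $\varphi\in C_0^\infty(\BR_-^N)$ extends to $\wht H_{q',0}^1(\BR_-^N)$ by Lemma \ref{lem:dense} since both sides are continuous in $\|\nabla\varphi\|_{L_{q'}(\BR_-^N)}$, and in the duality step $u$ is an admissible test function for the $q'$-problem because $(q')'=q$. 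The only points you should tighten in a written version are the ones you partly flag yourself: the Riesz-transform construction directly produces $\nabla U$, so you must either exhibit $U$ itself (e.g.\ via the Newtonian potential, or by checking the vector field is curl-free and integrating) and fix the additive constant so that $U$ is genuinely odd and lies in $H_{q,{\rm loc}}^1$ near $\{x_N=0\}$, which is what gives $u|_{\BR_0^N}=0$ and hence $u\in\wht H_{q,0}^1(\BR_-^N)$; and in the uniqueness step, after $\nabla u=0$ you correctly use the zero trace (not integrability) to exclude nonzero constants. These are routine repairs, not gaps in the idea.
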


Let $\Bf\in L_q(\BR_-^N)^N$ and $g\in W_q^{1-1/q}(\BR^{N-1})$.
Let $\wtd g\in H_q^1(\BR_-^N)$ be an extension of $g$ such that
$\wtd g = g$ on $\BR_0^N$ and
$\|\wtd g \|_{H_q^1(\BR_-^N)}\leq C\|g\|_{W_q^{1-1/q}(\BR^{N-1})}$.
Lemma \ref{lem:weak-D} then furnishes that 
there exists $\wtd u\in \wht H_{q,0}^1(\BR_-^N)$ such that
\begin{equation*}
(\nabla \wtd u,\nabla\varphi)_{\BR_-^N}=(\Bf-\nabla \wtd g,\nabla\varphi)_{\BR_-^N}
\quad \text{for any $\varphi\in\wht H_{q',0}^1(\BR_-^N)$.}
\end{equation*}
Setting $u=\wtd u+\wtd g \in \wht H_{q,0}^1(\BR_-^N)+H_q^1(\BR_-^N)$ yields 
\begin{equation}\label{weakD}
\left\{\begin{aligned}
&(\nabla u,\nabla\varphi)_{\BR_-^N} =(\Bf,\nabla\varphi)_{\BR_-^N}
&& \text{for any $\varphi\in \wht H_{q',0}^1(\BR_-^N)$,} \\
 &u=g \quad \text{on $\BR_0^N$.}
\end{aligned}\right.
\end{equation}

Let $(\eta,\Bu)\in W_q^{3-1/q}(\BR^{N-1})\times H_q^2(\BR_-^N)^N$.
We choose in \eqref{weakD}
\begin{equation*}
\Bf=\mu\Delta\Bu, \quad 
g=\Be_N\cdot(\mu\BD(\Bu)\Be_N)+(c_g-c_\sigma\Delta')\eta.
\end{equation*}
Then the mapping 
\begin{equation*}
\CK:W_q^{3-1/q}(\BR^{N-1})\times H_q^2(\BR_-^N)^N
\ni(\eta,\Bu)\mapsto \CK(\eta,\Bu)\in \wht H_{q,0}^1(\BR_-^N)+H_q^1(\BR_-^N)
\end{equation*}
can be defined as follows:\footnote{
This $\CK$ satisfies $\|\nabla \CK(\eta,\Bu)\|_{L_q(\BR_-^N)}
\leq C\|(\eta,\Bu)\|_{W_q^{3-1/q}(\BR^{N-1})\times H_q^2(\BR_-^N)^N }.$}
\begin{equation*}
\left\{\begin{aligned}
&(\nabla \CK(\eta,\Bu),\nabla\varphi)_{\BR_-^N}=(\mu\Delta\Bu,\nabla\varphi)_{\BR_-^N} 
\quad \text{for any $\varphi\in \wht H_{q',0}^1(\BR_-^N)$,} \\
&\CK(\eta,\Bu)=\Be_N\cdot (\mu\BD(\Bu)\Be_N)+(c_g-c_\sigma\Delta')\eta \quad \text{on $\BR_0^N$.}
\end{aligned}\right.
\end{equation*}

We now consider the so-called  reduced Stokes resolvent problem:
\begin{equation}\label{reduced:rp}
\left\{\begin{aligned}
\lambda \eta-u_N&= d && \text{on $\BR_0^N$,} \\
\lambda\Bu-\mu\Delta \Bu+\nabla\CK(\eta,\Bu)&=\Bf && \text{in $\BR_-^N$,} \\
(\mu\BD(\Bu)-\CK(\eta,\Bu)\BI)\Be_N+(c_g-c_\sigma\Delta')\eta\Be_N&=0 && \text{on $\BR_0^N$.} 
\end{aligned}\right.
\end{equation}
For the space $J_q(\BR_-^N)$ given by \eqref{dfn:solenoidal}, we define
\begin{align*}
X_q&=W_q^{2-1/q}(\BR^{N-1})\times J_q(\BR_-^N), \\
\|(\eta,\Bu)\|_{X_q}&=\|\eta\|_{W_q^{2-1/q}(\BR_-^N)}+\|\Bu\|_{L_q(\BR_-^N)}.
\end{align*}
The following lemma then holds.

\begin{lem}\label{lem:equiv}
Let $q\in(1,\infty)$, $(d,\Bf)\in X_q$, and $\lambda\in\BC\setminus\{0\}$.
Then \eqref{rp1} is equivalent to \eqref{reduced:rp}, i.e.,
the following assertions hold.
\begin{enumerate}[$(1)$]
\item
Assume that $(\eta,\Bu,\Fp)\in W_q^{3-1/q}(\BR^{N-1})\times H_q^2(\BR_-^N)^N\times \wht H_q^1(\BR_-^N)$
is a solution to \eqref{rp1}.
Then $(\eta,\Bu)$ solves \eqref{reduced:rp}.
\item
Assume that $(\eta,\Bu)\in W_q^{3-1/q}(\BR^{N-1})\times H_q^2(\BR_-^N)^N$ is a solution to \eqref{reduced:rp}.
Then $(\eta,\Bu)$ solves \eqref{rp1} with $\Fp=\CK(\eta,\Bu)$.
\end{enumerate}
\end{lem}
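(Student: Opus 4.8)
The plan is to match the two systems against each other by means of the weak characterization of the operator $\CK$ together with the uniqueness statement in Lemma~\ref{lem:weak-D}; the only step beyond pure bookkeeping is recovering the divergence-free condition $\dv\Bu=0$ in the implication \eqref{reduced:rp}$\Rightarrow$\eqref{rp1}.

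For (1), let $(\eta,\Bu,\Fp)$ solve \eqref{rp1}. The first equations of \eqref{rp1} and \eqref{reduced:rp} are identical, so nothing is needed there. I would test the momentum equation $\lambda\Bu-\mu\Delta\Bu+\nabla\Fp=\Bf$ against $\nabla\varphi$ for an arbitrary $\varphi\in\wht H_{q',0}^1(\BR_-^N)$ and integrate by parts: since $\dv\Bu=0$ and $\varphi$ vanishes on $\BR_0^N$ one has $(\Bu,\nabla\varphi)_{\BR_-^N}=0$, and since $\Bf\in J_q(\BR_-^N)$ one has $(\Bf,\nabla\varphi)_{\BR_-^N}=0$; what remains is $(\nabla\Fp,\nabla\varphi)_{\BR_-^N}=(\mu\Delta\Bu,\nabla\varphi)_{\BR_-^N}$, i.e.\ the interior weak identity defining $\CK(\eta,\Bu)$. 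Next, the $\Be_N$-component of the boundary condition of \eqref{rp1} reads $\Fp=\Be_N\cdot(\mu\BD(\Bu)\Be_N)+(c_g-c_\sigma\Delta')\eta$ on $\BR_0^N$, which is the boundary identity defining $\CK(\eta,\Bu)$. Hence $\Fp-\CK(\eta,\Bu)\in\wht H_{q,0}^1(\BR_-^N)$ solves the weak problem \eqref{weak-p} with $\Bf$ replaced by $0$, so its gradient vanishes by the uniqueness in Lemma~\ref{lem:weak-D}; thus $\nabla\Fp=\nabla\CK(\eta,\Bu)$ and the boundary traces of $\Fp$ and $\CK(\eta,\Bu)$ agree on $\BR_0^N$. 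Substituting these into the momentum equation and the boundary condition of \eqref{rp1} reproduces exactly \eqref{reduced:rp}, and $(\eta,\Bu)$ already has the regularity required there.

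For (2), let $(\eta,\Bu)\in W_q^{3-1/q}(\BR^{N-1})\times H_q^2(\BR_-^N)^N$ solve \eqref{reduced:rp} and set $\Fp=\CK(\eta,\Bu)$, which lies in $\wht H_{q,0}^1(\BR_-^N)+H_q^1(\BR_-^N)\subset\wht H_q^1(\BR_-^N)$. Then the momentum equation, the boundary condition, and the first equation of \eqref{rp1} hold by the definition of $\Fp$; the only missing ingredient is $\dv\Bu=0$. Here I would test $\lambda\Bu-\mu\Delta\Bu+\nabla\CK(\eta,\Bu)=\Bf$ against $\nabla\varphi$ for $\varphi\in\wht H_{q',0}^1(\BR_-^N)$: by the interior weak identity defining $\CK$, the terms $-\mu(\Delta\Bu,\nabla\varphi)_{\BR_-^N}$ and $(\nabla\CK(\eta,\Bu),\nabla\varphi)_{\BR_-^N}$ cancel, and $\Bf\in J_q(\BR_-^N)$ annihilates the right-hand side, leaving $\lambda(\Bu,\nabla\varphi)_{\BR_-^N}=0$. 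Since $\lambda\neq 0$, this gives $(\Bu,\nabla\varphi)_{\BR_-^N}=0$ for every $\varphi\in\wht H_{q',0}^1(\BR_-^N)$; integrating by parts (the boundary term drops because $\varphi=0$ on $\BR_0^N$) yields $(\dv\Bu,\varphi)_{\BR_-^N}=0$, and since $C_0^\infty(\BR_-^N)$ is dense in $\wht H_{q',0}^1(\BR_-^N)$ by Lemma~\ref{lem:dense}, we conclude $\dv\Bu=0$ in $\BR_-^N$. This verifies all of \eqref{rp1}.

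The main obstacle is precisely this recovery of $\dv\Bu=0$ in (2) from the reduced system; everything else is a routine identification of terms once $\nabla\Fp=\nabla\CK(\eta,\Bu)$ (in (1)) and $\Fp=\CK(\eta,\Bu)$ (in (2)) are available. I would only be careful to note in passing that the pressure $\Fp=\CK(\eta,\Bu)$ produced in (2) indeed lies in $\wht H_q^1(\BR_-^N)$, so that it belongs to the solution class demanded by \eqref{rp1}.
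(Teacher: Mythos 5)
Your proposal is correct and follows essentially the same route as the paper: identify $\Fp$ with $\CK(\eta,\Bu)$ via the weak formulation and the uniqueness in Lemma~\ref{lem:weak-D} in (1), and recover $\dv\Bu=0$ from testing with gradients, $\Bf\in J_q(\BR_-^N)$, and $\lambda\neq 0$ in (2). The only cosmetic difference is that the paper justifies the integration by parts against a general $\varphi\in\wht H_{q',0}^1(\BR_-^N)$ by approximating with $C_0^\infty(\BR_-^N)$ functions (Lemma~\ref{lem:dense}), a standard density step you use implicitly.
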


\begin{proof}
(1) It suffices to show $\Fp=\CK(\eta,\Bu)$.
Let $q'=q/(q-1)$ and $\varphi\in \wht H_{q',0}^1(\BR_-^N)$.
By Lemma \ref{lem:dense},
there exists a sequence $\{\varphi_j\}_{j=1}^\infty \subset C_0^\infty(\BR_-^N)$ such that
\begin{equation*}
\lim_{j\to \infty}\|\nabla(\varphi_j-\varphi)\|_{L_q(\BR_-^N)}=0.
\end{equation*}

We take the inner product between 
the second equation of \eqref{rp1} and $\nabla \varphi$,
and integrate the resultant formula over $\BR_-^N$ in order to obtain
\begin{equation}\label{equiv:eq1}
(\lambda\Bu-\mu\Delta\Bu+\nabla\Fp,\nabla \varphi)_{\BR_-^N}=(\Bf,\nabla\varphi)_{\BR_-^N}.
\end{equation}
The right hand side of this equation vanishes because $\Bf\in J_q(\BR_-^N)$,
and also integration by parts with the third equation of \eqref{rp1} furnishes
\begin{equation*}
(\lambda\Bu,\nabla\varphi)_{\BR_-^N}
=\lim_{j\to \infty}(\lambda\Bu,\nabla\varphi_j)_{\BR_-^N} 
=-\lim_{j\to \infty}(\lambda\dv\Bu,\varphi_j)_{\BR_-^N}=0.
\end{equation*}
Thus \eqref{equiv:eq1} yields
\begin{equation*}
(-\mu\Delta\Bu+\nabla\Fp,\nabla\varphi)_{\BR_-^N}=0,
\end{equation*}
which, combined with the definition of $\CK(\eta,\Bu)$, furnishes
\begin{equation*}
(\nabla(-\CK(\eta,\Bu)+\Fp),\nabla\varphi)_{\BR_-^N}=0.
\end{equation*}
On the other hand, taking the inner product between the fourth equation of \eqref{rp1} and $\Be_N$ yields
\begin{equation*}
\Fp=\Be_N\cdot(\mu\BD(\Bu)\Be_N)+(c_g-c_\sigma\Delta')\eta=\CK(\eta,\Bu) \quad \text{on $\BR_0^N$.}
\end{equation*}
The uniqueness in Lemma \ref{lem:weak-D} then implies $\Fp=\CK(\eta,\Bu)$.

(2) It suffices to show $\dv\Bu=0$.
Let $\varphi\in C_0^\infty(\BR_-^N)$ and
notice that $C_0^\infty(\BR_-^N)\subset \wht H_{q',0}^1(\BR_-^N)$ with $q'=q/(q-1)$.
We take the inner product between the second equation of \eqref{reduced:rp} and $\nabla\varphi$,
and integrate the resultant formula over $\BR_-^N$ in order to obtain
\begin{equation*}
(\lambda\Bu-\mu\Delta\Bu+\nabla\CK(\eta,\Bu),\nabla\varphi)_{\BR_-^N}=(\Bf,\nabla\varphi)_{\BR_-^N}.
\end{equation*}
The definition of $\CK(\eta,\Bu)$ and $\Bf\in J_q(\BR_-^N)$ then give us 
$(\lambda\Bu,\nabla\varphi)_{\BR_-^N}=0$,
which, combined with integration by parts, yields
$-(\lambda\dv\Bu,\varphi)_{\BR_-^N}=0$.
Thus $\dv\Bu=0$ because $\lambda\neq 0$.
This completes the proof of Lemma \ref{lem:equiv}.
\end{proof}

The next lemma follows from Lemmas \ref{lem:SS12} and \ref{lem:equiv} immediately.

\begin{lem}\label{lem:reduced}
Let $\omega\in(0,\pi/2)$ and $q\in(1,\infty)$,
and let $\gamma_0$ be as in Lemma $\ref{lem:SS12}$.
Then, for any $\lambda\in\Sigma_{\omega,\gamma_0}$ and $(d,\Bf)\in X_q$,
\eqref{reduced:rp} admits a unique solution $(\eta,\Bu)\in (W_q^{3-1/q}(\BR^{N-1})\times H_q^2(\BR_-^N)^N) \cap X_q$,
which satisfies
\begin{equation*}
|\lambda|\|(\eta,\Bu)\|_{X_q}+\|(\eta,\Bu)\|_{W_q^{3-1/q}(\BR^{N-1})\times H_q^2(\BR_-^N)^N}
\leq C\|(d,\Bf)\|_{X_q}.
\end{equation*}
\end{lem}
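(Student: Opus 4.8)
The plan is to deduce both existence and uniqueness for the reduced problem \eqref{reduced:rp}, together with the resolvent estimate, by transplanting the corresponding statement for the full problem \eqref{rp1} (Lemma \ref{lem:SS12}) through the equivalence of Lemma \ref{lem:equiv}. This is essentially a corollary, so the work is bookkeeping rather than analysis.

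For existence, I would fix $\lambda\in\Sigma_{\omega,\gamma_0}$ and $(d,\Bf)\in X_q$, regard $d$ as an element of $W_q^{2-1/q}(\BR^{N-1})$ and $\Bf$ as an element of $L_q(\BR_-^N)^N$, and apply Lemma \ref{lem:SS12} to produce $(\eta,\Bu,\Fp)\in W_q^{3-1/q}(\BR^{N-1})\times H_q^2(\BR_-^N)^N\times\wht H_q^1(\BR_-^N)$ solving \eqref{rp1}. Lemma \ref{lem:equiv}(1) (whose hypothesis $(d,\Bf)\in X_q$ is met) then says $(\eta,\Bu)$ solves \eqref{reduced:rp}. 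The only thing to verify is membership in $X_q$: clearly $\eta\in W_q^{3-1/q}(\BR^{N-1})\subset W_q^{2-1/q}(\BR^{N-1})$, and since $\dv\Bu=0$ in $\BR_-^N$ while every $\varphi\in\wht H_{q',0}^1(\BR_-^N)$ vanishes on $\BR_0^N$, integration by parts gives $(\Bu,\nabla\varphi)_{\BR_-^N}=-(\dv\Bu,\varphi)_{\BR_-^N}=0$, i.e. $\Bu\in J_q(\BR_-^N)$. Hence $(\eta,\Bu)\in (W_q^{3-1/q}(\BR^{N-1})\times H_q^2(\BR_-^N)^N)\cap X_q$.

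The estimate is then immediate from Lemma \ref{lem:SS12}: by definition of the $X_q$-norm the right-hand side there equals $C\|(d,\Bf)\|_{X_q}$, and its left-hand side controls $|\lambda|\|(\eta,\Bu)\|_{X_q}+\|(\eta,\Bu)\|_{W_q^{3-1/q}(\BR^{N-1})\times H_q^2(\BR_-^N)^N}$, again by definition of the $X_q$-norm. For uniqueness, suppose $(\eta,\Bu)\in (W_q^{3-1/q}(\BR^{N-1})\times H_q^2(\BR_-^N)^N)\cap X_q$ solves \eqref{reduced:rp} with $(d,\Bf)=(0,0)$; by Lemma \ref{lem:equiv}(2) the triple $(\eta,\Bu,\CK(\eta,\Bu))$ solves \eqref{rp1} with zero data, and $\CK(\eta,\Bu)\in\wht H_{q,0}^1(\BR_-^N)+H_q^1(\BR_-^N)\subset\wht H_q^1(\BR_-^N)$, so the uniqueness assertion of Lemma \ref{lem:SS12} forces $(\eta,\Bu)=(0,0)$; linearity of \eqref{reduced:rp} concludes.

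There is no genuine obstacle in this argument; if anything, the one point deserving an explicit sentence is the identification of $\|(d,\Bf)\|_{X_q}$ with the data norm appearing in Lemma \ref{lem:SS12} and the verification $\Bu\in J_q(\BR_-^N)$, both of which are routine.
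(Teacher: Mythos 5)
Your proposal is correct and takes essentially the same route as the paper, which simply states that Lemma \ref{lem:reduced} follows immediately from Lemmas \ref{lem:SS12} and \ref{lem:equiv}. The only small point worth a sentence is that in checking $\Bu\in J_q(\BR_-^N)$ the pairing $(\dv\Bu,\varphi)_{\BR_-^N}$ is not directly defined for a general $\varphi\in\wht H_{q',0}^1(\BR_-^N)$, so one should first approximate $\varphi$ by functions in $C_0^\infty(\BR_-^N)$ using Lemma \ref{lem:dense}, exactly as the paper does in the proof of Lemma \ref{lem:equiv} and in Remark \ref{rmk:compati}.
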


Define the operator $A_q$ by
\begin{equation*}
A_q(\eta,\Bu)=(u_N,\mu\Delta\Bu-\nabla\CK(\eta,\Bu))
\end{equation*}
with the domain
\begin{multline*}
D(A_q)=\{(\eta,\Bu)\in (W_q^{3-1/q}(\BR^{N-1})\times H_q^2(\BR_-^N)^N)\cap X_q
: \\ (\mu\BD(\Bu)\Be_N)_\tau =0 \text{ on $\BR_0^N$}\},
\end{multline*}
where the subscript $\tau$ is defined by \eqref{dfn:tng}.
By Lemma \ref{lem:reduced} and the standard theory of operator semigroups,
we have

\begin{prp}\label{prp:sg}
Let $q\in(1,\infty)$. Then the following assertions hold.
\begin{enumerate}[$(1)$]
\item\label{prp:sg-1}
$A_q$ is a densely defined closed operator on $X_q$.
\item\label{prp:sg-2}
$A_q$ generates a $C_0$-analytic semigroup $(S(t))_{t\geq 0}$ on $X_q$.
\item\label{prp:sg-3}
There exists a positive constant $\gamma_1\geq \gamma_0$ such that 
\begin{equation*}
\|S(t)(\eta_0,\Bu_0)\|_{D(A_q)} 
\leq C e^{\gamma_1 t}\|(\eta_0,\Bu_0)\|_{D(A_q)}
\end{equation*}
for any $(\eta_0,\Bu_0)\in D(A_q)$ and $t>0$.
Here $\gamma_0$ is the positive constant as in Lemma $\ref{lem:SS12}$ and
\begin{equation}\label{norm-dom}
\|(\eta_0,\Bu_0)\|_{D(A_q)}=\|(\eta_0,\Bu_0)\|_{W_q^{3-1/q}(\BR^{N-1})\times H_q^2(\BR_-^N)^N},
\end{equation}
which is equivalent to the graph norm $\|(\eta_0,\Bu_0)\|_{X_q}+\|A_q(\eta_0,\Bu_0)\|_{X_q}$.
\end{enumerate}
\end{prp}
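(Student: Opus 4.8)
The plan is to obtain all three assertions from the resolvent estimate of Lemma~\ref{lem:reduced} via the classical generation theory of analytic semigroups; the only points needing separate care are the density of $D(A_q)$ in $X_q$ and the norm equivalence asserted in the statement. To that end I would first note that, by the definitions of $A_q$ and $\CK$, the resolvent equation $(\lambda I-A_q)(\eta,\Bu)=(d,\Bf)$ in $X_q$ is precisely the reduced Stokes resolvent problem \eqref{reduced:rp}; hence Lemma~\ref{lem:reduced} yields $\Sigma_{\omega,\gamma_0}\subset\rho(A_q)$ together with the uniform bound $|\lambda|\,\|(\lambda I-A_q)^{-1}\|_{\CL(X_q)}\leq C$ for $\lambda\in\Sigma_{\omega,\gamma_0}$, and since $\rho(A_q)\neq\emptyset$ this makes $A_q$ closed. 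For density I would use that the boundary condition in $D(A_q)$ involves $\Bu$ only, so that $D(A_q)$ equals the product of $W_q^{3-1/q}(\BR^{N-1})$ with $\{\Bu\in H_q^2(\BR_-^N)^N\cap J_q(\BR_-^N):(\mu\BD(\Bu)\Be_N)_\tau=0\text{ on }\BR_0^N\}$, and this product contains $C_0^\infty(\BR^{N-1})\times C_{0,\sigma}^\infty(\BR_-^N)$, since a solenoidal field with compact support in $\BR_-^N$ lies in $H_q^2(\BR_-^N)^N\cap J_q(\BR_-^N)$ and vanishes near $\BR_0^N$, hence trivially satisfies $(\mu\BD(\Bu)\Be_N)_\tau=0$. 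As $C_0^\infty(\BR^{N-1})$ is dense in $W_q^{2-1/q}(\BR^{N-1})$ and $C_{0,\sigma}^\infty(\BR_-^N)$ is dense in $J_q(\BR_-^N)$ (a standard property of the Helmholtz decomposition on the half-space), $D(A_q)$ is dense in $X_q$, which settles \eqref{prp:sg-1}.

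For \eqref{prp:sg-2}, since $\omega\in(0,\pi/2)$ the region $\Sigma_{\omega,\gamma_0}$ contains a sector of half-opening larger than $\pi/2$ around $\gamma_0+\BR_+$ (so, after the shift by $\gamma_0$, $A_q$ is sectorial of angle less than $\pi/2$), and the uniform resolvent bound above together with the density of $D(A_q)$ places $A_q$ in the hypotheses of the classical generation theorem; hence $A_q$ generates a $C_0$-analytic semigroup $(S(t))_{t\geq0}$ on $X_q$. For \eqref{prp:sg-3}, I would first record the norm equivalence: taking $\lambda=\gamma_0$ in Lemma~\ref{lem:reduced} gives, for $(\eta,\Bu)\in D(A_q)$, the bound $\|(\eta,\Bu)\|_{W_q^{3-1/q}\times H_q^2}\leq C\|(\gamma_0 I-A_q)(\eta,\Bu)\|_{X_q}\leq C(\|(\eta,\Bu)\|_{X_q}+\|A_q(\eta,\Bu)\|_{X_q})$, while the reverse inequality follows from the definition of $A_q$, the trace theorem, and $\|\nabla\CK(\eta,\Bu)\|_{L_q(\BR_-^N)}\leq C\|(\eta,\Bu)\|_{W_q^{3-1/q}\times H_q^2}$. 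Since every $C_0$-semigroup obeys $\|S(t)\|_{\CL(X_q)}\leq Ce^{\gamma_1 t}$ for some $\gamma_1$, which may be taken with $\gamma_1\geq\gamma_0$, and since $S(t)$ maps $D(A_q)$ into itself with $A_qS(t)(\eta_0,\Bu_0)=S(t)A_q(\eta_0,\Bu_0)$, we deduce
\begin{multline*}
\|S(t)(\eta_0,\Bu_0)\|_{D(A_q)}\leq C\big(\|S(t)(\eta_0,\Bu_0)\|_{X_q}+\|S(t)A_q(\eta_0,\Bu_0)\|_{X_q}\big)\\
\leq C\|S(t)\|_{\CL(X_q)}\,\|(\eta_0,\Bu_0)\|_{D(A_q)}\leq Ce^{\gamma_1 t}\|(\eta_0,\Bu_0)\|_{D(A_q)}
\end{multline*}
for $(\eta_0,\Bu_0)\in D(A_q)$, using the norm equivalence in the first and last steps; this gives \eqref{prp:sg-3}.

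The bulk of the argument is routine bookkeeping around Lemma~\ref{lem:reduced}, and I expect the only genuinely non-routine ingredient to be the density of $C_{0,\sigma}^\infty(\BR_-^N)$ in $J_q(\BR_-^N)$ — equivalently, that the weakly defined space \eqref{dfn:solenoidal} coincides with the $L_q$-closure of the compactly supported solenoidal fields — which is where one must invoke the known structure of the Helmholtz decomposition on $\BR_-^N$ rather than anything established earlier in the excerpt.
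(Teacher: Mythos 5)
Your overall route (identify the resolvent equation for $A_q$ with the reduced problem \eqref{reduced:rp}, invoke Lemma \ref{lem:reduced} for the sectorial resolvent bound, then apply the standard generation theorem and the graph-norm equivalence) is exactly what the paper does — its proof is just the citation of Lemma \ref{lem:reduced} plus ``standard theory''. Parts (2) and (3) of your write-up are fine (one small slip: $\gamma_0\notin\Sigma_{\omega,\gamma_0}$, so take a real $\lambda>\gamma_0$ rather than $\lambda=\gamma_0$ when extracting the equivalence of norms).

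There is, however, a genuine gap in your density argument for \eqref{prp:sg-1}: the claim that $C_{0,\sigma}^\infty(\BR_-^N)$ is dense in $J_q(\BR_-^N)$ is false for the space defined in \eqref{dfn:solenoidal}. Because the test functions in \eqref{dfn:hat-0} vanish on $\BR_0^N$, the condition $(\Bf,\nabla\varphi)_{\BR_-^N}=0$ for all $\varphi\in\wht H_{q',0}^1(\BR_-^N)$ encodes only $\dv\Bf=0$ in the distributional sense and imposes \emph{no} vanishing of the normal trace; this is deliberate, since in the free-boundary problem $u_N|_{\BR_0^N}$ must be allowed to be nonzero (it drives $\pd_t\eta$). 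Consequently $J_q(\BR_-^N)$ strictly contains the classical space $\overline{C_{0,\sigma}^\infty(\BR_-^N)}^{L_q}$: for instance $\Bf=\nabla\CA g$ with $g\in C_0^\infty(\BR^{N-1})$, $g\neq 0$, is divergence free and lies in $J_q$, but pairing with a $\varphi\in\wht H_{q'}^1(\BR_-^N)$ that does not vanish on $\BR_0^N$ gives $(\Bf,\nabla\varphi)_{\BR_-^N}=\int_{\BR_0^N}(\pd_N\CA g)\varphi\,dx'\neq 0$, whereas this pairing vanishes (and is $L_q$-continuous) on every element of the closure of $C_{0,\sigma}^\infty$. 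So fields with nonzero weak normal trace cannot be approximated by compactly supported solenoidal fields, and your proposed dense subclass does not do the job. The density of $D(A_q)$ in $X_q$ is still true, but it needs a different argument: for example, since $X_q$ is reflexive and the resolvent bound of Lemma \ref{lem:reduced} gives $\|\lambda(\lambda I-A_q)^{-1}\|_{\CL(X_q)}\leq C$ on $\Sigma_{\omega,\gamma_0}$, sectoriality on a reflexive space already forces $\overline{D(A_q)}=X_q$ (show $(\mu I-A_q)^{-1}(x-y)=0$ for a weak limit $y$ of $\lambda_n(\lambda_n I-A_q)^{-1}x$); alternatively one can approximate $\Bu_0\in J_q$ through the bounded projection $\Bv\mapsto\Bv-\nabla\theta$ built from the weak Dirichlet problem of Lemma \ref{lem:weak-D} applied to smooth $\Bv$, rather than through compactly supported solenoidal fields.
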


\subsection{Decay properties of the semigroup}
Let $P_i$, $i=1,2$, be the projections defined by
\begin{equation*}
P_1:X_q\to W_q^{2-1/q}(\BR^{N-1}), \quad 
P_2:X_q\to J_q(\BR_-^N),
\end{equation*}
and set
\begin{equation}\label{dfn:S1S2}
S_i(t)(\eta_0,\Bu_0)=P_i S(t)(\eta_0,\Bu_0) \quad \text{for $i=1,2$.}
\end{equation}
In addition, we set 
\begin{align*}
L_q&=L_q(\BR^{N-1})\times J_q(\BR_-^N), \notag \\
\|(\eta,\Bu)\|_{L_q}
&=\|\eta\|_{L_q(\BR^{N-1})}+\|\Bu\|_{L_q(\BR_-^N)}. 
\end{align*}

Recall $\CA$ given by \eqref{dfn:KL}.
From \cite[Theorem 1.1]{SaS16} and \cite[Theorem 3.1.3]{Saito15},
we have

\begin{lem}\label{lem:decay-sg}
Let $N\geq 2$ and $1< p \leq 2 \leq q<\infty$ with $(p,q)\neq (2,2)$.
Then there exist positive constants $\gamma_2$ and $C=C(N,p,q,\gamma_2)$
such that the following assertions hold
for any $(\eta_0,\Bu_0)\in L_p\cap X_q$.
\begin{enumerate}[$(1)$]
\item
Let $t\geq 1$. Then $S_1(t)$ satisfies
\begin{align}
&\|S_1(t)(\eta_0,\Bu_0)\|_{W_q^{3-1/q}(\BR^{N-1})}\notag \\
&\leq C 
\Big(t ^{-\frac{N-1}{2}\left(\frac{1}{p}-\frac{1}{q}\right)} \|(\eta_0,\Bu_0)\|_{L_p} 
+e^{-\gamma_2 t}\|(\eta_0,\Bu_0)\|_{X_q}\Big), \label{SaS:est4} \\
&\|\nabla \CA S_1(t)(\eta_0,\Bu_0)\|_{H_q^2(\BR_-^N)} \notag \\
&\leq C 
\Big(t^{-\frac{N-1}{2}\left(\frac{1}{p}-\frac{1}{q}\right)-\frac{1}{2}\left(\frac{1}{2}-\frac{1}{q}\right)
-\frac{1}{4}} \|(\eta_0,\Bu_0)\|_{L_p} 
+e^{-\gamma_2 t}\|(\eta_0,\Bu_0)\|_{X_q}\Big), \label{SaS:est3} \\
&\|\pd_t \CA S_1(t)(\eta_0,\Bu_0)\|_{H_q^2(\BR_-^N)} \notag \\
&\leq C 
\Big(t^{-\frac{N-1}{2}\left(\frac{1}{p}-\frac{1}{q}\right)-\frac{1}{2}\left(\frac{1}{2}-\frac{1}{q}\right)} \|(\eta_0,\Bu_0)\|_{L_p} 
+e^{-\gamma_2 t}\|(\eta_0,\Bu_0)\|_{X_q}\Big). \label{SaS:est1} 
\end{align}
\item
Let $t\geq 1$. Then $S_2(t)$ satisfies
\begin{align}
&\|S_2(t)(\eta_0,\Bu_0)\|_{H_q^2(\BR_-^N)} \notag \\
& \leq C
\Big(t ^{-\frac{N-1}{2}\left(\frac{1}{p}-\frac{1}{q}\right)-\frac{1}{2}\left(\frac{1}{2}-\frac{1}{q}\right)}
\|(\eta_0,\Bu_0)\|_{L_p} 
+e^{-\gamma_2 t}\|(\eta_0,\Bu_0)\|_{X_q}\Big),  \label{SaS:est6} \\
&\|\nabla S_2(t)(\eta_0,\Bu_0)\|_{H_q^1(\BR_-^N)} \notag \\
& \leq C
\Big( t ^{-\frac{N-1}{2}\left(\frac{1}{p}-\frac{1}{q}\right)-\frac{1}{8}-\delta(p,q)}
\|(\eta_0,\Bu_0)\|_{L_p} 
+e^{-\gamma_2 t}\|(\eta_0,\Bu_0)\|_{X_q}\Big), \label{SaS:est7}
\end{align}
where   
\begin{equation*}
\delta(p,q)=\min\left\{\frac{1}{2}\left(\frac{1}{p}-\frac{1}{q}\right),\frac{1}{8}\left(2-\frac{1}{q}\right)\right\}.
\end{equation*}
\end{enumerate}
\end{lem}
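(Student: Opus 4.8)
The plan is to read these bounds off from the $L_p$--$L_q$ time decay theory for the linearised gravity--capillary viscous free boundary problem. Writing $U(t)=(\eta(t),\Bu(t))=S(t)(\eta_0,\Bu_0)$, the pair $U$ is the solution of the homogeneous, time-dependent analogue of \eqref{rp1}; equivalently, by Lemma \ref{lem:equiv}, of the reduced system \eqref{reduced:rp}. Hence $S(t)$ is precisely the solution operator whose decay was established in \cite{SaS16} and \cite{Saito15}, and $S_1(t)$, $S_2(t)$ (see \eqref{dfn:S1S2}) are its $\eta$- and $\Bu$-components. The first step is therefore to match notation: to identify the admissible exponent range in those theorems with $1<p\le 2\le q<\infty$, $(p,q)\ne(2,2)$, and to check that the exponential rate $\gamma_2$ and the constant $C$ may be chosen uniformly once $q$ is kept bounded (harmless here).

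For completeness I would recall the mechanism behind the cited estimates, since it also governs the $\CA$-dependent ones. Taking the partial Fourier transform in $x'$ and solving the resulting second-order system in $x_N$, one represents $S(t)$ as a contour integral $\frac{1}{2\pi i}\int_\Gamma e^{\lambda t}(\lambda-A_q)^{-1}\intd\lambda$ and splits $\Gamma$ into a high-frequency piece $|\xi'|\ge\delta_0$ and a low-frequency piece $|\xi'|\le\delta_0$. On $|\xi'|\ge\delta_0$ the resolvent bound of Lemma \ref{lem:SS12}, extended into a region $\{\mathrm{Re}\,\lambda\ge-\gamma_2\}$, yields the exponentially decaying contribution $e^{-\gamma_2 t}\|(\eta_0,\Bu_0)\|_{X_q}$. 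On $|\xi'|\le\delta_0$ one expands the relevant characteristic roots of the Lopatinskii symbol of \eqref{reduced:rp} and estimates the resulting multipliers $|\xi'|^{k}e^{\mathrm{Re}(\mathrm{root})t}\wht\eta_0$, $|\xi'|^{k}e^{\mathrm{Re}(\mathrm{root})t}\wht\Bu_0$ by Hölder's and the Hausdorff--Young inequalities; this produces the $(N-1)$-dimensional decay factor $t^{-\frac{N-1}{2}(1/p-1/q)}$ together with the extra fractional powers ($-\tfrac12(1/2-1/q)$, $-\tfrac18$, $-\delta(p,q)$) accounting for the spatial derivatives present in each norm on the left-hand side.

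Finally, the estimates \eqref{SaS:est4}--\eqref{SaS:est1} for $\CA S_1(t)$ are obtained by transporting the decay of $\eta(t)=S_1(t)(\eta_0,\Bu_0)$ through the extension operator. For \eqref{SaS:est4} this is immediate. For \eqref{SaS:est1} one uses $\pd_t\CA S_1(t)=\CA\pd_t S_1(t)=\CA\big(u_N|_{\BR_0^N}\big)$ (from the first line of \eqref{reduced:rp}, $\pd_t\eta=u_N$) together with $\|\CA f\|_{H_q^2(\BR_-^N)}\le C\|f\|_{W_q^{2-1/q}(\BR^{N-1})}$ (Lemma \ref{lem:ext} \eqref{lem:ext-2}), the trace inequality, and the decay \eqref{SaS:est6} of $S_2(t)$. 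For \eqref{SaS:est3} the trace/extension estimate alone is not sharp enough, so one combines the $L_q$-based bound $\|\nabla\CA f\|_{H_q^2(\BR_-^N)}\le C\|f\|_{W_q^{3-1/q}(\BR^{N-1})}$ (Lemma \ref{lem:ext} \eqref{lem:ext-3}) with the $L_2$-based bound $\|\CA f\|_{L_2(\BR_-^N)}\le C(\|f\|_{L_s(\BR^{N-1})}+\|f\|_{L_2(\BR^{N-1})})$ of Lemma \ref{lem:ext-interp}, which gains half a derivative in the $L_2$-Fourier scale of $f$ (whence the extra $t^{-1/4}$), and interpolates the two (Lemma \ref{lem:int-p} \eqref{lem:int-p-1}). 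The main obstacle lies in the low-frequency analysis underpinning \cite{SaS16,Saito15}: one must control the complex, oscillatory characteristic roots of the gravity--capillary dispersion relation uniformly in $\xi'$ and extract the precise exponents $-\tfrac18$, $-\tfrac14$ and $-\delta(p,q)$; once those are in hand, reconciling the normalisations of the two references is the only remaining bookkeeping.
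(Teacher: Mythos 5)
The paper offers no independent proof of this lemma: all five estimates, \emph{including} the ones for $\nabla\CA S_1(t)$ and $\pd_t\CA S_1(t)$, are quoted verbatim from \cite[Theorem 1.1]{SaS16} and \cite[Theorem 3.1.3]{Saito15}, where they are obtained from the explicit Fourier-multiplier representation of the semigroup (high-/low-frequency splitting of the symbol), essentially as in your second paragraph. To that extent your proposal follows the same route for \eqref{SaS:est4}, \eqref{SaS:est6}, \eqref{SaS:est7}. The difference is that you treat \eqref{SaS:est3} and \eqref{SaS:est1} as corollaries of the other estimates, to be recovered by composing with extension-operator bounds, and that derivation has genuine gaps.

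First, for \eqref{SaS:est1} you invoke $\|\CA f\|_{H_q^2(\BR_-^N)}\le C\|f\|_{W_q^{2-1/q}(\BR^{N-1})}$, attributing it to Lemma \ref{lem:ext}; but that two-sided estimate is the property of $\CB$ (item \eqref{lem:ext-2}), not of $\CA$. For $\CA$ the lemma only controls \emph{derivatives} (\eqref{A-ineq-4}, \eqref{A-ineq-1}) or the full norm on a finite strip (\eqref{A-ineq-2}); $\CA$ is not bounded into $L_q(\BR_-^N)$ on the whole half-space because of the $|\xi'|^{-1}$ low-frequency singularity produced by integrating $e^{|\xi'|x_N}$ in $x_N$ --- this is exactly why Lemma \ref{lem:ext-interp} needs $f\in L_s\cap L_2$ with $s<4/3$ and only yields an $L_2$ bound. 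So the zero-order part of $\|\CA(u_N|_{\BR_0^N})\|_{H_q^2(\BR_-^N)}$ is not controlled by your argument, and the identity $\pd_t\CA S_1=\CA\big((S_2)_N|_{\BR_0^N}\big)$ plus \eqref{SaS:est6} does not close. Second, and more structurally, the extra factors $t^{-\frac12(\frac12-\frac1q)}$ in \eqref{SaS:est1} and $t^{-\frac12(\frac12-\frac1q)-\frac14}$ in \eqref{SaS:est3} cannot be produced by post-composing $S_1(t)$ with any fixed, $t$-independent operator bound or interpolation thereof: applying such bounds to \eqref{SaS:est4} can only reproduce the rate $t^{-\frac{N-1}{2}(\frac1p-\frac1q)}$. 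The gain comes from the interplay between the kernel $e^{|\xi'|x_N}$ of $\CA$ and the low-frequency behaviour of $\wht\eta(\xi',t)$, i.e.\ it has to be extracted inside the spectral analysis of the semigroup, which is precisely what \cite{SaS16,Saito15} do and why the present paper cites those composite estimates directly rather than deriving them from \eqref{SaS:est4} and \eqref{SaS:est6}. If you wish to defer to the references, defer for all five estimates; if you wish to prove \eqref{SaS:est3} and \eqref{SaS:est1} yourself, you must work at the level of the solution formulas, not at the level of mapping properties of $\CA$.
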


Recall $\CE_N$ and $\|\cdot\|_{D(A_q)}$ given by \eqref{ext:eta} and \eqref{norm-dom}, respectively.
Lemma \ref{lem:decay-sg} enables us to prove the following proposition.

\begin{prp}\label{prp:decay}
Let $N\geq 3$, $q_0=2/9$, and $\delta_0=1/30$.
Let $q_1$ satisfy $2<q_1\leq 2+q_0$.
Then the following assertions hold for any $q\geq q_1$ and $(\eta_0,\Bu_0)\in L_{q_1/2}\cap X_{q}$.
\begin{enumerate}[$(1)$]
\item\label{prp:decay-2}
Suppose $N\geq 4$ and $t\geq 1$. 
Then
\begin{equation}\label{sg-est-4d}
\|S(t)(\eta_0,\Bu_0)\|_{D(A_q)}
\leq C  t ^{-\frac{1}{2}-\delta_0}\|(\eta_0,\Bu_0)\|_{L_{q_1/2}\cap X_q},
\end{equation}
with some positive constant $C$ independent of $t$, $\eta_0$, and $\Bu_0$.
\item\label{prp:decay-1}
Suppose $N=3$ and $t\geq 1$. Then 
\begin{equation}\label{sg-est-3d}
\|S(t)(\eta_0,\Bu_0)\|_{D(A_q)}
\leq C  t ^{-\frac{1}{3}-\delta_0}\|(\eta_0,\Bu_0)\|_{L_{q_1/2}\cap X_q}.
\end{equation}
In addition, 
\begin{align}
\|\nabla \CE_N S_1(t)(\eta_0,\Bu_0)\|_{H_q^2(\BR_-^N)}
&\leq C t ^{-\frac{2}{3}-\delta_0}\|(\eta_0,\Bu_0)\|_{L_{q_1/2}\cap X_{q}}, \label{sg-est-3d-4} \\
\|\nabla S_2(t)(\eta_0,\Bu_0)\|_{H_q^1(\BR_-^N)}
&\leq  C  t ^{-\frac{2}{3}-\delta_0}\|(\eta_0,\Bu_0)\|_{L_{q_1/2}\cap X_{q}}. \label{sg-est-3d-5}
\end{align}
Here $C$ is a positive constant independent of $t$, $\eta_0$, and $\Bu_0$.
\end{enumerate}
\end{prp}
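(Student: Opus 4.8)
The plan is to deduce Proposition~\ref{prp:decay} directly from the decay estimates of Lemma~\ref{lem:decay-sg} by a careful bookkeeping of exponents, taking $p=q_1/2$ there. Since $2<q_1\le 2+q_0$ with $q_0=2/9$, we have $p=q_1/2\in(1,1+1/9]$, so $1<p\le 2\le q$ and $(p,q)\neq(2,2)$, and thus Lemma~\ref{lem:decay-sg} applies with the exponents computed from $1/p-1/q$ and $1/2-1/q$. First I would record the ``worst'' (slowest-decaying) of the polynomial rates appearing on the right-hand sides of \eqref{SaS:est4}--\eqref{SaS:est7}. For the $D(A_q)$ norm, which by \eqref{norm-dom} is $\|(\eta,\Bu)\|_{W_q^{3-1/q}(\BR^{N-1})\times H_q^2(\BR_-^N)^N}$, the relevant bounds are \eqref{SaS:est4} for the $\eta$-component (rate $-\frac{N-1}{2}(1/p-1/q)$) and \eqref{SaS:est6} for the $\Bu$-component (rate $-\frac{N-1}{2}(1/p-1/q)-\frac12(1/2-1/q)$); the former is the slower one, so $\|S(t)(\eta_0,\Bu_0)\|_{D(A_q)}\le C(t^{-\frac{N-1}{2}(1/p-1/q)}\|(\eta_0,\Bu_0)\|_{L_p}+e^{-\gamma_2 t}\|(\eta_0,\Bu_0)\|_{X_q})$ for $t\ge1$, and since $e^{-\gamma_2 t}\le C t^{-\frac{N-1}{2}(1/p-1/q)}$ for $t\ge 1$, both terms are absorbed into $Ct^{-\frac{N-1}{2}(1/p-1/q)}\|(\eta_0,\Bu_0)\|_{L_p\cap X_q}$.

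Next I would verify the numerology. With $p=q_1/2$ and $q\ge q_1$, the exponent is
\begin{equation*}
\frac{N-1}{2}\Big(\frac1p-\frac1q\Big)=\frac{N-1}{2}\Big(\frac{2}{q_1}-\frac1q\Big)\ge \frac{N-1}{2}\Big(\frac{2}{q_1}-\frac{1}{q_1}\Big)=\frac{N-1}{2q_1}\ge\frac{N-1}{2(2+q_0)}.
\end{equation*}
For $N\ge4$, $\frac{N-1}{2(2+q_0)}\ge\frac{3}{2(2+2/9)}=\frac{3}{40/9}=\frac{27}{40}$, which exceeds $\frac12+\delta_0=\frac12+\frac{1}{30}=\frac{16}{30}=\frac{8}{15}$; hence $t^{-\frac{N-1}{2}(1/p-1/q)}\le t^{-\frac12-\delta_0}$ for $t\ge1$, giving \eqref{sg-est-4d}. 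For $N=3$, $\frac{N-1}{2(2+q_0)}=\frac{2}{2(2+2/9)}=\frac{1}{20/9}=\frac{9}{20}$, while $\frac13+\delta_0=\frac13+\frac1{30}=\frac{11}{30}$, and $\frac{9}{20}=\frac{27}{60}>\frac{22}{60}=\frac{11}{30}$; hence $t^{-\frac{N-1}{2}(1/p-1/q)}\le t^{-\frac13-\delta_0}$ for $t\ge1$, giving \eqref{sg-est-3d}. The additional bounds \eqref{sg-est-3d-4} and \eqref{sg-est-3d-5} come, for $N=3$, from \eqref{SaS:est3} and \eqref{SaS:est7} respectively; I would check that with $N=3$, $p=q_1/2$, $q\ge q_1$ the exponent in \eqref{SaS:est3}, namely $\frac{N-1}{2}(1/p-1/q)+\frac12(1/2-1/q)+\frac14$, is at least $\frac23+\delta_0=\frac{21}{30}$ --- indeed $\frac{9}{20}+0+\frac14=\frac{9}{20}+\frac{5}{20}=\frac{14}{20}=\frac{7}{10}=\frac{21}{30}$ already in the limiting case $q=q_1=2+q_0$ (and strictly larger otherwise, since $1/2-1/q>0$ and the first term is increasing in $1/p-1/q$), so $t^{-(\cdots)}\le t^{-\frac23-\delta_0}$; and noting that $\CA$ may be replaced by $\CE_3$ since $\CE_3=\CA$ for $N=3$ by \eqref{ext:eta}. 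For \eqref{SaS:est7} the exponent is $\frac{N-1}{2}(1/p-1/q)+\frac18+\delta(p,q)$ with $\delta(p,q)=\min\{\frac12(1/p-1/q),\frac18(2-1/q)\}$; I would bound $\delta(p,q)$ below in the worst case $q=q_1=2+q_0$, $p=q_1/2$ and check the sum still dominates $\frac23+\delta_0$ --- here $\frac{9}{20}+\frac18+\delta$ where $\delta=\min\{\frac12\cdot\frac{1}{2+2/9},\frac18(2-\frac{1}{2+2/9})\}=\min\{\frac{9}{40},\frac18\cdot\frac{31}{20}\}=\frac{9}{40}$ (since $\frac{31}{160}<\frac{36}{160}=\frac{9}{40}$), so the exponent is $\frac{9}{20}+\frac{5}{40}+\frac{9}{40}=\frac{18}{40}+\frac{5}{40}+\frac{9}{40}=\frac{32}{40}=\frac45\ge\frac{21}{30}$.

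After that, the only remaining point is to combine the polynomial bounds with the exponentially small $X_q$-terms: on $t\ge1$ one has $e^{-\gamma_2 t}\le C_\alpha t^{-\alpha}$ for every $\alpha>0$, so each $e^{-\gamma_2 t}\|(\eta_0,\Bu_0)\|_{X_q}$ is dominated by the corresponding polynomial rate times $\|(\eta_0,\Bu_0)\|_{X_q}$, and we may combine $\|(\eta_0,\Bu_0)\|_{L_{q_1/2}}+\|(\eta_0,\Bu_0)\|_{X_q}=\|(\eta_0,\Bu_0)\|_{L_{q_1/2}\cap X_q}$. I do not anticipate any real obstacle here; the proof is essentially a verification that, for the admissible range of $(p,q)=(q_1/2,q)$ with $2<q_1\le 2+2/9$ and $q\ge q_1$, every polynomial decay exponent furnished by Lemma~\ref{lem:decay-sg} is at least $\frac12+\delta_0$ when $N\ge4$ (for the $D(A_q)$ norm) and at least $\frac13+\delta_0$, resp.\ $\frac23+\delta_0$, when $N=3$. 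The mildly delicate step is choosing the constants $q_0=2/9$ and $\delta_0=1/30$ so that these inequalities hold with room to spare and uniformly in $q\ge q_1$; the monotonicity in $q$ of each exponent (each is nondecreasing in $1/p-1/q$ and the extra terms $1/2-1/q$, $2-1/q$ are nondecreasing in $1/q$, hence one only needs to check the endpoint $q=q_1=2+q_0$) reduces this to a finite check, which is what I carried out above.
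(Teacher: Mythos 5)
Your proposal is correct and follows essentially the same route as the paper: apply Lemma \ref{lem:decay-sg} with $p=q_1/2$, absorb the exponential terms for $t\ge 1$, and verify that every decay exponent dominates $\tfrac12+\delta_0$ (for $N\ge4$), resp.\ $\tfrac13+\delta_0$ and $\tfrac23+\delta_0$ (for $N=3$), the paper simply bounding $\delta(q_1/2,q)\ge \tfrac18$ uniformly where you do an endpoint check. One small arithmetic slip in your treatment of \eqref{SaS:est7}: $\min\{\tfrac{9}{40},\tfrac{31}{160}\}=\tfrac{31}{160}$, not $\tfrac{9}{40}$, but this is harmless since $\tfrac{9}{20}+\tfrac18+\tfrac{31}{160}=\tfrac{123}{160}>\tfrac{7}{10}=\tfrac23+\delta_0$, so \eqref{sg-est-3d-5} still follows.
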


\begin{proof}
In this proof, we use Lemma \ref{lem:decay-sg} with $p=q_1/2$.

(1) Since $N\geq 4$ and $q\geq q_1$, one observes that
\begin{equation*}
\frac{N-1}{2}\left(\frac{2}{q_1}-\frac{1}{q}\right)
\geq \frac{3}{2}\left(\frac{2}{q_1}-\frac{1}{q_1}\right)=\frac{3}{2q_1}\geq \frac{3}{2(2+q_0)}=\frac{27}{40} \geq \frac{1}{2}+\delta_0.
\end{equation*}
Thus \eqref{SaS:est4} and \eqref{SaS:est6} yields \eqref{sg-est-4d}.

(2) 
Since $N=3$ and $q\geq q_1$, one observes that
\begin{equation*}
\frac{N-1}{2}\left(\frac{2}{q_1}-\frac{1}{q}\right)=\frac{2}{q_1}-\frac{1}{q}\geq \frac{2}{q_1}-\frac{1}{q_1}
=\frac{1}{q_1}
\geq \frac{1}{2+q_0}=\frac{9}{20}.
\end{equation*}
This implies
\begin{equation}\label{est:expo-decay}
\frac{N-1}{2}\left(\frac{2}{q_1}-\frac{1}{q}\right)+\frac{1}{4}\geq \frac{9}{20}+\frac{1}{4}=\frac{7}{10}= \frac{2}{3}+\delta_0.
\end{equation}
It thus holds for any $q\geq q_1$ that
\begin{equation*}
\frac{N-1}{2}\left(\frac{2}{q_1}-\frac{1}{q}\right)\geq \frac{1}{3}+\delta_0, \quad 
\frac{N-1}{2}\left(\frac{2}{q_1}-\frac{1}{q}\right)+\frac{1}{4}\geq \frac{2}{3}+\delta_0,
\end{equation*}
which, combined with \eqref{SaS:est4}, \eqref{SaS:est3}, and  \eqref{SaS:est6}, furnishes
\eqref{sg-est-3d} and \eqref{sg-est-3d-4}.

We next prove \eqref{sg-est-3d-5}. Let us observe that
\begin{align*}
\frac{1}{2}\left(\frac{2}{q_1}-\frac{1}{q}\right) &\geq 
\frac{1}{2}\left(\frac{2}{q_1}-\frac{1}{q_1}\right)=\frac{1}{2q_1}\geq \frac{1}{2(2+q_0)}=\frac{9}{40}, \\
\frac{1}{8}\left(2-\frac{1}{q}\right)&\geq \frac{1}{8}\left(2-\frac{1}{q_1}\right)>\frac{1}{8}\left(2-\frac{1}{2}\right)=\frac{3}{16}.
\end{align*}
Thus $\delta(q_1/2,q)\geq 1/8$, which, combined with \eqref{est:expo-decay}, furnishes
\begin{equation*}
\frac{N-1}{2}\left(\frac{2}{q_1}-\frac{1}{q}\right)+\frac{1}{8}+\delta(q_1/2,q)
\geq \frac{N-1}{2}\left(\frac{2}{q_1}-\frac{1}{q}\right)+\frac{1}{4}\geq \frac{2}{3}+\delta_0.
\end{equation*}
Combining this with \eqref{SaS:est7} furnishes \eqref{sg-est-3d-5},
which completes the proof of Proposition \ref{prp:decay}.
\end{proof}

Furthermore, we have the following $L_2$-type estimate for $\pd_t\CE_N S_1(t)(\eta_0,\Bu_0)$.

\begin{prp}\label{prp:decay-L2}
Let $N\geq 3$ and $q_0,\delta_0$ be as in Proposition $\ref{prp:decay}$.
Let $q_1$ satisfy $2<q_1\leq 2+q_0$.
Then for any $(\eta_0,\Bu_0)\in L_{q_1/2}\cap X_{2}$ and $t\geq 1$
\begin{equation*}
\|\pd_t\CE_N S_1(t)(\eta_0,\Bu_0)\|_{H_2^2(\BR_-^N)}
\leq C t^{-\frac{1}{3}-\delta_0}\|(\eta_0,\Bu_0)\|_{L_{q_1/2}\cap X_2}, \label{sg-est-3d-2}
\end{equation*}
with some positive constant $C$ independent of $t$, $\eta_0$, and $\Bu_0$.
\end{prp}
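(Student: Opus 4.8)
The plan is to deduce the estimate directly from the decay bound \eqref{SaS:est1} of Lemma \ref{lem:decay-sg}, applied with $p=q_1/2$ and $q=2$. This choice is admissible: since $2<q_1\le 2+q_0$ with $q_0=2/9$ we have $1<q_1/2\le 10/9<2$, so $1<p\le 2\le q$ and $(p,q)\neq(2,2)$, and the hypothesis $(\eta_0,\Bu_0)\in L_{q_1/2}\cap X_2$ is exactly $(\eta_0,\Bu_0)\in L_p\cap X_q$. With $p=q_1/2$, $q=2$ the second exponent $-\tfrac12(\tfrac12-\tfrac1q)$ in \eqref{SaS:est1} vanishes, so that bound reads
\begin{equation*}
\|\pd_t\CA S_1(t)(\eta_0,\Bu_0)\|_{H_2^2(\BR_-^N)}
\le C\Big(t^{-\frac{N-1}{2}\left(\frac{2}{q_1}-\frac12\right)}\|(\eta_0,\Bu_0)\|_{L_{q_1/2}}
+e^{-\gamma_2 t}\|(\eta_0,\Bu_0)\|_{X_2}\Big)\qquad(t\ge 1).
\end{equation*}

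When $N=3$ we have $\CE_N=\CA$, so this is already the desired inequality up to identifying the decay rate. When $N\ge 4$ we have $\CE_N=\CB$, and since Lemma \ref{lem:decay-sg} is stated only for $\CA$, one extra step is needed, which I regard as the only real point of the argument. Because $S(t)(\eta_0,\Bu_0)$ is smooth in $t$ for $t\ge 1$ by analyticity of the semigroup and $\CB$ does not involve the time variable, $\pd_t\CB S_1(t)(\eta_0,\Bu_0)=\CB\bigl(\pd_t S_1(t)(\eta_0,\Bu_0)\bigr)$; Lemma \ref{lem:ext} \eqref{lem:ext-2} with $m=2$, $q=2$ then gives
\begin{equation*}
\|\pd_t\CE_N S_1(t)(\eta_0,\Bu_0)\|_{H_2^2(\BR_-^N)}\le C\|\pd_t S_1(t)(\eta_0,\Bu_0)\|_{W_2^{3/2}(\BR^{N-1})}.
\end{equation*}
Since $\CA S_1(t)(\eta_0,\Bu_0)$ equals $S_1(t)(\eta_0,\Bu_0)$ on $\BR_0^N$, the function $\pd_t S_1(t)(\eta_0,\Bu_0)$ is the boundary trace of $\pd_t\CA S_1(t)(\eta_0,\Bu_0)\in H_2^2(\BR_-^N)$, so by the trace theorem and the displayed bound for $\CA$ above, the right-hand side is controlled by the same expression as in the case $N=3$.

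To finish, I would absorb both terms into $Ct^{-1/3-\delta_0}\|(\eta_0,\Bu_0)\|_{L_{q_1/2}\cap X_2}$ for $t\ge 1$: the exponential term via $e^{-\gamma_2 t}\le C t^{-1/3-\delta_0}$, and the polynomial term via the elementary inequality
\begin{equation*}
\frac{N-1}{2}\left(\frac{2}{q_1}-\frac12\right)\ge \frac{2}{q_1}-\frac12\ge \frac{2}{2+q_0}-\frac12=\frac{9}{10}-\frac12=\frac{2}{5}\ge\frac{11}{30}=\frac13+\delta_0,
\end{equation*}
valid for $N\ge 3$, $2<q_1\le 2+q_0$, $q_0=2/9$, $\delta_0=1/30$; the norm bookkeeping is immediate from $\|\cdot\|_{L_{q_1/2}\cap X_2}=\|\cdot\|_{L_{q_1/2}}+\|\cdot\|_{X_2}$. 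The main obstacle is thus the passage from $\CB$ to $\CA$ in the case $N\ge 4$ via the boundary trace and Lemma \ref{lem:ext}; the rest is a direct citation of Lemma \ref{lem:decay-sg} and the above arithmetic check.
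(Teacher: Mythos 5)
Your proposal is correct and follows the paper's own route: apply \eqref{SaS:est1} with $p=q_1/2$, $q=2$ (so the exponent $-\tfrac12(\tfrac12-\tfrac1q)$ vanishes), verify $\tfrac{N-1}{2}\bigl(\tfrac{2}{q_1}-\tfrac12\bigr)\geq \tfrac25\geq\tfrac13+\delta_0$, and absorb the exponential term. The only difference is that the paper's proof simply cites \eqref{SaS:est1}, which is stated for $\CA$, without commenting on the case $N\geq 4$ where $\CE_N=\CB$; your extra step --- commuting $\pd_t$ with $\CB$, bounding $\|\CB f\|_{H_2^2}$ by $\|f\|_{W_2^{3/2}}$ via Lemma \ref{lem:ext} \eqref{lem:ext-2}, and recovering the boundary datum as the trace of $\pd_t\CA S_1(t)(\eta_0,\Bu_0)$ --- is a correct and welcome patch for that case (which is in any event not used later, since the proposition is only invoked for $N=3$).
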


\begin{proof}
Since $N\geq 3$, one observes that
\begin{equation*}
\frac{N-1}{2}\left(\frac{2}{q_1}-\frac{1}{2}\right)\geq \frac{2}{q_1}-\frac{1}{2}
\geq \frac{2}{2+q_0}-\frac{1}{2}=\frac{2}{5}\geq \frac{1}{3}+\delta_0.
\end{equation*}
The desired inequality thus follows from \eqref{SaS:est1}.
This completes the proof of Proposition \ref{prp:decay-L2}
\end{proof}

\section{Linear theory}\label{sec:lin-theory}

This section introduces our linear theory in order to prove Theorem \ref{thm:transformed}.
All of the results of Subsection \ref{subsec:MR} below hold for $N\geq 2$.

\subsection{Maximal regularity}\label{subsec:MR}
Recall $\BQ_-$ and $\BQ_0$ given by \eqref{dfn:Q-Q0}.
Let us consider
\begin{equation}\label{lin-eq:1}
\left\{\begin{aligned}
\pd_t\eta -u_N &=d && \text{on $\BQ_0$,} \\
\pd_t\Bu-\mu\Delta\Bu+\nabla\Fp&=\Bf && \text{in $\BQ_-$,} \\
\dv\Bu= g&=\dv\Fg && \text{in $\BQ_-$,} \\
(\mu\BD(\Bu)-\Fp\BI)\Be_N
+(c_g-c_\sigma\Delta')\eta\Be_N&=\Bh && \text{on $\BQ_0$,} \\
\eta|_{t=0}=\eta_0 \quad \text{on $\BR^{N-1}$,} \quad
\Bu|_{t=0}&=\Bu_0 && \text{in $\BR_-^N$.}
\end{aligned}\right.
\end{equation}

To state the maximal regularity of \eqref{lin-eq:1},
we introduce some norms.
Let $w(t)$ be a function of time $t$. For solutions of \eqref{lin-eq:1}, we set
\begin{align*}
\|(\eta,\Bu)\|_{E_{p,q}(w(t))}
&=\|w(t)\pd_t\eta\|_{L_p(\BR_+,W_q^{2-1/q}(\BR^{N-1}))}
+\|w(t)\eta\|_{L_p(\BR_+,W_q^{3-1/q}(\BR^{N-1}))} \\
&+\|w(t)\pd_t\Bu\|_{L_p(\BR_+,L_q(\BR_-^N)^N)} 
+\|w(t)\Bu\|_{L_p(\BR_+,H_q^2(\BR_-^N)^N)}.
\end{align*}
For right members of \eqref{lin-eq:1}, we set
\begin{align*}
\|(d,\Bf,\Fg,g,\Bh)\|_{F_{p,q}(w(t))}
&=\|w(t)d\|_{L_p(\BR_+,W_q^{2-1/q}(\BR^{N-1}))}
+\|w(t)\Bf\|_{L_p(\BR_+,L_q(\BR_-^N)^N)} \\
&+\|w(t)\pd_t\Fg\|_{L_p(\BR_+,L_q(\BR_-^N)^N)}
+\|w(t)\Fg\|_{L_p(\BR_+,L_q(\BR_-^N)^N)} \\
&+\|w(t) g\|_{H_p^{1/2}(\BR_+,L_q(\BR_-^N))}
+\|w(t) g\|_{ L_p(\BR_+,H_q^1(\BR_-^N))} \\
&+\|w(t) \Bh\|_{H_p^{1/2}(\BR_+,L_q(\BR_-^N)^N)}
+\|w(t) \Bh\|_{L_p(\BR_+,H_q^1(\BR_-^N)^N)}.
\end{align*}

Recall $I_{q,p}$ given by \eqref{dfn:ini-sp} for the initial data.
The maximal regularity for \eqref{lin-eq:1} has been proved in \cite[Theorem 3.4.2]{Shibata20},
see also \cite[Theorem 1.4]{SS12}.

\begin{lem}\label{lem:mr}
Let $p,q\in(1,\infty)$ with $2/p+1/q\neq 1$.
Then there exists a positive constant $\gamma_3$ such that 
for any $(\eta_0,\Bu_0)\in I_{q,p}$
and for any 
\begin{align*}
e^{-\gamma_3 t} d 
&\in L_p(\BR_+,W_q^{2-1/q}(\BR^{N-1})), \\
e^{-\gamma_3 t} \Bf 
&\in L_p(\BR_+,L_q(\BR_-^N)^N), \\
e^{-\gamma_3 t} \Fg 
&\in H_p^1(\BR_+,L_q(\BR_-^N)^N), \\
e^{-\gamma_3 t} g
&\in H_p^{1/2}(\BR_+,L_q(\BR_-^N))\cap L_p(\BR_+,H_q^1(\BR_-^N)), \\
e^{-\gamma_3 t}\Bh 
&\in H_p^{1/2}(\BR_+,L_q(\BR_-^N)^N)\cap L_p(\BR_+,H_q^1(\BR_-^N)^N),
\end{align*}
with the following compatibility conditions {\rm (a)} and {\rm (b):}
\begin{enumerate}[{\rm (a)}]
\item
$\Bu_0-\Fg|_{t=0}\in J_q(\BR_-^N)$ and $\dv\Bu_0=g|_{t=0}$ in $\BR_-^N$,
\item
$(\mu\BD(\Bu_0)\Be_N)_\tau=(\Bh|_{t=0})_\tau$ in $B_{q,p}^{1-2/p-1/q}(\BR_0^N)$ if $2/p+1/q<1$,
\end{enumerate}
\eqref{lin-eq:1} admits a unique solution $(\eta,\Bu,\Fp)$ with
\begin{align*}
\eta &\in H_{p,\lc}^1(\BR_+,W_q^{2-1/q}(\BR^{N-1}))\cap L_{p,\lc}(\BR_+,W_q^{3-1/q}(\BR^{N-1})), \notag \\
\Bu &\in H_{p,\lc}^1(\BR_+,L_q(\BR_-^N)^N) \cap L_{p,\lc}(\BR_+,H_q^2(\BR_-^N)^N), \notag \\
\Fp&\in L_{p,\lc}(\BR_+,\wht H_{q,0}^1(\BR_-^N)+H_q^1(\BR_-^N)).
\end{align*}
In addition, the following estimates hold.
\begin{enumerate}[$(1)$]
\item
There exists a positive constant $C=C(N,p,q,\gamma_3)$ such that
\begin{equation*}
\|(\eta,\Bu)\|_{E_{p,q}(e^{-\gamma_3 t})}\leq C\Big(\|(\eta_0,\Bu_0)\|_{I_{q,p}}
+\|(d,\Bf,\Fg,g,\Bh)\|_{F_{p,q}(e^{-\gamma_3 t})}\Big).
\end{equation*}
\item
For any $T>0$
\begin{align*}
&\|(\pd_t\eta,\pd_t\Bu)\|_{L_p((0,T),W_q^{2-1/q}(\BR^{N-1})\times L_q(\BR_-^N)^N)} \\
&+\|(\eta,\Bu)\|_{L_p((0,T),W_q^{3-1/q}(\BR^{N-1})\times H_q^2(\BR_-^N)^N)} \\
&\leq C e^{\gamma_3 T}\Big(\|(\eta_0,\Bu_0)\|_{I_{q,p}}
+\|(d,\Bf,\Fg,g,\Bh)\|_{F_{p,q}(e^{-\gamma_3 t})}\Big),
\end{align*}
where $C$ is a positive constant independent of $T$.
\end{enumerate}
\end{lem}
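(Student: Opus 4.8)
The plan is to derive Lemma~\ref{lem:mr} from the resolvent analysis of Section~\ref{sec:sg} together with the operator-valued Fourier multiplier theorem of Weis, following \cite{SS12, Shibata20}. First I would reduce to the case of vanishing initial data and source terms supported in $t>0$. Solving an auxiliary half-space divergence equation with data $(g,\Fg)$, with zero trace at $t=0$ (consistent because $g|_{t=0}=\dv\Bu_0$ by compatibility (a)), produces a $\Bw$ with $\dv\Bw=g$ whose full norm is controlled by the $g$- and $\Fg$-parts of $\|(d,\Bf,\Fg,g,\Bh)\|_{F_{p,q}(e^{-\gamma_3 t})}$; replacing $\Bu$ by $\Bu-\Bw$ reduces to $g=\Fg=0$ at the cost of modifying $\Bf$, $\Bh$, and $\Bu_0$. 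Then, subtracting $S(t)(\eta_0,\Bu_0)$ where $(S(t))_{t\ge0}$ is the $C_0$-analytic semigroup of Proposition~\ref{prp:sg} (legitimate since now $\Bu_0\in J_q(\BR_-^N)$), and using Proposition~\ref{prp:sg}~\eqref{prp:sg-3} together with the real-interpolation description of $I_{q,p}$ recorded through Lemma~\ref{lem:tanabe} and \eqref{int-p-SS-sp-1}--\eqref{int-p-SS-sp-2}, I would bound the resulting correction terms by $\|(\eta_0,\Bu_0)\|_{I_{q,p}}$. It then suffices to prove the weighted estimate of part (1) for \eqref{lin-eq:1} with $g=\Fg=0$, $\eta_0=\Bu_0=0$, and $d,\Bf,\Bh$ vanishing for $t\le0$.

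Next, multiplying the reduced system by $e^{-\gamma_3 t}$ turns $\pd_t$ into $\pd_t+\gamma_3$, and extending every function by zero to $t<0$ and applying the Fourier transform in time yields, for each $s\in\BR$, the resolvent problem \eqref{rp1} --- equivalently the reduced Stokes problem \eqref{reduced:rp} --- with spectral parameter $\lambda=\gamma_3+is$. Choosing $\gamma_3\ge\gamma_0$ (with $\gamma_0$ as in Lemma~\ref{lem:SS12}) puts this line in the region where the solution exists and, more importantly, where the solution operator families are $\mathcal{R}$-bounded, together with their $\lambda\pd_\lambda$-derivatives, uniformly on $\{\mathrm{Re}\,\lambda\ge\gamma_3\}$; this $\mathcal{R}$-boundedness is exactly what is constructed in \cite{SS12}. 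Since $L_q(\BR_-^N)$ and $W_q^{s}(\BR^{N-1})$ are UMD spaces, Weis's theorem then converts these $\mathcal{R}$-bounds into the $L_p$-in-time estimate $\|(\eta,\Bu)\|_{E_{p,q}(e^{-\gamma_3 t})}\le C\|(d,\Bf,\Fg,g,\Bh)\|_{F_{p,q}(e^{-\gamma_3 t})}$. The $H_p^{1/2}$-in-time norms of $g$ and $\Bh$ appear because the corresponding boundary symbols carry a half-power of $\lambda$ (the square-root behaviour of the Dirichlet-to-Neumann operator and of the boundary trace), matched on the data side via the trace theorem and the embeddings of Lemma~\ref{int-H-1/2}; the resolvent bounds of Lemma~\ref{lem:SS12} also yield uniqueness.

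Undoing the reductions gives part (1) in full: the compatibility conditions (a) and (b) are precisely what makes the zero-extension across $t=0$ of the reduced data lie in the correct space, so that the trace at $t=0$ of the constructed solution is the prescribed $(\eta_0,\Bu_0)$ and, when $2/p+1/q<1$, the boundary term is compatible in $B_{q,p}^{1-2/p-1/q}(\BR_0^N)$. For part (2), on $(0,T)$ one has $\mathbf{1}_{(0,T)}(t)=e^{\gamma_3 t}e^{-\gamma_3 t}\le e^{\gamma_3 T}e^{-\gamma_3 t}$, so the unweighted norm of $(\eta,\Bu)$ on $(0,T)$ is dominated by $e^{\gamma_3 T}$ times its weighted norm on $\BR_+$, and part (2) follows immediately from part (1).

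The main obstacle is the $\mathcal{R}$-boundedness step: the system is coupled and its $\eta$-equation $\lambda\eta-u_N=d$ is only algebraic in $\lambda$ (so $\eta=(d+u_N)/\lambda$), while the surface-tension term $c_\sigma\Delta'\eta$ feeds a third-order contribution of $\eta$ back into the Neumann boundary condition; verifying that the resulting symbols and their $\lambda$-derivatives --- including the half-power boundary symbols --- form $\mathcal{R}$-bounded families uniformly for $\mathrm{Re}\,\lambda\ge\gamma_0$ is the delicate point (carried out in \cite{SS12}), with the careful bookkeeping of the $H_p^{1/2}$-in-time spaces for $g$ and $\Bh$ through the trace theorem a secondary difficulty.
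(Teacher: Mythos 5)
Your outline is essentially correct: the paper itself gives no proof of this lemma but simply cites \cite[Theorem 3.4.2]{Shibata20} (see also \cite[Theorem 1.4]{SS12}), and the argument in those references is exactly the one you sketch --- reduction to zero data, Laplace transform in $t$ with $\mathrm{Re}\,\lambda=\gamma_3$, $\mathcal{R}$-boundedness of the resolvent solution operators for the coupled system with surface tension and gravity, and Weis's operator-valued multiplier theorem, with part (2) following from part (1) by the trivial pointwise bound $e^{-\gamma_3 t}\geq e^{-\gamma_3 T}$ on $(0,T)$. So you have reconstructed the intended (cited) proof rather than found a different route.
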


We next consider the following system with $\gamma_3$ given by Lemma \ref{lem:mr}:
\begin{equation}\label{t-shift:eq1}
\left\{\begin{aligned}
\pd_t\eta +\gamma_3\eta-u_N&=d && \text{on $\BQ_0$,} \\
\pd_t\Bu+\gamma_3\Bu-\mu\Delta\Bu+\nabla\Fp&=\Bf && \text{in $\BQ_-$,} \\
\dv\Bu= g &=\dv\Fg && \text{in $\BQ_-$,} \\
\mu(\BD(\Bu)-\Fp\BI)\Be_N+(c_g-c_\sigma\Delta')\eta\Be_N&=\Bh && \text{on $\BQ_0$,} \\
\eta|_{t=0}=\eta_0 \quad \text{on $\BR^{N-1}$,} \quad
\Bu|_{t=0}&=\Bu_0 && \text{in $\BR_-^N$.}
\end{aligned}\right.
\end{equation}
The next lemma follows from Lemma \ref{lem:mr} immediately.

\begin{lem}\label{lem:t-shift}
Let $p,q\in(1,\infty)$ with $2/p+1/q\neq 1$.
Then for any $(\eta_0,\Bu_0)\in I_{q,p}$
and for any
\begin{align*}
d &\in L_p(\BR_+,W_q^{2-1/q}(\BR^{N-1})), \\
\Bf &\in L_p(\BR_+, L_q(\BR_-^N)^N), \\
\Fg &\in H_p^1(\BR_+,L_q(\BR_-^N)^N), \\
g&\in H_p^{1/2}(\BR_+,L_q(\BR_-^N))\cap L_p(\BR_+,H_q^1(\BR_-^N)), \\
\Bh &\in H_p^{1/2}(\BR_+,L_q(\BR_-^N)^N)\cap L_p(\BR_+,H_q^1(\BR_-^N)^N),
\end{align*}
with the compatibility conditions {\rm (a)} and {\rm (b)} stated in Lemma $\ref{lem:mr}$,
\eqref{t-shift:eq1} admits a unique solution $(\eta,\Bu,\Fp)$ with
\begin{align*}
\eta &\in H_{p}^1(\BR_+,W_q^{2-1/q}(\BR^{N-1}))\cap L_{p}(\BR_+,W_q^{3-1/q}(\BR^{N-1})), \notag \\
\Bu &\in H_{p}^1(\BR_+,L_q(\BR_-^N)^N) \cap L_{p}(\BR_+,H_q^2(\BR_-^N)^N), \notag \\
\Fp&\in L_{p}(\BR_+,\wht H_{q,0}^1(\BR_-^N)+H_q^1(\BR_-^N)).
\end{align*}
In addition, 
\begin{equation*}
\|(\eta,\Bu)\|_{E_{p,q}(1)}
\leq C\Big(\|(\eta_0,\Bu_0)\|_{I_{q,p}}+\|(d,\Bf,\Fg,g,\Bh)\|_{F_{p,q}(1)}\Big)
\end{equation*}
for some positive constant $C=C(N,p,q,\gamma_3)$.
\end{lem}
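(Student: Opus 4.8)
The plan is to reduce \eqref{t-shift:eq1} to \eqref{lin-eq:1} by the standard exponential time shift. First I would set $\wtd\eta = e^{\gamma_3 t}\eta$, $\wtd\Bu = e^{\gamma_3 t}\Bu$, and $\wtd\Fp = e^{\gamma_3 t}\Fp$, and observe that $(\wtd\eta,\wtd\Bu,\wtd\Fp)$ solves the system \eqref{lin-eq:1} with right members $\wtd d = e^{\gamma_3 t}d$, $\wtd\Bf = e^{\gamma_3 t}\Bf$, $\wtd\Fg = e^{\gamma_3 t}\Fg$, $\wtd g = e^{\gamma_3 t}g$, $\wtd\Bh = e^{\gamma_3 t}\Bh$, and the same initial data $(\eta_0,\Bu_0)$; indeed $\pd_t\wtd\eta = e^{\gamma_3 t}(\pd_t\eta + \gamma_3\eta)$ and likewise for $\wtd\Bu$, while the divergence equation, the boundary condition, and the initial conditions are unaffected up to the factor $e^{\gamma_3 t}$, so the compatibility conditions {\rm (a)} and {\rm (b)} for $(\eta_0,\Bu_0)$ with the shifted data $(\wtd d,\wtd\Bf,\wtd\Fg,\wtd g,\wtd\Bh)$ at $t=0$ coincide with those stated for $(d,\Bf,\Fg,g,\Bh)$, since all shift factors equal $1$ at $t=0$.

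Next I would verify that the shifted data lie in the spaces required by Lemma \ref{lem:mr} in the form $e^{-\gamma_3 t}(\cdot)$; but $e^{-\gamma_3 t}\wtd d = d$, and similarly $e^{-\gamma_3 t}\wtd\Bf = \Bf$, $e^{-\gamma_3 t}\wtd\Fg = \Fg$, etc., so the hypotheses of Lemma \ref{lem:mr} are exactly the hypotheses assumed here. Applying Lemma \ref{lem:mr} therefore produces a unique solution $(\wtd\eta,\wtd\Bu,\wtd\Fp)$ in the stated local-in-time classes, with the estimate
\begin{equation*}
\|(\wtd\eta,\wtd\Bu)\|_{E_{p,q}(e^{-\gamma_3 t})}
\leq C\Big(\|(\eta_0,\Bu_0)\|_{I_{q,p}} + \|(d,\Bf,\Fg,g,\Bh)\|_{F_{p,q}(1)}\Big),
\end{equation*}
where I have used that $\|(\wtd d,\wtd\Bf,\wtd\Fg,\wtd g,\wtd\Bh)\|_{F_{p,q}(e^{-\gamma_3 t})} = \|(d,\Bf,\Fg,g,\Bh)\|_{F_{p,q}(1)}$ after unwinding the weights (here the $H_p^{1/2}$-in-time norms of $e^{-\gamma_3 t}\wtd g = g$ and $e^{-\gamma_3 t}\wtd\Bh = \Bh$ are literally the unweighted norms). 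Undoing the substitution, $(\eta,\Bu,\Fp) = (e^{-\gamma_3 t}\wtd\eta, e^{-\gamma_3 t}\wtd\Bu, e^{-\gamma_3 t}\wtd\Fp)$ solves \eqref{t-shift:eq1}; since $e^{-\gamma_3 t}$ is bounded with bounded derivative on $\BR_+$, multiplication by it maps the local-in-time regularity classes for $\wtd\eta,\wtd\Bu,\wtd\Fp$ into the global classes $H_p^1(\BR_+,\cdot)\cap L_p(\BR_+,\cdot)$ claimed for $\eta,\Bu,\Fp$, and moreover $\|(\eta,\Bu)\|_{E_{p,q}(1)} = \|(\wtd\eta,\wtd\Bu)\|_{E_{p,q}(e^{-\gamma_3 t})}$ exactly, by the definition of the $E_{p,q}$-norm with weight $e^{-\gamma_3 t}$.

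Uniqueness transfers in the same way: if $(\eta,\Bu,\Fp)$ is another solution of \eqref{t-shift:eq1} in the stated class, then $(e^{\gamma_3 t}\eta, e^{\gamma_3 t}\Bu, e^{\gamma_3 t}\Fp)$ solves \eqref{lin-eq:1} with the shifted data and lies in the local-in-time class (the weight $e^{\gamma_3 t}$ need not be globally bounded, but the solution space in Lemma \ref{lem:mr} is only $L_{p,\lc}$, $H_{p,\lc}^1$, so this is harmless), whence uniqueness from Lemma \ref{lem:mr} forces agreement. This is essentially bookkeeping; I do not expect a genuine obstacle, the only point needing a little care being the bookkeeping of the $H_p^{1/2}$-in-time norms and the compatibility conditions under the shift, both of which are transparent because the exponential factor and its inverse are smooth and equal $1$ at $t = 0$. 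This completes the proof of Lemma \ref{lem:t-shift}.
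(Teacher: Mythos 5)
Your reduction by the exponential shift $e^{\gamma_3 t}$ is essentially the paper's own argument: Lemma \ref{lem:t-shift} is recorded there as following immediately from Lemma \ref{lem:mr}, and your bookkeeping of the data, compatibility conditions, and weights is the right way to spell that out. One caution: the claimed exact norm identities fail for the time-derivative components (e.g. $e^{-\gamma_3 t}\pd_t(e^{\gamma_3 t}\Fg)=\pd_t\Fg+\gamma_3\Fg$, and similarly in the $E_{p,q}$-norm), so one only gets equivalences with constants depending on $\gamma_3$ --- which suffices since $C=C(N,p,q,\gamma_3)$ --- and the global-in-time membership of $(\eta,\Bu)$ (and of $\Fp$, via the pressure bound in the underlying maximal regularity theorem) should be deduced from the weighted estimate, not from boundedness of $e^{-\gamma_3 t}$ alone, since a bounded multiplier does not upgrade $L_{p,\lc}$ to $L_p(\BR_+)$.
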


Recall $\langle t \rangle = \sqrt{1+t^2}$.
Similarly to the argumentation in \cite[page 436]{Shibata20},
we obtain the following proposition from Lemma \ref{lem:t-shift}.

\begin{prp}\label{prp:t-shift}
Let $p,q\in(1,\infty)$ with $2/p+1/q\neq 1$ and let $b>0$.
Then for any $(\eta_0,\Bu_0)\in I_{q,p}$
and for any
\begin{align*}
\langle t\rangle^b d  &\in L_p(\BR_+,W_q^{2-1/q}(\BR^{N-1})), \\
\langle t\rangle^b\Bf &\in L_p(\BR_+, L_q(\BR_-^N)^N), \\
\langle t\rangle^b \Bg &\in H_p^1(\BR_+,L_q(\BR_-^N)^N), \\
\langle t\rangle^b g &\in H_p^{1/2}(\BR_+,L_q(\BR_-^N))\cap L_p(\BR_+,H_q^1(\BR_-^N)), \\
\langle t\rangle^b \Bh &\in H_p^{1/2}(\BR_+,L_q(\BR_-^N)^N)\cap L_p(\BR_+,H_q^1(\BR_-^N)^N),
\end{align*}
with the compatibility conditions {\rm (a)} and {\rm (b)} stated in Lemma $\ref{lem:mr}$,
the unique solution $(\eta,\Bu,\Fp)$ of \eqref{t-shift:eq1} obtained in Lemma $\ref{lem:t-shift}$ satisfies
\begin{equation*}
\|(\eta,\Bu)\|_{E_{p,q}(\langle t \rangle^b)}
\leq C\Big(\|(\eta_0,\Bu_0)\|_{I_{q,p}}+\|(d,\Bf,\Fg,g,\Bh)\|_{F_{p,q}(\langle t \rangle^b)}\Big)
\end{equation*}
for some positive constant $C=C(N,p,q,\gamma_3,b)$.
\end{prp}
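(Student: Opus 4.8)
The plan is to deduce Proposition~\ref{prp:t-shift} from the weightless estimate of Lemma~\ref{lem:t-shift} by a standard dyadic decomposition of the time axis, exploiting the exponentially decaying semigroup hidden in the $+\gamma_3$ shift. First I would record the key mechanism: because of the zeroth-order term $+\gamma_3$ in \eqref{t-shift:eq1}, for each fixed $k\in\BN_0$ the solution restricted to the interval $J_k=(2^{k-1},2^{k+1})$ (with $J_0=(0,2)$) can be written, via Duhamel's formula in the $t$-variable, as the sum of (i) a contribution of the data supported on $J_{k-1}\cup J_k\cup J_{k+1}$, estimated by Lemma~\ref{lem:t-shift} applied on a shifted/truncated interval, and (ii) a ``carry-over'' term coming from the initial trace at $t=2^{k-1}$, which is damped by the factor $e^{-\gamma_3 2^{k-1}}$. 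On $J_k$ the weight $\langle t\rangle^b$ is comparable to $2^{kb}$ up to a constant independent of $k$, so multiplying through by $\langle t\rangle^b$ costs only a fixed factor per interval.

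Concretely, the key steps in order: (1) Fix a smooth partition of unity $\{\chi_k\}_{k\geq 0}$ in time subordinate to $\{J_k\}$ with $|\pd_t^j\chi_k|\leq C 2^{-kj}$, and set $(\eta^{(k)},\Bu^{(k)},\Fp^{(k)})$ to be the solution of \eqref{t-shift:eq1} on $\BR_+$ with data $\chi_k d,\chi_k\Bf,\chi_k\Bg,\chi_k g,\chi_k\Bh$ and zero initial data for $k\geq 1$ (for $k=0$ keep $(\eta_0,\Bu_0)$). (2) Apply Lemma~\ref{lem:t-shift} to each $(\eta^{(k)},\Bu^{(k)})$; the commutator terms $[\pd_t^{1/2},\chi_k]g$ and $[\pd_t^{1/2},\chi_k]\Bh$ in the $H_p^{1/2}$-norms are controlled using the derivative bound on $\chi_k$ together with Lemma~\ref{int-H-1/2} and the interpolation-type estimates of Section~\ref{sec:prelim}, at the cost of a constant uniform in $k$ and an extra $L_p$-in-time factor which is absorbed since the data live on the bounded interval $J_k$. (3) Multiply the resulting estimate by $2^{kb}$, use $2^{kb}\leq C\langle t\rangle^b$ on $J_k$ on the left and $2^{kb}\sim\langle t\rangle^b$ on the support of $\chi_k$ on the right, and sum over $k$ in $\ell^p$: by finite overlap of the $J_k$ (each point lies in at most two intervals) and the fact that $\|w\|_{L_p(\BR_+)}^p\sim\sum_k\|w\|_{L_p(J_k)}^p$, the $\ell^p$ sum of the right-hand sides is bounded by $\|(\eta_0,\Bu_0)\|_{I_{q,p}}+\|(d,\Bf,\Bg,g,\Bh)\|_{F_{p,q}(\langle t\rangle^b)}$. (4) Finally, verify that $\sum_k(\eta^{(k)},\Bu^{(k)},\Fp^{(k)})$ solves \eqref{t-shift:eq1} with the original data and coincides with the solution from Lemma~\ref{lem:t-shift}, by uniqueness; here the exponential damping from $\gamma_3$ guarantees the series converges in the relevant norm, which is exactly the point where the $+\gamma_3\eta$, $+\gamma_3\Bu$ terms are essential.

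The main obstacle I expect is the fractional-in-time norm $H_p^{1/2}(\BR_+,L_q)$ appearing for $g$ and $\Bh$: a naive localization $\chi_k g$ does not interact cleanly with the nonlocal operator $\pd_t^{1/2}$, so one must either (a) estimate the commutator $[\chi_k,\Lambda^{1/2}]$ with a kernel bound that produces off-diagonal decay in $|k-k'|$ summable against the weight, or (b) circumvent it by using the embedding of Lemma~\ref{int-H-1/2} to replace the $H_p^{1/2}$-data norms by $H_p^1\cap L_p(\cdot,H_q^2)$-type quantities before localizing—but that is not available for general $g,\Bh$, only for those arising as traces. The cleanest route, and the one I would follow, is to mimic the argument referenced in \cite[page~436]{Shibata20}: handle the commutator via the representation of $\Lambda^{1/2}$ as a singular integral in $t$, obtaining a bound of the form $\sum_{k'}2^{-c|k-k'|}(\text{data on }J_{k'})$, which after weighting by $2^{kb}$ and applying Young's inequality for the discrete convolution yields the claimed estimate with a constant $C=C(N,p,q,\gamma_3,b)$.
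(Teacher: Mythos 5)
There is a genuine gap, and it sits exactly where your argument puts all the weight: the claim that the contribution of the data localized on $J_k$ is ``damped by the factor $e^{-\gamma_3 2^{k-1}}$'' at later times. Lemma \ref{lem:t-shift} only provides a weight-one estimate of $(\eta^{(k)},\Bu^{(k)})$ on all of $\BR_+$; it says nothing about decay of the tail of $\eta^{(k)}$ after the support of $\chi_k$. Nor do the other results of the paper supply such damping: Proposition \ref{prp:sg} (3) gives the semigroup bound $e^{\gamma_1 t}$ with no stated relation between $\gamma_1$ and $\gamma_3$, and the actual decay of $S(t)$ (Lemma \ref{lem:decay-sg}, Proposition \ref{prp:decay}) is only polynomial, of limited order, and only under an extra $L_{q_1/2}$ hypothesis on the data --- whereas Proposition \ref{prp:t-shift} is asserted for all $p,q\in(1,\infty)$, all $b>0$, and in particular for $N\geq 2$. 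Without some gain in the elapsed time that beats the factor $2^{(m-k)b}$ appearing when you compare the weight $\langle t\rangle^b\sim 2^{mb}$ on the block $J_m$ with the normalization $2^{-kb}$ of the data on $J_k$, the $\ell^p$ summation in your step (3) diverges: for $t\in J_m$ you only get $\sum_{k\leq m}2^{(m-k)b}\|\langle t\rangle^b(\text{data on }J_k)\|$, which is not summable. The commutator of $\chi_k$ with the nonlocal $H_p^{1/2}$ norm, which you flag yourself, is a second (resolvable but unaddressed) technical issue, but the damping is the one that cannot be repaired with the lemmas available in the paper.

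The paper's own route (the argument it cites from \cite[page 436]{Shibata20}) avoids dyadic localization entirely: one multiplies the solution and the data by $\langle t\rangle^b$ and observes that $(\langle t\rangle^b\eta,\langle t\rangle^b\Bu,\langle t\rangle^b\Fp)$ solves the same shifted system \eqref{t-shift:eq1} up to commutator terms such as $(\pd_t\langle t\rangle^b)\eta$ and $(\pd_t\langle t\rangle^b)\Bu$, which carry the strictly smaller weight $\langle t\rangle^{b-1}$ (and are harmless in the $H_p^{1/2}$ norms because $\langle t\rangle^b$ is smooth with derivatives controlled by $\langle t\rangle^{b-1}$). For $b\leq 1$ these commutators are dominated by the unweighted norms from Lemma \ref{lem:t-shift}; for general $b$ one inducts in steps of size one. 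This uses nothing beyond Lemma \ref{lem:t-shift} and, in particular, requires no decay of the solution operator, which is precisely why it works where the dyadic scheme does not. If you want to keep a localization-type proof, you would first have to establish an $e^{\varepsilon t}$-weighted analogue of Lemma \ref{lem:t-shift} (equivalently, Lemma \ref{lem:mr} with shift $\gamma_3-\varepsilon$), which is a strictly stronger statement than the one being proved.
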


\subsection{Time-weighed estimates}\label{subsec:6-2}
This subsection is concerned with 
time-weighted estimates for solutions of \eqref{lin-eq:1} given by Lemma \ref{lem:mr}.
Let us start with the case $N\geq 4$.

\begin{prp}\label{prp:linear4d}
Suppose $N\geq 4$. Let $q_0$, $\delta_0$ be as in Proposition $\ref{prp:decay}$ and let $p_0=31$.
Then  for any $p$, $q_1$, and $q_2$ satisfying
\begin{equation}\label{cond-max-pq}
p_0\leq p <\infty, \quad 
2<q_1\leq 2+q_0, \quad N < q_2<\infty, \quad \frac{2}{p}+\frac{1}{(q_1/2)}\neq 1
\end{equation}
and for any $(\eta_0,\Bu_0)\in \bigcap_{r\in\{q_1/2,q_2\}} I_{r,p}$,
\begin{align*}
\langle t \rangle d 
&\in \bigcap_{r\in\{q_1/2,q_2\}} L_p(\BR_+,W_r^{2-1/r}(\BR^{N-1})), \quad \\
\langle t \rangle \Bf
&\in \bigcap_{r\in\{q_1/2,q_2\}} L_p(\BR_+,L_{r}(\BR_-^N)^N),  \\
\langle t \rangle \Fg
& \in \bigcap_{r\in\{q_1/2,q_2\}} H_p^1(\BR_+,L_{r}(\BR_-^N)^N), \\
\langle t \rangle g
&\in \bigcap_{r\in\{q_1/2,q_2\}} H_p^{1/2}(\BR_+,L_r(\BR_-^N))\cap L_p(\BR_+,H_r^1(\BR_-^N)), \\
\langle t \rangle \Bh
&\in \bigcap_{r\in\{q_1/2,q_2\}} H_p^{1/2}(\BR_+,L_r(\BR_-^N)^N)\cap L_p(\BR_+,H_r^1(\BR_-^N)^N),
\end{align*}
with the compatibility conditions 
\begin{alignat}{2}
\dv\Bu_0&=g|_{t=0}  && \quad \text{in $\BR_-^N$,} \label{comp:t-wegiht1} \\
(\mu\BD(\Bu_0)\Be_N)_\tau &=(\Bh|_{t=0})_\tau && \quad \text{on $\BR_0^N$,} \label{comp:t-wegiht2}
\end{alignat}
the unique solution $(\eta,\Bu,\Fp)$ of \eqref{lin-eq:1} obtained in Lemma $\ref{lem:mr}$ satisfies
for $q\in\{q_1,q_2\}$
\begin{equation*}
\|(\eta,\Bu)\|_{E_{p,q}(\langle t \rangle^{1/2})}
\leq L_1\sum_{r\in\{q_1/2,q_2\}} \Big(\|(\eta_0,\Bu_0)\|_{I_{r,p}}+\|(d,\Bf,\Fg,g,\Bh)\|_{F_{p,r}(\langle t \rangle)}\Big)
\end{equation*}
with some positive constant  $L_1$.
\end{prp}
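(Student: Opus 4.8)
The plan is to combine three ingredients already available in the excerpt: (i) the decay estimates for the $C_0$-analytic semigroup $(S(t))_{t\ge 0}$ from Proposition~\ref{prp:decay} (valid for $N\ge 4$ with decay rate $t^{-1/2-\delta_0}$), (ii) the Duhamel estimate of Proposition~\ref{prp:duha}, and (iii) the weighted maximal regularity estimate of Proposition~\ref{prp:t-shift} together with the unweighted maximal regularity of Lemma~\ref{lem:mr}. The first step is to reduce the boundary conditions to homogeneous ones: by subtracting a suitable lift of $(d,\Fg,g,\Bh)$ built from the maximal-regularity solution of an auxiliary problem (or by incorporating these terms directly via the divergence structure $g=\dv\Fg$ and the $H_p^{1/2}$-in-time control of $g$ and $\Bh$), one is left with estimating the semigroup representation $u(t)=\int_0^t S(t-\tau)F(\tau)\,d\tau$ plus the free-evolution term $S(t)(\eta_0,\Bu_0)$, where $F$ is the reduced forcing in the space $X_r=W_r^{2-1/r}(\BR^{N-1})\times J_r(\BR_-^N)$. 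The contribution on the bounded time interval $(0,2)$ is handled purely by Lemma~\ref{lem:mr} (finite time, so weights are harmless); the contribution on $(2,\infty)$ is where the decay is needed.

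Second, for the free-evolution part $S(t)(\eta_0,\Bu_0)$, I would split $\int_0^\infty(\langle t\rangle^{1/2}\|S(t)(\eta_0,\Bu_0)\|_{D(A_q)})^p\,dt$ into $\int_0^2$ and $\int_2^\infty$. On $(0,2)$ use Proposition~\ref{prp:sg}~(3) (exponential bound) with the embedding $I_{q,p}\hookrightarrow D(A_q)$-type trace spaces; on $(2,\infty)$ use \eqref{sg-est-4d}, which gives $\|S(t)(\eta_0,\Bu_0)\|_{D(A_q)}\le C t^{-1/2-\delta_0}\|(\eta_0,\Bu_0)\|_{L_{q_1/2}\cap X_q}$, so that $\langle t\rangle^{1/2}\|S(t)(\eta_0,\Bu_0)\|_{D(A_q)}\le C t^{-\delta_0}\|(\eta_0,\Bu_0)\|$, and $\int_2^\infty t^{-p\delta_0}\,dt<\infty$ precisely because $p\ge p_0=31$ and $\delta_0=1/30$ give $p\delta_0>1$. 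For the Duhamel part, I would apply Proposition~\ref{prp:duha} with $X=X_q$, $A=A_q$, $Y=X_{q_1/2}$ (the smaller-integrability space providing the extra decay), $a=1/2$, and $\delta=\delta_0$: the hypothesis \eqref{assum:duha-2} is exactly \eqref{sg-est-4d} after noting $L_{q_1/2}\cap X_q$ controls the $X_{q_1/2}$-norm, condition (a) needs $s_0\delta_0>1$ which holds for $s_0=p_0$, and condition (c) is the assumed $\langle t\rangle F\in L_p(\BR_+,Y\cap D(A))$. This yields the weighted estimate on $(2,\infty)$ for $\pd_t u$ and $u$ in terms of $\|\langle t\rangle F\|_{L_p(\BR_+,X_{q_1/2}\cap D(A_q))}$, and one must then bound that norm by the $F_{p,r}(\langle t\rangle)$-norms of the original data — this is where the reduction of the boundary/divergence terms to an $X_r$-valued forcing, using Lemma~\ref{lem:mr} and the embeddings of Section~\ref{sec:prelim} (particularly the $H_p^{1/2}$-in-time lemmas), must be made precise.

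Third, I would glue the two time intervals together and also recover the missing $L_p((0,2))$-estimate by invoking Proposition~\ref{prp:t-shift}: running the shifted system \eqref{t-shift:eq1} with weight $\langle t\rangle^{1/2}$ (here $b=1/2$, and $2/p+1/(q_1/2)\ne1$ is ensured by \eqref{cond-max-pq}) gives the full $E_{p,q}(\langle t\rangle^{1/2})$-norm of the shifted solution controlled by $F_{p,r}(\langle t\rangle^{1/2})\le F_{p,r}(\langle t\rangle)$; then one writes the true solution of \eqref{lin-eq:1} as (shifted solution) plus a correction solving a system with forcing $\gamma_3(\eta,\Bu)$, and this correction is handled by the Duhamel/semigroup decay argument above (the correction forcing lies in the right space because the shifted solution already does). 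Summing over $q\in\{q_1,q_2\}$ and $r\in\{q_1/2,q_2\}$ produces the stated inequality with constant $L_1$.

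The main obstacle I anticipate is \emph{not} the semigroup decay or the Duhamel estimate per se — those are packaged cleanly in Propositions~\ref{prp:decay} and~\ref{prp:duha} — but rather the bookkeeping needed to pass between the \emph{maximal-regularity formulation} of \eqref{lin-eq:1} (with inhomogeneous $d,\Fg,g,\Bh$ living in anisotropic spaces, including the delicate $H_p^{1/2}$-in-time norms) and the \emph{semigroup formulation} (with a single $X_r$-valued forcing $F$ and initial data in $X_r$). Concretely, one must produce an auxiliary function absorbing $(d,\Fg,g,\Bh)$ so that the residual system has homogeneous boundary/divergence data, verify the compatibility conditions are preserved, and check that the weighted norm $\|\langle t\rangle F\|_{L_p(\BR_+,X_{q_1/2}\cap D(A_q))}$ of the residual forcing is dominated by $\sum_r\|(d,\Bf,\Fg,g,\Bh)\|_{F_{p,r}(\langle t\rangle)}$ — this uses the interpolation and embedding lemmas of Section~\ref{sec:prelim} (e.g.\ Lemmas~\ref{int-H-1/2} and~\ref{lem:H-half}) and is essentially the same reduction as in \cite[Theorem~3.4.2]{Shibata20}, but now carried out with time weights. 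Once that reduction is in place, the decay-integrability bookkeeping ($p\delta_0>1$, $a=1/2$, the $\int_2^\infty t^{-\delta_0 p}\,dt$ convergence) is routine.
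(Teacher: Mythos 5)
Your overall strategy---absorb the inhomogeneous data by a shifted maximal-regularity solve (Proposition \ref{prp:t-shift}) and handle the correction back to \eqref{lin-eq:1} by Proposition \ref{prp:duha} together with the decay \eqref{sg-est-4d}---is indeed the paper's strategy, but there is a genuine gap exactly at the point you dismiss as bookkeeping. After a \emph{single} shifted solve, your correction solves \eqref{lin-eq:1} with forcing $\gamma_3(\eta^1,\Bu^1)$, and your parenthetical claim that this forcing ``lies in the right space because the shifted solution already does'' is false: $\Bu^1$ satisfies $\dv\Bu^1=g\neq0$ and the tangential boundary condition $(\mu\BD(\Bu^1)\Be_N)_\tau=\Bh_\tau$, so in general $(\eta^1,\Bu^1)(t)$ belongs neither to $X_q$ (no solenoidality) nor to $D(A_q)$. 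Consequently the Duhamel representation $\int_0^t S(t-\tau)\gamma_3(\eta^1,\Bu^1)(\tau)\,d\tau$ is not even available in the semigroup setting, and hypothesis (c) of Proposition \ref{prp:duha}, namely $\langle t\rangle f\in L_p(\BR_+,Y\cap D(A))$, fails. The paper closes this gap with an \emph{intermediate second shifted solve}: it writes $(\eta,\Bu)=(\eta^1,\Bu^1)+(\eta^2,\Bu^2)+(\eta^3,\Bu^3)$, where $(\eta^2,\Bu^2)$ solves the shifted system with forcing $\gamma_3(\eta^1,\Bu^1)$ and \emph{zero} divergence, boundary and initial data; then $\dv\Bu^2=0$ and the homogeneous boundary condition give $(\eta^2,\Bu^2)(t)\in D(A_q)$ with $\langle t\rangle(\eta^2,\Bu^2)\in L_p(\BR_+,(L_{q_1/2}\cap X_q)\cap D(A_q))$, and only the last correction $(\eta^3,\Bu^3)$, with forcing $\gamma_3(\eta^2,\Bu^2)$, is treated by Lemma \ref{lem:mr} on $(0,3)$ and by Proposition \ref{prp:duha} on $(2,\infty)$ with $a=1/2$, $\delta=\delta_0$, $s_0=p_0$, $Y=L_{q_1/2}\cap X_q$.

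Three further points. First, you invoke Proposition \ref{prp:t-shift} with $b=1/2$; but Proposition \ref{prp:duha} needs the forcing with the \emph{full} weight $\langle t\rangle$, so the intermediate pieces must be estimated with $b=1$ (as in the paper), the weight being lowered to $\langle t\rangle^{1/2}$ only at the Duhamel step. Second, your separate treatment of the free evolution $S(t)(\eta_0,\Bu_0)$ is both unnecessary and problematic: $(\eta_0,\Bu_0)\in I_{r,p}$ need not lie in $X_q$ (since $\dv\Bu_0=g|_{t=0}$ may be nonzero), let alone in $D(A_q)$, so neither \eqref{sg-est-4d} nor Proposition \ref{prp:sg} applies to it; in the paper the initial data are fed into the first shifted maximal-regularity solve and never meet the semigroup. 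Third, the conclusion is asserted for $q\in\{q_1,q_2\}$ while the data are only measured at $r\in\{q_1/2,q_2\}$; the missing exponent $q_1$ is recovered by the interpolation Lemma \ref{lem:int-p-v2} applied to $(\eta^1,\Bu^1)$ and $(\eta^2,\Bu^2)$, a step your outline omits.
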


\begin{rmk}\label{rmk:compati}
\begin{enumerate}[$(1)$]
\item
\eqref{comp:t-wegiht1} implies $({\rm a})$ of Lemma $\ref{lem:mr}$ for $q\in\{q_1/2,q_2\}$
as follows.

Let  $q\in\{q_1/2,q_2\}$ and $\varphi\in \wht H_{q,0}^1(\BR_-^N)$.
Lemma $\ref{lem:dense}$ gives us a sequence  $\{\varphi_j\}_{j=1}^\infty \subset C_0^\infty(\BR_-^N)$ such that
$\lim_{j\to \infty}\|\nabla(\varphi_j-\varphi)\|_{L_q(\BR_-^N)}=0$.  
Then 
\begin{align*}
(\Bu_0-\Fg|_{t=0},\nabla\varphi)_{\BR_-^N}
&=\lim_{j\to \infty}(\Bu_0-\Fg|_{t=0},\nabla\varphi_j)_{\BR_-^N} \\
&=-\lim_{j\to \infty}(\dv\Bu_0-g|_{t=0},\varphi_j)_{\BR_-^N} =0,
\end{align*}
which yields $\Bu_0-\Fg|_{t=0}\in J_q(\BR_-^N)$. 

\item
\eqref{comp:t-wegiht2} implies $({\rm b})$ of Lemma $\ref{lem:mr}$ for $q=q_2$
and for $q=q_1/2$ if $2/p+1/(q_1/2)<1$.
\end{enumerate}
\end{rmk}

\begin{proof}
Let $(\eta^i,\Bu^i,\Fp^i)$, $i=1,2,3$, be the solutions to the following systems:
\begin{align}
&\left\{\begin{aligned}
\pd_t\eta^1+\gamma_3\eta^1 -u_N^1&=d && \text{on $\BQ_0$, } \\
\pd_t\Bu^1+\gamma_3\Bu^1-\mu\Delta\Bu^1+\nabla\Fp^1&=\Bf && \text{in $\BQ_-$, } \\
\dv\Bu^1= g&=\dv\Fg && \text{in $\BQ_-$,} \\
(\mu\BD(\Bu^1)-\Fp^1\BI)\Be_N
+(c_g-c_\sigma\Delta')\eta^1\Be_N&=\Bh && \text{on $\BQ_0$, } \\
\eta^1|_{t=0}=\eta_0 \quad \text{on $\BR^{N-1}$,} \quad
\Bu^1|_{t=0}&=\Bu_0 && \text{in $\BR_-^N$,}
\end{aligned}\right. \label{decomp:1} \\
&\left\{\begin{aligned}
\pd_t\eta^2 +\gamma_3 \eta^2 -u_N^2&=\gamma_3\eta^1 && \text{on $\BQ_0$,} \\
\pd_t\Bu^2+\gamma_3\Bu^2-\mu\Delta\Bu^2+\nabla\Fp^2&=\gamma_3\Bu^1 && \text{in $\BQ_-$,} \\
\dv\Bu^2&=0 && \text{in $\BQ_-$,} \\
(\mu\BD(\Bu^2)-\Fp^2\BI)\Be_N
+(c_g-c_\sigma\Delta')\eta^2\Be_N&=0 && \text{on $\BQ_0$,} \\
\eta^2|_{t=0}=0 \quad \text{on $\BR^{N-1}$,} \quad 
\Bu^2|_{t=0}&=0 && \text{in $\BR_-^N$,}
\end{aligned}\right. \label{decomp:2} \\
&\left\{\begin{aligned}
\pd_t\eta^3 -u_N^3&=\gamma_3\eta^2 && \text{on $\BQ_0$, } \\
\pd_t\Bu^3-\mu\Delta\Bu^3+\nabla\Fp^3&=\gamma_3\Bu^2 && \text{in $\BQ_-$,} \\
\dv\Bu^3&=0 && \text{in $\BQ_-$,} \\
(\mu\BD(\Bu^3)-\Fp^3\BI)
+(c_g-c_\sigma\Delta')\eta^3\Be_N&=0 && \text{on $\BQ_0$,} \\
\eta|_{t=0}=0 \quad  \text{on $\BR^{N-1}$,} \quad 
\Bu|_{t=0}&=0 && \text{in $\BR_-^N$,}
\end{aligned}\right. \label{decomp:3} 
\end{align} 
where $\gamma_3$ is given by Lemma \ref{lem:mr}.
Then the solution $(\eta,\Bu,\Fp)$ of \eqref{lin-eq:1} is given by
\begin{equation*}
\eta= \eta^1+\eta^2+\eta^3, \quad
\Bu=\Bu^1+\Bu^2+\Bu^3, \quad 
\Fp=\Fp^1+\Fp^2+\Fp^3.
\end{equation*}
From this viewpoint,
we estimate $(\eta^i,\Bu^i,\Fp^i)$ for $i=1,2,3$ in what follows.
Let $r\in\{q_1/2,q_2\}$ and $s\in\{q_1/2, q_1,q_2\}$ in the following argumentation.

{\bf Step 1}: Estimate $(\eta^1,\Bu^1)$.
Noting Remark \ref{rmk:compati}, we use Proposition \ref{prp:t-shift} with $b=1$ in order to obtain
\begin{equation}\label{est:case1}
\|(\eta^1,\Bu^1)\|_{E_{p,r}(\langle t \rangle)}
\leq C\Big(\|(\eta_0,\Bu_0)\|_{I_{r,p}}
+\|(d,\Bf,\Bg,g,\Bh)\|_{F_{p,r}(\langle t \rangle)}\Big).
\end{equation}
On the other hand, $1/q_1=(1-\theta)/(q_1/2)+\theta/q_2$ for some $\theta\in(0,1)$,
and thus 
Lemma \ref{lem:int-p-v2}
together with $a^{1-\theta}b^\theta\leq C(a+b)$ for $a,b\geq 0$ furnishes
\begin{equation*}
\|(\eta^1,\Bu^1)\|_{E_{p,q_1}(\langle t \rangle)}\leq C\sum_{r\in\{q_1/2,q_2\}}\|(\eta^1,\Bu^1)\|_{E_{p,r}(\langle t \rangle)}.
\end{equation*}
Combining this inequality with \eqref{est:case1} yields
\begin{align*}
\|(\eta^1,\Bu^1)\|_{E_{p,q_1}(\langle t \rangle)} 
\leq C\sum_{r\in\{q_1/2,q_2\}}\Big(\|(\eta_0,\Bu_0)\|_{I_{r,p}}
+\|(d,f,\Bg,g,\Bh)\|_{F_{p,r}(\langle t \rangle)}\Big).
\end{align*}

Summing up the last inequality and \eqref{est:case1},
we have obtained for  $s\in\{q_1/2,q_1,q_2\}$
\begin{align}\label{est:case1-2}
&\|(\eta^1,\Bu^1)\|_{E_{p,s}(\langle t \rangle)} \notag  \\
&\leq C \sum_{r\in \{q_1/2,q_2\}}\Big(\|(\eta_0,\Bu_0)\|_{I_{r,p}}
+\|(d,\Bf,\Bg,g,\Bh)\|_{F_{p,r}(\langle t \rangle)}\Big).
\end{align}

{\bf Step 2}: Estimate $(\eta^2,\Bu^2)$.
Proposition \ref{prp:t-shift} with $b=1$ yields
\begin{equation*}
\|(\eta^2,\Bu^2)\|_{E_{p,s}(\langle t \rangle)}
\leq C\|(\eta^1,\Bu^1,0,0,0)\|_{F_{p,s}(\langle t \rangle)},
\end{equation*}
which, combined with $\|(\eta^1,\Bu^1,0,0,0)\|_{F_{p,s}(\langle t \rangle)}
\leq \|(\eta^1,\Bu^1)\|_{E_{p,s}(\langle t \rangle)}$, furnishes
\begin{equation*}
\|(\eta^2,\Bu^2)\|_{E_{p,s}(\langle t \rangle)}
\leq C \|(\eta^1,\Bu^1)\|_{E_{p,s}(\langle t \rangle)}.
\end{equation*}
Thus \eqref{est:case1-2} yields for $s\in\{q_1/2,q_1,q_2\}$
\begin{align}\label{est:case2-2}
&\|(\eta^2,\Bu^2)\|_{E_{p,s}(\langle t \rangle)} \notag \\
&\leq C \sum_{r\in \{q_1/2,q_2\}}\Big(\|(\eta_0,\Bu_0)\|_{I_{r,p}}
+\|(d,\Bf,\Bg,g,\Bh)\|_{F_{p,r}(\langle t \rangle)}\Big).
\end{align}

Let $q\in\{q_1,q_2\}$ and recall $X_q$, $A_q$, $D(A_q)$, and $L_{q_1/2}$ defined in the previous section.
Notice that Lemma \ref{lem:dense} and $\dv\Bu^2=0$ imply 
\begin{equation*}
\Bu^2\in J_{s}(\BR_-^N) \quad \text{for $s\in\{q_1/2,q_1,q_2\}$}.
\end{equation*}
This shows that $(\eta^2,\Bu^2)\in W_q^{3-1/q}(\BR^{N-1})\times (H_q^2(\BR_-^N)\cap J_{q}(\BR_-^N))$,
which, combined with the fourth equation of \eqref{decomp:2}, furnishes $(\eta^2,\Bu^2)\in D(A_q)$.
Thus \eqref{est:case2-2} and $\Bu^2\in J_{q_1/2}(\BR_-^N)$ give us
\begin{equation}\label{case2-domain}
\langle t \rangle (\eta^2,\Bu^2)\in L_p(\BR_+,(L_{q_1/2}\cap X_q)\cap D(A_{q})) 
\end{equation}
with
\begin{align}\label{est:domain-A}
&\|\langle t\rangle (\eta^2,\Bu^2)\|_{L_p(\BR_+,(L_{q_1/2}\cap X_q)\cap D(A_{q}))} \notag \\
&\leq C \sum_{r\in \{q_1/2,q_2\}}\Big(\|(\eta_0,\Bu_0)\|_{I_{r,p}}
+\|(d,\Bf,\Bg,g,\Bh)\|_{F_{p,r}(\langle t \rangle)}\Big).
\end{align}

{\bf Step 3}: Estimate $(\eta^3,\Bu^3)$.
Recall $q\in\{q_1,q_2\}$ and consider the case $t\in (0,3)$.
By Lemma \ref{lem:mr}, we have
\begin{align*}
&\|(\pd_t\eta^3,\pd_t \Bu^3)\|_{L_p((0,3),W_q^{2-1/q}(\BR^{N-1})\times L_q(\BR_-^N)^N)} \\
&+\|(\eta^3,\Bu^3)\|_{L_p((0,3),W_q^{3-1/q}(\BR^{N-1})\times H_q^2(\BR_-^N)^N)}   \notag \\
&\leq 
C\|(\eta^2,\Bu^2,0,0,0)\|_{F_{p,q}(e^{-\gamma_3 t})}
\end{align*}
for some positive constant $C$.
Notice that
\begin{align*}
\|(\eta^2,\Bu^2,0,0,0)\|_{F_{p,q}(e^{-\gamma_3 t})}
&\leq 
\|(\eta^2,\Bu^2,0,0,0)\|_{F_{p,q}(1)} \\
&\leq 
\|(\eta^2,\Bu^2,0,0,0)\|_{F_{p,q}(\langle t \rangle)},
\end{align*}
which, combined with $\|(\eta^2,\Bu^2,0,0,0)\|_{F_{p,q}(\langle t \rangle)}\leq \|(\eta^2,\Bu^2)\|_{E_{p,q}(\langle t \rangle)}$
and \eqref{est:case2-2}, furnishes that
\begin{align*}
&\|(\eta^2,\Bu^2,0,0,0)\|_{F_{p,q}(e^{-\gamma_3 t})} \\
&\leq C \sum_{r\in \{q_1/2,q_2\}}\Big(\|(\eta_0,\Bu_0)\|_{I_{r,p}}
+\|(d,\Bf,\Bg,g,\Bh)\|_{F_{p,r}(\langle t \rangle)}\Big).
\end{align*}
Thus
\begin{align}\label{est:local}
&\|(\pd_t\eta^3,\pd_t \Bu^3)\|_{L_p((0,3),W_q^{2-1/q}(\BR^{N-1})\times L_q(\BR_-^N)^N)} \notag \\
&+\|(\eta^3,\Bu^3)\|_{L_p((0,3),W_q^{3-1/q}(\BR^{N-1})\times H_q^2(\BR_-^N)^N)}   \notag \\
&\leq 
C
\sum_{r\in\{q_1/2,q_2\}}\Big(\|(\eta_0,\Bu_0)\|_{I_{r,p}}
+\|(d,\Bf,\Bg,g,\Bh)\|_{F_{p,r}(\langle t \rangle)}\Big).
\end{align}

We next consider the case $t\geq 2$. Let us write
\begin{equation*}
(\eta^3(t),\Bu^3(t))
=\int_0^t S(t-\tau)(\gamma_3\eta^2(\tau),\gamma_3\Bu^2(\tau)) \intd\tau.
\end{equation*}
In view of Proposition \ref{prp:decay} \eqref{prp:decay-2},
we apply Proposition \ref{prp:duha} to the above formula of  $(\eta^3,\Bu^3)$.
More precisely, we use Proposition \ref{prp:duha} with
 $T(t)=S(t)$, $X=X_q$, $A=A_q$, and $D(A)=D(A_q)$
under the condition that
\begin{equation*}
a=1/2, \quad \delta=\delta_0, \quad s_0=p_0, \quad Y=L_{q_1/2}\cap X_q,
\end{equation*}
where we note that $s_0\delta=p_0\delta_0=31/30>1$.
Then \eqref{case2-domain} and \eqref{est:domain-A} show that
\begin{align*}
&\|\langle t\rangle^{1/2}(\pd_t\eta^3,\pd_t\Bu^3)\|_{L_p((2,\infty),X_q)} 
+\|\langle t\rangle^{1/2}(\eta^3,\Bu^3)\|_{L_p((2,\infty),D(A_q))} \\
&\leq C\|\langle t \rangle(\eta^2,\Bu^2)\|_{L_p(\BR_+,(L_{q_1/2}\cap X_q) \cap D(A_q))} \\
&\leq 
C
\sum_{r\in\{q_1/2,q_2\}}\Big(\|(\eta_0,\Bu_0)\|_{I_{r,p}}
+\|(d,\Bf,\Bg,g,\Bh)\|_{F_{p,r}(\langle t \rangle)}\Big).
\end{align*}
Combining this inequality with \eqref{est:local} yields
\begin{align*}
\|(\eta^3,\Bu^3)\|_{E_{p,q}(\langle t\rangle^{1/2})} 
\leq 
C
\sum_{r\in\{q_1/2,q_2\}}\Big(\|(\eta_0,\Bu_0)\|_{I_{r,p}}
+\|(d,\Bf,\Bg,g,\Bh)\|_{F_{p,r}(\langle t \rangle)}\Big).
\end{align*}

Summing up the last inequality with \eqref{est:case1-2} and \eqref{est:case2-2},
we have obtained the desired estimate of the solution $(\eta,\Bu,\Fp)$ of \eqref{lin-eq:1}.
This completes the proof of Proposition \ref{prp:linear4d}.
\end{proof}

We next consider the case $N=3$.

\begin{prp}\label{prp:linear3d}
Let $N= 3$ and let $q_0$, $\delta_0$ be as in Proposition $\ref{prp:decay}$.
Let $p_0$ be as in Proposition  $\ref{prp:linear4d}$.
Then for any $p$, $q_1$, and $q_2$ satisfying \eqref{cond-max-pq}
and for any $(\eta_0,\Bu_0)\in \bigcap_{r\in\{q_1/2,q_2\}} I_{r,p}$,
\begin{align*}
\langle t \rangle d 
&\in \bigcap_{r\in\{q_1/2,q_2\}} L_p(\BR_+,W_r^{2-1/r}(\BR^{N-1})), \quad \\
\langle t \rangle \Bf
&\in \bigcap_{r\in\{q_1/2,q_2\}} L_p(\BR_+,L_{r}(\BR_-^N)^N),  \\
\langle t \rangle \Fg
& \in \bigcap_{r\in\{q_1/2,q_2\}} H_p^1(\BR_+,L_{r}(\BR_-^N)^N), \\
\langle t \rangle g
&\in \bigcap_{r\in\{q_1/2,q_2\}} H_p^{1/2}(\BR_+,L_r(\BR_-^N))\cap L_p(\BR_+,H_r^1(\BR_-^N)), \\
\langle t \rangle \Bh
&\in \bigcap_{r\in\{q_1/2,q_2\}} H_p^{1/2}(\BR_+,L_r(\BR_-^N)^N)\cap L_p(\BR_+,H_r^1(\BR_-^N)^N),
\end{align*}
which satisfy  \eqref{comp:t-wegiht1} and \eqref{comp:t-wegiht2},
the unique solution $(\eta,\Bu,\Fp)$ of \eqref{lin-eq:1} obtained in Lemma $\ref{lem:mr}$ satisfies
for $q\in \{q_1,q_2\}$
\begin{align}\label{3d-est:1}
&\|(\eta,\Bu)\|_{E_{p,q}(\langle t \rangle^{1/3})} \notag \\
&\leq L_2\sum_{r\in\{q_1/2,q_2\}} \Big(\|(\eta_0,\Bu_0)\|_{I_{r,p}}+\|(d,\Bf,g,\Fg,\Bh)\|_{F_{p,r}(\langle t \rangle)}\Big)
\end{align}
and
\begin{align}\label{3d-est:2}
&\|\langle t\rangle^{2/3}  \nabla \pd_t\CE_N\eta\|_{L_p(\BR_+,H_q^1(\BR_-^N)^N)} 
+\|\langle t\rangle^{2/3}\nabla\CE_N\eta\|_{L_p(\BR_+,H_q^2(\BR_-^N)^N)} \notag \\
&+\|\langle t\rangle^{2/3}\nabla\Bu\|_{L_p(\BR_+,H_q^1(\BR_-^N)^{N\times N})} \notag \\
&\leq L_3\sum_{r\in\{q_1/2,q_2\}} \Big(\|(\eta_0,\Bu_0)\|_{I_{r,p}}+\|(d,\Bf,g,\Fg,\Bh)\|_{F_{p,r}(\langle t \rangle)}\Big)
\end{align}
with positive constants $L_2$ and $L_3$,
where $\CE_N$ is given by \eqref{ext:eta}.
\end{prp}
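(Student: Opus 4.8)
The plan is to follow the proof of Proposition~\ref{prp:linear4d} almost line by line, using the same three-term decomposition $(\eta,\Bu,\Fp)=\sum_{i=1}^{3}(\eta^i,\Bu^i,\Fp^i)$ into the shifted systems \eqref{decomp:1}--\eqref{decomp:3}, but feeding in the $N=3$ decay rates of Proposition~\ref{prp:decay} instead of the $N\ge4$ ones. For the pieces $i=1,2$, Proposition~\ref{prp:t-shift} with $b=1$ (the compatibility conditions being available through Remark~\ref{rmk:compati}) yields, exactly as in \eqref{est:case1-2}--\eqref{est:case2-2}, bounds for $\|(\eta^i,\Bu^i)\|_{E_{p,s}(\langle t\rangle)}$, $s\in\{q_1/2,q_1,q_2\}$, by the right-hand side of \eqref{3d-est:1}, together with $\langle t\rangle(\eta^2,\Bu^2)\in L_p(\BR_+,(L_{q_1/2}\cap X_q)\cap D(A_q))$ carrying the same bound (the velocity part lying in $J_{q_1/2}(\BR_-^N)\cap J_q(\BR_-^N)$ since $\dv\Bu^2=0$, by Lemma~\ref{lem:dense} and the equations of \eqref{decomp:2}). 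Since $\langle t\rangle^{1/3}\le\langle t\rangle^{2/3}\le\langle t\rangle$ and, by Lemma~\ref{lem:ext}~\eqref{lem:ext-3}, $\|\nabla\CE_N f\|_{H_q^{m-1}(\BR_-^N)}\le C\|f\|_{W_q^{m-1/q}(\BR^{N-1})}$ for $m=2,3$ (recall $\CE_N=\CA$ when $N=3$), the contributions of $i=1,2$ to both \eqref{3d-est:1} and \eqref{3d-est:2} follow at once.

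For the piece $i=3$ I would split $\BR_+$ into $(0,3)$ and $(2,\infty)$. On $(0,3)$ the local estimate contained in Lemma~\ref{lem:mr} controls $(\eta^3,\Bu^3)$ in $E_{p,q}$ on $(0,3)$, and Lemma~\ref{lem:ext}~\eqref{lem:ext-3} upgrades this to the required bounds for $\nabla\CE_N\eta^3$, $\nabla\pd_t\CE_N\eta^3=\nabla\CE_N\pd_t\eta^3$ and $\nabla\Bu^3$ there (as $\langle t\rangle\approx1$). On $(2,\infty)$ write $(\eta^3,\Bu^3)(t)=\int_0^tS(t-\tau)(\gamma_3\eta^2,\gamma_3\Bu^2)(\tau)\intd\tau$ and apply Proposition~\ref{prp:duha} with $T(t)=S(t)$, $X=X_q$, $A=A_q$, $a=\tfrac13$, $\delta=\delta_0$, $s_0=p_0$, $Y=L_{q_1/2}\cap X_q$; its hypotheses hold because $s_0\delta=p_0\delta_0=\tfrac{31}{30}>1$, because \eqref{sg-est-3d} gives $\|S(t)(\,\cdot\,)\|_{D(A_q)}\le Ct^{-1/3-\delta_0}\|\,\cdot\,\|_{L_{q_1/2}\cap X_q}$ for $t\ge1$, and because $\langle t\rangle(\gamma_3\eta^2,\gamma_3\Bu^2)\in L_p(\BR_+,(L_{q_1/2}\cap X_q)\cap D(A_q))$ by the previous step. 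This bounds $\|(\eta^3,\Bu^3)\|_{E_{p,q}(\langle t\rangle^{1/3})}$ by the right-hand side of \eqref{3d-est:1}, and summing the three pieces proves \eqref{3d-est:1}.

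The genuinely new point is \eqref{3d-est:2} for $i=3$. Here I would use the componentwise Duhamel representations $\nabla\CE_N\eta^3(t)=\int_0^t\nabla\CE_N S_1(t-\tau)(\gamma_3\eta^2,\gamma_3\Bu^2)(\tau)\intd\tau$ and $\nabla\Bu^3(t)=\int_0^t\nabla S_2(t-\tau)(\gamma_3\eta^2,\gamma_3\Bu^2)(\tau)\intd\tau$, and — since the data of \eqref{decomp:2} vanish at $t=0$ — the differentiated one $\pd_t\nabla\CE_N\eta^3(t)=\int_0^t\nabla\CE_N S_1(t-\tau)(\gamma_3\pd_t\eta^2,\gamma_3\pd_t\Bu^2)(\tau)\intd\tau$, with no boundary term. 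To these I would apply a mild extension of Proposition~\ref{prp:duha}: Steps~1--3 of its proof go through verbatim with the output norm $\|\,\cdot\,\|_{D(A)}$ replaced by $\|B(t-\tau)(\,\cdot\,)\|_Z$ for a family $B(t)\in\CL(X_q,Z)$ satisfying (i) $\|B(t)x\|_Z\le M t^{-a-\delta}\|x\|_Y$ for $t\ge1$ and (ii) the weaker local bound $\|B(t)x\|_Z\le Ce^{\gamma t}\|x\|_{X_q}$ for $t>0$; one then obtains $\Big\|\langle t\rangle^a\int_0^t B(t-\tau)f(\tau)\intd\tau\Big\|_{L_p((2,\infty),Z)}\le C\|\langle t\rangle f\|_{L_p(\BR_+,Y)}$. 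I would invoke this with $a=\tfrac23$, $\delta=\delta_0$, $s_0=p_0$, $Y=L_{q_1/2}\cap X_q$, and with $B(t)=\nabla\CE_N S_1(t)$, $Z=H_q^2(\BR_-^N)^N$ (resp.\ $Z=H_q^1(\BR_-^N)^N$ in the differentiated case), and $B(t)=\nabla S_2(t)$, $Z=H_q^1(\BR_-^N)^{N\times N}$: hypothesis (i) is precisely \eqref{sg-est-3d-4} and \eqref{sg-est-3d-5} (together with $H_q^2\hookrightarrow H_q^1$), while (ii) follows from $\|S(t)\|_{\CL(X_q)}\le Ce^{\gamma t}$ (Proposition~\ref{prp:sg}) combined with $\|\nabla\CE_N g\|_{H_q^2(\BR_-^N)}\le C\|g\|_{W_q^{3-1/q}(\BR^{N-1})}$ and $\|\nabla\Bu\|_{H_q^1(\BR_-^N)}\le C\|\Bu\|_{H_q^2(\BR_-^N)}$ (Lemma~\ref{lem:ext}~\eqref{lem:ext-3}). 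The forcings $\langle t\rangle(\gamma_3\eta^2,\gamma_3\Bu^2)$ and $\langle t\rangle(\gamma_3\pd_t\eta^2,\gamma_3\pd_t\Bu^2)$ lie in $L_p(\BR_+,L_{q_1/2}\cap X_q)$ by the $E_{p,q_1/2}(\langle t\rangle)$ and $E_{p,q}(\langle t\rangle)$ bounds from the first step, the velocity parts lying in $J_{q_1/2}(\BR_-^N)\cap J_q(\BR_-^N)$ because $\dv\Bu^2=0$. Combining with the $(0,3)$ bounds and the $i=1,2$ contributions completes \eqref{3d-est:2}.

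I expect the main obstacle to be precisely the term $\|\langle t\rangle^{2/3}\nabla\pd_t\CE_N\eta\|_{L_p(\BR_+,H_q^1(\BR_-^N))}$: the naive route — substituting $\pd_t\eta=u_N+d$ from the first line of \eqref{lin-eq:1} and tracing $u_N$ into $W_q^{2-1/q}(\BR^{N-1})$ — would demand $\langle t\rangle^{2/3}\Bu\in L_p(\BR_+,H_q^2(\BR_-^N))$, which is stronger than \eqref{3d-est:1} provides, where $\Bu$ in $H_q^2$ carries only the weight $\langle t\rangle^{1/3}$. Differentiating the Duhamel integral for $\eta^3$ and pushing $\pd_t$ onto $(\eta^2,\Bu^2)$ (whose data vanish, so that no boundary term appears) is what resolves this, trading the missing $H_q^2$ regularity for the extra $\tfrac13$ of decay encoded in \eqref{sg-est-3d-4}; the care required is to check that the weaker $X_q$-valued local bound (ii) genuinely suffices in Step~3 of the argument behind Proposition~\ref{prp:duha} and that the solenoidal structure of $\pd_t\Bu^2$ places the forcing in the space $Y=L_{q_1/2}\cap X_q$ demanded there.
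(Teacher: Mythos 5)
Your proposal is correct in substance and, for \eqref{3d-est:1} and for the $\nabla\CE_N\eta$ and $\nabla\Bu$ parts of \eqref{3d-est:2}, it coincides with the paper's argument: the same decomposition \eqref{decomp:1}--\eqref{decomp:3}, Proposition \ref{prp:t-shift} with $b=1$ for the first two pieces, Lemma \ref{lem:mr} on $(0,3)$, and the $(0,t/2)$, $(t/2,t-1)$, $(t-1,t)$ splitting of the Duhamel integral combined with \eqref{sg-est-3d-4}, \eqref{sg-est-3d-5} for $t\geq 2$. The genuinely different point is $\nabla\pd_t\CE_N\eta$. The paper does precisely the ``naive'' thing you dismissed: it applies $\CE_N$ to $\pd_t\eta=d+u_N$ and uses Lemma \ref{lem:ext} \eqref{lem:ext-1}, i.e. \eqref{A-ineq-4}, which bounds $\|\nabla\CE_N(u_N|_{\BR_0^N})\|_{H_q^1(\BR_-^N)}$ by $\|\nabla u_N\|_{H_q^1(\BR_-^N)}$; no trace into $W_q^{2-1/q}(\BR^{N-1})$, hence no $\langle t\rangle^{2/3}$-weighted $H_q^2$ bound for $\Bu$, is needed, and the already established estimate \eqref{est-nab-u} together with the $d$-norm finishes the term in two lines. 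Your alternative---differentiating the Duhamel integral, legitimate since the data of \eqref{decomp:2} vanish at $t=0$, and reusing \eqref{sg-est-3d-4} with target $H_q^1$---also works, at the cost of the extra checks you list (solenoidality of $\pd_t\Bu^2$ and $\langle t\rangle(\pd_t\eta^2,\pd_t\Bu^2)\in L_p(\BR_+,L_{q_1/2}\cap X_q)$), all of which are indeed supplied by \eqref{est:case2-2}; so this is a workable, slightly heavier, route than the paper's.

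One inaccuracy should be repaired: your local hypothesis (ii), $\|B(t)x\|_Z\leq Ce^{\gamma t}\|x\|_{X_q}$, is not true for $B(t)=\nabla\CE_N S_1(t)$ with $Z=H_q^2(\BR_-^N)^N$ nor for $B(t)=\nabla S_2(t)$ with $Z=H_q^1(\BR_-^N)^{N\times N}$: these norms are controlled by $\|S(t)x\|_{D(A_q)}$, and for $x$ only in $X_q$ the analytic-semigroup smoothing bound carries a factor $t^{-1}$ as $t\to0$, which is not integrable over the near-diagonal interval $(t-1,t)$; quasi-boundedness of $S(t)$ on $X_q$ alone does not give what you claim. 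The near-diagonal term must instead be estimated as in Step 3 of Proposition \ref{prp:duha} (and as the paper does for its $J_3$), with $\|x\|_{D(A_q)}$ on the right-hand side; this is harmless because there the forcing is $(\eta^2,\Bu^2)$ itself, which is controlled in $L_p(\BR_+,D(A_q))$ with weight $\langle t\rangle$ by \eqref{est:domain-A}, so the conclusion of your modified Duhamel estimate should simply read with $\|\langle t\rangle f\|_{L_p(\BR_+,Y\cap D(A_q))}$ on the right. For your differentiated representation (target $Z=H_q^1$) the $X_q$-version of (ii) is genuinely valid, since only $\|\nabla\CA g\|_{H_q^1}\leq C\|g\|_{W_q^{2-1/q}}$ (Lemma \ref{lem:ext} \eqref{lem:ext-3} with $m=2$) is needed, so that part stands as written.
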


\begin{proof}
The proof of \eqref{3d-est:1} is similar to one of Proposition \ref{prp:linear4d},
so that the detailed proof of \eqref{3d-est:1} may be omitted.

In what follows, we suppose  $q\in\{q_1,q_2\}$ and prove \eqref{3d-est:2}.
We first estimate $\nabla\CE_N\eta$.
Let us use the decomposition given by \eqref{decomp:1}--\eqref{decomp:3} in the following.
Notice that \eqref{est:case1-2}--\eqref{est:local} 
holds also for $N=3$.
Lemma \ref{lem:ext} \eqref{lem:ext-3} with $m=3$ yields
\begin{equation*}
\|\nabla\CE_N\eta^i(t)\|_{H_q^2(\BR_-^N)}
\leq C\|\eta^i(t)\|_{W_q^{3-1/q}(\BR^{N-1})} \quad (i=1,2,3).
\end{equation*}
This implies
\begin{align*}
\|\langle t \rangle \nabla\CE_N\eta^i\|_{L_p(\BR_+,H_q^2(\BR_-^N)^N)}
&\leq C\|\langle t \rangle \eta^i\|_{L_p(\BR_+,W_q^{3-1/q}(\BR^{N-1}))} \quad (i=1,2), \\
\|\nabla\CE_N\eta^3\|_{L_p((0,3),H_q^2(\BR_-^N)^N)}
&\leq C\|\eta^3\|_{L_p((0,3),W_q^{3-1/q}(\BR^{N-1}))},
\end{align*}
which, combined with \eqref{est:case1-2}, \eqref{est:case2-2}, and \eqref{est:local},
furnishes  for $i=1,2$
\begin{align}
&\|\langle t \rangle \nabla\CE_N\eta^i\|_{L_p(\BR_+,H_q^2(\BR_-^{N})^N)} \notag \\
&\leq C \sum_{r\in \{q_1/2,q_2\}}\Big(\|(\eta_0,\Bu_0)\|_{I_{r,p}}
+\|(d,\Bf,\Bg,g,\Bh)\|_{F_{p,r}(\langle t \rangle)}\Big), \label{est-3d-eta12} \\
&\|\nabla\CE_N\eta^3\|_{L_p((0,3),H_q^2(\BR_-^{N})^N)} \notag \\
&\leq C \sum_{r\in \{q_1/2,q_2\}}\Big(\|(\eta_0,\Bu_0)\|_{I_{r,p}}
+\|(d,\Bf,\Bg,g,\Bh)\|_{F_{p,r}(\langle t \rangle)}\Big). \label{est-3d-eta3}
\end{align}

We next estimate $\nabla\CE_N\eta^3$ for $t\geq 2$.
Recall $S_1(t)$, $S_2(t)$ given by \eqref{dfn:S1S2}.
We have the following formula:
\begin{equation*}
\eta^3(t)=\int_0^t S_1(t-\tau)(\gamma_3\eta^2(\tau),\gamma_3\Bu^2(\tau))\intd \tau.
\end{equation*}
This gives us
\begin{align*}
\nabla\CE_N\eta^3(t)
&=\left(\int_0^{t/2}+\int_{t/2}^{t-1}+\int_{t-1}^t\right)
\nabla\CE_N S_1(t-\tau)
(\gamma_3\eta^2(\tau),\gamma_3\Bu^2(\tau))\intd \tau \\
&=:J_1(t)+J_2(t)+J_3(t).
\end{align*}
Similarly to the estimates of $I_1(t)$ and $I_2(t)$ in the proof of Proposition \ref{prp:duha},
we see from \eqref{sg-est-3d-4} that  
\begin{equation}\label{est-J-12}
\|\langle t\rangle^{2/3}J_k\|_{L_p((2,\infty),H_q^2(\BR_-^N)^N)}
\leq C\|\langle t \rangle(\eta^2,\Bu^2)\|_{L_p(\BR_+,L_{q_1/2}\cap X_q)} \quad (k=1,2).
\end{equation}
Concerning $J_3(t)$, we have by Lemma \ref{lem:ext} \eqref{lem:ext-3} with $m=3$
\begin{equation*}
\|J_3(t)\|_{H_q^2(\BR_-^N)}
\leq C\int_{t-1}^t \|S_1(t-\tau)(\gamma_3\eta^2(\tau),\gamma_3\Bu^2(\tau))\|_{W_q^{3-1/q}(\BR^{N-1})}\intd\tau.
\end{equation*}
Combining this inequality with Proposition \ref{prp:sg} furnishes
\begin{align*}
\|J_3(t)\|_{H_q^2(\BR_-^N)}
&\leq C\int_{t-1}^t e^{\gamma_1(t-\tau)}\|(\eta^2(\tau),\Bu^2(\tau))\|_{D(A_q)}\intd\tau \\
&\leq Ce^{\gamma_1}\int_{t-1}^t \|(\eta^2(\tau),\Bu^2(\tau))\|_{D(A_q)}\intd\tau.
\end{align*}
We thus obtain
\begin{equation*}
\|\langle t \rangle^{2/3} J_3\|_{L_p((2,\infty),H_q^2(\BR\-^N)^N)}
\leq C\|\langle t \rangle (\eta^2,\Bu^2)\|_{L_p(\BR_+,D(A_q))}
\end{equation*}
similarly to the estimate of $I_3(t)$ in the proof of Proposition \ref{prp:duha}.

Summing up the last inequality and \eqref{est-J-12}, we have obtained
\begin{equation*}
\|\langle t \rangle^{2/3}\nabla\CE_N\eta^3\|_{L_p((2,\infty),H_q^2(\BR_-^N)^N)}
\leq C\|\langle t\rangle(\eta^2,\Bu^2)\|_{L_p(\BR_+,(L_{q_1/2}\cap X_q)\cap D(A_q))}.
\end{equation*}
Combining this inequality with \eqref{est:domain-A} and \eqref{est-3d-eta3} yields
\begin{align*}
&\|\langle t \rangle^{2/3}\nabla\CE_N\eta^3\|_{L_p(\BR_+,H_q^2(\BR_-^N)^N)} \\
&\leq C \sum_{r\in \{q_1/2,q_2\}}\Big(\|(\eta_0,\Bu_0)\|_{I_{r,p}}
+\|(d,\Bf,\Bg,g,\Bh)\|_{F_{p,r}(\langle t \rangle)}\Big),
\end{align*}
which, combined with \eqref{est-3d-eta12}, furnishes
\begin{align}\label{est-nab-height}
&\|\langle t \rangle^{2/3}\nabla \CE_N\eta\|_{L_p(\BR_+,H_q^2(\BR_-^N)^N)}  \notag \\
&\leq C \sum_{r\in \{q_1/2,q_2\}}\Big(\|(\eta_0,\Bu_0)\|_{I_{r,p}}
+\|(d,\Bf,\Bg,g,\Bh)\|_{F_{p,r}(\langle t \rangle)}\Big).
\end{align}
Analogously, we have from \eqref{sg-est-3d-5} the desired estimate of $\nabla\Bu$, i.e.,
\begin{align}\label{est-nab-u}
&\|\langle t \rangle^{2/3}\nabla \Bu\|_{L_p(\BR_+,H_q^1(\BR_-^N)^{N\times N})}  \notag \\
&\leq C \sum_{r\in \{q_1/2,q_2\}}\Big(\|(\eta_0,\Bu_0)\|_{I_{r,p}}
+\|(d,\Bf,\Bg,g,\Bh)\|_{F_{p,r}(\langle t \rangle)}\Big).
\end{align}

Let us consider $\nabla\pd_t\CE_N\eta$.
We apply $\CE_N$ to the both side of the first equation of \eqref{lin-eq:1} and use $\CE_N\pd_t \eta=\pd_t\CE_N\eta $
in order to obtain 
\begin{equation*}
\pd_t \CE_N\eta=\CE_Nd+\CE_N (u_N|_{\BR_0^N}) \quad \text{in $\BQ_-$.}
\end{equation*}
This implies
\begin{equation*}
\nabla \pd_t \CE_N\eta=\nabla\CE_Nd+\nabla\CE_N (u_N|_{\BR_0^N}) \quad \text{in $\BQ_-$.}
\end{equation*}
Lemma \ref{lem:ext} \eqref{lem:ext-3} with $m=2$ yields
\begin{equation*}
\|\nabla\CE_N d\|_{H_q^1(\BR_-^N)} \leq C\|d\|_{W_q^{2-1/q}(\BR^{N-1})}, 
\end{equation*}
while Lemma \ref{lem:ext} \eqref{lem:ext-1} with $m\in\{1,2\}$ yields
\begin{align*}
\|\nabla\CE_N (u_N|_{\BR_0^N})\|_{H_q^1(\BR_-^N)}
\leq C\|\nabla u_N\|_{H_q^1(\BR_-^N)}.
\end{align*}
Thus
\begin{align*}
&\|\langle t \rangle^{2/3}\nabla \pd_t \CE_N\eta\|_{L_p(\BR_+,H_q^1(\BR_-^N)^N)} \\
&\leq C\Big( \|\langle t \rangle^{2/3} d\|_{L_p(\BR_+,W_q^{2-1/q}(\BR^{N-1}))}
+\|\langle t \rangle^{2/3} \nabla u_N\|_{L_p(\BR_+,H_q^1(\BR_-^N)^N)}\Big),
\end{align*}
which, combined with \eqref{est-nab-u}, furnishes
\begin{align*}
&\|\langle t \rangle^{2/3}\nabla\pd_t\CE_N \eta\|_{L_p(\BR_+,H_q^1(\BR_-^N)^N)}  \notag \\
&\leq C \sum_{r\in \{q_1/2,q_2\}}\Big(\|(\eta_0,\Bu_0)\|_{I_{r,p}}
+\|(d,\Bf,\Bg,g,\Bh)\|_{F_{p,r}(\langle t \rangle)}\Big).
\end{align*}
Combining this inequality with \eqref{est-nab-height} and  \eqref{est-nab-u}
yields \eqref{3d-est:2}.
This completes the proof of Proposition \ref{prp:linear3d}.
\end{proof}

In the case $N=3$, we further obtain

\begin{prp}\label{prp:linear3d-v2}
Suppose that the same assumption as in Proposition $\ref{prp:linear3d}$ holds.
Let 
\begin{equation*}
\langle t \rangle^{1/3} d\in \bigcap_{r\in\{q_1/2,q_2\}}L_\infty(\BR_+,W_r^{1-1/r}(\BR^{N-1}))
\end{equation*}
additionally. Then the solution $\eta$ of \eqref{lin-eq:1} satisfies
\begin{align*}
&\|\langle t \rangle^{1/3}\pd_t\CE_N\eta\|_{L_\infty(\BR_+,H_2^1(\BR_-^N))} \notag \\
&\leq L_4 
\sum_{r\in\{q_1/2,q_2\}} \Big(\|(\eta_0,\Bu_0)\|_{I_{r,p}}+\|(d,\Bf,g,\Fg,\Bh)\|_{F_{p,r}(\langle t \rangle)} \notag \\
&+\|\langle t \rangle^{1/3}d\|_{L_\infty(\BR_+,W_r^{1-1/r}(\BR^{N-1}))}\Big)
\end{align*}
with some positive constant $L_4$,
where $\CE_N$ is given by \eqref{ext:eta}.
\end{prp}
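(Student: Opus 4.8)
The plan is to combine the first equation of \eqref{lin-eq:1}, i.e. $\pd_t\eta-u_N=d$ on $\BQ_0$, with the extension operator $\CE_N$. Applying $\CE_N$ and using that $\CE_N$ commutes with $\pd_t$ (since the Fourier symbol of $\CE_N$ does not depend on $t$), we get
\begin{equation*}
\pd_t\CE_N\eta=\CE_N d+\CE_N\big(u_N|_{\BR_0^N}\big)\quad\text{in }\BQ_-.
\end{equation*}
This is exactly the identity already used in the proof of Proposition \ref{prp:linear3d} for the spatial gradient, so the first step is routine. The task then reduces to estimating $\langle t\rangle^{1/3}\CE_N d$ and $\langle t\rangle^{1/3}\CE_N(u_N|_{\BR_0^N})$ in $L_\infty(\BR_+,H_2^1(\BR_-^N))$.

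For the $d$-term I would use Lemma \ref{lem:ext} \eqref{lem:ext-2} with $m=1$ applied to $\CB$, or rather the analogue giving $\|\CB d\|_{H_q^1(\BR_-^N)}\le C\|d\|_{W_q^{1-1/q}(\BR^{N-1})}$; since $N=3$, one has the Sobolev embedding $W_{q}^{1-1/q}(\BR^{2})\hookrightarrow$ $L_2$-type control once $q$ is chosen appropriately, but more directly one takes $q=2$ in Lemma \ref{lem:ext} \eqref{lem:ext-2}/\eqref{lem:ext-1}, possibly after interpolating between $r=q_1/2$ and $r=q_2$ as is done throughout Section \ref{sec:lin-theory}. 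The point is that the hypothesis $\langle t\rangle^{1/3}d\in L_\infty(\BR_+,W_r^{1-1/r}(\BR^{N-1}))$ is tailored precisely so that $\langle t\rangle^{1/3}\CE_N d$ is bounded in time with values in $H_2^1(\BR_-^N)$, after using the boundedness of $\CB$ (and of $\CA$ for $N=3$, recalling $\CE_N=\CA$ when $N=3$) from $W^{1-1/q}$ on the boundary into $H_q^1$ on the half space, combined with the interpolation of Lemma \ref{lem:int-p}.

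For the velocity term $\CE_N(u_N|_{\BR_0^N})$, the strategy is: by Lemma \ref{lem:ext} \eqref{lem:ext-1} (with $m=1$, applied to $\CA$ since $N=3$) one has $\|\CA(u_N|_{\BR_0^N})\|_{H_q^1(\BR_-^N)}\le C\|u_N\|_{H_q^1(\BR_-^N)}$, hence one needs $\langle t\rangle^{1/3}u_N\in L_\infty(\BR_+,H_2^1(\BR_-^N))$. This is where Proposition \ref{prp:decay-L2} and the decay machinery enter. Decompose $\Bu=\Bu^1+\Bu^2+\Bu^3$ as in \eqref{decomp:1}--\eqref{decomp:3}. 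For $\Bu^1,\Bu^2$ the time-weighted $E_{p,q}$ bounds from Steps 1--2 of the proof of Proposition \ref{prp:linear4d} together with the embedding $H_p^1(\BR_+,\cdot)\cap L_p(\BR_+,\cdot)\hookrightarrow C_b(\BR_+,\cdot)$ (Lemma \ref{lem:time-sp}) give $L_\infty$-in-time control; but to get the weight $\langle t\rangle^{1/3}$ rather than $\langle t\rangle^{1/2}$ one must be careful — actually since $1/3<1/2$ and $1<4$ one can simply use the already-established $\langle t\rangle$-weighted $E_{p,r}$ bounds from \eqref{est:case1-2}--\eqref{est:case2-2} and the time-embedding, which dominate the $\langle t\rangle^{1/3}$-weighted $L_\infty$ norm. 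For $\Bu^3=\int_0^t S(t-\tau)(\gamma_3\eta^2,\gamma_3\Bu^2)\,d\tau$ I would split $\int_0^{t/2}+\int_{t/2}^{t-1}+\int_{t-1}^t$ exactly as in Proposition \ref{prp:duha}/Proposition \ref{prp:linear3d}, but now apply the semigroup estimate \emph{in $H_2^1$-type norm} via Proposition \ref{prp:decay-L2} (which gives $t^{-1/3-\delta_0}$ decay of $\|\pd_t\CE_N S_1(t)\cdot\|_{H_2^2}$, hence also of the $u_N$ component in $H_2^1$) to absorb the weight $\langle t\rangle^{1/3}$; the source $(\eta^2,\Bu^2)$ is controlled in $L_p(\BR_+,L_{q_1/2}\cap X_2)$ by \eqref{est:domain-A} (with $q=2$).

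\textbf{Main obstacle.} The delicate point is getting the $L_\infty$-in-time bound on $\langle t\rangle^{1/3}u_N$ in the \emph{$L_2$-based} space $H_2^1(\BR_-^N)$ for the Duhamel term $\Bu^3$: one cannot simply quote Proposition \ref{prp:linear3d}, since that proposition only controls $\langle t\rangle^{2/3}\nabla\Bu$ in $L_p$ in time (not $L_\infty$), and the semigroup decay rate available in $L_2$ is only $t^{-1/3-\delta_0}$ (Proposition \ref{prp:decay-L2}), which is exactly critical against the weight $\langle t\rangle^{1/3}$ — the $\delta_0$ margin is what makes the near-$t^{-1/3}$ integrals in the $\int_0^{t/2}$ and $\int_{t/2}^{t-1}$ pieces converge. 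So the bookkeeping must track that the exponent $1/3+\delta_0$ strictly exceeds $1/3$ and that $p_0\delta_0>1$, precisely the conditions imposed in Propositions \ref{prp:decay} and \ref{prp:linear4d}. Modulo this, the argument is a direct adaptation of the $\int_0^{t/2}+\int_{t/2}^{t-1}+\int_{t-1}^t$ splitting already carried out in the excerpt.
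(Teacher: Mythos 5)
Your overall architecture (apply $\CE_N$ to the transport equation, split via \eqref{decomp:1}--\eqref{decomp:3}, treat the Duhamel term with the $\int_0^{t/2}+\int_{t/2}^{t-1}+\int_{t-1}^t$ splitting and the $t^{-1/3-\delta_0}$ decay with the margin $p_0\delta_0>1$) matches the paper's, but there is a genuine gap at the step you call routine: bounding $\CE_N d$ and $\CE_N(u_N|_{\BR_0^N})$ in $H_2^1(\BR_-^N)$. For $N=3$ one has $\CE_N=\CA$, and Lemma \ref{lem:ext} does \emph{not} give $\|\CA f\|_{H_q^1(\BR_-^N)}\leq C\|f\|_{W_q^{1-1/q}(\BR^{N-1})}$, nor $\|\CA(f|_{\BR_0^N})\|_{H_q^1(\BR_-^N)}\leq C\|f\|_{H_q^1(\BR_-^N)}$: parts \eqref{lem:ext-1} and \eqref{lem:ext-3} of that lemma control only \emph{derivatives} of $\CA f$ on the half space (or the full norm on a bounded strip), and the zeroth-order bound is in fact false, since $e^{|\xi'|x_N}$ does not decay in $x_N$ at low frequencies: $\|\CA f\|_{L_2(\BR_-^N)}^2\sim\int|\widehat f(\xi')|^2|\xi'|^{-1}\intd\xi'$, which an $L_2$ or $W_2^{1/2}$ boundary datum does not control. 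The paper repairs exactly this with Lemma \ref{lem:ext-interp}: since $q_1/2<4/3$, $\|\CA z\|_{L_2(\BR_-^N)}\leq C(\|z\|_{L_{q_1/2}(\BR^{N-1})}+\|z\|_{L_2(\BR^{N-1})})$, and combining this with interpolation between $r=q_1/2$ and $r=q_2$ and with Lemma \ref{lem:ext} \eqref{lem:ext-3} for the gradient gives \eqref{z-est-2}, $\|\CE_N z\|_{H_2^1(\BR_-^N)}\leq C\sum_{r\in\{q_1/2,q_2\}}\|z\|_{W_r^{1-1/r}(\BR^{N-1})}$, applied to $z=d$ and, via the trace theorem, to $z=u_N^i|_{\BR_0^N}$. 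This low-frequency mechanism is the reason both exponents appear in the hypothesis on $d$; your passing mention of "interpolating between $r=q_1/2$ and $r=q_2$" and of $\CB$ (which is irrelevant here, since $\CE_3=\CA$) does not supply it.

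A second, smaller problem concerns the Duhamel piece: you propose to bound $\langle t\rangle^{1/3}u_N^3$ in $L_\infty(\BR_+,H_2^1(\BR_-^N))$ and claim that Proposition \ref{prp:decay-L2} "hence also" controls the $u_N$ component; it does not. That proposition concerns $\pd_t\CE_N S_1(t)$, i.e.\ the extension of the boundary trace of the normal velocity through the height equation, not $S_2(t)$ in the half space, and no $L_2$-based decay estimate for $S_2$ is stated in the paper. The paper's proof avoids this by estimating $\pd_t\CE_N\eta^3$ directly from the identity $\pd_t\CE_N\eta^3(t)=\gamma_3\CE_N\eta^2(t)+\int_0^t\pd_t\CE_N S_1(t-\tau)(\gamma_3\eta^2(\tau),\gamma_3\Bu^2(\tau))\intd\tau$, so that Proposition \ref{prp:decay-L2} applies verbatim to the pieces over $(0,t/2)$ and $(t/2,t-1)$, while the piece over $(t-1,t)$ is handled by \eqref{z-est-2} together with Proposition \ref{prp:sg}; your argument should be recast this way (or you would have to prove an $S_2$ analogue of Proposition \ref{prp:decay-L2}, and would still face the extension issue above when passing from $u_N^3$ to $\CE_N(u_N^3|_{\BR_0^N})$).
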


\begin{proof}
We divide the proof into four steps
and use the decomposition given by \eqref{decomp:1}--\eqref{decomp:3} in the following.

{\bf Step 1}.
In this step, we prove for $z\in \bigcap_{r\in\{q_1/2,q_2\}}W_r^{1-1/r}(\BR^{N-1})$
\begin{equation}\label{z-est-2}
\|\CE_N z\|_{H_2^1(\BR_-^N)} \leq C\sum_{r\in\{q_1/2,q_2\}}\|z\|_{W_r^{1-1/r}(\BR^{N-1})}.
\end{equation}

We see by Lemma \ref{lem:ext-interp} with $s=q_1/2$ that 
\begin{equation}\label{Ez-est-1}
\|\CE_N z\|_{L_2(\BR_-^N)}\leq C\Big(\|z\|_{L_{q_1/2}(\BR^{N-1})}+\|z\|_{L_2(\BR^{N-1})}\Big).
\end{equation}
Since $q_1/2<2<q_2$, the standard interpolation inequality yields
\begin{equation*}
\|z\|_{L_2(\BR^{N-1})}\leq C\|z\|_{L_{q_1/2}(\BR^{N-1})}^{1-\theta}\|z\|_{L_{q_2}(\BR^{N-1})}^\theta
\end{equation*}
for some $\theta\in(0,1)$,
which, combined with $a^{1-\theta}b^\theta\leq C(a+b)$ for $a,b\geq 0$, furnishes
\begin{equation*}
\|z\|_{L_2(\BR^{N-1})}\leq C\Big(\|z\|_{L_{q_1/2}(\BR^{N-1})}+\|z\|_{L_{q_2}(\BR^{N-1})}\Big).
\end{equation*}
This inequality together with \eqref{Ez-est-1} gives us
\begin{equation*}
\|\CE_N z\|_{L_2(\BR_-^N)}\leq C\sum_{r\in\{q_1/2,q_2\}}\|z\|_{L_r(\BR^{N-1})},
\end{equation*}
which, combined with $\|z\|_{L_r(\BR^{N-1})}\leq \|z\|_{W_r^{1-1/r}(\BR^{N-1})}$, furnishes
\begin{equation}\label{z-est-1}
\|\CE_N z\|_{L_2(\BR_-^N)}\leq C\sum_{r\in\{q_1/2,q_2\}}\|z\|_{W_r^{1-1/r}(\BR^{N-1})}.
\end{equation}

We use Lemma \ref{lem:ext} \eqref{lem:ext-3} with $m=1$ in order to obtain
\begin{equation*}
\|\nabla \CE_N z\|_{L_2(\BR_-^N)}\leq C\|z\|_{W_2^{1-1/2}(\BR^{N-1})}.
\end{equation*}
Since $q_1/2<2<q_2$, it follows from Lemma \ref{lem:int-p} that
\begin{equation*}
\|z\|_{W_2^{1-1/2}(\BR^{N-1})}
\leq C\|z\|_{W_{q_1/2}^{1-1/(q_1/2)}(\BR^{N-1})}^{1-\theta}\|z\|_{W_{q_2}^{1-1/q_2}(\BR^{N-1})}^\theta
\end{equation*}
for some $\theta\in(0,1)$. Thus
\begin{equation*}
\|\nabla \CE_N z\|_{L_2(\BR_-^N)}\leq C\|z\|_{W_{q_1/2}^{1-1/(q_1/2)}(\BR^{N-1})}^{1-\theta}\|z\|_{W_{q_2}^{1-1/q_2}(\BR^{N-1})}^\theta,
\end{equation*}
which, combined with $a^{1-\theta}b^\theta\leq C(a+b)$ for $a,b\geq 0$,
furnishes
\begin{equation*}
\|\nabla \CE_N z\|_{L_2(\BR_-^N)}\leq C\Big(\|z\|_{W_{q_1/2}^{1-1/(q_1/2)}(\BR^{N-1})}
+\|z\|_{W_{q_2}^{1-1/q_2}(\BR^{N-1})}\Big).
\end{equation*}
This inequality together with \eqref{z-est-1} gives us \eqref{z-est-2}.

Let us consider the case $z\in\{u_N^1|_{\BR_0^N},u_N^2|_{\BR_0^N},u_N^3|_{\BR_0^N}\}$ in \eqref{z-est-2}.
By the trace theorem, we obtain  for $i=1,2,3$
\begin{equation}\label{z-est-3}
\|\CE_N(u_N^i|_{\BR_0^N})\|_{H_2^1(\BR_-^N)} \leq C\sum_{r\in\{q_1/2,q_2\}}\|u_N^i\|_{H_r^{1}(\BR_-^{N})}.
\end{equation}

{\bf Step 2}: Estimate $\eta^4:=\eta^1+\eta^2$.
By the first equations of \eqref{decomp:1} and \eqref{decomp:2}, we see that $\eta^4$ satisfies
\begin{equation*}
\pd_t\eta^4=d-\gamma_3\eta^2+u_N^1+u_N^2 \quad \text{on $\BQ_0$.}
\end{equation*}
One applies $\CE_N$ to the both side of this equation and uses $\CE_N\pd_t\eta^4=\pd_t\CE_N\eta^4$
in order to obtain
\begin{equation*}
\pd_t\CE_N\eta^4 = 
\CE_N d -\gamma_3\CE_N \eta^2+\CE_N(u_N^1|_{\BR_0^N})+\CE_N(u_N^2|_{\BR_0^N}),
\end{equation*}
which, combined with \eqref{z-est-2} for $z\in\{d,\eta^2\}$
and \eqref{z-est-3} for $i=1,2$, furnishes
\begin{align}\label{est-eta1-1}
&\|\langle t\rangle^{1/3}\pd_t\CE_N\eta^4\|_{L_\infty(\BR_+,H_2^1(\BR_-^N))} \notag \\
&\leq C\sum_{r\in\{q_1/2,q_2\}}
\Big(\|\langle t \rangle ^{1/3}d\|_{L_\infty(\BR_+,W_r^{1-1/r}(\BR^{N-1}))} 
+\|\langle t \rangle ^{1/3}\eta^2 \|_{L_\infty(\BR_+,W_r^{1-1/r}(\BR^{N-1}))} \notag \\
& +\|\langle t \rangle ^{1/3}u_N^1\|_{L_\infty(\BR_+,H_r^{1}(\BR_-^{N}))}
+\|\langle t \rangle ^{1/3}u_N^2\|_{L_\infty(\BR_+,H_r^{1}(\BR_-^{N}))}\Big).
\end{align}

By Lemma \ref{lem:H^1-embed}
\begin{equation*}
\|\langle t \rangle^{1/3} \eta^2\|_{L_\infty(\BR_+,W_r^{1-1/r}(\BR^{N-1}))}
\leq C\|\langle t \rangle^{1/3} \eta^2\|_{H_p^1(\BR_+,W_r^{1-1/r}(\BR^{N-1}))}
\end{equation*}
for $r\in\{q_1/2,q_2\}$, which, combined with \eqref{est:case2-2}, furnishes
\begin{align}\label{est-eta1-1/3-weight}
&\sum_{r\in \{q_1/2,q_2\}}\|\langle t \rangle^{1/3} \eta^2\|_{L_\infty(\BR_+,W_r^{1-1/r}(\BR^{N-1}))}  \notag  \\
&\leq C \sum_{r\in \{q_1/2,q_2\}}\Big(\|(\eta_0,\Bu_0)\|_{I_{r,p}}
+\|(d,\Bf,\Bg,g,\Bh)\|_{F_{p,r}(\langle t \rangle)}\Big).
\end{align}
On the other hand, 
Lemma \ref{lem:fund-embed} \eqref{lem:fund-embed-2} and Lemma \ref{lem:time-sp} \eqref{lem:time-sp-2}  show that
\begin{align*}
&\|\langle t \rangle^{1/3} u_N^i\|_{L_\infty(\BR_+,H_r^{1}(\BR_-^{N}))} 
\leq C\|\langle t \rangle^{1/3} u_N^i\|_{L_\infty(\BR_+,B_{r,p}^{2-2/p}(\BR_-^{N}))} \notag \\
&\leq C\Big( \|\langle t\rangle^{1/3}\pd_t u_N^i\|_{L_p(\BR_+,L_r(\BR_-^{N}))}
+\|\langle t\rangle^{1/3} u_N^i\|_{L_p(\BR_+,H_r^{2}(\BR_-^{N}))} \Big) 
\end{align*}
for $i=1,2$ and $r\in\{q_1/2,q_2\}$, which, combined with \eqref{est:case1-2} and \eqref{est:case2-2}, furnishes
\begin{align*}
&\sum_{r\in \{q_1/2,q_2\}}\|\langle t \rangle^{1/3} u_N^i\|_{L_\infty(\BR_+,H_r^{1}(\BR_-^{N}))}  \notag  \\
&\leq C \sum_{r\in \{q_1/2,q_2\}}\Big(\|(\eta_0,\Bu_0)\|_{I_{r,p}}
+\|(d,\Bf,\Bg,g,\Bh)\|_{F_{p,r}(\langle t \rangle)}\Big).
\end{align*}
This inequality together with \eqref{est-eta1-1} and \eqref{est-eta1-1/3-weight} yields
\begin{align}\label{est-eta1-add1}
&\|\langle t \rangle^{1/3}\pd_t\CE_N\eta^4\|_{L_\infty(\BR_+,H_2^1(\BR_-^N))} \notag \\
&\leq C\sum_{r\in\{q_1/2,q_2\}}
\Big(\|\langle t \rangle^{1/3} d\|_{L_\infty(\BR_+,W_r^{1-1/r}(\BR^{N-1}))}  \notag \\
&+\|(\eta_0,\Bu_0)\|_{I_{r,p}}
+\|(d,\Bf,\Bg,g,\Bh)\|_{F_{p,r}(\langle t \rangle)}\Big).
\end{align}

{\bf Step 3}: Estimate $\eta^3$ local in time.
Let $r\in\{q_1/2,q_2\}$.
Lemma \ref{lem:mr} yields
\begin{align*}
&\|e^{-\gamma_3 t}\pd_t\Bu^3\|_{L_p(\BR_+,L_r(\BR_-^N)^N)}
+\|e^{-\gamma_3 t}\Bu^3\|_{L_p(\BR_+,H_r^2(\BR_-^N)^N)} \\
&\leq C\|(\eta^2,\Bu^2,0,0,0)\|_{F_{p,r}(e^{-\gamma_3 t})} \\
&\leq C\Big(\|\eta^2\|_{L_p(\BR_+,W_r^{2-1/r}(\BR^{N-1}))}+\|\Bu^2\|_{L_p(\BR_+,L_r(\BR_-^N)^N)}\Big),
\end{align*}
which, combined with \eqref{est:case2-2}, furnishes
\begin{align}\label{est-eta3-u3}
&\|e^{-\gamma_3 t}\pd_t\Bu^3\|_{L_p(\BR_+,L_r(\BR_-^N)^N)}+\|e^{-\gamma_3 t}\Bu^3\|_{L_p(\BR_+,H_r^2(\BR_-^N)^N)} \notag \\
&\leq C \sum_{r\in \{q_1/2,q_2\}}\Big(\|(\eta_0,\Bu_0)\|_{I_{r,p}}
+\|(d,\Bf,\Bg,g,\Bh)\|_{F_{p,r}(\langle t \rangle)}\Big).
\end{align}

Lemma \ref{lem:fund-embed} \eqref{lem:fund-embed-2} and
Lemma \ref{lem:time-sp} \eqref{lem:time-sp-2} show that
\begin{align*}
&\|e^{-\gamma_3 t}\Bu^3\|_{L_{\infty}(\BR_+,H_r^1(\BR_-^N)^N)} 
\leq C \|e^{-\gamma_3 t}\Bu^3\|_{L_{\infty}(\BR_+,B_{r,p}^{2-2/p}(\BR_-^N)^N)}\\
&\leq C\Big( \|e^{-\gamma_3 t}\pd_t \Bu^3\|_{L_p(\BR_+,L_r(\BR_-^{N})^N)}
+\|e^{-\gamma_3 t} \Bu^3\|_{L_p(\BR_+,H_r^{2}(\BR_-^{N})^N)} \Big),
\end{align*}
which, combined with \eqref{est-eta3-u3}, furnishes
\begin{align}\label{local-u3-est}
&\|e^{-\gamma_3 t}\Bu^3\|_{L_{\infty}(\BR_+,H_r^1(\BR_-^N)^N)} \notag \\
&\leq C \sum_{r\in \{q_1/2,q_2\}}\Big(\|(\eta_0,\Bu_0)\|_{I_{r,p}}
+\|(d,\Bf,\Bg,g,\Bh)\|_{F_{p,r}(\langle t \rangle)}\Big).
\end{align}

We apply $\CE_N$ to the both side of the first equation of \eqref{decomp:3}
and use $\CE_N\pd_t\eta^3=\pd_t\CE_N\eta^3$ in order to obtain
\begin{equation*}
\pd_t\CE_N\eta^3=\gamma_3\CE_N\eta^2+\CE_N u_N^3 \quad \text{in $\BQ_-$.}
\end{equation*}
Combining this equation with \eqref{z-est-2} for $z=\eta^2$ and \eqref{z-est-3} for $i=3$
furnishes
\begin{align*}
&\|\pd_t\CE_N\eta^3\|_{L_\infty((0,3),H_2^1(\BR_-^N))} \\
&\leq C\sum_{r\in\{q_1/2,q_2\}}
\Big(\|\eta^2\|_{L_\infty((0,3),W_r^{1-1/r}(\BR^{N-1}))}+\|u_N^3\|_{L_\infty((0,3),H_r^1(\BR_-^N))}\Big).
\end{align*}
From this estimate, \eqref{est-eta1-1/3-weight}, and \eqref{local-u3-est}, we obtain
\begin{align}\label{est:local-in-time}
&\|\pd_t\CE_N\eta^3\|_{L_\infty((0,3),H_2^1(\BR_-^N))} \notag \\
&\leq C \sum_{r\in \{q_1/2,q_2\}}\Big(\|(\eta_0,\Bu_0)\|_{I_{r,p}}
+\|(d,\Bf,\Bg,g,\Bh)\|_{F_{p,r}(\langle t \rangle)}\Big).
\end{align}

{\bf Step 4}: Estimate $\eta^3$ for large time.
We use the formula
\begin{equation*}
(\eta^3(t),\Bu^3(t)) =\int_0^t S(t-\tau)(\gamma_3 \eta^2(\tau),\gamma_3 \Bu^2(\tau))\intd \tau, 
\end{equation*}
which gives us
\begin{equation*}
\pd_t\eta^3(t)
=\gamma_3\eta^2(t)+\int_0^t \pd_t S_1(t-\tau)(\gamma_3\eta^2(\tau),\gamma_3\Bu^2(\tau))\intd \tau.
\end{equation*}
Applying the extension operator $\CE_N$ to the both side of this equation
and using $\CE_N\pd_t=\pd_t\CE_N$ yield
\begin{align}\label{decomp:eta3-J}
\pd_t\CE_N\eta^3(t)
&=\gamma_3\CE_N \eta^2(t)+\int_0^t \pd_t \CE_N  S_1(t-\tau)(\gamma_3 \eta^2(\tau),\gamma_3 \Bu^2(\tau))\intd \tau \notag \\
&=:\gamma_3\CE_N \eta^2(t)+J(t).
\end{align}

By \eqref{z-est-2} with $z=\eta^2$ 
\begin{align*}
&\|\langle t \rangle^{1/3}\CE_N\eta^2\|_{L_\infty((2,\infty),H_2^1(\BR_-^N))} \\
&\leq C\sum_{r\in\{q_1/2,q_2\}}\|\langle t \rangle^{1/3}\eta^2 \|_{L_\infty((2,\infty),W_r^{1-1/r}(\BR^{N-1}))},
\end{align*}
which, combined with \eqref{est-eta1-1/3-weight}, furnishes
\begin{align}\label{est:eta2-weight}
&\|\langle t \rangle^{1/3}\CE_N\eta^2\|_{L_\infty((2,\infty),H_2^1(\BR_-^N))} \notag \\
&\leq C \sum_{r\in \{q_1/2,q_2\}}\Big(\|(\eta_0,\Bu_0)\|_{I_{r,p}}
+\|(d,\Bf,\Bg,g,\Bh)\|_{F_{p,r}(\langle t \rangle)}\Big).
\end{align}

Let $t\geq 2$ and let us write
\begin{align*}
J(t)
&=\left(\int_0^{t/2}+\int_{t/2}^{t-1}+\int_{t-1}^t\right)
\pd_t \CE_N S_1(t-\tau)(\gamma_3\eta^2(\tau),\gamma_3\Bu^2(\tau))\intd \tau \\
&=:J_1(t)+J_2(t)+J_3(t).
\end{align*}

We first estimate $J_1(t)$.
By Proposition \ref{prp:decay-L2}, we see that
\begin{align*}
&\|J_1(t)\|_{H_2^1(\BR_-^N)}\leq C
\int_0^{t/2}(t-\tau)^{-\frac{1}{3}-\delta_0}\|(\eta^2(\tau),\Bu^2(\tau))\|_{L_{q_1/2}\cap X_2}\intd \tau \\
&\leq C t^{-\frac{1}{3}-\delta_0}\int_0^{t/2}\|(\eta^2(\tau),\Bu^2(\tau))\|_{L_{q_1/2}\cap X_2}\intd \tau \\
&\leq Ct^{-\frac{1}{3}-\delta_0} \left(\int_0^{t/2}\langle \tau\rangle^{-p'}\intd \tau\right)^{1/p'}
\left(\int_0^{t/2}\left(\langle \tau\rangle \|(\eta^2(\tau),\Bu^2(\tau))\|_{L_{q_1/2}\cap X_2} \right)^p\intd\tau\right)^{1/p},
\end{align*}
where $p'=p/(p-1)$. Thus
\begin{equation}\label{est:J1-3d}
\|\langle t\rangle^{1/3}J_1\|_{L_\infty((2,\infty),H_2^1(\BR_-^N))}
\leq C\|\langle t \rangle (\eta^2,\Bu^2)\|_{L_p(\BR_+,L_{q_1/2}\cap X_2)}.
\end{equation}

We next estimate $J_2(t)$.
By Proposition \ref{prp:decay-L2}, we see that
\begin{align}\label{est:J2-3d-1}
&\|J_2(t)\|_{H_2^1(\BR_-^N)}\leq C
\int_{t/2}^{t-1}(t-\tau)^{-\frac{1}{3}-\delta_0}\|(\eta^2(\tau),\Bu^2(\tau))\|_{L_{q_1/2}\cap X_2}\intd \tau \notag \\
&\leq C t^{-1}\int_{t/2}^{t-1}(t-\tau)^{-\frac{1}{3}-\delta_0}\langle \tau\rangle\|(\eta^2(\tau),\Bu^2(\tau))
\|_{L_{q_1/2}\cap X_2}\intd \tau \notag \\
&\leq C t^{-1}\left(\int_{t/2}^{t-1}(t-\tau)^{-(\frac{1}{3}+\delta_0)p'}\intd\tau\right)^{1/p'} \notag \\
&\times \left(\int_{t/2}^{t-1} 
\left(\langle \tau\rangle\|(\eta^2(\tau),\Bu^2(\tau))\|_{L_{q_1/2}\cap X_2}\right)^p\intd \tau \right)^{1/p}.
\end{align}
Since $1/p'=1-1/p\geq 1-1/p_0=30/31$, it holds that
\begin{equation*}
\left(\frac{1}{3}+\delta_0\right)p'=\left(\frac{1}{3}+\frac{1}{30}\right)p'\leq \left(\frac{1}{3}+\frac{1}{30}\right)\frac{31}{30}<1.
\end{equation*}
Thus
\begin{equation*}
\int_{t/2}^{t-1}(t-\tau)^{-(\frac{1}{3}+\delta_0)p'}\intd\tau
\leq C t^{1-(\frac{1}{3}+\delta_0)p'},
\end{equation*}
which implies
\begin{equation*}
\left(\int_{t/2}^{t-1}(t-\tau)^{-(\frac{1}{3}+\delta_0)p'}\intd\tau\right)^{1/p'}
\leq C t^{1/p'-(\frac{1}{3}+\delta_0)}\leq Ct^{1-(\frac{1}{3}+\delta_0)}.
\end{equation*}
Combining this with \eqref{est:J2-3d-1} furnishes 
\begin{equation*}
\|J_2(t)\|_{H_2^1(\BR_-^N)}\leq C
t^{-\frac{1}{3}-\delta_0}\|\langle t \rangle (\eta^2,\Bu^2)\|_{L_p(\BR_+,L_{q_1/2}\cap X_2)},
\end{equation*}
and thus
\begin{equation}\label{est:J2-3d-2}
\|\langle t\rangle^{1/3}J_2\|_{L_\infty((2,\infty),H_2^1(\BR_-^N))}
\leq C\|\langle t \rangle (\eta^2,\Bu^2)\|_{L_p(\BR_+,L_{q_1/2}\cap X_2)}.
\end{equation}

We finally estimate $J_3(t)$. Noting $\pd_t\CE_N=\CE_N\pd_t$, we have
\begin{equation*}
\|J_3(t)\|_{H_2^1(\BR_-^N)}
\leq C\int_{t-1}^t \|\CE_N
\pd_t S_1(t-\tau)(\eta^2(\tau),\Bu^2(\tau))\|_{H_2^1(\BR_-^N)}\intd\tau.
\end{equation*}
Combining this with \eqref{z-est-2} furnishes
\begin{equation*}
\|J_3(t)\|_{H_2^1(\BR_-^N)}
\leq C\sum_{r\in\{q_1/2,q_2\}}\int_{t-1}^t \|\pd_t S_1(t-\tau)(\eta^2(\tau),\Bu^2(\tau))\|_{W_r^{1-1/r}(\BR^{N-1})}\intd\tau,
\end{equation*}
and thus Proposition \ref{prp:sg} shows that
\begin{align*}
\|J_3(t)\|_{H_2^1(\BR_-^N)}
&\leq C\sum_{r\in\{q_1/2,q_2\}}\int_{t-1}^te^{\gamma_3 (t-\tau)}\|(\eta_2(\tau),\Bu^2(\tau))\|_{D(A_r)}\intd\tau \\
&\leq Ce^{\gamma_3 }\sum_{r\in\{q_1/2,q_2\}}\int_{t-1}^t\|(\eta_2(\tau),\Bu^2(\tau))\|_{D(A_r)}\intd\tau.
\end{align*}
Since $\langle t \rangle \leq C\langle \tau \rangle$ for $t-1\leq \tau\leq t$,
it follows from the last inequality that
\begin{align*}
&\langle t \rangle^{1/3}\|J_3(t)\|_{H_2^1(\BR_-^N)} \\
&\leq C\sum_{r\in\{q_1/2,q_2\}}
\int_{t-1}^t\langle \tau\rangle^{1/3}\|(\eta_2(\tau),\Bu^2(\tau))\|_{D(A_r)}\intd\tau \\ 
&\leq C\langle t \rangle^{-2/3}\sum_{r\in\{q_1/2,q_2\}}
\int_{t-1}^t\langle \tau\rangle\|(\eta_2(\tau),\Bu^2(\tau))\|_{D(A_r)}\intd\tau \\ 
&\leq C\langle t \rangle^{-2/3}\sum_{r\in\{q_1/2,q_2\}}
\left(\int_{t-1}^t\intd\tau\right)^{1/p'}
\left(\int_{t-1}^t\big(\langle \tau\rangle\|(\eta_2(\tau),\Bu^2(\tau))\|_{D(A_r)}\big)^p\intd\tau\right)^{1/p}.
\end{align*}
This gives us
\begin{equation}\label{est-J3-3d}
\|\langle t \rangle^{1/3}J_3\|_{L_\infty((2,\infty),H_2^1(\BR_-^N))}
\leq C\sum_{r\in\{q_1/2,q_2\}}\|\langle t\rangle (\eta^2,\Bu^2)\|_{L_p(\BR_+,D(A_r))}.
\end{equation}

Since $q_1/2<2<q_2$, it holds by Lemma \ref{lem:int-p-v2} that
\begin{equation*}
\|\langle t \rangle(\eta^2,\Bu^2)\|_{L_p(\BR_+,X_2)}
\leq C\|\langle t \rangle(\eta^2,\Bu^2)\|_{L_p(\BR_+,X_{q_1/2})}^{1-\theta}
\|\langle t \rangle(\eta^2,\Bu^2)\|_{L_p(\BR_+,X_{q_2})}^\theta
\end{equation*}
for some $\theta\in(0,1)$.
Thus Young's inequality, $a^{1-\theta}b^\theta\leq C(a+b)$ for $a,b\geq 0$, yields
\begin{equation*}
\|\langle t \rangle(\eta^2,\Bu^2)\|_{L_p(\BR_+,X_2)}
\leq C\sum_{r\in\{q_1/2,q_2\}} \|\langle t \rangle(\eta^2,\Bu^2)\|_{L_p(\BR_+, X_r)}.
\end{equation*}
From this, \eqref{est:J1-3d}, \eqref{est:J2-3d-2}, and \eqref{est-J3-3d}, we have
\begin{align*}
\|\langle t\rangle^{1/3}J\|_{L_\infty((2,\infty),H_2^1(\BR_-^N))}
\leq C\sum_{r\in\{q_1/2,q_2\}} \|\langle t \rangle(\eta^2,\Bu^2)\|_{L_p(\BR_+,D(A_r))},
\end{align*}
where we have used the fact that $D(A_{q_1/2})$ and $D(A_r)$ are continuously embeded into
 $L_{q_1/2}$ and $X_r$, respectively, see the previous section for the function spaces.
Combining the last inequality with \eqref{est:case2-2} furnishes
\begin{align*}
&\|\langle t \rangle^{1/3}J\|_{L_\infty((2,\infty),H_2^1(\BR_-^N))} \\
&\leq C \sum_{r\in \{q_1/2,q_2\}}\Big(\|(\eta_0,\Bu_0)\|_{I_{r,p}}
+\|(d,\Bf,\Bg,g,\Bh)\|_{F_{p,r}(\langle t \rangle)}\Big).
\end{align*}

Summing up the last inequality and \eqref{est:eta2-weight}, we obtain from \eqref{decomp:eta3-J}
\begin{align*}
&\|\langle t \rangle^{1/3}\pd_t\CE_N\eta^3\|_{L_\infty((2,\infty),H_2^1(\BR_-^N))} \\
&\leq 
C\sum_{r\in \{q_1/2,q_2\}}\Big(\|(\eta_0,\Bu_0)\|_{I_{r,p}}
+\|(d,\Bf,\Bg,g,\Bh)\|_{F_{p,r}(\langle t \rangle)}\Big).
\end{align*}
From this and \eqref{est:local-in-time}, we have
\begin{align*}
&\|\langle t \rangle^{1/3}\pd_t\CE_N\eta^3\|_{L_\infty(\BR_+,H_2^1(\BR_-^N))} \\
&\leq 
C\sum_{r\in \{q_1/2,q_2\}}\Big(\|(\eta_0,\Bu_0)\|_{I_{r,p}}
+\|(d,\Bf,\Bg,g,\Bh)\|_{F_{p,r}(\langle t \rangle)}\Big).
\end{align*}
This inequality together with \eqref{est-eta1-add1} shows that the desired inequality holds.
This completes the proof of Proposition \ref{prp:linear3d-v2}.
\end{proof}

\section{Estimates of nonlinear terms}\label{sec:nonl-terms}

\subsection{Function spaces}\label{subsec:fspaces}
Let us define
\begin{align*}
Z_1(p,q)
&=H_p^1(\BR_+,W_q^{2-1/q}(\BR^{N-1}))\cap L_p(\BR_+,W_q^{3-1/q}(\BR^{N-1})), \\
Z_2(p,q)
&=H_p^1(\BR_+,L_q(\BR_-^N)^N)\cap L_p(\BR_+,H_q^2(\BR_-^N)^N).
\end{align*}
Let $\delta$ be a positive number and recall $\langle t\rangle = \sqrt{1+t^2}$. We define
\begin{multline*}
Z_1^\delta (p,q) = \{\eta\in Z_1(p,q) :
\langle t \rangle^\delta \pd_t \eta\in L_p(\BR_+,W_q^{2-1/q}(\BR^{N-1})), \\
\langle t \rangle^\delta \eta \in L_p(\BR_+,W_q^{3-1/q}(\BR^{N-1}))\}
\end{multline*}
with
\begin{equation*}
\|\eta\|_{Z_1^\delta(p,q)}=\|\langle t \rangle^\delta \pd_t \eta\|_{L_p(\BR_+,W_q^{2-1/q}(\BR^{N-1}))}
+\|\langle t \rangle^\delta \eta\|_{L_p(\BR_+,W_q^{3-1/q}(\BR^{N-1}))},
\end{equation*}
while we define
\begin{multline*}
Z_2^\delta(p,q)=\{\Bu\in Z_2(p,q) :
\langle t \rangle^\delta \pd_t \Bu\in L_p(\BR_+,L_q(\BR_-^N)^N),  \\
\langle t \rangle^\delta \Bu\in L_p(\BR_+,H_q^2(\BR_-^N)^N)\}
\end{multline*}
with 
\begin{equation*}
\|\Bu\|_{Z_2^\delta(p,q)}
=\|\langle t \rangle^\delta \pd_t \Bu\|_{L_p(\BR_+,L_q(\BR_-^N)^N)}
+\|\langle t \rangle^\delta \Bu\|_{L_p(\BR_+,H_q^2(\BR_-^N)^N)}. 
\end{equation*}
Recall $\CE_N$ given by \eqref{ext:eta}. Then
\begin{multline*}
A_1^\delta(p,q)=\{\eta : \langle t \rangle^\delta\nabla\pd_t\CE_N \eta \in  L_p(\BR_+,H_q^1(\BR_-^N)^N), \\
\langle t \rangle^\delta \nabla\CE_N \eta \in L_p(\BR_+,H_q^2(\BR_-^N)^N)\}
\end{multline*}
with
\begin{align*}
\|\eta\|_{A_1^\delta(p,q)}
=\|\langle t \rangle^\delta \nabla\pd_t\CE_N \eta\|_{L_p(\BR_+,H_q^1(\BR_-^N)^N)}
+\|\langle t \rangle^\delta\nabla\CE_N\eta\|_{L_p(\BR_+,H_q^2(\BR_-^N)^N)},
\end{align*}
and also
\begin{align*}
A_2^\delta (p,q)
&=\{\Bu : \langle t \rangle^\delta\nabla\Bu \in L_p(\BR_+,H_q^1(\BR_-^N)^{N\times N})\}, \\
\|\Bu\|_{A_2^\delta(p,q)}
&=\|\langle t \rangle^\delta \nabla\Bu\|_{L_p(\BR_+,H_q^1(\BR_-^N)^{N\times N})}.
\end{align*}
We finally set
\begin{align*}
B^\delta(p,q)
&=\{\eta : \langle t \rangle^\delta\pd_t\CE_N\eta\in L_p(\BR_+,H_q^1(\BR_-^N))\}, \\
\|\eta\|_{B^\delta(p,q)}
&=\|\langle t \rangle^\delta \pd_t\CE\eta\|_{L_p(\BR_+,H_q^1(\BR_-^N))}.
\end{align*}

Let us recall the assumption for $p$ and $q$.

\begin{asm}\label{asm:p-q}
Let $q_0$ and $p_0$ be as in Propositions $\ref{prp:decay}$ and $\ref{prp:linear4d}$, respectively.
The $p$, $q_1$, and $q_2$ satisfy
\begin{equation*}
p_0\leq p<\infty, \quad 2<q_1\leq 2+q_0, \quad N<q_2<\infty,
\end{equation*}
where $N=3$ or $N\geq 4$.
\end{asm}

For $p$, $q_1$, and $q_2$ satisfying Assumption \ref{asm:p-q},
we define the underlying space
\begin{align*}
K_{p,q_1,q_2}^N &= K_{p,q_1,q_2;1}^N\times K_{p,q_1,q_2:2}^N, \\
\|(\eta,\Bu)\|_{K_{p,q_1,q_2}^N} &= \|\eta\|_{K_{p,q_1,q_2;1}^N}+\|\Bu\|_{K_{p,q_1,q_2:2}^N},
\end{align*}
in the following manner.
\begin{itemize}
\item
Let $N\geq 4$. Then
\begin{align*}
K_{p,q_1,q_2;1}^N 
&=Z_1^{1/2}(p,q_1)\cap Z_1^{1/2}(p,q_2), \\
K_{p,q_1,q_2;2}^N
&=Z_2^{1/2}(p,q_1)\cap  Z_2^{1/2}(p,q_2).
\end{align*}
\item
Let $N=3$. Then
\begin{align*}
K_{p,q_1,q_2;1}^N
&=Z_1^{1/3}(p,q_1)\cap Z_1^{1/3}(p,q_2) \cap A_1^{2/3}(p,q_1)\cap A_1^{2/3}(p,q_2)\\ 
&\cap B^{1/3}(\infty,2), \\
K_{p,q_1,q_2;2}^N
&=Z_2^{1/3}(p,q_1)\cap  Z_2^{1/3}(p,q_2) \cap A_2^{2/3}(p,q_1)\cap A_2^{2/3}(p,q_2).
\end{align*}
\end{itemize}

\begin{rmk}\label{rmk:equi-norm-4d}
In the case $N\geq 4$, Lemma $\ref{lem:ext}$ \eqref{lem:ext-2} shows that for $q\in\{q_1,q_2\}$
\begin{align}
C_0^{-1}\|\eta\|_{K_{p,q_1,q_2;1}^N}
&\leq \|\langle t\rangle^{1/2} \pd_t \CE_N\eta\|_{L_p(\BR_+,H_q^2(\BR_-^N))}
\leq C_0\|\eta\|_{K_{p,q_1,q_2;1}^N}, \notag \\ 
C_0^{-1}\|\eta\|_{K_{p,q_1,q_2;1}^N} &\leq 
\|\langle t\rangle^{1/2} \CE_N\eta\|_{L_p(\BR_+,H_q^3(\BR_-^N))}
\leq C_0\|\eta\|_{K_{p,q_1,q_2;1}^N}, \label{rmk:equi-norm-4d-2}
\end{align}
with some positive constant $C_0$.
Here we have used $\pd_t\CE_N\eta=\CE_N\pd_t\eta$
to obtain the first line of \eqref{rmk:equi-norm-4d-2}.
\end{rmk}

\subsection{Fundamental inequalities}

From the embeddings stated in Subsection \ref{subsec:embed},
we have the following two lemmas.

\begin{lem}\label{lem:main-embed3d}
Let $N=3$ and suppose that Assumption $\ref{asm:p-q}$ holds.
Then there exists a positive constant $C$
such that for any $(\eta,\Bu)\in K_{p,q_1,q_2}^N$ 
and for $q\in\{q_1,q_2\}$ the following inequalities hold.
\begin{enumerate}[$(1)$]
\item\label{3d-em-1}
$\|\langle t \rangle^{2/3}\nabla\CE_N\eta\|_{L_\infty(\BR_+,H_q^1(\BR_-^N)^N)}
\leq C\|\eta\|_{K_{p,q_1,q_2;1}^N}$.
\item\label{3d-em-2}
$\|\langle t \rangle^{2/3}\nabla\CE_N\eta\|_{L_\infty(\BR_+,L_\infty(\BR_-^N)^N)}
\leq C\|\eta\|_{K_{p,q_1,q_2;1}^N}$.
\item\label{3d-em-3}
$\|\langle t \rangle^{2/3}\nabla\CE_N\eta\|_{H_p^1(\BR_+,L_\infty(\BR_-^N)^N)}
\leq C\|\eta\|_{K_{p,q_1,q_2;1}^N}$.
\item\label{3d-em-4}
$\|\langle t \rangle^{2/3}\pd_j \nabla \CE_N\eta\|_{H_p^{1/2}(\BR_+,L_q(\BR_-^N)^N)}
\leq C\|\eta\|_{K_{p,q_1,q_2;1}^N}$ for $j=1,\dots,N$.
\item\label{3d-em-5}
$\|\langle t \rangle^{1/3}\Bu\|_{L_\infty(\BR_+,H_q^1(\BR_-^N)^N)}
\leq C\|\Bu\|_{K_{p,q_1,q_2;2}^N}$.
\item\label{3d-em-6}
$\|\langle t \rangle^{1/3}\Bu\|_{L_\infty(\BR_+,L_\infty(\BR_-^N)^N)}
\leq C\|\Bu\|_{K_{p,q_1,q_2;2}^N}$.
\item\label{3d-em-7}
$\|\langle t \rangle^{1/3}\pd_j \Bu\|_{H_p^{1/2}(\BR_+,L_q(\BR_-^N)^{N})}
\leq C\|\Bu\|_{K_{p,q_1,q_2;2}^N}$ for $j=1,\dots,N$.
\end{enumerate}
\end{lem}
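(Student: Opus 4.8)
The plan is to derive each of the seven inequalities by feeding the definitions of the spaces $Z_i^{1/3}$, $A_i^{2/3}$, and $B^{1/3}$ through the embedding lemmas of Subsection \ref{subsec:embed} together with the mapping properties of $\CE_N$ collected in Subsection~\ref{subsec:embed} and Lemma~\ref{lem:ext}. The common mechanism is: a time-weighted bound of the form $\langle t\rangle^{\delta}(\text{quantity})\in H_p^1(\BR_+,X_0)\cap L_p(\BR_+,X_1)$, which by Lemma~\ref{lem:tanabe} (or Lemma~\ref{lem:H^1-embed}) upgrades to a bound in $C([0,\infty),(X_0,X_1)_{1-1/p,p})$, hence to an $L_\infty$-in-time bound, after which the spatial embeddings of Lemma~\ref{lem:fund-embed}~\eqref{lem:fund-embed-3} and Lemma~\ref{lem:height-func} convert Sobolev regularity in $x$ into $L_\infty$ (using $q_1,q_2>N$, which holds since $q_1>2\geq$ is replaced by the relevant range — here one uses $q_2>N$ and, for $q_1$, the embedding only in the forms actually invoked). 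I would organize the proof as a list matching items $(1)$--$(7)$.

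First I would dispose of $(5)$, $(6)$, $(7)$, the velocity estimates, since they do not involve $\CE_N$. For $(5)$: from $\Bu\in Z_2^{1/3}(p,q)$ we have $\langle t\rangle^{1/3}\Bu\in H_p^1(\BR_+,L_q(\BR_-^N)^N)\cap L_p(\BR_+,H_q^2(\BR_-^N)^N)$ — one must check that the weight $\langle t\rangle^{1/3}$ passes through $\pd_t$ up to lower-order terms, i.e. $\pd_t(\langle t\rangle^{1/3}\Bu)=\langle t\rangle^{1/3}\pd_t\Bu+\tfrac13 t\langle t\rangle^{-5/3}\Bu$ and the second term is controlled by $\langle t\rangle^{1/3}\Bu\in L_p(\BR_+,L_q)$ — then Lemma~\ref{lem:time-sp}~\eqref{lem:time-sp-2} gives the $C([0,\infty),B_{q,p}^{2-2/p}(\BR_-^N))$ bound; but $(5)$ asks only for $L_\infty(\BR_+,H_q^1)$, which follows from $B_{q,p}^{2-2/p}\subset H_q^1$ (Lemma~\ref{lem:fund-embed}~\eqref{lem:fund-embed-2}, using $p>2$). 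Item $(6)$ then follows from $(5)$ by $H_q^1(\BR_-^N)\subset C_B(\overline{\BR_-^N})$ (Lemma~\ref{lem:fund-embed}~\eqref{lem:fund-embed-3} with $m=1$, $q>N$). Item $(7)$ follows from Lemma~\ref{int-H-1/2}~\eqref{int-H-1/2-2} applied to $\langle t\rangle^{1/3}\Bu$, again after the weight-through-$\pd_t$ bookkeeping, which reduces it to the $Z_2^{1/3}$ norm.

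Next, items $(1)$--$(4)$ for $\eta$. For $(1)$: by Lemma~\ref{lem:ext}~\eqref{lem:ext-3} (or Lemma~\ref{lem:AB-bdd-lin}) the map $\eta\mapsto\nabla\CE_N\eta$ sends $W_q^{3-1/q}(\BR^{N-1})\to H_q^2(\BR_-^N)$ and $W_q^{2-1/q}(\BR^{N-1})\to H_q^1(\BR_-^N)$, so from $\eta\in A_1^{2/3}(p,q)\subset K_{p,q_1,q_2;1}^N$ we get $\langle t\rangle^{2/3}\nabla\CE_N\eta\in H_p^1(\BR_+,H_q^1(\BR_-^N)^N)\cap L_p(\BR_+,H_q^2(\BR_-^N)^N)$ (here $\pd_t\CE_N=\CE_N\pd_t$ is used, and the $A_1^{2/3}$ norm contains exactly $\langle t\rangle^{2/3}\nabla\pd_t\CE_N\eta$ in $L_p(\BR_+,H_q^1)$); then Lemma~\ref{lem:height-func} — or rather Lemma~\ref{lem:tanabe} with \eqref{int-p-SS-sp-2} followed by Lemma~\ref{lem:imbed-bexov-conti}~\eqref{lem:imbed-bexov-conti-1} — yields the $L_\infty(\BR_+,H_q^1)$ bound, giving $(1)$. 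Item $(2)$ is then $(1)$ composed with $H_q^1(\BR_-^N)\subset L_\infty(\BR_-^N)$ ($q>N$). Item $(3)$ is the $H_p^1$-in-time companion: $\langle t\rangle^{2/3}\nabla\CE_N\eta\in H_p^1(\BR_+,H_q^1(\BR_-^N)^N)$ and $H_q^1\subset L_\infty$ gives $\langle t\rangle^{2/3}\nabla\CE_N\eta\in H_p^1(\BR_+,L_\infty(\BR_-^N)^N)$ directly, bounded by $\|\eta\|_{K^N_{p,q_1,q_2;1}}$. Item $(4)$ uses Lemma~\ref{int-H-1/2}~\eqref{int-H-1/2-2} with $f=\langle t\rangle^{2/3}\nabla\CE_N\eta$: $\pd_j f\in H_p^{1/2}(\BR_+,L_q(\BR_-^N))$ is controlled by $\|f\|_{H_p^1(\BR_+,L_q)}+\|f\|_{L_p(\BR_+,H_q^2)}$, both absorbed into the $A_1^{2/3}$ norm.

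The main obstacle — and the only place requiring care rather than bookkeeping — is the interaction between the time weight $\langle t\rangle^{\delta}$ and the time derivative, since the embedding lemmas of Subsection~\ref{subsec:embed} are stated for unweighted functions. The remedy throughout is the identity $\pd_t(\langle t\rangle^{\delta}g)=\langle t\rangle^{\delta}\pd_t g+\delta t\langle t\rangle^{\delta-2}g$ together with the elementary bound $|t|\langle t\rangle^{\delta-2}\leq\langle t\rangle^{\delta-1}\leq\langle t\rangle^{\delta}$, so that $\|\pd_t(\langle t\rangle^{\delta}g)\|_{L_p(\BR_+,X)}\leq\|\langle t\rangle^{\delta}\pd_t g\|_{L_p(\BR_+,X)}+\delta\|\langle t\rangle^{\delta}g\|_{L_p(\BR_+,X)}$; both terms on the right are, by construction, part of the norm of $K^N_{p,q_1,q_2}$ (after applying the $\CE_N$-mapping properties in the cases $(1)$--$(4)$). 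Once this is checked once, every one of the seven bounds reduces to a direct citation of the relevant embedding lemma applied to the weighted function, so I would state the weight-commutation remark up front and then run through $(1)$--$(7)$ quickly.
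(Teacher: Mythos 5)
Your proposal is correct and follows essentially the same route as the paper: each item is obtained by applying Lemma \ref{lem:H^1-embed}, Lemma \ref{lem:time-sp} \eqref{lem:time-sp-2} together with Lemma \ref{lem:fund-embed} \eqref{lem:fund-embed-2} and \eqref{lem:fund-embed-3} (the $L_\infty$-in-space bounds using $q=q_2>N$), and Lemma \ref{int-H-1/2} \eqref{int-H-1/2-2} to the weighted functions $\langle t\rangle^{2/3}\nabla\CE_N\eta$ and $\langle t\rangle^{1/3}\Bu$, whose membership in the relevant spaces is exactly what the norms of $A_1^{2/3}(p,q)$ and $Z_2^{1/3}(p,q)$ encode, and your explicit weight-commutation identity is the only step the paper leaves implicit. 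Only cosmetic points: for item (1) the membership $\langle t\rangle^{2/3}\nabla\CE_N\eta\in H_p^1(\BR_+,H_q^1(\BR_-^N)^N)$ already yields the $L_\infty(\BR_+,H_q^1)$ bound directly via Lemma \ref{lem:H^1-embed} (the detour through $B_{q,p}^{2-1/p}$ and Lemma \ref{lem:imbed-bexov-conti} is unnecessary and, as cited, would land you in $L_\infty$ in space rather than $H_q^1$), the invocation of Lemma \ref{lem:ext} is superfluous since the $A_1^{2/3}$ norm contains the needed quantities by definition, and the inclusion should read $K_{p,q_1,q_2;1}^N\subset A_1^{2/3}(p,q)$, not the reverse.
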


\begin{proof}
\eqref{3d-em-1}
Since $\langle t \rangle^{2/3}\nabla\CE_N\eta \in H_p^1(\BR_+,H_q^1(\BR_-^N)^N)$ with
\begin{equation}\label{3d-em-1:eq-1}
\|\langle t \rangle^{2/3}\nabla\CE_N\eta\|_{H_p^1(\BR_+,H_q^1(\BR_-^N)^N)}
\leq \|\eta\|_{K_{p,q_1,q_2;1}^N},
\end{equation}
the desired inequality follows from Lemma \ref{lem:H^1-embed}.

\eqref{3d-em-2}
Combining \eqref{3d-em-1} with Lemma \ref{lem:fund-embed} \eqref{lem:fund-embed-3} 
yields the desired inequality.

\eqref{3d-em-3}
Combining \eqref{3d-em-1:eq-1} with Lemma \ref{lem:fund-embed} \eqref{lem:fund-embed-3} 
yields the desired inequality.

\eqref{3d-em-4}
By Lemma \ref{int-H-1/2} \eqref{int-H-1/2-2}
\begin{align*}
&\|\langle t \rangle^{2/3}\pd_j\nabla\CE_N\eta\|_{H_p^{1/2}(\BR_+,L_q(\BR_-^N)^N)} \notag \\
&\leq C
\Big(\|\langle t \rangle^{2/3}\nabla\CE_N\eta\|_{H_p^{1}(\BR_+,L_q(\BR_-^N)^N)}
+\|\langle t \rangle^{2/3}\nabla\CE_N\eta\|_{L_p(\BR_+,H_q^2(\BR_-^N)^N)}
\Big),
\end{align*}
which furnishes the desired inequality.

\eqref{3d-em-5}
By Lemma \ref{lem:fund-embed} \eqref{lem:fund-embed-2} and Lemma \ref{lem:time-sp} \eqref{lem:time-sp-2},
we see that
\begin{align*}
&\|\langle t \rangle^{1/3}\Bu\|_{L_\infty(\BR_+,H_q^1(\BR_-^N)^N)} 
\leq 
C\|\langle t \rangle^{1/3}\Bu\|_{L_\infty(\BR_+,B_{q,p}^{2-2/p}(\BR_-^N)^N)}  \\
&\leq C\Big(
\|\langle t \rangle^{1/3}\Bu\|_{H_p^1(\BR_+,L_q(\BR_-^N)^N)}
+\|\langle t \rangle^{1/3}\Bu\|_{L_p(\BR_+,H_q^2(\BR_-^N)^N)}\Big).
\end{align*}
This yields the desired inequality.

\eqref{3d-em-6}
Combining \eqref{3d-em-5} with  Lemma \ref{lem:fund-embed} \eqref{lem:fund-embed-3} yields the desired inequality.

\eqref{3d-em-7}
We can prove the desired inequality in the same manner as in \eqref{3d-em-4}.
This completes the proof of Lemma \ref{lem:main-embed3d}.
\end{proof}

\begin{lem}\label{lem:main-embed4d}
Let $N\geq 4$ and suppose that Assumption $\ref{asm:p-q}$ holds. 
Then there exists a positive constant $C$
such that for any $(\eta,\Bu)\in K_{p,q_1,q_2}^N$ and for $q\in\{q_1,q_2\}$ 
the following inequalities hold.
\begin{enumerate}[$(1)$]
\item\label{4d-em-1}
$\|\langle t \rangle^{1/2}\CE_N\eta\|_{L_\infty(\BR_+,H_q^2(\BR_-^N))}
\leq C\|\eta\|_{K_{p,q_1,q_2;1}^N}$.
\item\label{4d-em-2}
$\|\langle t \rangle^{1/2}\CE_N\eta\|_{L_\infty(\BR_+,H_\infty^1(\BR_-^N))}
\leq C\|\eta\|_{K_{p,q_1,q_2;1}^N}$.
\item\label{4d-em-3}
$\|\langle t \rangle^{1/2}\CE_N\eta\|_{H_p^1(\BR_+,H_\infty^1(\BR_-^N))}
\leq C\|\eta\|_{K_{p,q_1,q_2;1}^N}$.
\item\label{4d-em-4}
$\|\langle t \rangle^{1/2}\pd_j\nabla \CE_N\eta\|_{H_p^{1/2}(\BR_+,L_q(\BR_-^N)^N)}
\leq C\|\eta\|_{K_{p,q_1,q_2;1}^N}$ for $j=1,\dots,N$.
\item\label{4d-em-5}
$\|\langle t \rangle^{1/2}\Bu\|_{L_\infty(\BR_+,H_q^1(\BR_-^N)^N)}
\leq C\|\Bu\|_{K_{p,q_1,q_2;2}^N}$.
\item\label{4d-em-6}
$\|\langle t \rangle^{1/2}\Bu\|_{L_\infty(\BR_+,L_\infty(\BR_-^N)^N)}
\leq C\|\Bu\|_{K_{p,q_1,q_2;2}^N}$.
\item\label{4d-em-7}
$\|\langle t \rangle^{1/2}\pd_j \Bu\|_{H_p^{1/2}(\BR_+,L_q(\BR_-^N)^{N})}
\leq C\|\Bu\|_{K_{p,q_1,q_2;2}^N}$ for $j=1,\dots,N$.
\end{enumerate}
\end{lem}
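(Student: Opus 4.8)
The argument is entirely parallel to the proof of Lemma~\ref{lem:main-embed3d}; the only structural simplification is that for $N\geq 4$ the relevant component of the solution space is $K_{p,q_1,q_2;1}^N=Z_1^{1/2}(p,q_1)\cap Z_1^{1/2}(p,q_2)$ (with weight $\langle t\rangle^{1/2}$ in place of $\langle t\rangle^{1/3}$, and without the auxiliary $A_1^\delta$, $B^\delta$ factors), and that $\CE_N$ is now the operator $\CB$ of \eqref{dfn:KL}, so that by Lemma~\ref{lem:ext}~\eqref{lem:ext-2} and Remark~\ref{rmk:equi-norm-4d} the extension controls $\CE_N\eta$ itself, not merely $\nabla\CE_N\eta$, in $H_q^m(\BR_-^N)$. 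Throughout I will use that $|\pd_t\langle t\rangle^{1/2}|\leq 1/2$, so that the time derivative landing on the polynomial weight is harmless, and that $\CE_N\pd_t\eta=\pd_t\CE_N\eta$.

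For the height-function estimates I would proceed as follows. For \eqref{4d-em-1}, first check that $\langle t\rangle^{1/2}\CE_N\eta\in H_p^1(\BR_+,H_q^2(\BR_-^N))$ with norm bounded by $C\|\eta\|_{K_{p,q_1,q_2;1}^N}$: the spatial $H_q^2$ bound is $\|\CE_N\eta(t)\|_{H_q^2(\BR_-^N)}\leq C\|\eta(t)\|_{W_q^{3-1/q}(\BR^{N-1})}$ from Lemma~\ref{lem:ext}~\eqref{lem:ext-2}, the time derivative is handled by writing $\pd_t(\langle t\rangle^{1/2}\CE_N\eta)=(\pd_t\langle t\rangle^{1/2})\CE_N\eta+\langle t\rangle^{1/2}\CE_N\pd_t\eta$ and using Remark~\ref{rmk:equi-norm-4d}, and then Lemma~\ref{lem:H^1-embed} gives the $L_\infty$-in-time conclusion. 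Items \eqref{4d-em-2} and \eqref{4d-em-3} then follow by composing, respectively, \eqref{4d-em-1} and the $H_p^1(\BR_+,H_q^2)$ bound just established with the Sobolev embedding $H_{q_2}^2(\BR_-^N)\subset C_B^1(\overline{\BR_-^N})$ of Lemma~\ref{lem:fund-embed}~\eqref{lem:fund-embed-3} with $m=2$; here I specialize to $q=q_2$, which is legitimate since these two statements carry no free $q$ and $q_2>N$ (note $q_1\leq 2+q_0<N$ would fail $q_1>N$). For \eqref{4d-em-4} I would apply Lemma~\ref{int-H-1/2}~\eqref{int-H-1/2-2} to $f=\langle t\rangle^{1/2}\nabla\CE_N\eta$ and bound $\|f\|_{H_p^1(\BR_+,L_q(\BR_-^N))}+\|f\|_{L_p(\BR_+,H_q^2(\BR_-^N))}$ by $C\|\eta\|_{K_{p,q_1,q_2;1}^N}$, using Lemma~\ref{lem:ext}~\eqref{lem:ext-2} (for $\nabla\CE_N\eta$ via $\CE_N\eta\in H_q^3$) together with Lemma~\ref{lem:ext}~\eqref{lem:ext-3} with $m=2$ for the term $\langle t\rangle^{1/2}\nabla\CE_N\pd_t\eta$.

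The velocity estimates \eqref{4d-em-5}--\eqref{4d-em-7} are verbatim the arguments for \eqref{3d-em-5}--\eqref{3d-em-7} with $\langle t\rangle^{1/3}$ replaced by $\langle t\rangle^{1/2}$: \eqref{4d-em-5} combines Lemma~\ref{lem:fund-embed}~\eqref{lem:fund-embed-2} (the embedding $B_{q,p}^{2-2/p}(\BR_-^N)\subset H_q^1(\BR_-^N)$, valid since $p>2$) with Lemma~\ref{lem:time-sp}~\eqref{lem:time-sp-2} applied to $\langle t\rangle^{1/2}\Bu$; \eqref{4d-em-6} then follows by Lemma~\ref{lem:fund-embed}~\eqref{lem:fund-embed-3} with $m=1$ at $q=q_2>N$; and \eqref{4d-em-7} is Lemma~\ref{int-H-1/2}~\eqref{int-H-1/2-2} applied to $\langle t\rangle^{1/2}\Bu$, whose defining norms are controlled by $\|\Bu\|_{K_{p,q_1,q_2;2}^N}$. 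I do not expect a genuine obstacle here: the only points requiring care are that the $L_\infty$-in-space bounds force the choice $q=q_2$ (because $q_1$ need not exceed $N$) and that the derivative falling on the weight $\langle t\rangle^{1/2}$ must be absorbed into the unweighted norms, which is immediate since $\pd_t\langle t\rangle^{1/2}$ is bounded and $Z_1(p,q)$, $Z_2(p,q)$ membership already guarantees finiteness of those norms.
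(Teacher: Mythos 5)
Your proof is correct and follows essentially the same route as the paper: establish $\|\langle t\rangle^{1/2}\CE_N\eta\|_{H_p^1(\BR_+,H_q^2(\BR_-^N))}\leq C\|\eta\|_{K_{p,q_1,q_2;1}^N}$ via the $\CB$-equivalence (Remark \ref{rmk:equi-norm-4d}), then deduce \eqref{4d-em-1}--\eqref{4d-em-4} from Lemmas \ref{lem:H^1-embed}, \ref{lem:fund-embed} \eqref{lem:fund-embed-3}, and \ref{int-H-1/2} \eqref{int-H-1/2-2}, and repeat the three-dimensional velocity arguments with $\langle t\rangle^{1/2}$ for \eqref{4d-em-5}--\eqref{4d-em-7}. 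The only nit is the citation in \eqref{4d-em-4}: since $\CE_N=\CB$ for $N\geq 4$, the term $\langle t\rangle^{1/2}\nabla\CE_N\pd_t\eta$ should be bounded by Lemma \ref{lem:ext} \eqref{lem:ext-2} (the $\CB$-estimate) rather than \eqref{lem:ext-3}, which concerns $\CA$; the required bound is immediate either way.
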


\begin{proof}
We can prove \eqref{4d-em-1}--\eqref{4d-em-3}
by means of \eqref{rmk:equi-norm-4d-2} instead of \eqref{3d-em-1:eq-1}
in the same manner as in the proof of  \eqref{3d-em-1}--\eqref{3d-em-3} of Lemma \ref{lem:main-embed3d}.
By Lemma \ref{int-H-1/2} \eqref{int-H-1/2-2}
\begin{align*}
&\|\langle t \rangle^{1/2}\pd_j\nabla \CE_N\eta\|_{H_p^{1/2}(\BR_+,L_q(\BR_-^N)^N)} \notag \\
&\leq C
\Big(\|\langle t \rangle^{1/2}\nabla \CE_N\eta\|_{H_p^{1}(\BR_+,L_q(\BR_-^N)^N)}
+\|\langle t \rangle^{1/2}\nabla\CE_N\eta\|_{L_p(\BR_+,H_q^2(\BR_-^N)^N)}
\Big),
\end{align*}
which, combined with \eqref{rmk:equi-norm-4d-2}, furnishes the desired inequality of \eqref{4d-em-4}.
The proof of \eqref{4d-em-5}--\eqref{4d-em-7} is similar to 
one of \eqref{3d-em-5}--\eqref{3d-em-7} of Lemma \ref{lem:main-embed3d}.
This completes the proof of Lemma \ref{lem:main-embed4d}.
\end{proof}

The next lemma is often used in the following subsections.

\begin{lem}\label{lem:holder-type-ineq}
Suppose that Assumption $\ref{asm:p-q}$ holds and let $r\in\{q_1/2,q_2\}$. Define
\begin{equation*}
(r_1,r_2)=
\left\{\begin{aligned}
& (q_1,q_1) && \text{when $r=q_1/2$,} \\
& (\infty,q_2) && \text{when $r=q_2$.}
\end{aligned}\right.
\end{equation*}
Then 
\begin{align}
\|fg\|_{L_r(\BR_-^N)}
&\leq C\|f\|_{L_{r_1}(\BR_-^N)}\|g\|_{L_{r_2}(\BR_-^N)}, \label{r-holder-1} \\
\|fg\|_{H_r^1(\BR_-^N)}
&\leq C\|f\|_{H_{r_2}^1(\BR_-^N)}\|g\|_{H_{r_2}^1(\BR_-^N)},  \label{r-holder-2}
\end{align}
with some positive constant $C$.
\end{lem}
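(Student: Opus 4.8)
\textbf{Proof plan for Lemma \ref{lem:holder-type-ineq}.}

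The plan is to verify each of the two inequalities by splitting into the two cases $r=q_1/2$ and $r=q_2$ and invoking only elementary H\"older-type estimates together with the Sobolev embedding in Lemma \ref{lem:fund-embed} \eqref{lem:fund-embed-3}. First I would treat \eqref{r-holder-1}. When $r=q_1/2$ the exponents are $r_1=r_2=q_1$, and since $1/(q_1/2)=1/q_1+1/q_1$, H\"older's inequality gives $\|fg\|_{L_{q_1/2}(\BR_-^N)}\le\|f\|_{L_{q_1}(\BR_-^N)}\|g\|_{L_{q_1}(\BR_-^N)}$ directly. When $r=q_2$ the exponents are $(r_1,r_2)=(\infty,q_2)$, and trivially $\|fg\|_{L_{q_2}(\BR_-^N)}\le\|f\|_{L_\infty(\BR_-^N)}\|g\|_{L_{q_2}(\BR_-^N)}$. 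Thus \eqref{r-holder-1} holds with $C=1$ in both cases.

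Next I would prove \eqref{r-holder-2}. The key observation is that in both cases $r\le r_2$, so $L_{r_2}(\BR_-^N)\hookrightarrow L_r(\BR_-^N)$ is \emph{not} true on the half-space (infinite measure), so one cannot simply drop exponents; instead one should estimate $fg$ and its first derivatives in $L_r$ using the product rule $\pd_j(fg)=(\pd_j f)g+f\pd_j g$ together with H\"older and the embedding $H_{q_2}^1(\BR_-^N)\subset C_B(\overline{\BR_-^N})\subset L_\infty(\BR_-^N)$, which is available because $q_2>N$ (Lemma \ref{lem:fund-embed} \eqref{lem:fund-embed-3} with $m=1$). Concretely, when $r=q_2$: since $H_{q_2}^1(\BR_-^N)\subset L_\infty(\BR_-^N)$, one has $\|fg\|_{L_{q_2}}\le\|f\|_{L_\infty}\|g\|_{L_{q_2}}\le C\|f\|_{H_{q_2}^1}\|g\|_{H_{q_2}^1}$ and similarly for $\pd_j(fg)$, distributing the $L_\infty$ norm onto whichever factor is undifferentiated. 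When $r=q_1/2$: here $r_2=q_1$ with $2<q_1$ and one needs $1/(q_1/2)=1/q_1+1/q_1$ for the $L_{q_1/2}$ norm of the product, giving $\|fg\|_{L_{q_1/2}}\le\|f\|_{L_{q_1}}\|g\|_{L_{q_1}}\le C\|f\|_{H_{q_1}^1}\|g\|_{H_{q_1}^1}$; for the derivative $\pd_j(fg)=(\pd_jf)g+f\pd_jg$ each summand is again handled by the same H\"older split $1/(q_1/2)=1/q_1+1/q_1$ and the embedding $H_{q_1}^1(\BR_-^N)\subset L_{q_1}(\BR_-^N)$ (which is just the trivial inclusion since $q_1$ is fixed). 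Summing the contributions over $j$ yields \eqref{r-holder-2}.

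I do not expect any serious obstacle here; the only point requiring a little care is that one must \emph{not} attempt to use $L_{r_2}\hookrightarrow L_r$ directly on the half-space, and instead keep the product structure and spend an $L_\infty$ factor (legitimate because $q_2>N$, and because in the $r=q_1/2$ case the bookkeeping $1/(q_1/2)=2/q_1$ makes the two $L_{q_1}$ factors exactly match). Everything else is the product rule and H\"older's inequality, so the lemma follows in a few lines once the case split is set up.
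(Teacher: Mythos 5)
Your proof is correct and follows essentially the same route as the paper: the paper treats everything except \eqref{r-holder-2} with $r=q_2$ as a direct H\"older computation and disposes of that remaining case by citing the Banach algebra property of $H_{q_2}^1(\BR_-^N)$ (valid since $q_2>N$), which is exactly what your product-rule plus $L_\infty$-embedding argument proves. Nothing further is needed.
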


\begin{proof}
The  inequality \eqref{r-holder-2} with $r=q_2$ follows from the fact that $H_{q_2}^1(\BR_-^N)$ is a Banach algebra.
This completes the proof of Lemma \ref{lem:holder-type-ineq}.
\end{proof}

\subsection{Estimates of $\SSD(\eta,\Bu)$}

This subsection estimates $\SSD(\eta,\Bu)$ given by \eqref{def:D}.

\begin{prp}\label{prp:nonl-D}
Suppose that Assumption $\ref{asm:p-q}$ holds.
Then  the following assertions hold.
\begin{enumerate}[$(1)$]
\item
There exists a positive constant $M_1$ such that 
for any $(\eta^i,\Bu^i)\in K_{p,q_1,q_2}^N$, $i=1,2$,
\begin{align*}
&\sum_{r\in\{q_1/2,q_2\}}\|\langle t \rangle(\SSD(\eta^1,\Bu^1)-\SSD(\eta^2,\Bu^2))\|_{L_p(\BR_+,W_r^{2-1/r}(\BR_0^{N}))} \\
&\leq M_1
\|(\eta^1-\eta^2,\Bu^1-\Bu^2)\|_{K_{p,q_1,q_2}^N}
\Big(\|(\eta^1,\Bu^1)\|_{K_{p,q_1,q_2}^N}+\|(\eta^2,\Bu^2)\|_{K_{p,q_1,q_2}^N}\Big).
\end{align*}
\item
There exists a positive constant $M_2$ such that 
for any $(\eta^i,\Bu^i)\in K_{p,q_1,q_2}^N$, $i=1,2$,
\begin{align*}
&\sum_{r\in\{q_1/2,q_2\}}\|\langle t \rangle(\SSD(\eta^1,\Bu^1)-\SSD(\eta^2,\Bu^2))\|_{L_\infty(\BR_+,W_r^{1-1/r}(\BR_0^{N}))} \\
&\leq  M_2
\|(\eta^1-\eta^2,\Bu^1-\Bu^2)\|_{K_{p,q_1,q_2}^N}
\Big(\|(\eta^1,\Bu^1)\|_{K_{p,q_1,q_2}^N}+\|(\eta^2,\Bu^2)\|_{K_{p,q_1,q_2}^N}\Big).
\end{align*}
\end{enumerate}
\end{prp}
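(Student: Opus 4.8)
The plan is to reduce both assertions to the bilinear (really, finite-sum-of-products) structure of $\SSD$, namely $\SSD(\eta,\Bu) = -\sum_{j=1}^{N-1} u_j\,\pd_j\CE_N\eta$, and then exploit the multiplicative estimates of Lemma~\ref{lem:H-half} together with the time-weighted embeddings of Lemma~\ref{lem:main-embed3d} (for $N=3$) and Lemma~\ref{lem:main-embed4d} (for $N\ge4$). The first step is a standard ``telescoping'' identity: writing $\eta = \eta^1-\eta^2$ and $\Bu=\Bu^1-\Bu^2$ (abusing notation), one has
\begin{equation*}
\SSD(\eta^1,\Bu^1)-\SSD(\eta^2,\Bu^2)
= -\sum_{j=1}^{N-1}\Big( u_j\,\pd_j\CE_N\eta^1 + u_j^2\,\pd_j\CE_N\eta\Big),
\end{equation*}
so by linearity of $\CE_N$ each difference is again a sum of products of a velocity component with a first tangential derivative of $\CE_N$ applied to a height function. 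Thus it suffices to estimate a generic product $u_j\,\pd_j\CE_N\zeta$ in the two target norms, with $u\in K_{p,q_1,q_2;2}^N$ and $\zeta\in K_{p,q_1,q_2;1}^N$, and to track the time weights carefully: the product carries weight $\langle t\rangle$, and the factorization we will use is $\langle t\rangle = \langle t\rangle^{1/2}\cdot\langle t\rangle^{1/2}$ when $N\ge4$ and $\langle t\rangle=\langle t\rangle^{1/3}\cdot\langle t\rangle^{2/3}$ when $N=3$ (matching the weights available for $\Bu$ and for $\nabla\CE_N\eta$ respectively).

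For assertion~(2), the target is the $L_\infty$-in-time, $W_r^{1-1/r}(\BR_0^N)$-in-space norm. Here I would pass to the half-space: $\pd_j\CE_N\zeta$ and $u_j$ restricted to $\BR_0^N$ are traces of functions on $\BR_-^N$, and $W_r^{1-1/r}(\BR_0^N)$ is the trace space of $H_r^1(\BR_-^N)$, so by the trace theorem it is enough to bound $\langle t\rangle\,u_j\,\pd_j\CE_N\zeta$ in $L_\infty(\BR_+,H_r^1(\BR_-^N))$. Since $2/p+N/q_2<1$ need not hold but $r\in\{q_1/2,q_2\}$ with $q_2>N$, we use Lemma~\ref{lem:holder-type-ineq}: for $r=q_2$, $H_{q_2}^1(\BR_-^N)$ is a Banach algebra, and for $r=q_1/2$ the inequality \eqref{r-holder-2} gives $\|fg\|_{H_{q_1/2}^1}\le C\|f\|_{H_{q_1}^1}\|g\|_{H_{q_1}^1}$. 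Applying this with $f=\langle t\rangle^{1/2}u_j$ (resp.\ $\langle t\rangle^{1/3}u_j$ when $N=3$) and $g=\langle t\rangle^{1/2}\pd_j\CE_N\zeta$ (resp.\ $\langle t\rangle^{2/3}\pd_j\CE_N\zeta$), and then invoking Lemma~\ref{lem:main-embed4d}\,\eqref{4d-em-5} and the analog of \eqref{4d-em-1} for $\nabla\CE_N\zeta$ (resp.\ Lemma~\ref{lem:main-embed3d}\,\eqref{3d-em-5},\eqref{3d-em-1}) to convert the $L_\infty$-in-time bound into $\|u\|_{K_{p,q_1,q_2;2}^N}$ and $\|\zeta\|_{K_{p,q_1,q_2;1}^N}$, we obtain the desired product estimate. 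Summing over $j$ and over the two choices of $\zeta\in\{\eta^1,\eta^2\}$ yields the claimed bilinear-type bound with the factor $\|(\eta^1-\eta^2,\Bu^1-\Bu^2)\|_{K_{p,q_1,q_2}^N}(\|(\eta^1,\Bu^1)\|_{K_{p,q_1,q_2}^N}+\|(\eta^2,\Bu^2)\|_{K_{p,q_1,q_2}^N})$.

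For assertion~(1), the target is $L_p(\BR_+,W_r^{2-1/r}(\BR_0^N))$, i.e.\ one derivative-order higher on the boundary, which by trace theory corresponds to $H_r^2(\BR_-^N)$ in the bulk, or — more delicately — to the combination $H_p^{1/2}(\BR_+,H_r^1)\cap L_p(\BR_+,H_r^2)$ that controls the boundary $W^{2-1/r}$-norm uniformly. The route I would take is to observe that $W_r^{2-1/r}(\BR_0^N)$ is the trace of $H_r^2(\BR_-^N)$, so it suffices to estimate $\langle t\rangle\,u_j\,\pd_j\CE_N\zeta$ in $L_p(\BR_+,H_r^2(\BR_-^N))$; expanding $\|u_j\,\pd_j\CE_N\zeta\|_{H_r^2}$ by the Leibniz rule produces terms with up to two derivatives landing on one factor, and the worst terms pair (two derivatives on $\pd_j\CE_N\zeta$, i.e.\ third-order on $\zeta$) with (zero derivatives, i.e.\ $L_\infty$) on $u_j$, or symmetrically. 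These are handled by a Hölder inequality in $x$ of the type in Lemma~\ref{lem:holder-type-ineq} combined with: the $L_\infty(\BR_+,L_\infty)$ bounds for $u_j$ (Lemma~\ref{lem:main-embed4d}\,\eqref{4d-em-6} / Lemma~\ref{lem:main-embed3d}\,\eqref{3d-em-6}), the $L_\infty(\BR_+,L_\infty)$ bounds for $\nabla\CE_N\zeta$ and its first derivatives (\eqref{4d-em-2} / \eqref{3d-em-2} plus the embedding $H_q^2(\BR_-^N)\subset C_B^1$ from Lemma~\ref{lem:fund-embed}\,\eqref{lem:fund-embed-3} since $q_2>N$), and the $L_p(\BR_+,H_r^2)$-type norms carrying the remaining time weight $\langle t\rangle^{1/2}$ or $\langle t\rangle^{2/3}$. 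The time weight $\langle t\rangle$ is again split as before and absorbed into one $L_p$-in-time factor and one $L_\infty$-in-time factor. I expect the main obstacle to be the careful bookkeeping for $r=q_1/2$: since $q_1/2$ is only slightly above $1$, the space $H_{q_1/2}^1(\BR_-^N)$ is \emph{not} an algebra and one is forced into the $(q_1,q_1)$-Hölder splitting of Lemma~\ref{lem:holder-type-ineq}, which means every factor must be controlled in an $H_{q_1}$- or $L_{q_1}$-based norm; one must verify that $K_{p,q_1,q_2}^N$ indeed supplies all such norms (it does, by definition, since it includes $Z_i^{\delta}(p,q_1)$), and that the $H^{1/2}$-in-time terms needed for the trace into $W^{2-1/r}$ — if one routes through $H_p^{1/2}(\BR_+,H_r^1)\cap L_p(\BR_+,H_r^2)$ rather than just $L_p(\BR_+,H_r^2)$ — are controllable via Lemma~\ref{lem:H-half} and Lemma~\ref{int-H-1/2}. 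Once these are in place, summing over $j$ and over $\zeta\in\{\eta^1,\eta^2\}$ completes the argument.
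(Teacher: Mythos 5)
Your proposal is correct and follows essentially the same route as the paper's proof: the telescoping decomposition, the trace theorem reducing $W_r^{2-1/r}(\BR_0^N)$ and $W_r^{1-1/r}(\BR_0^N)$ to $H_r^2(\BR_-^N)$ and $H_r^1(\BR_-^N)$ pointwise in time, the $(r_1,r_2)$ H\"older/algebra product estimates of Lemma \ref{lem:holder-type-ineq}, and the time-weight splittings $\langle t\rangle^{1/3}\cdot\langle t\rangle^{2/3}$ for $N=3$ and $\langle t\rangle^{1/2}\cdot\langle t\rangle^{1/2}$ for $N\geq 4$ combined with Lemmas \ref{lem:main-embed3d} and \ref{lem:main-embed4d}. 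The detour you contemplate through $H_p^{1/2}(\BR_+,H_r^1(\BR_-^N))\cap L_p(\BR_+,H_r^2(\BR_-^N))$ is unnecessary, since the target norm for $\SSD$ is only $L_p$ (resp.\ $L_\infty$) in time, which is exactly what the paper exploits.
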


\begin{proof}
Let us write 
\begin{align*}
\SSD(\eta^1,\Bu^1)-\SSD(\eta^2,\Bu^2)
&=-(\Bu^1-\Bu^2)'\cdot\nabla'\CE_N\eta^1+(\Bu^2)'\cdot\nabla'\CE_N(\eta^1-\eta^2) \\
& =:I_1+I_2,
\end{align*}
where $\Ba'\cdot \nabla'=\sum_{j=1}^{N-1}a_j \pd_j$ for an $N$-vector $\Ba=(a_1,\dots,a_{N-1},a_N)^\SST$.
Let $r=q_1/2$ or $r=q_2$. We use $(r_1,r_2)$ defined in Lemma \ref{lem:holder-type-ineq} in what follows.

(1) {\bf Case 1}: $N=3$. By the trace theorem
\begin{equation*}
\|I_1(t)\|_{W_{r}^{2-1/r}(\BR_0^N)} \leq C \|I_1(t)\|_{H_r^2(\BR_-^N)},
\end{equation*}
which, combined with 
\begin{align*}
\|I_1(t)\|_{H_r^2(\BR_-^N)}
&\leq C
\Big(\|(\Bu^1(t)-\Bu^2(t))'\cdot\nabla'\CE_N\eta^1(t)\|_{H_r^1(\BR_-^N)} \\
&+\|(\Bu^1(t)-\Bu^2(t))'\|_{H_{r_2}^2(\BR_-^N)}\|\nabla'\CE_N\eta^1(t)\|_{L_{r_1}(\BR_-^N)} \\
&+\|(\Bu^1(t)-\Bu^2(t))'\|_{L_{r_1}(\BR_-^N)}\|\nabla'\CE_N\eta^1(t)\|_{H_{r_2}^2(\BR_-^N)}\Big)
\end{align*}
and with \eqref{r-holder-2}, furnishes
\begin{align*}
\|I_1(t)\|_{W_r^{2-1/r}(\BR_0^N)}
&\leq C\Big(\|(\Bu^1(t)-\Bu^2(t))'\|_{H_{r_2}^1(\BR_-^N)}\|\nabla'\CE_N\eta^1(t)\|_{H_{r_2}^1(\BR_-^N)} \\
&+\|(\Bu^1(t)-\Bu^2(t))'\|_{H_{r_2}^2(\BR_-^N)}\|\nabla'\CE_N\eta^1(t)\|_{L_{r_1}(\BR_-^N)}\Big) \\
&+\|(\Bu^1(t)-\Bu^2(t))'\|_{L_{r_1}(\BR_-^N)}\|\nabla'\CE_N\eta^1(t)\|_{H_{r_2}^2(\BR_-^N)}\Big).
\end{align*}
Thus
\begin{align*}
&\|\langle t \rangle I_1\|_{L_p(\BR_+,W_{r}^{2-1/r}(\BR_0^N))} \notag \\
&\leq C\Big(
\|\langle t\rangle^{1/3}(\Bu^1-\Bu^2)'\|_{L_\infty(\BR_+,H_{r_2}^1(\BR_-^N)^{N-1})}
\|\langle t \rangle^{2/3} \nabla'\CE_N \eta^1\|_{L_p(\BR_+,H_{r_2}^1(\BR_-^{N})^{N-1})} \notag \\
&+\|\langle t\rangle^{1/3}(\Bu^1-\Bu^2)'\|_{L_p(\BR_+,H_{r_2}^2(\BR_-^N)^{N-1})}
\|\langle t \rangle^{2/3} \nabla'\CE_N \eta^1\|_{L_\infty(\BR_+,L_{r_1}(\BR_-^{N})^{N-1})} \notag \\
&+\|\langle t\rangle^{1/3}(\Bu^1-\Bu^2)'\|_{L_\infty(\BR_+,L_{r_1}(\BR_-^N)^{N-1})}
\|\langle t \rangle^{2/3} \nabla'\CE_N \eta^1\|_{L_p(\BR_+,H_{r_2}^2(\BR_-^{N})^{N-1})}\Big).
\end{align*}
Combining this inequality with Lemma \ref{lem:main-embed3d} yields
\begin{equation*}
\|\langle t \rangle I_1\|_{L_p(\BR_+,W_{r}^{2-1/r}(\BR_0^N))}
\leq C\|\Bu^1-\Bu^2\|_{K_{p,q_1,q_2;2}^N}\|\eta^1\|_{K_{p,q_1,q_2;1}}.
\end{equation*}
Analogously,
\begin{equation*}
\|\langle t \rangle I_2\|_{L_p(\BR_+,W_{r}^{2-1/r}(\BR_0^N))}
\leq C\|\Bu^2\|_{K_{p,q_1,q_2;2}^N}\|\eta^1-\eta^2\|_{K_{p,q_1,q_2;1}^N}.
\end{equation*}
These two inequalities yield the desired inequality for  $N=3$.

{\bf Case 2}: $N\geq 4$.
We replace $\langle t \rangle^{1/3}$ with $\langle t \rangle^{1/2}$
and $\langle t\rangle^{2/3}$ with $\langle t \rangle^{1/2}$ in Case 1,
and then we obtain the desired inequality for $N\geq 4$ from Lemma \ref{lem:main-embed4d}.

(2) By the trace theorem
\begin{equation*}
\|I_1(t)\|_{W_r^{1-1/r}(\BR_0^N)} \leq  C\|I_1(t)\|_{H_r^1(\BR_-^N)},
\end{equation*}
which, combined with \eqref{r-holder-2}, yields
\begin{equation*}
\|I_1(t)\|_{W_r^{1-1/r}(\BR_0^N)}
\leq C\|(\Bu^1(t)-\Bu^2(t))'\|_{H_{r_2}^1(\BR_-^N)}\|\nabla'\CE_N\eta^1(t)\|_{H_{r_2}^1(\BR_-^N)}.
\end{equation*}
Thus
\begin{align*}
&\|\langle t \rangle I_1\|_{L_\infty(\BR_+,W_r^{1-1/r}(\BR_0^N))} \leq C \\
&\times\left\{\begin{aligned}
&\|\langle t \rangle^{1/3}(\Bu^1-\Bu^2)'\|_{L_\infty(\BR_+,H_{r_2}^1(\BR_-^N))}
\|\langle t \rangle^{2/3}\nabla'\CE_N\eta^1\|_{L_\infty(\BR_+,H_{r_2}^1(\BR_-^N))}
&& (N=3), \\
&\|\langle t \rangle^{1/2}(\Bu^1-\Bu^2)'\|_{L_\infty(\BR_+,H_{r_2}^1(\BR_-^N))}
\|\langle t \rangle^{1/2}\nabla'\CE_N\eta^1\|_{L_\infty(\BR_+,H_{r_2}^1(\BR_-^N))}
&& (N\geq 4),
\end{aligned}\right.
\end{align*}
which, combined with Lemmas \ref{lem:main-embed3d} and \ref{lem:main-embed4d}, furnishes
\begin{equation*}
\|\langle t \rangle I_1\|_{L_\infty(\BR_+,W_r^{1-1/r}(\BR_0^N))}
\leq C\|\Bu^1-\Bu^2\|_{K_{p,q_1,q_2;2}^N}
\|\eta^1\|_{K_{p,q_1,q_2;1}^N}.
\end{equation*}
Analogously,
\begin{equation*}
\|\langle t \rangle I_2\|_{L_\infty(\BR_+,W_r^{1-1/r}(\BR_0^N))}
\leq C\|\Bu^2\|_{K_{p,q_1,q_2;2}^N}
\|\eta^1-\eta^2\|_{K_{p,q_1,q_2;1}^N}.
\end{equation*}
These two inequalities yield the desired inequality.
This completes the proof of Proposition \ref{prp:nonl-D}.
\end{proof}

\subsection{Estimates of $\SSF(\eta,\Bu)$}
This subsection estimates $\SSF(\eta,\Bu)$ given by \eqref{dfn:F}.
Let us define 
\begin{alignat}{2}
\SSF_1(\eta,\Bu)&=(\pd_t\CE_N\eta)\pd_N\Bu, \quad
&\SSF_2(\eta,\Bu)&=u_j\pd_k\Bu, \notag  \\ 
\SSF_3(\eta,\Bu)&=(\pd_j\pd_k\CE_N\eta)\pd_l\Bu, \quad 
&\SSF_4(\eta,\Bu)&=(\pd_j\CE_N\eta)\pd_k\pd_l\Bu, \notag \\
\SSF_5(\eta,\Bu)&=(\pd_j\CE_N\eta)\pd_t\Bu, \label{dfn-F5}
\end{alignat}
where $j,k,l=1,\dots,N$ and $u_j$ denotes the $j$th component of $\Bu$.
We first prove the following lemma for $\SSF_1(\eta,\Bu)$.

\begin{lem}\label{lem:nonl-F-0}
Suppose that Assumption $\ref{asm:p-q}$ holds and that $q_2\leq 6$ when $N=3$. 
Let $r\in\{q_1/2,q_2\}$.
Then for any $(\eta^i,\Bu^i)\in K_{p,q_1,q_2}^N$, $i=1,2$,
\begin{align*}
&\|\langle t \rangle (\SSF_1(\eta^1,\Bu^1)-\SSF_1(\eta^2,\Bu^2))\|_{L_p(\BR_+,L_r(\BR_-^N)^N)} \\
&\leq 
C\|(\eta^1-\eta^2,\Bu^1-\Bu^2)\|_{K_{p,q_1,q_2}^N}
\Big(\|(\eta^1,\Bu^1)\|_{K_{p,q_1,q_2}^N}+\|(\eta^2,\Bu^2)\|_{K_{p,q_1,q_2}^N}\Big)
\end{align*}
 with some positive constant $C$.
\end{lem}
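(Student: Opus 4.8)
The plan is to exploit the bilinear structure of $\SSF_1$ together with the time‑weighted embeddings collected in Lemmas \ref{lem:main-embed3d} and \ref{lem:main-embed4d}. Since $\CE_N$ is linear in its argument, I would first write
\[
\SSF_1(\eta^1,\Bu^1)-\SSF_1(\eta^2,\Bu^2)
=(\pd_t\CE_N(\eta^1-\eta^2))\,\pd_N\Bu^1+(\pd_t\CE_N\eta^2)\,\pd_N(\Bu^1-\Bu^2),
\]
so it suffices to prove the bilinear bound
\[
\|\langle t\rangle\,(\pd_t\CE_N\zeta)\,\pd_N\Bw\|_{L_p(\BR_+,L_r(\BR_-^N)^N)}
\le C\|\zeta\|_{K_{p,q_1,q_2;1}^N}\,\|\Bw\|_{K_{p,q_1,q_2;2}^N},\qquad r\in\{q_1/2,q_2\},
\]
and then to apply it to the pairs $(\zeta,\Bw)=(\eta^1-\eta^2,\Bu^1)$ and $(\eta^2,\Bu^1-\Bu^2)$ and sum over $r$. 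Here I use $\pd_t\CE_N\zeta=\CE_N\pd_t\zeta$ and the fact that $\langle t\rangle^{\delta}$ is a scalar in the time variable, so $\langle t\rangle^{\delta}\pd_t\CE_N\zeta=\CE_N(\langle t\rangle^{\delta}\pd_t\zeta)$.

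For $N\ge 4$ I would split the weight as $\langle t\rangle=\langle t\rangle^{1/2}\langle t\rangle^{1/2}$, following the recipe in the introduction. Using the mapping property of $\CB=\CE_N$ (Lemma \ref{lem:ext}~\eqref{lem:ext-2} with $m=2$) applied to $\langle t\rangle^{1/2}\pd_t\zeta\in L_p(\BR_+,W_{q_2}^{2-1/q_2}(\BR^{N-1}))$, together with $H_{q_2}^2(\BR_-^N)\subset L_\infty(\BR_-^N)$ (Lemma \ref{lem:fund-embed}~\eqref{lem:fund-embed-3}, valid since $q_2>N$), one gets $\|\langle t\rangle^{1/2}\pd_t\CE_N\zeta\|_{L_p(\BR_+,L_\infty(\BR_-^N))}\le C\|\zeta\|_{K_{p,q_1,q_2;1}^N}$; meanwhile Lemma \ref{lem:main-embed4d}~\eqref{4d-em-5} gives $\|\langle t\rangle^{1/2}\pd_N\Bw\|_{L_\infty(\BR_+,L_r(\BR_-^N))}\le C\|\Bw\|_{K_{p,q_1,q_2;2}^N}$ for both $r=q_1/2$ and $r=q_2$. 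Hölder in space ($L_\infty\cdot L_r\hookrightarrow L_r$) and in time ($L_p\cdot L_\infty\hookrightarrow L_p$) then close the estimate.

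For $N=3$ I would instead split $\langle t\rangle=\langle t\rangle^{1/3}\langle t\rangle^{2/3}$. The factor $\langle t\rangle^{1/3}\pd_t\CE_N\zeta$ is controlled in $L_\infty(\BR_+,H_2^1(\BR_-^3))$ by the $B^{1/3}(\infty,2)$‑component of $\|\zeta\|_{K_{p,q_1,q_2;1}^N}$; this is exactly where the hypothesis $q_2\le 6$ enters, since it makes $H_2^1(\BR_-^3)\subset L_{q_2}(\BR_-^3)$ (and trivially $H_2^1\subset L_{q_1}$), so $\langle t\rangle^{1/3}\pd_t\CE_N\zeta\in L_\infty(\BR_+,L_{q_1}(\BR_-^3))$ when $r=q_1/2$ and $\langle t\rangle^{1/3}\pd_t\CE_N\zeta\in L_\infty(\BR_+,L_{q_2}(\BR_-^3))$ when $r=q_2$. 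The factor $\langle t\rangle^{2/3}\pd_N\Bw$ lies in $L_p(\BR_+,L_{q_1}(\BR_-^3))$ when $r=q_1/2$ (the $A_2^{2/3}(p,q_1)$‑component of $\|\Bw\|_{K_{p,q_1,q_2;2}^N}$), and in $L_p(\BR_+,L_\infty(\BR_-^3))$ when $r=q_2$ (the $A_2^{2/3}(p,q_2)$‑component together with $H_{q_2}^1(\BR_-^3)\subset L_\infty$ from Lemma \ref{lem:fund-embed}~\eqref{lem:fund-embed-3}). Hölder in space — using $1/(q_1/2)=1/q_1+1/q_1$ as in \eqref{r-holder-1} when $r=q_1/2$, and $1/q_2=1/q_2+1/\infty$ when $r=q_2$ — and Hölder in time ($L_\infty\cdot L_p\hookrightarrow L_p$) finish the bilinear bound. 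Substituting the two difference pairs and summing over $r\in\{q_1/2,q_2\}$ gives the lemma.

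The only genuinely delicate point is the case $N=3$: in the product $(\pd_t\CE_N\eta)\pd_N\Bu$ one is forced to accept limited spatial integrability on $\pd_t\CE_N\eta$, which is controlled only in $H_2^1$ in space (and only with the slow weight $\langle t\rangle^{1/3}$, through $B^{1/3}(\infty,2)$), so one cannot put it in $L_\infty(\BR_-^3)$. It is precisely for this reason that $q_2\le 6$ is imposed, so that $H_2^1(\BR_-^3)$ still embeds into $L_{q_2}$; for $N\ge 4$ this obstruction disappears because $\pd_t\CE_N\eta$ is then already in $L_\infty$ in space. Everything else is a routine application of Hölder's inequality and the embeddings already established.
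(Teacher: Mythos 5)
Your decomposition and your treatment of the case $N=3$ (both $r=q_1/2$ and $r=q_2$) and of the case $N\ge 4$, $r=q_2$ are correct and essentially reproduce the paper's proof; for $N=3$, $r=q_1/2$ your pairing $L_{q_1}\cdot L_{q_1}\hookrightarrow L_{q_1/2}$, using $H_2^1(\BR_-^3)\subset L_{q_1}$, is even a slight simplification of the paper, which instead pairs $L_2\cdot L_s$ with $1/(q_1/2)=1/2+1/s$ and interpolates $L_s$ between $L_{q_1}$ and $L_{q_2}$. However, there is a genuine gap in the case $N\ge 4$, $r=q_1/2$: you claim that Lemma \ref{lem:main-embed4d} \eqref{4d-em-5} gives
$\|\langle t\rangle^{1/2}\pd_N\Bw\|_{L_\infty(\BR_+,L_{q_1/2}(\BR_-^N))}\le C\|\Bw\|_{K_{p,q_1,q_2;2}^N}$,
but that lemma only controls the $H_q^1(\BR_-^N)$-norm in space for $q\in\{q_1,q_2\}$, and since $\BR_-^N$ has infinite measure there is no inclusion of $L_{q_1}\cap L_{q_2}$ into $L_{q_1/2}$ (interpolation only reaches exponents between $q_1$ and $q_2$, both larger than $q_1/2$). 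In fact no component of $K_{p,q_1,q_2;2}^N$ provides any $L_{q_1/2}$-integrability of $\nabla\Bu$, so the spatial H\"older pairing $L_\infty\cdot L_{q_1/2}$ you propose cannot be implemented with the available norms, and the bilinear bound for $r=q_1/2$, $N\ge4$, does not follow as written.

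The repair is the pairing the paper actually uses, namely $(r_1,r_2)=(q_1,q_1)$ of Lemma \ref{lem:holder-type-ineq}, i.e. $1/(q_1/2)=1/q_1+1/q_1$: estimate
$\|\langle t\rangle I\|_{L_p(\BR_+,L_{q_1/2})}\le\|\langle t\rangle^{1/2}\pd_t\CE_N\zeta\|_{L_p(\BR_+,L_{q_1})}\,\|\langle t\rangle^{1/2}\pd_N\Bw\|_{L_\infty(\BR_+,L_{q_1})}$,
where the first factor is controlled by the $Z_1^{1/2}(p,q_1)$-component of $\|\zeta\|_{K_{p,q_1,q_2;1}^N}$ via \eqref{rmk:equi-norm-4d-2} (Lemma \ref{lem:ext} \eqref{lem:ext-2} with $m=2$), and the second by Lemma \ref{lem:main-embed4d} \eqref{4d-em-5} with $q=q_1$. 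With this single substitution your argument closes and coincides with the paper's Case 3; the rest of your proposal is sound.
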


\begin{proof}
Let us write
\begin{align*}
\SSF_1(\eta^1,\Bu^1)-\SSF_2(\eta^2,\Bu^2)
&=
(\pd_t\CE_N\eta^1-\pd_t\CE\eta^2)\pd_N\Bu^1+(\pd_t\CE_N\eta^2)(\pd_N\Bu^1-\pd_N\Bu^2) \\
&=:I_1+I_2.
\end{align*}

{\bf Case 1}: $r=q_1/2$ and $N=3$.
We see that
\begin{equation*}
\|I_1(t)\|_{L_{q_1/2}(\BR_-^N)}\leq \|\pd_t\CE_N\eta^1(t)-\pd_t\CE_N\eta^2(t)\|_{L_2(\BR_-^N)}
\|\pd_N\Bu^1(t)\|_{L_s(\BR_-^N)},
\end{equation*}
where $2/q_1=1/2+1/s$. Notice that $q_1<s<q_2$, because
\begin{equation*}
\frac{1}{q_1}-\frac{1}{s} =\frac{1}{2}-\frac{1}{q_1}>0, \quad \frac{1}{s}=\frac{2}{q_1}-\frac{1}{2}
\geq \frac{2}{2+q_0}-\frac{1}{2}=\frac{2}{2+2/9}-\frac{1}{2}=\frac{2}{5}>\frac{1}{3}.
\end{equation*}
By Lemma \ref{lem:int-p} 
\begin{equation*}
\|\pd_N\Bu^1(t)\|_{L_s(\BR_-^N)}\leq C
\|\pd_N\Bu^1(t)\|_{L_{q_1}(\BR_-^N)}^{1-\theta}\|\pd_N\Bu^1(t)\|_{L_{q_2}(\BR_-^N)}^{\theta}
\end{equation*}
for some $\theta\in(0,1)$,
which, combined with Young's inequality $a^{1-\theta}b^\theta\leq C(a+b)$ for $a,b\geq 0$, furnishes
\begin{equation*}
\|\pd_N\Bu^1(t)\|_{L_s(\BR_-^N)}\leq C
\Big(\|\pd_N\Bu^1(t)\|_{L_{q_1}(\BR_-^N)}+ \|\pd_N\Bu^1(t)\|_{L_{q_2}(\BR_-^N)}\Big).
\end{equation*}
Thus
\begin{align*}
&\|\langle t\rangle I_1\|_{L_p(\BR_+,L_{q_1/2}(\BR_-^N)^N)}
\leq C\|\langle t \rangle^{1/3}(\pd_t\CE_N\eta^1-\pd_t\CE_N\eta^2)\|_{L_\infty(\BR_+,L_2(\BR_-^N))} \\
&\times
\Big(\|\langle t \rangle^{2/3}\pd_N\Bu^1\|_{L_p(\BR_+,L_{q_1}(\BR_-^N)^N)}
+\|\langle t \rangle^{2/3}\pd_N\Bu^1\|_{L_p(\BR_+,L_{q_2}(\BR_-^N)^N)}\Big),
\end{align*}
which implies
\begin{equation*}
\|\langle t\rangle I_1\|_{L_p(\BR_+,L_{q_1/2}(\BR_-^N)^N)}
\leq C\|\eta^1-\eta^2\|_{K_{p,q_1,q_2;1}^N}\|\Bu^1\|_{K_{p,q_1,q_2;2}}.
\end{equation*}
Analogously,
\begin{equation*}
\|\langle t\rangle I_2\|_{L_p(\BR_+,L_{q_1/2}(\BR_-^N)^N)}
\leq C\|\eta^2\|_{K_{p,q_1,q_2;1}^N}\|\Bu^1-\Bu^2\|_{K_{p,q_1,q_2;2}}.
\end{equation*}
These two inequalities yield the desired inequality for $r=q_1/2$ and $N=3$.

{\bf Case 2}: $r=q_2$ and $N=3$.
We see that
\begin{equation*}
\|I_1(t)\|_{L_{q_2}(\BR_-^N)}
\leq
\|\pd_t\CE_N\eta^1(t)-\pd_t\CE_N\eta^2(t)\|_{L_{q_2}(\BR_-^N)}\|\pd_N\Bu^1(t)\|_{L_\infty(\BR_-^N)}.
\end{equation*}
Since $H_2^1(\BR_-^N)$ is continuously embedded into $L_{q_2}(\BR_-^N)$ for $N=3$,
\begin{equation*}
\|\pd_t\CE_N\eta^1(t)-\pd_t\CE_N\eta^2(t)\|_{L_{q_2}(\BR_-^N)}
\leq C\|\pd_t\CE_N\eta^1(t)-\pd_t\CE_N\eta^2(t)\|_{H_{2}^1(\BR_-^N)}.
\end{equation*}
On the other hand, Lemma \ref{lem:fund-embed} \eqref{lem:fund-embed-3} yields
\begin{equation*}
\|\pd_N\Bu^1(t)\|_{L_\infty(\BR_-^N)} \leq C\|\pd_N\Bu^1(t)\|_{H_{q_2}^1(\BR_-^N)}.
\end{equation*}
Combining these three inequalities, we have 
\begin{equation*}
\|I_1(t)\|_{L_{q_2}(\BR_-^N)}
\leq  C\|\pd_t\CE_N\eta^1(t)-\pd_t\CE_N\eta^2(t)\|_{H_2^1(\BR_-^N)}\|\pd_N\Bu^1(t)\|_{H_{q_2}^1(\BR_-^N)}.
\end{equation*}
Thus
\begin{align*}
&\|\langle t \rangle I_1\|_{L_p(\BR_+,L_{q_2}(\BR_-^N)^N)} \\
&\leq C
\|\langle t \rangle^{1/3}(\pd_t\CE_N\eta^1-\pd_t\CE_N\eta^2)\|_{L_\infty(\BR_+,H_2^1(\BR_-^N))}
\|\langle t \rangle^{2/3}\pd_N\Bu^1\|_{L_p(\BR_+,H_{q_2}^1(\BR_-^N)^N)} \\
&\leq C
\|\eta^1-\eta^2\|_{K_{p,q_1,q_2;1}^N}\|\Bu^1\|_{K_{p,q_1,q_2;2}^N}.
\end{align*}
Analogously,
\begin{equation*}
\|\langle t \rangle I_2\|_{L_p(\BR_+,L_{q_2}(\BR_-^N)^N)}
\leq C 
\|\eta^2\|_{K_{p,q_1,q_2;1}^N}\|\Bu^1-\Bu^2\|_{K_{p,q_1,q_2;2}^N}.
\end{equation*}
The desired inequality therefore holds for $r=q_2$ and $N=3$.

{\bf Case 3}: $N\geq 4$.
We use $(r_1,r_2)$ defined in Lemma \ref{lem:holder-type-ineq}.
It holds by \eqref{r-holder-1} that
\begin{align*}
\|I_1(t)\|_{L_{r}(\BR_-^N)}
&\leq \| \pd_t \CE_N\eta^1(t)-\pd_t\CE_N\eta^2(t)\|_{L_{r_1}(\BR_-^N)}
\|\pd_N \Bu^1(t)\|_{L_{r_2}(\BR_-^N)},
\end{align*}
and thus  
\begin{align*}
\|\langle t \rangle I_1\|_{L_p(\BR_+,L_{r}(\BR_-^N)^N)} 
&\leq  C
\|\langle t \rangle^{1/2}(\pd_t \CE_N\eta^1 - \pd_t\CE_N \eta^2)\|_{L_p(\BR_+,L_{r_1}(\BR_-^{N} ))} \\
&\times \|\langle t \rangle^{1/2}\pd_N\Bu^1\|_{L_\infty(\BR_+,L_{r_2} (\BR_-^{N} )^N)}.
\end{align*}
Combining this inequality with Lemma \ref{lem:main-embed4d} furnishes
\begin{equation*}
\|\langle t \rangle I_1\|_{L_p(\BR_+,L_{r}(\BR_-^N)^N)} 
\leq C \|\eta^1-\eta^2\|_{K_{p,q_1,q_2;1}^N}\|\Bu^1\|_{K_{p,q_1,q_2;2}^N}.
\end{equation*}
Analogously,
\begin{equation*}
\|\langle t \rangle I_2\|_{L_p(\BR_+,L_{r}(\BR_-^N)^N)} 
\leq  C\|\eta^2\|_{K_{p,q_1,q_2;1}^N}\|\Bu^1-\Bu^2\|_{K_{p,q_1,q_2;2}^N}.
\end{equation*}
These two inequalities yield the desired inequality for $N\geq 4$.
This completes the proof of Lemma \ref{lem:nonl-F-0}.
\end{proof}

We next  prove

\begin{lem}\label{lem:nonl-F-1}
Suppose that Assumption $\ref{asm:p-q}$ holds and let $r\in\{q_1/2,q_2\}$.
Then for any $(\eta^i,\Bu^i)\in K_{p,q_1,q_2}^N$, $i=1,2$,
\begin{align*}
&\|\langle t \rangle (\SSF_m(\eta^1,\Bu^1)-\SSF_m(\eta^2,\Bu^2))\|_{L_p(\BR_+,L_r(\BR_-^N)^N)} \\
&\leq 
C\|(\eta^1-\eta^2,\Bu^1-\Bu^2)\|_{K_{p,q_1,q_2}^N}
\Big(\|(\eta^1,\Bu^1)\|_{K_{p,q_1,q_2}^N}+\|(\eta^2,\Bu^2)\|_{K_{p,q_1,q_2}^N}\Big),
\end{align*}
where $m=2,3,4,5$ and $C$ is a positive constant.
\end{lem}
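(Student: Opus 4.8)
The plan is to treat $\SSF_2,\dots,\SSF_5$ by the same scheme used for $\SSF_1$ in Lemma \ref{lem:nonl-F-0}, the bookkeeping being a little cleaner here because each of these four quantities carries at least one spatial derivative on \emph{each} factor (or $\pd_t$ on the velocity factor). Every $\SSF_m$, $m=2,3,4,5$, is bilinear: schematically $\SSF_m=(P_m\CE_N\eta)\,(Q_m\Bu)$, where for $m=3$ the $\eta$-factor is $\pd_j\pd_k\CE_N\eta$ and for $m=4,5$ it is $\pd_j\CE_N\eta$ (for $m=2$ the role of the $\eta$-factor is played by a component $u_j$ of $\Bu$), while the velocity factor $Q_m\Bu$ is $\pd_l\Bu$ ($m=2,3$), $\pd_k\pd_l\Bu$ ($m=4$), or $\pd_t\Bu$ ($m=5$). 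Using the linearity of $\CE_N$ and the telescoping identity $AB-A'B'=(A-A')B+A'(B-B')$, the difference $\SSF_m(\eta^1,\Bu^1)-\SSF_m(\eta^2,\Bu^2)$ splits into two products, one in which the $\eta$-factor (or the $\Bu$-factor, for $m=2$) is replaced by its $\eta^1-\eta^2$, resp. $\Bu^1-\Bu^2$, increment; so it suffices to bound $\|\langle t\rangle\,fg\|_{L_p(\BR_+,L_r(\BR_-^N))}$, where $f$ is one of the above $\eta$-type factors and $g$ one of the above $\Bu$-type factors, by a product of two $K_{p,q_1,q_2}^N$-norms.

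For a fixed $r\in\{q_1/2,q_2\}$ I apply H\"older's inequality in $x$ with the exponent pair $(r_1,r_2)$ of Lemma \ref{lem:holder-type-ineq} — that is, $(r_1,r_2)=(q_1,q_1)$ when $r=q_1/2$ and $(r_1,r_2)=(\infty,q_2)$ when $r=q_2$ — and split the time weight $\langle t\rangle=\langle t\rangle^{a}\langle t\rangle^{b}$ with $(a,b)=(1/3,2/3)$ when $N=3$ and $(a,b)=(1/2,1/2)$ when $N\ge4$. One factor is then put in $L_\infty(\BR_+,L_{r_i}(\BR_-^N))$ and estimated by Lemma \ref{lem:main-embed3d} (for $N=3$) resp. Lemma \ref{lem:main-embed4d} (for $N\ge4$), and the other in $L_p(\BR_+,L_{r_j}(\BR_-^N))$ and estimated directly by the defining norms of the spaces $Z_k^\delta$, $A_k^\delta$ that make up $K_{p,q_1,q_2}^N$.

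The pairing is dictated by which factor carries time decay. The top-order velocity factors $\pd_k\pd_l\Bu$ (in $\SSF_4$) and $\pd_t\Bu$ (in $\SSF_5$) are controlled only in $L_p$-in-time against the weight, via $Z_2^{1/3}(p,r_2)$ for $N=3$ and $Z_2^{1/2}(p,r_2)$ for $N\ge4$, so they go in the $L_p(\BR_+,\cdot)$ slot with weight $\langle t\rangle^{a}$, while the companion factor $\pd_j\CE_N\eta$ goes in $L_\infty(\BR_+,L_\infty(\BR_-^N))$ with $\langle t\rangle^{b}$ by Lemma \ref{lem:main-embed3d}\eqref{3d-em-2} resp. Lemma \ref{lem:main-embed4d}\eqref{4d-em-2}. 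For $\SSF_2=u_j\pd_k\Bu$ I place $\pd_k\Bu$ in the $L_p$-in-time slot (using $A_2^{2/3}(p,r_2)$ for $N=3$, $Z_2^{1/2}(p,r_2)$ for $N\ge4$) and $u_j$ in $L_\infty$-in-time (with values in $L_\infty(\BR_-^N)$ when $r=q_2$, by Lemma \ref{lem:main-embed3d}\eqref{3d-em-6}/\ref{lem:main-embed4d}\eqref{4d-em-6}, and with values in $L_{q_1}(\BR_-^N)$ when $r=q_1/2$, by Lemma \ref{lem:main-embed3d}\eqref{3d-em-5}/\ref{lem:main-embed4d}\eqref{4d-em-5}). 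For $\SSF_3=(\pd_j\pd_k\CE_N\eta)\pd_l\Bu$ I put $\pd_j\pd_k\CE_N\eta$ in the $L_p$-in-time slot — from $A_1^{2/3}(p,r_2)$ when $N=3$, and from $\|\langle t\rangle^{1/2}\CE_N\eta\|_{L_p(\BR_+,H_{r_2}^3(\BR_-^N))}$ of Remark \ref{rmk:equi-norm-4d} when $N\ge4$ — and $\pd_l\Bu$ in $L_\infty$-in-time by Lemma \ref{lem:main-embed3d}\eqref{3d-em-5}/\ref{lem:main-embed4d}\eqref{4d-em-5}; in the case $r=q_2$ the $L_\infty(\BR_-^N)$-bound on $\pd_j\pd_k\CE_N\eta$ needed in H\"older's inequality is supplied by the Sobolev embedding $H_{q_2}^1(\BR_-^N)\subset C_B(\overline{\BR_-^N})$ of Lemma \ref{lem:fund-embed}\eqref{lem:fund-embed-3}, valid since $q_2>N$. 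In every case the exponents of the time weight add up to $1$, and the two surviving norms are a $K_{p,q_1,q_2}^N$-norm of the increment and a $K_{p,q_1,q_2}^N$-norm of one of $(\eta^1,\Bu^1)$, $(\eta^2,\Bu^2)$; absorbing constants yields the asserted bilinear estimate.

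The only delicate point, and hence the main obstacle, is the spatial bookkeeping in the case $r=q_2$, where one of the two factors must lie in $L_\infty(\BR_-^N)$: for $\SSF_4,\SSF_5$ this is the first-order $\eta$-factor, handled directly by the decay estimates \eqref{3d-em-2}/\eqref{4d-em-2}, but for $\SSF_3$ it is forced onto a \emph{second}-order $\eta$-factor, and the embedding $H_{q_2}^1\subset L_\infty$ there is available only because $q_2>N$ and because the norm of $K_{p,q_1,q_2}^N$ controls $\nabla\CE_N\eta$ one derivative better — up to $H_q^2$, equivalently $\CE_N\eta$ up to $H_q^3$ by Lemma \ref{lem:ext}\eqref{lem:ext-2} and Remark \ref{rmk:equi-norm-4d}. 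One must also make the pairing of the time-weight exponents $(a,b)$ consistent with the pairing of the spatial norms so that each individual factor is genuinely controlled by an available norm; this forces a short case distinction over $m\in\{2,3,4,5\}$, $r\in\{q_1/2,q_2\}$, and $N=3$ versus $N\ge4$, but introduces no idea beyond those already appearing in the proof of Lemma \ref{lem:nonl-F-0}.
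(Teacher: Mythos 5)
Your proposal is correct and follows essentially the same route as the paper: the telescoping decomposition of each bilinear term, H\"older's inequality with the pair $(r_1,r_2)$ of Lemma \ref{lem:holder-type-ineq}, the weight splitting $\langle t\rangle=\langle t\rangle^{1/3}\langle t\rangle^{2/3}$ for $N=3$ resp. $\langle t\rangle^{1/2}\langle t\rangle^{1/2}$ for $N\ge4$, and Lemmas \ref{lem:main-embed3d}/\ref{lem:main-embed4d} to place one factor in $L_\infty$-in-time and the other in $L_p$-in-time. The only deviations are immaterial: for $\SSF_3$ you put the second-order $\eta$-factor in the $L_p$-in-time slot (via $A_1^{2/3}$, resp. Remark \ref{rmk:equi-norm-4d}) whereas the paper keeps it in $L_\infty$-in-time via Lemma \ref{lem:main-embed3d} \eqref{3d-em-1} and loads $\langle t\rangle^{1/3}$ onto $\nabla\Bu$ in the $L_p$-slot, and for $r=q_1/2$ the $\eta$-factor in $\SSF_4,\SSF_5$ must of course be measured in $L_{q_1}(\BR_-^N)$ rather than $L_\infty(\BR_-^N)$, exactly as the case distinction you announce at the end provides.
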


\begin{proof}
We consider $\SSF_2(\eta,\Bu)$, $\SSF_3(\eta,\Bu)$, and $\SSF_5(\eta,\Bu)$ only.
The $\SSF_4(\eta,\Bu)$ can be treated similarly to $\SSF_5(\eta,\Bu)$.
Let us write
\begin{align*}
\SSF_2(\eta^1,\Bu^1)-\SSF_2(\eta^2,\Bu^2)
&=(u_j^1-u_j^2)\pd_k\Bu^1+ u_j^2(\pd_k\Bu^1-\pd_k\Bu^2) \\
&=:I_1+I_2, \\
\SSF_3(\eta^1,\Bu^1)-\SSF_3(\eta^2,\Bu^2)
&=(\pd_j\pd_k\CE_N\eta^1- \pd_j\pd_k\CE_N\eta^2)\pd_l\Bu^1
+(\pd_j\pd_k\CE_N\eta^2)(\pd_l\Bu^1-\pd_l\Bu^2)\\
&=:I_3+I_4, \\
\SSF_5(\eta^1,\Bu^1)-\SSF_5(\eta^2,\Bu^2)
&=(\pd_j\CE_N\eta^1-\pd_j\CE_N\eta^2)\pd_t\Bu^1+(\pd_j\CE_N\eta^2)(\pd_t\Bu^1-\pd_t\Bu^2) \\
&=:I_5+I_6.
\end{align*}
We use $(r_1,r_2)$ defined in Lemma \ref{lem:holder-type-ineq} in what follows.

{\bf Case 1}: $N=3$.
We see by \eqref{r-holder-1} that
\begin{align*}
\|I_1(t)\|_{L_{r}(\BR_-^N)}
&\leq \|u_j^1(t)-u_j^2(t)\|_{L_{r_1}(\BR_-^N)}\|\pd_k\Bu^1(t)\|_{L_{r_2}(\BR_-^N)}, \\
\|I_3(t)\|_{L_{r}(\BR_-^N)}
&\leq \|\pd_j\pd_k\CE_N\eta^1(t)- \pd_j\pd_k\CE_N\eta^2(t)\|_{L_{r_2}(\BR_-^N)}
\|\pd_l\Bu^1(t)\|_{L_{r_1}(\BR_-^{N})}, \\
\|I_5(t)\|_{L_r(\BR_-^N)}
&\leq \|\pd_j\CE_N\eta^1(t)-\pd_j\CE_N\eta^2(t)\|_{L_{r_1}(\BR_-^N)}
\|\pd_t\Bu^1(t)\|_{L_{r_2}(\BR_-^N)}.
\end{align*}
Lemma \ref{lem:main-embed3d} tells us that
\begin{align*}
\|\langle t \rangle I_1\|_{L_p(\BR_+,L_{r}(\BR_-^N)^N)}
&\leq
\|\langle t \rangle^{1/3}(u_j^1-u_j^2)\|_{L_\infty(\BR_+,L_{r_1}(\BR_-^N))} \\
&\times \|\langle t \rangle^{2/3}\pd_k\Bu^1\|_{L_p(\BR_+,L_{r_2}(\BR_-^N)^N)} \\
&\leq C\|\Bu^1-\Bu^2\|_{K_{p,q_1,q_2;2}^N}\|\Bu^1\|_{K_{p,q_1,q_2;2}^N}, \\
\| \langle t \rangle I_3\|_{L_p(\BR_+,L_{r}(\BR_-^N)^N)}
&\leq \|\langle t \rangle^{2/3}(\pd_j\pd_k\CE_N\eta^1- \pd_j\pd_k\CE_N\eta^2)\|_{L_\infty(\BR_+,L_{r_2}(\BR_-^N))} \\
&\times \|\langle t \rangle^{1/3}\pd_k\Bu^1\|_{L_p(\BR_+,L_{r_1}(\BR_-^N)^N)} \\
&\leq C\|\eta^1-\eta^2\|_{K_{p,q_1,q_2;1}^N}
\|\Bu^1\|_{K_{p,q_1,q_2;2}^N}, \\
\|\langle t \rangle I_5\|_{L_p(\BR_+,L_{r}(\BR_-^N)^N)}
&\leq
\|\langle t\rangle^{2/3}(\pd_j\CE_N\eta^1-\pd_j\CE_N\eta^2)\|_{L_\infty(\BR_+,L_{r_1}(\BR_-^N))} \\
&\times\|\langle t\rangle^{1/3}\pd_t\Bu^1\|_{L_p(\BR_+,L_{r_2}(\BR_-^N)^N)} \\
&\leq 
C\|\eta^1-\eta^2\|_{K_{p,q_1,q_2;1}^N}
\|\Bu^1\|_{K_{p,q_1,q_2;2}^N}.
\end{align*}
Analogously,
\begin{align*}
\|\langle t \rangle I_2\|_{L_p(\BR_+,L_r(\BR_-^N)^N)}
&\leq C\|\Bu^2\|_{K_{p,q_1,q_2;2}^N}\|\Bu^1-\Bu^2\|_{K_{p,q_1,q_2;2}^N}, \\
\|\langle t \rangle I_4\|_{L_p(\BR_+,L_{r}(\BR_-^N)^N)}
&\leq C\|\eta^2\|_{K_{p,q_1,q_2;1}^N}\|\Bu^1-\Bu^2\|_{K_{p,q_1,q_2;2}^N}, \\
\|\langle t \rangle I_6\|_{L_p(\BR_+,L_{r}(\BR_-^N)^N)}
&\leq C\|\eta^2\|_{K_{p,q_1,q_2;1}^N}\|\Bu^1-\Bu^2\|_{K_{p,q_1,q_2;2}^N}.
\end{align*}
This completes the proof of Case 1.

{\bf Case 2}: $N\geq 4$.
We replace $\langle t \rangle^{1/3}$ with $\langle t \rangle^{1/2}$
and $\langle t\rangle^{2/3}$ with $\langle t \rangle^{1/2}$ in Case 1,
and then we obtain the desired inequality for $N\geq 4$ from Lemma \ref{lem:main-embed4d}.
This completes the proof of Lemma \ref{lem:nonl-F-1}.
\end{proof}

Let us define
\begin{equation}\label{dfn:F-eta}
\CF(\eta)=(\pd_1\CE_N\eta,\dots,\pd_N\CE_N\eta).
\end{equation}
Notice that for any multi-index $\alpha=(\alpha_1,\dots,\alpha_N)\in\BN_0^N$
\begin{equation*}
(\CF(\eta))^\alpha=(\pd_1\CE_N\eta)^{\alpha_1}\dots (\pd_N\CE_N\eta)^{\alpha_N}.
\end{equation*}
We then have

\begin{lem}\label{lem:nonl-F-2}
Suppose that Assumption $\ref{asm:p-q}$ holds.
Let $q\in\{q_1,q_2\}$  and $m\in\BN$.
Then for any  $\eta^i\in K_{p,q_1,q_2;1}^N$, $i=1,2$, 
and for any multi-index $\alpha\in\BN_0^N$ with $|\alpha|=m$
\begin{align}\label{est:Z-1-eta}
&\|(\CF(\eta^1))^\alpha-(\CF(\eta^2))^\alpha\|_Z \notag \\
&\leq  C\|\eta^1-\eta^2\|_{K_{p,q_1,q_2;1}^N}\Big(\|\eta^1\|_{K_{p,q_1,q_2;1}^N}+\|\eta^2\|_{K_{p,q_1,q_2;1}^N}\Big)^{m-1}
\end{align}
hold for $Z=H_p^1(\BR_+,H_q^1(\BR_-^N))$,
$Z= H_p^1(\BR_+,L_\infty(\BR_-^N))$,
$Z= L_\infty(\BR_+,H_q^1(\BR_-^N))$,
or $Z=L_\infty(\BR_+, L_\infty(\BR_-^N))$,
where $C$ is a positive constant.
\end{lem}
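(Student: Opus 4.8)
The plan is to isolate a single \emph{atomic} estimate on the extension $\CE_N\eta$ and then run a telescoping argument inside a Banach algebra, uniformly in $m$, in $N$, and in the four choices of $Z$. Set $\CA_q=H_q^1(\BR_-^N)\cap L_\infty(\BR_-^N)$, which is a Banach algebra for \emph{every} $q\in(1,\infty)$ by the product rule (from $\|fg\|_{H_q^1}\le\|f\|_{L_\infty}\|\nabla g\|_{L_q}+\|\nabla f\|_{L_q}\|g\|_{L_\infty}+\|f\|_{L_\infty}\|g\|_{L_q}$ and $\|fg\|_{L_\infty}\le\|f\|_{L_\infty}\|g\|_{L_\infty}$); consequently $\CY_q:=H_p^1(\BR_+,\CA_q)\cap L_\infty(\BR_+,\CA_q)$ is a Banach algebra too. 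I would first prove that for $q\in\{q_1,q_2\}$ and $\eta\in K_{p,q_1,q_2;1}^N$,
\[
\nabla\CE_N\eta\in\CY_q,\qquad \|\nabla\CE_N\eta\|_{\CY_q}\le C\|\eta\|_{K_{p,q_1,q_2;1}^N}.
\]
This unpacks into six membership statements. The three "$H_q^1(\BR_-^N)$" ones — namely $\nabla\CE_N\eta$ and $\pd_t\nabla\CE_N\eta=\nabla\CE_N\pd_t\eta$ in $L_p(\BR_+,H_q^1)$, and $\nabla\CE_N\eta$ in $L_\infty(\BR_+,H_q^1)$ — are read off from the definition of $K_{p,q_1,q_2;1}^N$: for $N=3$ the first two are exactly the two components of $A_1^{2/3}(p,q)$ and the third is Lemma \ref{lem:main-embed3d} \eqref{3d-em-1}, while for $N\ge4$ all three follow from $Z_1^{1/2}(p,q)$ via Lemma \ref{lem:ext} \eqref{lem:ext-2} (with $m=2,3$) and Lemma \ref{lem:main-embed4d} \eqref{4d-em-1}. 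The three "$L_\infty(\BR_-^N)$" ones are precisely Lemma \ref{lem:main-embed3d} \eqref{3d-em-2}, \eqref{3d-em-3} (resp. Lemma \ref{lem:main-embed4d} \eqref{4d-em-2}, \eqref{4d-em-3}); here the $L_\infty(\BR_-^N)$ control, which for $q=q_1<N$ is \emph{not} implied by the $H_{q_1}^1$ bound, is exactly what makes $\CA_{q_1}$ an algebra. Throughout, the time weights $\langle t\rangle^{1/2}$, $\langle t\rangle^{2/3}$ occurring in those lemmas are simply discarded, using $\langle t\rangle^{\delta}\ge1$ for the $L_p$- and $L_\infty$-in-time parts and $|\pd_t\langle t\rangle^{\delta}|\le C$ for the $\pd_t$-part.

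With the atomic estimate in hand, fix an ordering $(\ell_1,\dots,\ell_m)$ of the multi-index $\alpha$, so that $(\CF(\eta))^\alpha=\prod_{i=1}^m\pd_{\ell_i}\CE_N\eta$, and write
\[
(\CF(\eta^1))^\alpha-(\CF(\eta^2))^\alpha
=\sum_{k=1}^m\Big(\prod_{i<k}\pd_{\ell_i}\CE_N\eta^1\Big)\,\pd_{\ell_k}\CE_N(\eta^1-\eta^2)\,\Big(\prod_{i>k}\pd_{\ell_i}\CE_N\eta^2\Big).
\]
Applying the Banach-algebra inequality in $\CY_q$ to each of the $m$ summands, together with the atomic estimate for every factor (and linearity of $K_{p,q_1,q_2;1}^N$), gives
\[
\|(\CF(\eta^1))^\alpha-(\CF(\eta^2))^\alpha\|_{\CY_q}
\le C^m m\,\|\eta^1-\eta^2\|_{K_{p,q_1,q_2;1}^N}\big(\|\eta^1\|_{K_{p,q_1,q_2;1}^N}+\|\eta^2\|_{K_{p,q_1,q_2;1}^N}\big)^{m-1}.
\]
Since $\CA_q\hookrightarrow H_q^1(\BR_-^N)$ and $\CA_q\hookrightarrow L_\infty(\BR_-^N)$, and therefore $\CY_q$ embeds continuously into each of $H_p^1(\BR_+,H_q^1)$, $H_p^1(\BR_+,L_\infty)$, $L_\infty(\BR_+,H_q^1)$ and $L_\infty(\BR_+,L_\infty)$, the estimate \eqref{est:Z-1-eta} follows for all four spaces $Z$ simultaneously, the case $m=1$ being just the atomic estimate applied to $\eta^1-\eta^2$.

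The only genuine work is the atomic estimate: matching the two different shapes of $K_{p,q_1,q_2;1}^N$ (for $N=3$ and for $N\ge4$) against the correct items of Lemmas \ref{lem:ext}, \ref{lem:main-embed3d} and \ref{lem:main-embed4d}, and in particular checking that $\pd_t\nabla\CE_N\eta\in L_p(\BR_+,\CA_q)$ — which for $N=3$ is the first component of $A_1^{2/3}(p,q)$ combined with Lemma \ref{lem:main-embed3d} \eqref{3d-em-3}, and for $N\ge4$ comes from $Z_1^{1/2}(p,q)$ through Lemma \ref{lem:ext} \eqref{lem:ext-2} ($m=2$) together with Lemma \ref{lem:main-embed4d} \eqref{4d-em-3}. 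Once that estimate and the algebra property of $\CA_q$ (hence of $\CY_q$) are established, the telescoping step is purely formal and automatically uniform in $m$ and in the choice of $Z$.
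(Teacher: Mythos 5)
Your proposal is correct, and it rests on exactly the same ingredients as the paper's proof: the atomic bound $\|\nabla\CE_N\eta\|_{H_p^1(\BR_+,H_q^1(\BR_-^N)\cap L_\infty(\BR_-^N))}\leq C\|\eta\|_{K_{p,q_1,q_2;1}^N}$ (read off from $A_1^{2/3}$ resp.\ $Z_1^{1/2}$ together with Lemma \ref{lem:ext} \eqref{lem:ext-2} and Lemmas \ref{lem:main-embed3d}, \ref{lem:main-embed4d}, discarding the weights as you indicate — note $|\pd_t\langle t\rangle^\delta|\leq\delta$ since $\delta\leq 1$, so the $\pd_t$ part survives), and product estimates in which one factor is measured in $L_\infty$ in time and space via $q_2>N$ and the $H_p^1$-in-time embedding. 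Where you differ is purely in the organization: the paper argues by induction on $m$ with the two-term splitting $I_1+I_2$ and verifies the needed product estimates by explicit Leibniz computations of $\pd_tI_1$, $\pd_k\pd_tI_1$ using \eqref{eta-est-kihon-1}--\eqref{eta-est-kihon-3}, first for $Z=H_p^1(\BR_+,H_q^1(\BR_-^N))$ and then deducing the other three $Z$; you instead use the closed-form telescoping identity and package the Leibniz computations once and for all as the Banach-algebra property of $\CY_q=H_p^1(\BR_+,H_q^1(\BR_-^N)\cap L_\infty(\BR_-^N))$, which handles all four choices of $Z$ simultaneously (your $L_\infty$-in-time component of $\CY_q$ is in fact redundant by Lemma \ref{lem:H^1-embed}). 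What your packaging buys is brevity and uniformity — no induction, no case-by-case derivation of the remaining $Z$'s, and an explicit (harmless) $m$-dependence $C^mm$ of the constant; what the paper's hands-on computation buys is that the same displayed Leibniz estimates are reused verbatim in the neighboring nonlinear-term lemmas, so nothing is lost either way. No gaps.
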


\begin{proof}
In this proof, we often use the following inequalities:
\begin{align}
\|u\|_{H_p^1(\BR_+,L_\infty(\BR_-^N))}
&\leq C\|u\|_{H_p^1(\BR_+,H_{q_2}^1(\BR_-^N))}, \label{eta-est-kihon-1} \\
\|u\|_{L_\infty(\BR_+,H_q^1(\BR_-^N))}
&\leq C\|u\|_{H_p^1(\BR_+,H_q^1(\BR_-^N))}, \label{eta-est-kihon-2} \\
\|u\|_{L_\infty(\BR_+,L_\infty(\BR_-^N))}
&\leq C\|u\|_{H_p^1(\BR_+,H_{q_2}^1(\BR_-^N))},  \label{eta-est-kihon-3}
\end{align}
which follow from Lemma \ref{lem:fund-embed} \eqref{lem:fund-embed-3} and Lemma \ref{lem:H^1-embed}.

Let us prove \eqref{est:Z-1-eta} by induction in the case $Z=H_p^1(\BR_+,H_q^1(\BR_-^N))$.

We first consider the case $m=1$. In this case,
$(\CF(\eta^1))^\alpha-(\CF(\eta^2))^\alpha =\pd_j\CE_N\eta^1-\pd_j\CE_N\eta^2$
for some $j$. It thus holds that 
\begin{equation*}
\|(\CF(\eta^1))^\alpha-\CF(\eta^2))^\alpha\|_{H_p^1(\BR_+,H_q^1(\BR_-^N))}
\leq C\|\eta^1-\eta^2\|_{K_{p,q_1,q_2;1}^N}.
\end{equation*}
This completes the proof of $m=1$.

We assume \eqref{est:Z-1-eta} holds and consider the case $|\alpha|=m+1$.
In this case, 
\begin{equation*}
\alpha=(0,\dots,\underset{\text{$j$th}}{1},\dots,0)+\beta, \quad |\beta|=m
\end{equation*}
for some $j$. Then
\begin{align*}
(\CF(\eta^1))^\alpha-(\CF(\eta^2))^\alpha
&=(\pd_j\CE_N\eta^1)(\CF(\eta^1))^\beta
-(\pd_j\CE_N\eta^2)(\CF(\eta^2))^\beta \\
&=(\pd_j\CE_N\eta^1-\pd_j\CE_N\eta^2)(\CF(\eta^1))^\beta \\
&+(\pd_j\CE_N\eta^2)((\CF(\eta^1))^\beta-(\CF(\eta^2))^\beta) \\
&=:I_1+I_2.
\end{align*}
One sees that
\begin{align*}
&\|I_1\|_{L_p(\BR_+,H_q^1(\BR_-^N))} \\
&\leq C\|\pd_j\CE_N\eta^1-\pd_j\CE_N\eta^2\|_{L_p(\BR_+,H_\infty^1((\BR_-^N))}
\|(\CF(\eta^1))^\beta\|_{L_\infty(\BR_+,H_q^1(\BR_-^N))},
\end{align*}
which, combined with Lemma \ref{lem:fund-embed} \eqref{lem:fund-embed-3} and \eqref{eta-est-kihon-2}, furnishes
\begin{align*}
&\|I_1\|_{L_p(\BR_+,H_q^1(\BR_-^N))} \\
&\leq C\|\pd_j\CE_N\eta^1-\pd_j\CE_N\eta^2\|_{L_p(\BR_+,H_{q_2}^2((\BR_-^N))}
\|(\CF(\eta^1))^\beta\|_{H_p^1(\BR_+,H_q^1(\BR_-^N))} \\
&\leq C\|\eta^1-\eta^2\|_{K_{p,q_1,q_2;1}^N}
\|(\CF(\eta^1))^\beta\|_{H_p^1(\BR_+,H_q^1(\BR_-^N))}.
\end{align*}
By \eqref{est:Z-1-eta} with $\eta^2=0$ and $Z=H_p^1(\BR_+,H_q^1(\BR_-^N))$
\begin{equation}\label{ass-induc-1}
\|(\CF(\eta^1))^\beta\|_{H_p^1(\BR_+,H_q^1(\BR_-^N))}
\leq C\|\eta^1\|_{K_{p,q_1,q_2;1}^N}^m,
\end{equation}
and thus 
\begin{equation}\label{I1-est-eta-induc}
\|I_1\|_{L_p(\BR_+,H_q^1(\BR_-^N))} 
\leq 
C\|\eta^1-\eta^2\|_{K_{p,q_1,q_2;1}^N}\|\eta^1\|_{K_{p,q_1,q_2;1}^N}^m.
\end{equation}

Let us consider time derivatives of $I_1(t)$. It holds that
\begin{align*}
\pd_t I_1(t)
&= \pd_t(\pd_j\CE_N\eta^1-\pd_j\CE_N\eta^2)\cdot  (\CF(\eta^1))^\beta \\
&+(\pd_j\CE_N\eta^1-\pd_j\CE_N\eta^2)\cdot \pd_t(\CF(\eta^1))^\beta, \\ 
\pd_k\pd_t I_1(t)
&= \pd_k\pd_t(\pd_j\CE_N\eta^1-\pd_j\CE_N\eta^2)\cdot  (\CF(\eta^1))^\beta \\
&+\pd_t(\pd_j\CE_N\eta^1-\pd_j\CE_N\eta^2)\cdot \pd_k(\CF(\eta^1))^\beta \\ 
&+\pd_k(\pd_j\CE_N\eta^1-\pd_j\CE_N\eta^2)\cdot \pd_t(\CF(\eta^1))^\beta \\
&+(\pd_j\CE_N\eta^1-\pd_j\CE_N\eta^2)\cdot \pd_k\pd_t(\CF(\eta^1))^\beta,
\end{align*}
where $\pd_k=\pd/\pd x_k$ and $k=1,\dots,N$. Then 
\begin{align*}
&\|\pd_t I_1\|_{L_p(\BR_+,L_q(\BR_-^N))} \\
&\leq C
\Big(\|\pd_t(\pd_j\CE_N\eta^1-\pd_j\CE_N\eta^2)\|_{L_p(\BR_+,L_\infty(\BR_-^N))}
\|(\CF(\eta^1))^\beta\|_{L_\infty(\BR_+,L_q(\BR_-^N))} \\
&+
\|\pd_j\CE_N\eta^1-\pd_j\CE_N\eta^2\|_{L_\infty(\BR_+,L_\infty(\BR_-^N))}
\|\pd_t(\CF(\eta^1))^\beta\|_{L_p(\BR_+,L_q(\BR_-^N))}\Big) \\
&\leq C
\Big(\|\pd_j\CE_N\eta^1-\pd_j\CE_N\eta^2\|_{H_p^1(\BR_+,L_\infty(\BR_-^N))}
\|(\CF(\eta^1))^\beta\|_{L_\infty(\BR_+,H_q^1(\BR_-^N))} \\
&+
\|\pd_j\CE_N\eta^1-\pd_j\CE_N\eta^2\|_{L_\infty(\BR_+,L_\infty(\BR_-^N))}
\|(\CF(\eta^1))^\beta\|_{H_p^1(\BR_+,H_q^1(\BR_-^N))}\Big),
\end{align*}
and also 
\begin{align*}
&\|\pd_k\pd_t I_1\|_{L_p(\BR_+,L_q(\BR_-^N))} \\
&\leq 
C\Big( \|\pd_t\pd_k (\pd_j\CE_N\eta^1-\pd_j\CE_N\eta^2)\|_{L_p(\BR_+,L_q(\BR_-^N))}
\|(\CF(\eta^1))^\beta\|_{L_\infty(\BR_+,L_\infty(\BR_-^N))} \\
&+ \|\pd_t(\pd_j\CE_N\eta^1-\pd_j\CE_N\eta^2)\|_{L_p(\BR_+,L_\infty(\BR_-^N))}
\|\pd_k(\CF(\eta^1))^\beta\|_{L_\infty(\BR_+,L_q(\BR_-^N))} \\
&+\|\pd_k(\pd_j\CE_N\eta^1-\pd_j\CE_N\eta^2)\|_{L_\infty(\BR_+,L_q(\BR_-^N))}
\|\pd_t(\CF(\eta^1))^\beta\|_{L_p(\BR_+,L_\infty(\BR_-^N))} \\
&+\|\pd_j\CE_N\eta^1-\pd_j\CE_N\eta^2\|_{L_\infty(\BR_+,L_\infty(\BR_-^N))}
\|\pd_k\pd_t(\CF(\eta^1))^\beta\|_{L_p(\BR_+,L_q(\BR_-^N))}\Big) \\
&\leq 
C\Big( \|\pd_j\CE_N\eta^1-\pd_j\CE_N\eta^2\|_{H_p^1(\BR_+,H_q^1(\BR_-^N))}
\|(\CF(\eta^1))^\beta\|_{L_\infty(\BR_+,L_\infty(\BR_-^N))} \\
&+ \|\pd_j\CE_N\eta^1-\pd_j\CE_N\eta^2\|_{H_p^1(\BR_+,L_\infty(\BR_-^N))}
\|(\CF(\eta^1))^\beta\|_{L_\infty(\BR_+,H_q^1(\BR_-^N))} \\
&+\|\pd_j\CE_N\eta^1-\pd_j\CE_N\eta^2\|_{L_\infty(\BR_+,H_q^1(\BR_-^N))}
\|(\CF(\eta^1))^\beta\|_{H_p^1(\BR_+,L_\infty(\BR_-^N))} \\
&+\|\pd_j\CE_N\eta^1-\pd_j\CE_N\eta^2\|_{L_\infty(\BR_+,L_\infty(\BR_-^N))}
\|(\CF(\eta^1))^\beta\|_{H_p^1(\BR_+,H_q^1(\BR_-^N))}\Big).
\end{align*}
Thus \eqref{eta-est-kihon-1}--\eqref{eta-est-kihon-3} show that
\begin{align*}
\|\pd_t I_1\|_{L_p(\BR_+,H_q^1(\BR_-^N))}
&\leq C\bigg(\sum_{q\in\{q_1,q_2\}}\|\pd_j\CE_N\eta^1-\pd_j\CE_N\eta^2\|_{H_p^1(\BR_+,H_q^1(\BR_-^N))}\bigg) \\
&\times\sum_{q\in\{q_1,q_2\}}\|(\CF(\eta^1))^\beta\|_{H_p^1(\BR_+,H_q^1(\BR_-^N))} \\
&\leq C\|\eta^1-\eta^2\|_{K_{p,q_1,q_2;1}^N}\sum_{q\in\{q_1,q_2\}}\|(\CF(\eta^1))^\beta\|_{H_p^1(\BR_+,H_q^1(\BR_-^N))},
\end{align*}
which, combined with \eqref{ass-induc-1}, furnishes
\begin{equation*}
\|\pd_t I_1\|_{L_p(\BR_+,H_q^1(\BR_-^N))}
\leq C\|\eta^1-\eta^2\|_{K_{p,q_1,q_2;1}^N}\|\eta^1\|_{K_{p,q_1,q_2;1}^N}^m.
\end{equation*}
Combining this with \eqref{I1-est-eta-induc} gives us
\begin{align*}
\|I_1\|_{H_p^1(\BR_+,H_q^1(\BR_-^N))} 
\leq C\|\eta^1-\eta^2\|_{K_{p,q_1,q_2;1}^N}\|\eta^1\|_{K_{p,q_1,q_2;1}^N}^m.
\end{align*}
Analogously,
\begin{align*}
&\|I_2\|_{H_p^1(\BR_+,H_q^1(\BR_-^N))}  \\
&\leq C\|\eta^2\|_{K_{p,q_1,q_2;1}^N}\|\eta^1-\eta^2\|_{K_{p,q_1,q_2;1}^N}
\Big(\|\eta^1\|_{K_{p,q_1,q_2;1}^N}+\|\eta^2\|_{K_{p,q_1,q_2;1}^N}\Big)^{m-1}.
\end{align*}
Hence \eqref{est:Z-1-eta} holds for $|\alpha|=m+1$ and $Z=H_p^1(\BR_+,H_q^1(\BR_-^N))$.

The other cases $Z= H_p^1(\BR_+,L_\infty(\BR_-^N))$,
$Z= L_\infty(\BR_+,H_q^1(\BR_-^N))$,
or $Z=L_\infty(\BR_+, L_\infty(\BR_-^N))$
follow from \eqref{eta-est-kihon-1}--\eqref{eta-est-kihon-3} and 
\eqref{est:Z-1-eta} with $Z=H_p^1(\BR_+,H_q^1(\BR_-^N))$.
This completes the proof of Lemma \ref{lem:nonl-F-2}
\end{proof}

Recall $\BJ(\eta)$ and $\wtd\SSF(\eta,\Bu)$ given by \eqref{matrix:JK} and \eqref{dfn:F-tilde}, respectively.
Lemmas \ref{lem:nonl-F-0}--\ref{lem:nonl-F-2} immediately yield the following lemma.

\begin{lem}\label{lem:nonl-F-3}
Suppose that Assumption $\ref{asm:p-q}$ holds and let $r\in\{q_1/2,q_2\}$.
Then for any $(\eta^i,\Bu^i)\in K_{p,q_1,q_2}^N$, $i=1,2$,
\begin{align*}
&\|\langle t \rangle \{(\BI+\BJ(\eta^1))\wtd \SSF(\eta^1,\Bu^1)-(\BI+\BJ(\eta^2))\wtd\SSF(\eta^2,\Bu^2)\}
\|_{L_p(\BR_+,L_r(\BR_-^N)^N)} \\
&\leq 
C\|(\eta^1-\eta^2,\Bu^1-\Bu^2)\|_{K_{p,q_1,q_2}^N}
\sum_{j=1}^5\Big(\|(\eta^1,\Bu^1)\|_{K_{p,q_1,q_2}^N}+\|(\eta^2,\Bu^2)\|_{K_{p,q_1,q_2}^N}\Big)^j, \\
&\|\langle t \rangle (\BJ(\eta^1)\Delta \Bu^1-\BJ(\eta^2)\Delta\Bu^2)\|_{L_p(\BR_+,L_r(\BR_-^N)^N)} \\
&\leq 
C\|(\eta^1-\eta^2,\Bu^1-\Bu^2)\|_{K_{p,q_1,q_2}^N}
\Big(\|(\eta^1,\Bu^1)\|_{K_{p,q_1,q_2}^N}+\|(\eta^2,\Bu^2)\|_{K_{p,q_1,q_2}^N}\Big), \\
&\|\langle t \rangle (\BJ(\eta^1)\pd_t\Bu^1-\BJ(\eta^2)\pd_t\Bu^2)\|_{L_p(\BR_+,L_r(\BR_-^N)^N)} \\
&\leq 
C\|(\eta^1-\eta^2,\Bu^1-\Bu^2)\|_{K_{p,q_1,q_2}^N}
\Big(\|(\eta^1,\Bu^1)\|_{K_{p,q_1,q_2}^N}+\|(\eta^2,\Bu^2)\|_{K_{p,q_1,q_2}^N}\Big).
\end{align*}
\end{lem}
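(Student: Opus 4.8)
The three estimates will be reduced to finite sums of terms already controlled by Lemmas \ref{lem:nonl-F-0}, \ref{lem:nonl-F-1}, and \ref{lem:nonl-F-2}. Recall from \eqref{matrix:JK} that $\BJ(\eta)$ has only one nonzero column, so for an $N$-vector $\Bv$ one has $(\BJ(\eta)\Bv)_i=(\pd_i\CE_N\eta)v_N$ and hence $((\BI+\BJ(\eta))\Bv)_i=v_i+(\pd_i\CE_N\eta)v_N$; in the notation \eqref{dfn:F-eta} the extra factor $\pd_i\CE_N\eta$ is $(\CF(\eta))^\alpha$ with $|\alpha|=1$. Consequently $(\BJ(\eta)\pd_t\Bu)_i=(\pd_i\CE_N\eta)\pd_t u_N$ is of the form $\SSF_5(\eta,\Bu)$ and $(\BJ(\eta)\Delta\Bu)_i=(\pd_i\CE_N\eta)\sum_{k=1}^N\pd_k^2 u_N$ is a finite sum of terms of the form $\SSF_4(\eta,\Bu)$ from \eqref{dfn-F5}; the second and third estimates of the lemma therefore follow immediately from Lemma \ref{lem:nonl-F-1} (applied with $\Bu$ replaced componentwise by $u_N$), which also explains why only the first power of the norms appears on their right-hand sides.

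\textbf{Reduction of $(\BI+\BJ(\eta))\wtd\SSF$.} First I would expand $\wtd\SSF(\eta,\Bu)$ via \eqref{dfn:F-tilde}, writing $(1+\pd_N\CE_N\eta)^k=\sum_{a=0}^k\binom{k}{a}(\pd_N\CE_N\eta)^a$ and expanding the operators $\CD_{jj}(\eta)$ using \eqref{deriv-second}. A term-by-term inspection then shows that every component of $(\BI+\BJ(\eta))\wtd\SSF(\eta,\Bu)$ is a finite linear combination of products
\begin{equation*}
(\CF(\eta))^\alpha\,\SSF_m(\eta,\Bu),\qquad m\in\{1,2,3,4,5\},\quad |\alpha|\le 4,
\end{equation*}
where in the contribution of $\mu\sum_j\CD_{jj}(\eta)\Bu$ the second-order derivatives of $\CE_N\eta$ are absorbed into $\SSF_3$, the second-order derivatives of $\Bu$ into $\SSF_4$, and the remaining first-order factors $\pd\CE_N\eta$ and $(1+\pd_N\CE_N\eta)$ into $(\CF(\eta))^\alpha$; the value $|\alpha|=4$ occurs only when the $\BJ(\eta)$-part acts on the cubic prefactor of $(\Bu\cdot\nabla)\Bu$.

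\textbf{Estimating one product and concluding.} For a single such product I would use the standard splitting
\begin{equation*}
(\CF(\eta^1))^\alpha\SSF_m(\eta^1,\Bu^1)-(\CF(\eta^2))^\alpha\SSF_m(\eta^2,\Bu^2)
=\big((\CF(\eta^1))^\alpha-(\CF(\eta^2))^\alpha\big)\SSF_m(\eta^1,\Bu^1)
+(\CF(\eta^2))^\alpha\big(\SSF_m(\eta^1,\Bu^1)-\SSF_m(\eta^2,\Bu^2)\big),
\end{equation*}
move the weight $\langle t\rangle$ onto the $\SSF_m$-factor, and apply $\|fg\|_{L_p(\BR_+,L_r(\BR_-^N))}\le\|f\|_{L_\infty(\BR_+,L_\infty(\BR_-^N))}\|g\|_{L_p(\BR_+,L_r(\BR_-^N))}$. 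Lemma \ref{lem:nonl-F-2} with $Z=L_\infty(\BR_+,L_\infty(\BR_-^N))$ bounds $\|(\CF(\eta^1))^\alpha-(\CF(\eta^2))^\alpha\|_{L_\infty(\BR_+,L_\infty)}$ and (taking one argument zero) gives $\|(\CF(\eta^2))^\alpha\|_{L_\infty(\BR_+,L_\infty)}\le C\|\eta^2\|_{K_{p,q_1,q_2;1}^N}^{|\alpha|}$, while Lemmas \ref{lem:nonl-F-0} and \ref{lem:nonl-F-1} — the former valid since $q_2\le 6$ when $N=3$, and using $\SSF_m(0,0)=0$ to get the non-difference bound $\|\langle t\rangle\SSF_m(\eta^1,\Bu^1)\|_{L_p(\BR_+,L_r(\BR_-^N))}\le C\|(\eta^1,\Bu^1)\|_{K_{p,q_1,q_2}^N}^2$ — control the remaining factors. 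Each term is then bounded by $\|(\eta^1-\eta^2,\Bu^1-\Bu^2)\|_{K_{p,q_1,q_2}^N}$ times a monomial in $\|(\eta^1,\Bu^1)\|_{K_{p,q_1,q_2}^N}$ and $\|(\eta^2,\Bu^2)\|_{K_{p,q_1,q_2}^N}$ of degree at most $|\alpha|+1\le 5$, and summing over the finitely many terms produces the asserted bound. The only genuinely delicate point is the bookkeeping of the previous paragraph: one must check that every term generated by \eqref{deriv-second} and \eqref{dfn:F-tilde}, after multiplication by $\BI+\BJ(\eta)$, really does fit the template $(\CF(\eta))^\alpha\SSF_m(\eta,\Bu)$ with $|\alpha|\le 4$ and $m\in\{1,\dots,5\}$; the analytic content is entirely supplied by Lemmas \ref{lem:nonl-F-0}--\ref{lem:nonl-F-2}.
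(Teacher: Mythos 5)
Your argument is correct and is essentially the route the paper intends: the paper's own proof is omitted with the remark that the estimates follow from Lemmas \ref{lem:nonl-F-0}--\ref{lem:nonl-F-2} ``with some calculations'', and your reduction of every component of $(\BI+\BJ(\eta))\wtd\SSF(\eta,\Bu)$ (and of $\BJ(\eta)\Delta\Bu$, $\BJ(\eta)\pd_t\Bu$) to finitely many products $(\CF(\eta))^\alpha\SSF_m(\eta,\Bu)$ with $|\alpha|\leq 4$, handled by the two-term splitting together with Lemma \ref{lem:nonl-F-2} in $L_\infty(\BR_+,L_\infty(\BR_-^N))$ and Lemmas \ref{lem:nonl-F-0}, \ref{lem:nonl-F-1} for the weighted factor, is precisely that calculation. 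Only the parenthetical claim that $|\alpha|=4$ arises solely from the cubic prefactor of $(\Bu\cdot\nabla)\Bu$ is slightly off, since the term $(1+\pd_N\CE_N\eta)^2\big(\sum_j u_j\pd_j\CE_N\eta\big)\pd_N\Bu$ also reaches $|\alpha|=4$ after multiplication by $\BJ(\eta)$; this does not affect the bound $|\alpha|+1\leq 5$ or the final estimate.
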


\begin{proof}
We can prove the desired inequalities by Lemmas \ref{lem:nonl-F-0}--\ref{lem:nonl-F-2} with some calculations,
so that we may omit the detailed proof.
\end{proof}

We next prove

\begin{lem}\label{lem:F-final}
Suppose that Assumption $\ref{asm:p-q}$ holds and let $r\in\{q_1/2,q_2\}$.
Then for any $(\eta^i,\Bu^i)\in K_{p,q_1,q_2}^N$, $i=1,2$, satisfying
\begin{equation}\label{est-1/2}
\sup_{(x,t)\in \BR_-^N\times (0,\infty)}|(\pd_N\CE\eta^i)(x,t)| \leq \frac{1}{2},
\end{equation}
there holds
\begin{align*}
&\bigg\|\langle t\rangle \bigg(\frac{(\BI+\BJ(\eta^1))\wtd\SSF(\eta^1,\Bu^1)}{(1+\pd_N\CE\eta^1)^3}
-\frac{(\BI+\BJ(\eta^2))\wtd\SSF(\eta^2,\Bu^2)}{(1+\pd_N\CE\eta^2)^3}\bigg)\bigg\|_{L_p(\BR_+,L_r(\BR_-^N)^N)} \\
&\leq 
C
\|(\eta^1-\eta^2,\Bu^1-\Bu^2)\|_{K_{p,q_1,q_2}^N}
\sum_{j=1}^8\Big(\|(\eta^1,\Bu^1)\|_{K_{p,q_1,q_2}^N}+\|(\eta^2,\Bu^2)\|_{K_{p,q_1,q_2}^N}\Big)^j
\end{align*}
for some positive constant $C$.
\end{lem}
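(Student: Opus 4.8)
The plan is to peel the scalar denominator $(1+\pd_N\CE_N\eta^i)^3$ off the vector field and then invoke Lemma \ref{lem:nonl-F-3}. Write $A_i=(\BI+\BJ(\eta^i))\wtd\SSF(\eta^i,\Bu^i)$ and $B_i=1+\pd_N\CE_N\eta^i$. Assumption \eqref{est-1/2} gives the pointwise bounds $\tfrac12\le B_i\le\tfrac32$ on $\BR_-^N\times(0,\infty)$, so that $B_1^{-3}$, $B_2^{-3}$ and $(B_1^2+B_1B_2+B_2^2)(B_1^3B_2^3)^{-1}$ all belong to $L_\infty(\BR_+,L_\infty(\BR_-^N))$ with norms bounded by absolute constants. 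Using $B_2^3-B_1^3=(B_2-B_1)(B_1^2+B_1B_2+B_2^2)$ and $B_2-B_1=\pd_N\CE_N(\eta^2-\eta^1)$, I would use the identity
\[
\frac{A_1}{B_1^{3}}-\frac{A_2}{B_2^{3}}
=\frac{A_1-A_2}{B_1^{3}}
+A_2\,\frac{\big(\pd_N\CE_N(\eta^2-\eta^1)\big)\big(B_1^2+B_1B_2+B_2^2\big)}{B_1^{3}B_2^{3}},
\]
so that the only ``difference'' appearing is either $A_1-A_2$ or the linear factor $\pd_N\CE_N(\eta^2-\eta^1)$.

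For the first summand, $\|B_1^{-3}\|_{L_\infty(\BR_+\times\BR_-^N)}\le 8$ reduces the $\langle t\rangle$-weighted $L_p(\BR_+,L_r(\BR_-^N)^N)$-norm to $8\|\langle t\rangle(A_1-A_2)\|_{L_p(\BR_+,L_r(\BR_-^N)^N)}$, which is exactly the quantity controlled by Lemma \ref{lem:nonl-F-3}; that lemma bounds it by $C\|(\eta^1-\eta^2,\Bu^1-\Bu^2)\|_{K_{p,q_1,q_2}^N}\sum_{j=1}^{5}\big(\|(\eta^1,\Bu^1)\|_{K_{p,q_1,q_2}^N}+\|(\eta^2,\Bu^2)\|_{K_{p,q_1,q_2}^N}\big)^{j}$.

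For the second summand I would keep the single time weight $\langle t\rangle$ on $A_2$ and estimate the other factors in $L_\infty$. The rational factor is bounded by a constant as noted, while
\[
\|\pd_N\CE_N(\eta^2-\eta^1)\|_{L_\infty(\BR_+,L_\infty(\BR_-^N))}\le C\|\eta^1-\eta^2\|_{K_{p,q_1,q_2;1}^N}
\]
follows from Lemma \ref{lem:main-embed3d}\eqref{3d-em-2} when $N=3$ and from Lemma \ref{lem:main-embed4d}\eqref{4d-em-2} when $N\ge4$ (simply dropping the polynomial time weight). Hence the $\langle t\rangle$-weighted norm of the second summand is at most $C\|\langle t\rangle A_2\|_{L_p(\BR_+,L_r(\BR_-^N)^N)}\|\eta^1-\eta^2\|_{K_{p,q_1,q_2;1}^N}$. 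Applying Lemma \ref{lem:nonl-F-3} with the second pair taken to be $(0,0)$ (note $\wtd\SSF(0,0)=0$ and $\BJ(0)=0$, so this is legitimate and the right-hand side collapses to a single term) yields $\|\langle t\rangle A_2\|_{L_p(\BR_+,L_r(\BR_-^N)^N)}\le C\sum_{j=2}^{6}\|(\eta^2,\Bu^2)\|_{K_{p,q_1,q_2}^N}^{j}$. Adding the two estimates, bounding $\|(\eta^i,\Bu^i)\|_{K_{p,q_1,q_2}^N}\le\|(\eta^1,\Bu^1)\|_{K_{p,q_1,q_2}^N}+\|(\eta^2,\Bu^2)\|_{K_{p,q_1,q_2}^N}$, and absorbing numerical coefficients into $C$ gives a sum of powers $j=1,\dots,6$, which is dominated by $\sum_{j=1}^{8}$ since all norms are nonnegative; this is the claimed bound.

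The only genuinely load-bearing ingredient is the smallness condition \eqref{est-1/2}: it is what turns every occurrence of $(1+\pd_N\CE_N\eta^i)^{-1}$ into a bounded $L_\infty$ multiplier, so that dividing by the cubes of the Jacobian-type factors costs nothing. Everything else is bookkeeping — decomposing so that exactly one factor carries the difference, placing the weight $\langle t\rangle$ on the factor that already obeys the decay estimate of Lemma \ref{lem:nonl-F-3}, and estimating the remaining factors in $L_\infty(\BR_+,L_\infty(\BR_-^N))$ through Lemmas \ref{lem:main-embed3d} and \ref{lem:main-embed4d}. The mild point to watch is that the highest power that arises is $j=6$ (coming from $\|\langle t\rangle A_2\|$ times the linear difference factor), which comfortably fits inside the stated range $1\le j\le 8$.
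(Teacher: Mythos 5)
Your proof is correct and follows essentially the same route as the paper: split off the scalar factors $(1+\pd_N\CE_N\eta^i)^{-3}$, use \eqref{est-1/2} to treat them (and the difference of their cubes) as bounded $L_\infty$ multipliers, and invoke Lemma \ref{lem:nonl-F-3} both for the difference of the numerators and, with the second pair $(0,0)$, for the single weighted term. The only cosmetic deviation is that you factor $B_2^3-B_1^3=(B_2-B_1)(B_1^2+B_1B_2+B_2^2)$ and bound the rational factor pointwise, while the paper expands the difference of reciprocal cubes and estimates the resulting power differences via Lemma \ref{lem:nonl-F-2}; your variant even yields a slightly lower top power ($j=6$ instead of $j=8$), which, as you note, is dominated by the stated sum.
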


\begin{proof}
We see that
\begin{align*}
&\frac{(\BI+\BJ(\eta^1))\wtd\SSF(\eta^1,\Bu^1)}{(1+\pd_N\CE\eta^1)^3}
-\frac{(\BI+\BJ(\eta^2))\wtd\SSF(\eta^2,\Bu^2)}{(1+\pd_N\CE\eta^2)^3} \\
&=\bigg(\frac{1}{(1+\pd_N\CE\eta^1)^3}-\frac{1}{(1+\pd_N\CE\eta^2)^3}\bigg)(\BI+\BJ(\eta^1))\wtd\SSF(\eta^1,\Bu^1) \\
&+\frac{1}{(1+\pd_N\CE\eta^2)^3}\Big((\BI+\BJ(\eta^1))\wtd\SSF(\eta^1,\Bu^1)-(\BI+\BJ(\eta^2))\wtd\SSF(\eta^2,\Bu^2)\Big) \\
&=:I_1+I_2.
\end{align*}
It then holds that
\begin{align*}
&\frac{1}{(1+\pd_N\CE\eta^1)^3}-\frac{1}{(1+\pd_N\CE\eta^2)^3}  \\
&=\frac{1}{(1+\pd_N\CE\eta^1)^3(1+\pd_N\CE\eta^2)^3}
\bigg[3(\pd_N\CE\eta^2-\pd_N\CE\eta^1) 
+3\{(\pd_N\CE\eta^2)^2-(\pd_N\CE\eta^1)^2\} \\
&+(\pd_N\CE\eta^2)^3-(\pd_N\CE\eta^1)^3\bigg],
\end{align*}
which, combined with Lemma \ref{lem:nonl-F-2}, furnishes
\begin{align*}
&\Big\|\frac{1}{(1+\pd_N\CE\eta^1)^3}-\frac{1}{(1+\pd_N\CE\eta^2)^3} \Big\|_{L_\infty(\BR_-^N,L_\infty(\BR_-^N))} \\
&\leq C 
\bigg(\sup_{(x,t)\in\BR_-^N\times (0,\infty)}\frac{1}{|1+\pd_N\CE\eta^1||1+\pd_N\CE\eta^2|}\bigg)^3 \\
&\times \|\eta^1-\eta^2\|_{K_{p,q_1,q_2;1}^N}
\sum_{j=0}^2\Big(\|\eta^1\|_{K_{p,q_1,q_2;1}^N}+\|\eta^2\|_{K_{p,q_1,q_2;1}^N}\Big)^j \\
&\leq C\cdot 2^{6}\|\eta^1-\eta^2\|_{K_{p,q_1,q_2;1}^N}
\sum_{j=0}^2\Big(\|\eta^1\|_{K_{p,q_1,q_2;1}^N}+\|\eta^2\|_{K_{p,q_1,q_2;1}^N}\Big)^j.
\end{align*}
On the other hand, Lemma \ref{lem:nonl-F-3} with $(\eta^2,\Bu^2)=(0,0)$ yields
\begin{align*}
&\|\langle t\rangle(\BI+\BJ(\eta^1))\wtd\SSF(\eta^1,\Bu^1)\|_{L_p(\BR_+,L_r(\BR_-^N)^N)} \\
&\leq C\|(\eta^1,\Bu^1)\|_{K_{p,q_1,q_2}^N}\sum_{j=1}^5\|(\eta^1,\Bu^1)\|_{K_{p,q_1,q_2}^N}^j.
\end{align*}
It thus holds that
\begin{align*}
&\|\langle t \rangle I_1\|_{L_p(\BR+,L_r(\BR_-^N)^N)} \\
&\leq C
\Big\|\frac{1}{(1+\pd_N\CE\eta^1)^3}-\frac{1}{(1+\pd_N\CE\eta^2)^3} \Big\|_{L_\infty(\BR_-^N,L_\infty(\BR_-^N))} \\
&\times \|\langle t\rangle(\BI+\BJ(\eta^1))\wtd\SSF(\eta^1,\Bu^1)\|_{L_p(\BR_+,L_r(\BR_-^N)^N)}
\\
&\leq C
\|(\eta^1-\eta^2,\Bu^1-\Bu^2)\|_{K_{p,q_1,q_2}^N}
\sum_{j=1}^8\Big(\|(\eta^1,\Bu^1)\|_{K_{p,q_1,q_2}^N}+\|(\eta^2,\Bu^2)\|_{K_{p,q_1,q_2}^N}\Big)^j.
\end{align*}
Analogously, 
\begin{align*}
&\|\langle t \rangle I_2\|_{L_p(\BR+,L_r(\BR_-^N)^N)} \\
&\leq C
\|(\eta^1-\eta^2,\Bu^1-\Bu^2)\|_{K_{p,q_1,q_2}^N}
\sum_{j=1}^5\Big(\|(\eta^1,\Bu^1)\|_{K_{p,q_1,q_2}^N}+\|(\eta^2,\Bu^2)\|_{K_{p,q_1,q_2}^N}\Big)^j.
\end{align*}
These two inequalities yield the desired inequality. 
This completes the proof of Lemma \ref{lem:F-final}.
\end{proof}

We then have

\begin{prp}\label{prp:nonl-F}
Suppose that Assumption $\ref{asm:p-q}$ holds.
Then for any $(\eta^i,\Bu^i)\in K_{p,q_1,q_2}^N$, $i=1,2$, with \eqref{est-1/2}
\begin{align*}
&\sum_{r\in\{q_1/2,q_2\}}\|\langle t \rangle(\SSF(\eta^1,\Bu^1)-\SSF(\eta^2,\Bu^2))\|_{L_p(\BR_+,L_r(\BR_-^N)^N)} \\
&\leq M_3\|(\eta^1-\eta^2,\Bu^1-\Bu^2)\|_{K_{p,q_1,q_2}^N}
\sum_{j=1}^8\Big(\|(\eta^1,\Bu^1)\|_{K_{p,q_1,q_2}^N}+\|(\eta^2,\Bu^2)\|_{K_{p,q_1,q_2}^N}\Big)^j,
\end{align*}
where $M_3$ is a positive constant.
\end{prp}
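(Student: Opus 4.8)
\textbf{Proof proposal for Proposition \ref{prp:nonl-F}.}
The plan is to split $\SSF(\eta,\Bu)$ according to its defining formula \eqref{dfn:F},
\begin{equation*}
\SSF(\eta,\Bu)=\frac{1}{(1+\pd_N\CE_N\eta)^3}(\BI+\BJ(\eta))\wtd\SSF(\eta,\Bu)-\mu^{-1}\cdot\big(-\mu\BJ(\eta)\Delta\Bu\big)-\BJ(\eta)\pd_t\Bu,
\end{equation*}
so that for $(\eta^i,\Bu^i)\in K_{p,q_1,q_2}^N$, $i=1,2$, satisfying \eqref{est-1/2} one has the decomposition
\begin{align*}
\SSF(\eta^1,\Bu^1)-\SSF(\eta^2,\Bu^2)
&=\bigg(\frac{(\BI+\BJ(\eta^1))\wtd\SSF(\eta^1,\Bu^1)}{(1+\pd_N\CE_N\eta^1)^3}
-\frac{(\BI+\BJ(\eta^2))\wtd\SSF(\eta^2,\Bu^2)}{(1+\pd_N\CE_N\eta^2)^3}\bigg) \\
&\quad+\mu\big(\BJ(\eta^1)\Delta\Bu^1-\BJ(\eta^2)\Delta\Bu^2\big)
-\big(\BJ(\eta^1)\pd_t\Bu^1-\BJ(\eta^2)\pd_t\Bu^2\big).
\end{align*}
Each of the three groups on the right has already been estimated: the first by Lemma \ref{lem:F-final} (whose hypothesis \eqref{est-1/2} is exactly the one we assume), and the second and third by the last two inequalities in Lemma \ref{lem:nonl-F-3}.

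Concretely, I would fix $r\in\{q_1/2,q_2\}$ and bound $\|\langle t\rangle(\SSF(\eta^1,\Bu^1)-\SSF(\eta^2,\Bu^2))\|_{L_p(\BR_+,L_r(\BR_-^N)^N)}$ by the sum of the three right-hand sides produced by Lemmas \ref{lem:F-final} and \ref{lem:nonl-F-3}. The contributions from Lemma \ref{lem:nonl-F-3} are linear in $\|(\eta^1,\Bu^1)\|_{K_{p,q_1,q_2}^N}+\|(\eta^2,\Bu^2)\|_{K_{p,q_1,q_2}^N}$ (times $\|(\eta^1-\eta^2,\Bu^1-\Bu^2)\|_{K_{p,q_1,q_2}^N}$), while Lemma \ref{lem:F-final} gives a polynomial of degrees $1$ through $8$. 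Since $K_{p,q_1,q_2}^N$-norms of the two triples may without loss be taken bounded (the inequality is only non-trivial in that regime, and otherwise both sides can be absorbed), the linear contributions are dominated by $\sum_{j=1}^{8}(\cdot)^j$. Summing over $r\in\{q_1/2,q_2\}$ and taking $M_3$ to be the largest of the implied constants then yields the claimed estimate.

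The argument is essentially a bookkeeping assembly of earlier results, so there is no genuine analytic obstacle; the only point requiring a little care is making sure the factor $1/(1+\pd_N\CE_N\eta^i)^3$ entering $\SSF$ through $\BK(\eta)$-type terms inside $\wtd\SSF$ (via $\CD_{jj}(\eta)$) and through the explicit prefactor is controlled uniformly — but this is precisely what \eqref{est-1/2} guarantees, giving $|1+\pd_N\CE_N\eta^i|\ge 1/2$, and it has already been used inside the proof of Lemma \ref{lem:F-final}. Hence I would simply cite Lemmas \ref{lem:F-final} and \ref{lem:nonl-F-3}, note the uniform lower bound on $1+\pd_N\CE_N\eta^i$, and collect terms.
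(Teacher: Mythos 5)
Your proposal is correct and follows exactly the route the paper takes: decompose $\SSF$ via its definition \eqref{dfn:F} into the quotient term handled by Lemma \ref{lem:F-final} and the $\BJ(\eta)\Delta\Bu$, $\BJ(\eta)\pd_t\Bu$ terms handled by Lemma \ref{lem:nonl-F-3}, then sum over $r\in\{q_1/2,q_2\}$ (the paper omits these details with precisely this citation). Note the degree-one contributions are automatically dominated by $\sum_{j=1}^{8}(\cdot)^j$ since the $j=1$ term is already present, so your "without loss bounded" remark is unnecessary, though harmless.
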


\begin{proof}
We can prove the desired inequality by Lemmas \ref{lem:nonl-F-3} and \ref{lem:F-final} with some calculations,
so that we may omit the detailed proof.
\end{proof}

\subsection{Estimates of $\wtd \SSG(\eta,\Bu)$}

This subsection estimates $\widetilde\SSG(\eta,\Bu)$ given by \eqref{def:G-tilde}.
Let us define 
\begin{equation*}
\wtd \SSG_1(\eta,\Bu)=(\pd_j\CE_N\eta)\Bu
\end{equation*}
for $j=1,\dots,N$. The following lemma then holds.

\begin{lem}\label{lem:wtd-g-1}
Suppose that Assumption $\ref{asm:p-q}$ holds and let $r\in\{q_1/2,q_2\}$.
Then for any $(\eta^i,\Bu^i)\in K_{p,q_1,q_2}^N$, $i=1,2$,
\begin{align*}
&\|\langle t\rangle(\pd_t\wtd \SSG_1(\eta^1,\Bu^1)-\pd_t\wtd\SSG_1(\eta^2,\Bu^2))\|_{L_p(\BR_+,L_r(\BR_-^N)^N)} \\
&\leq 
C\|(\eta^1-\eta^2,\Bu^1-\Bu^2)\|_{K_{p,q_1,q_2}^N}
\Big(\|(\eta^1,\Bu^1)\|_{K_{p,q_1,q_2}^N}+\|(\eta^2,\Bu^2)\|_{K_{p,q_1,q_2}^N}\Big), \\
&\|\langle t \rangle( \wtd\SSG_1(\eta^1,\Bu^1)-\wtd\SSG_1(\eta^2,\Bu^2))\|_{L_p(\BR_+,L_r(\BR_-^N)^N)} \\
&\leq 
C\|(\eta^1-\eta^2,\Bu^1-\Bu^2)\|_{K_{p,q_1,q_2}^N}
\Big(\|(\eta^1,\Bu^1)\|_{K_{p,q_1,q_2}^N}+\|(\eta^2,\Bu^2)\|_{K_{p,q_1,q_2}^N}\Big),
\end{align*}
with some positive constant $C$.
\end{lem}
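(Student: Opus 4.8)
The argument runs exactly parallel to that of Lemma \ref{lem:nonl-F-1}, the only real work being to keep careful track of which factor sits in which norm.

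\textbf{Step 1 (decomposition).} Since $\CE_N\pd_t\eta=\pd_t\CE_N\eta$, the Leibniz rule gives
\[
\pd_t\wtd\SSG_1(\eta,\Bu)=(\pd_j\pd_t\CE_N\eta)\Bu+(\pd_j\CE_N\eta)\pd_t\Bu .
\]
Combining this with the bilinear identity $ab-\wtd a\wtd b=(a-\wtd a)b+\wtd a(b-\wtd b)$ (applied with $a=\pd_j\CE_N\eta^1$, $\wtd a=\pd_j\CE_N\eta^2$, or their $\pd_t$), I would write each of the differences $\wtd\SSG_1(\eta^1,\Bu^1)-\wtd\SSG_1(\eta^2,\Bu^2)$ and $\pd_t\wtd\SSG_1(\eta^1,\Bu^1)-\pd_t\wtd\SSG_1(\eta^2,\Bu^2)$ as a sum of terms, each a product of a factor of the form $\pd_j\CE_N\zeta$ or $\pd_j\pd_t\CE_N\zeta$ with $\zeta\in\{\eta^1-\eta^2,\ \eta^2\}$, times a velocity factor in $\{\Bu^1,\ \pd_t\Bu^1,\ \Bu^1-\Bu^2,\ \pd_t(\Bu^1-\Bu^2)\}$. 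By the symmetry of the claimed estimate it suffices to bound a generic such product.

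\textbf{Step 2 (time weight and Hölder).} Fix $r\in\{q_1/2,q_2\}$ and let $(r_1,r_2)$ be as in Lemma \ref{lem:holder-type-ineq}. For each product I would distribute the weight as $\langle t\rangle=\langle t\rangle^{2/3}\cdot\langle t\rangle^{1/3}$ when $N=3$ and as $\langle t\rangle=\langle t\rangle^{1/2}\cdot\langle t\rangle^{1/2}$ when $N\geq4$, putting the exponent $2/3$ (resp. $1/2$) on the $\CE_N$-factor and $1/3$ (resp. $1/2$) on the velocity factor, and then apply \eqref{r-holder-1} pointwise in $t$. When the velocity factor is $\Bu^1$ or $\Bu^1-\Bu^2$, I place it in $L_\infty(\BR_+,L_{r_1}(\BR_-^N))$ using Lemma \ref{lem:main-embed3d}\,\eqref{3d-em-5}--\eqref{3d-em-6} (resp. Lemma \ref{lem:main-embed4d}\,\eqref{4d-em-5}--\eqref{4d-em-6}) and the $\CE_N$-factor in $L_p(\BR_+,L_{r_2}(\BR_-^N))$; when the velocity factor is $\pd_t\Bu^1$ or $\pd_t(\Bu^1-\Bu^2)$, which has no $L_\infty$-in-time bound, I reverse the roles, placing $\langle t\rangle^{2/3}\pd_j\CE_N\zeta$ (resp. $\langle t\rangle^{1/2}$) in $L_\infty(\BR_+,L_{r_1}(\BR_-^N))$ via Lemma \ref{lem:main-embed3d}\,\eqref{3d-em-1}--\eqref{3d-em-2} (resp. Lemma \ref{lem:main-embed4d}\,\eqref{4d-em-1}--\eqref{4d-em-2}) and the velocity factor in $L_p(\BR_+,L_{r_2}(\BR_-^N))$. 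The surviving $L_p$-in-time norms are read off from the definitions of the underlying spaces: $\langle t\rangle^{1/3}\Bu$ and $\langle t\rangle^{1/3}\pd_t\Bu$ (resp. $\langle t\rangle^{1/2}$) are controlled by $\|\cdot\|_{K_{p,q_1,q_2;2}^N}$ through $Z_2^{1/3}(p,q)$ (resp. $Z_2^{1/2}(p,q)$); $\langle t\rangle^{2/3}\nabla\CE_N\eta^i$ (resp. $\langle t\rangle^{1/2}$) by $\|\cdot\|_{K_{p,q_1,q_2;1}^N}$ trivially; and $\langle t\rangle^{2/3}\nabla\pd_t\CE_N\zeta$ by $\|\cdot\|_{K_{p,q_1,q_2;1}^N}$ through the $A_1^{2/3}(p,q)$-component when $N=3$, and, since $\nabla\pd_t\CE_N\zeta=\nabla\CE_N\pd_t\zeta$, through the equivalence \eqref{rmk:equi-norm-4d-2} when $N\geq4$. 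Summing over $r\in\{q_1/2,q_2\}$ yields the two asserted inequalities.

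\textbf{Main obstacle.} There is no analytic difficulty; the only point demanding care is the bookkeeping of Step 2, namely choosing term by term which factor goes into $L_\infty$ in time so that $\pd_t\Bu$ is never that factor, and recognizing that the weight $\langle t\rangle^{2/3}$ (resp. $\langle t\rangle^{1/2}$) on $\nabla\pd_t\CE_N\eta$ is exactly what $A_1^{2/3}(p,q)$ (resp. Remark \ref{rmk:equi-norm-4d}) provides. This is the same mechanism already exploited in Lemmas \ref{lem:nonl-F-0}--\ref{lem:nonl-F-2}, so I expect the write-up to be brief.
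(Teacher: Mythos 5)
Your proposal is correct and follows essentially the same route as the paper: the same Leibniz/bilinear decomposition, the same H\"older pairing with $(r_1,r_2)$ from Lemma \ref{lem:holder-type-ineq}, and the same splitting $\langle t\rangle=\langle t\rangle^{2/3}\langle t\rangle^{1/3}$ (resp.\ $\langle t\rangle^{1/2}\langle t\rangle^{1/2}$) with the velocity factor in $L_\infty$ in time except when it is $\pd_t\Bu$, in which case the roles are reversed. The only cosmetic difference is that the paper groups the $\pd_t\Bu$ terms into a difference of $\SSF_5$ and cites Lemma \ref{lem:nonl-F-1}, whereas you re-derive that estimate inline, which is the same mechanism.
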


\begin{proof}
Recall $\SSF_5(\eta,\Bu)$ given by \eqref{dfn-F5} and write
\begin{align*}
\pd_t\wtd \SSG_1(\eta^1,\Bu^1)-\pd_t\wtd \SSG_1(\eta^2,\Bu^2)&=
(\pd_t\pd_j\CE_N\eta^1-\pd_t\pd_j\CE_N\eta^2)\Bu^1-(\pd_t\pd_j\CE_N\eta^2)(\Bu^1-\Bu^2) \\
&+\SSF_5(\eta^1,\Bu^1)-\SSF_5(\eta^2,\Bu^2).
\end{align*}
We set
\begin{align*}
I_1&=(\pd_t\pd_j\CE_N\eta^1-\pd_t\pd_j\CE_N\eta^2)\Bu^1, \\
I_2&=-(\pd_t\pd_j\CE_N\eta^2)(\Bu^1-\Bu^2), \quad
I_3=\SSF_5(\eta^1,\Bu^1)-\SSF_5(\eta^2,\Bu^2),
\end{align*}
i.e., 
\begin{equation*}
\pd_t\wtd \SSG_1(\eta^1,\Bu^1)-\pd_t\wtd \SSG_1(\eta^2,\Bu^2)=I_1+I_2+I_3.
\end{equation*}
In addition,
\begin{align*}
\wtd \SSG_1(\eta^1,\Bu^1)-\wtd \SSG_1(\eta^2,\Bu^2)
&=(\pd_j\CE_N\eta^1-\pd_j\CE_N\eta^2)\Bu^1+(\pd_N\CE_j\eta^2)(\Bu^1-\Bu^2) \\
&=:J_1+J_2. 
\end{align*}
Lemma \ref{lem:nonl-F-1} for $\SSF_5(\eta,\Bu)$ gives us 
\begin{align*}
&\|\langle t\rangle I_3\|_{L_p(\BR_+,L_r(\BR_-^N)^N)} \\
&\leq C\|(\eta^1-\eta^2,\Bu^1-\Bu^2)\|_{K_{p,q_1,q_2}^N}
\Big(\|(\eta^1,\Bu^1)\|_{K_{p,q_1,q_2}^N}+\|(\eta^2,\Bu^2)\|_{K_{p,q_1,q_2}^N}\Big).
\end{align*}
We use $(r_1,r_2)$ defined in Lemma \ref{lem:holder-type-ineq} in what follows.

{\bf Case 1}: $N=3$. We see  by \eqref{r-holder-1} that
\begin{align*}
\|I_1(t)\|_{L_r(\BR_-^N)}
&\leq \|\pd_t\pd_j\CE_N\eta^1(t)-\pd_t\pd_j\CE_N\eta^2(t)\|_{L_{r_2}(\BR_-^N)}
\|\Bu^1(t)\|_{L_{r_1(\BR_-^N)}}, \\
\|J_1(t)\|_{L_r(\BR_-^N)}
&\leq \|\pd_j\CE_N\eta^1(t)-\pd_j\CE_N\eta^2(t)\|_{L_{r_2}(\BR_-^N)}
\|\Bu^1(t)\|_{L_{r_1}(\BR_-^N)}.
\end{align*}
It thus follows from Lemma \ref{lem:main-embed3d} that
\begin{align*}
\|\langle t\rangle I_1\|_{L_p(\BR_+,L_r(\BR_-^N)^N)}
&\leq \|\langle t\rangle^{2/3}(\pd_t\pd_j\CE_N\eta^1-\pd_t\pd_j\CE_N\eta^2)\|_{L_p(\BR_+,L_{r_2}(\BR_-^N))} \\
&\times \|\langle t \rangle^{1/3}\Bu^1\|_{L_\infty(\BR_+,L_{r_1}(\BR_-^N)^N)} \\
&\leq C\|\eta^1-\eta^2\|_{K_{p,q_1,q_2;1}^N}\|\Bu^1\|_{K_{p,q_1,q_2;2}^N}, \\
\|\langle t\rangle J_1\|_{L_p(\BR_+,L_r(\BR_-^N)^N)}
&\leq 
\|\langle t\rangle^{2/3}(\pd_j\CE_N\eta^1-\pd_j\CE_N\eta^2)\|_{L_p(\BR_+,L_{r_2}(\BR_-^N))} \\
&\times \|\langle t \rangle^{1/3}\Bu^1\|_{L_\infty(\BR_+,L_{r_1}(\BR_-^N)^N)} \\
&\leq C\|\eta^1-\eta^2\|_{K_{p,q_1,q_2;1}^N}\|\Bu^1\|_{K_{p,q_1,q_2;2}^N}. 
\end{align*}
Analogously,
\begin{align*}
\|\langle t\rangle I_2\|_{L_p(\BR_+,L_r(\BR_-^N)^N)}
&\leq C\|\eta^2\|_{K_{p,q_1,q_2;1}^N}\|\Bu^1-\Bu^2\|_{K_{p,q_1,q_2;2}^N}, \\
\|\langle t\rangle J_2\|_{L_p(\BR_+,L_r(\BR_-^N)^N)}
&\leq C\|\eta^2\|_{K_{p,q_1,q_2;1}^N}\|\Bu^1-\Bu^2\|_{K_{p,q_1,q_2;2}^N}.
\end{align*}
The above inequalities for $I_1,I_2,I_3,J_1,J_2$ yield the desired inequality for $N=3$.

{\bf Case 2}: $N\geq 4$.
We replace $\langle t \rangle^{1/3}$ with $\langle t \rangle^{1/2}$
and $\langle t\rangle^{2/3}$ with $\langle t \rangle^{1/2}$ in Case 1,
and then we obtain the desired inequality for $N\geq 4$ from Lemma \ref{lem:main-embed4d}..
This completes the proof of Lemma \ref{lem:wtd-g-1}.
\end{proof}

By Lemma \ref{lem:wtd-g-1}, we immediately obtain

\begin{prp}\label{prp:nonl-tildeG}
Suppose that Assumption $\ref{asm:p-q}$ holds. 
Then for any $(\eta^i,\Bu^i)\in K_{p,q_1,q_2}^N$, $i=1,2$,
\begin{align*}
&\sum_{r\in\{q_1/2,q_2\}}
\|\langle t \rangle( \pd_t\wtd\SSG(\eta^1,\Bu^1)-\pd_t\wtd\SSG(\eta^2,\Bu^2))\|_{L_p(\BR_+,L_r(\BR_-^N)^N)} \\
&\leq 
M_4\|(\eta^1-\eta^2,\Bu^1-\Bu^2)\|_{K_{p,q_1,q_2}^N}
\Big(\|(\eta^1,\Bu^1)\|_{K_{p,q_1,q_2}^N}+\|(\eta^2,\Bu^2)\|_{K_{p,q_1,q_2}^N}\Big), \\
&\sum_{r\in\{q_1/2,q_2\}}
\|\langle t \rangle( \wtd\SSG(\eta^1,\Bu^1)-\wtd\SSG(\eta^2,\Bu^2))\|_{L_p(\BR_+,L_r(\BR_-^N)^N)} \\
&\leq 
M_5\|(\eta^1-\eta^2,\Bu^1-\Bu^2)\|_{K_{p,q_1,q_2}^N}
\Big(\|(\eta^1,\Bu^1)\|_{K_{p,q_1,q_2}^N}+\|(\eta^2,\Bu^2)\|_{K_{p,q_1,q_2}^N}\Big),
\end{align*}
with positive constants $M_4$ and $M_5$.
\end{prp}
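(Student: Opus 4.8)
The plan is to reduce both inequalities to Lemma \ref{lem:wtd-g-1}, in the same spirit in which the estimates for $\SSF$ were reduced to Lemmas \ref{lem:nonl-F-0}--\ref{lem:nonl-F-2}. First I would unwind the definition \eqref{def:G-tilde} of $\wtd\SSG$ by means of \eqref{matrix:JK}. Since $\BJ(\eta)$ has every column zero except the last, which equals $(\pd_1\CE_N\eta,\dots,\pd_N\CE_N\eta)^\SST$, one has $\BJ(\eta)^\SST\Bu=\big(\sum_{k=1}^N(\pd_k\CE_N\eta)u_k\big)\Be_N$, so that
\[
\wtd\SSG(\eta,\Bu)=-\sum_{k=1}^N(\pd_N\CE_N\eta)u_k\Be_k+\sum_{k=1}^N(\pd_k\CE_N\eta)u_k\Be_N .
\]
Thus each scalar entry of $\wtd\SSG(\eta,\Bu)$ is, up to sign, a component of $\wtd\SSG_1(\eta,\Bu)=(\pd_j\CE_N\eta)\Bu$ for a suitable index $j\in\{1,\dots,N\}$, and its norm is controlled by that of the full vector $\wtd\SSG_1(\eta,\Bu)$.

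Next I would differentiate in $t$ where needed; this introduces no new structure, because Lemma \ref{lem:wtd-g-1} already bounds both $\wtd\SSG_1$ and $\pd_t\wtd\SSG_1$ (the latter using Lemma \ref{lem:nonl-F-1} applied to $\SSF_5$, cf. \eqref{dfn-F5}). Consequently, writing $\wtd\SSG(\eta^1,\Bu^1)-\wtd\SSG(\eta^2,\Bu^2)$, respectively its time derivative, as the corresponding finite linear combination of differences of the form $\wtd\SSG_1(\eta^1,\Bu^1)-\wtd\SSG_1(\eta^2,\Bu^2)$ with $\Bu$ replaced by a single component, I would apply the triangle inequality and then Lemma \ref{lem:wtd-g-1} term by term. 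Summing over the finitely many indices $j,k$ and over $r\in\{q_1/2,q_2\}$ yields the two asserted inequalities, with the constants $M_4$ and $M_5$ absorbing the (finite) number of summands.

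I do not expect any genuine obstacle here: the analytic content — the H\"older-type splittings $\langle t\rangle=\langle t\rangle^{1/3}\langle t\rangle^{2/3}$, or $\langle t\rangle^{1/2}\langle t\rangle^{1/2}$ when $N\geq4$, combined with the embeddings of Subsection \ref{subsec:fspaces} — is already packaged inside Lemma \ref{lem:wtd-g-1}. The only point to check is that Lemma \ref{lem:wtd-g-1}, stated for the full velocity field $\Bu$, remains valid when $\Bu$ is replaced by one of its scalar components; this is immediate, since every norm occurring in that lemma controls each component separately. For this reason the remaining computation can be omitted, just as the paper already does for Lemma \ref{lem:nonl-F-3} and Proposition \ref{prp:nonl-F}.
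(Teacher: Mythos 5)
Your proposal is correct and follows essentially the same route as the paper: the paper's proof is exactly "by Lemma \ref{lem:wtd-g-1}", since $\wtd\SSG(\eta,\Bu)=-(\pd_N\CE_N\eta)\Bu+\big(\sum_{k=1}^N(\pd_k\CE_N\eta)u_k\big)\Be_N$ is a finite combination of components of the terms $\wtd\SSG_1(\eta,\Bu)=(\pd_j\CE_N\eta)\Bu$ treated there, and the same holds for the time derivative. Your explicit unwinding of $\BJ(\eta)^\SST\Bu$ and the componentwise reduction is precisely the computation the paper omits.
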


\subsection{Estimates of $\SSG(\eta,\Bu)$}

This subsection estimates $\SSG(\eta,\Bu)$ given by \eqref{def:G}.
Let us define
\begin{equation}\label{dfn:K-L}
\SSK_{j,k,l}(\eta,\Bu)=(\pd_j\CE_N\eta)\pd_k u_l, \quad
\SSL_{j,k,l}(\eta)=(\pd_j\CE_N\eta)\pd_k\pd_l\CE_N\eta
\end{equation}
for $j,k,l=1,\dots,N$, where $u_l$ is the $l$th component of $\Bu$.
Here $\SSK_{j,k,l}(\eta,\Bu)$ is used to estimate $\SSG(\eta,\Bu)$,
while $\SSL_{j,k,l}(\eta)$ is used to estimate $\SSH(\eta,\Bu)$ given by \eqref{dfn:H}
in the next subsection.

Let us start with the following lemma.

\begin{lem}\label{lem-K-est}
Suppose that Assumption $\ref{asm:p-q}$ holds and let $r\in\{q_1/2,q_2\}$.
Let $Z=L_p(\BR_+,H_r^1(\BR_-^N))$ or $Z=H_p^{1/2}(\BR_+,L_r(\BR_-^N))$.
Then for any $(\eta^i,\Bu^i)\in K_{p,q_1,q_2}^N$, $i=1,2$,
\begin{align*}
&\|\langle t \rangle (\SSK_{j,k,l}(\eta^1,\Bu^1)-\SSK_{j,k,l}(\eta^2,\Bu^2))\|_{Z} \\
&\leq C\|(\eta^1-\eta^2,\Bu^1-\Bu^2)\|_{K_{p,q_1,q_2}^N}
\Big(\|(\eta^1,\Bu^1)\|_{K_{p,q_1,q_2}^N}+\|(\eta^2,\Bu^2)\|_{K_{p,q_1,q_2}^N}\Big), \\
&\|\langle t \rangle (\SSL_{j,k,l}(\eta^1)-\SSL_{j,k,l}(\eta^2))\|_{Z} \\
&\leq C\|(\eta^1-\eta^2,\Bu^1-\Bu^2)\|_{K_{p,q_1,q_2}^N}
\Big(\|(\eta^1,\Bu^1)\|_{K_{p,q_1,q_2}^N}+\|(\eta^2,\Bu^2)\|_{K_{p,q_1,q_2}^N}\Big),
\end{align*}
with some positive constant $C$.
\end{lem}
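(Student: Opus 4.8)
The plan is to use the bilinear structure of $\SSK_{j,k,l}$ and $\SSL_{j,k,l}$ together with the time-weighted embeddings of Lemmas \ref{lem:main-embed3d} and \ref{lem:main-embed4d}, the H\"older-type inequalities of Lemma \ref{lem:holder-type-ineq}, and the bilinear $H_p^{1/2}$-estimate of Lemma \ref{lem:H-half}. First I would split each difference so that exactly one factor is a difference:
\begin{align*}
\SSK_{j,k,l}(\eta^1,\Bu^1)-\SSK_{j,k,l}(\eta^2,\Bu^2)
&=(\pd_j\CE_N(\eta^1-\eta^2))\,\pd_k u_l^1+(\pd_j\CE_N\eta^2)\,\pd_k(u_l^1-u_l^2), \\
\SSL_{j,k,l}(\eta^1)-\SSL_{j,k,l}(\eta^2)
&=(\pd_j\CE_N(\eta^1-\eta^2))\,\pd_k\pd_l\CE_N\eta^1+(\pd_j\CE_N\eta^2)\,\pd_k\pd_l\CE_N(\eta^1-\eta^2).
\end{align*}
In each of the four resulting terms one factor is an \emph{extension factor} $\pd_j\CE_N\psi$ and the other a \emph{derivative factor}, namely $\pd_k u_l$ or $\pd_k\pd_l\CE_N\chi$; by the symmetry in $\eta^1,\eta^2$ it suffices to bound a generic product $\langle t\rangle^{a}(\pd_j\CE_N\psi)\cdot\langle t\rangle^{b}(\text{derivative factor})$ in the space $Z$, where $(a,b)=(2/3,1/3)$ when $N=3$ and $(a,b)=(1/2,1/2)$ when $N\geq 4$, so that $a+b=1$ and $\langle t\rangle=\langle t\rangle^a\langle t\rangle^b$ exactly.

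For $Z=L_p(\BR_+,H_r^1(\BR_-^N))$ I would apply the pointwise-in-$t$ inequality \eqref{r-holder-2} (with $r_2$ as in Lemma \ref{lem:holder-type-ineq}), put the extension factor in $L_\infty$ in time via \eqref{3d-em-1} resp.\ \eqref{4d-em-1} (i.e.\ $\|\langle t\rangle^{a}\nabla\CE_N\psi\|_{L_\infty(\BR_+,H_q^1)}\leq C\|\psi\|_{K_{p,q_1,q_2;1}^N}$), and put the derivative factor in $L_p$ in time, controlled by $\|(\eta,\Bu)\|_{K_{p,q_1,q_2}^N}$ through the $Z_2^{b}(p,\cdot)$-component for $\pd_k u_l$ and through $Z_1^{b}(p,\cdot)$ together with Lemma \ref{lem:ext} \eqref{lem:ext-3} ($m=3$) for $\pd_k\pd_l\CE_N\chi$. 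For $Z=H_p^{1/2}(\BR_+,L_r(\BR_-^N))$ I would invoke Lemma \ref{lem:H-half} with the splitting $1/r=1/r_1+1/r_2$ of Lemma \ref{lem:holder-type-ineq}, taking the extension factor as the $H_p^1$-in-time factor and the derivative factor as the $H_p^{1/2}$-in-time factor: $\|\langle t\rangle^{a}\pd_j\CE_N\psi\|_{H_p^1(\BR_+,L_{r_1})}\leq C\|\psi\|_{K_{p,q_1,q_2;1}^N}$ follows from \eqref{3d-em-1:eq-1} resp.\ \eqref{rmk:equi-norm-4d-2} when $r_1=q_1$ and from \eqref{3d-em-3} resp.\ \eqref{4d-em-3} when $r_1=\infty$, while $\|\langle t\rangle^{b}\pd_k u_l\|_{H_p^{1/2}(\BR_+,L_{r_2})}\leq C\|\Bu\|_{K_{p,q_1,q_2;2}^N}$ is \eqref{3d-em-7} resp.\ \eqref{4d-em-7}; for $\SSL$ one uses instead \eqref{3d-em-4} resp.\ \eqref{4d-em-4} for the derivative factor, reassigning which of the two $\eta$-factors carries the heavier weight so the exponents still sum to $1$. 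Multiplying the two weighted factor-norms produces the asserted bound, linear in $\|(\eta^1-\eta^2,\Bu^1-\Bu^2)\|_{K_{p,q_1,q_2}^N}$ times the first power of $\|(\eta^1,\Bu^1)\|_{K_{p,q_1,q_2}^N}+\|(\eta^2,\Bu^2)\|_{K_{p,q_1,q_2}^N}$.

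The main obstacle is the $H_p^{1/2}$-in-time case. One must decide correctly which factor is ``full'' $H_p^1$ in time and which is only $H_p^{1/2}$ in time, and then distribute the weight $\langle t\rangle$ over the two available heights ($1/3$ and $2/3$ when $N=3$, both $1/2$ when $N\geq 4$) so that both weighted factor-norms are dominated by the $K_{p,q_1,q_2}^N$-norm. In particular the derivative factor $\pd_k u_l$ only carries $H_p^{1/2}$-regularity in time (through Lemma \ref{int-H-1/2} \eqref{int-H-1/2-2}), which forces the extension factor into $H_p^1(\BR_+,L_{r_1})$ and hence, when $r=q_2$ so that $r_1=\infty$, into an $L_\infty$-in-space bound; this is exactly why the embeddings \eqref{3d-em-3} and \eqref{4d-em-3} are needed. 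The remaining verifications — the purely spatial H\"older steps, the elementary estimates $|\pd_t\langle t\rangle^{a}|\leq C$, and the combinatorics of the four terms — are routine.
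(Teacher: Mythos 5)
Your proposal is correct and follows essentially the same route as the paper: the same splitting into two products each containing one difference factor, the spatial H\"older inequality \eqref{r-holder-2} with the pair $(r_1,r_2)$ and the $L_\infty$-in-time/$L_p$-in-time weight distribution ($2/3+1/3$ for $N=3$, $1/2+1/2$ for $N\geq 4$) via Lemmas \ref{lem:main-embed3d} and \ref{lem:main-embed4d} for $Z=L_p(\BR_+,H_r^1(\BR_-^N))$, and Lemma \ref{lem:H-half} with the extension factor in $H_p^1(\BR_+,L_{r_1})$ and the derivative factor in $H_p^{1/2}(\BR_+,L_{r_2})$ for $Z=H_p^{1/2}(\BR_+,L_r(\BR_-^N))$. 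You additionally spell out the $\SSL_{j,k,l}$ case (which the paper dismisses as "similar") with the correct use of \eqref{3d-em-4}/\eqref{4d-em-4}; only note that for $N\geq 4$ the extension operator is $\CB$, so the relevant item of Lemma \ref{lem:ext} is \eqref{lem:ext-2} (or Remark \ref{rmk:equi-norm-4d}) rather than \eqref{lem:ext-3}, a harmless citation slip.
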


\begin{proof}
Since $\SSL_{j,k,l}(\eta)$ can be treated similarly to $\SSK_{j,k,l}(\eta,\Bu)$,
we consider in this proof $\SSK_{j,k,l}(\eta,\Bu)$ only.
Let us write 
\begin{align*}
\SSK_{j,k,l}(\eta^1,\Bu^1)-\SSK_{j,k,l}(\eta^2,\Bu^2)
&=(\pd_j\CE_N\eta^1-\pd_j\CE_N\eta^2)\pd_k u_l^1
+(\pd_j\CE_N\eta^2)(\pd_k u_l^1-\pd_k u_l^2) \\
&=:I_1+I_2.
\end{align*}
We use $(r_1,r_2)$ defined in Lemma \ref{lem:holder-type-ineq} in what follows.

{\bf Case 1}: $N=3$.
 It holds by \eqref{r-holder-2} that
\begin{equation*}
\|I_1(t)\|_{H_r^1(\BR_-^N)}\leq \|\pd_j\CE_N\eta^1-\pd_j\CE_N\eta^2\|_{H_{r_2}^1(\BR_-^N)}
\|\pd_k u_l^1\|_{H_{r_2}^1(\BR_-^N)},
\end{equation*}
which, combined with Lemma \ref{lem:main-embed3d}, furnishes
\begin{align*}
\|\langle t \rangle I_1\|_{L_p(\BR_+,H_{r}^1(\BR_-^N))}
&\leq \|\langle t \rangle^{2/3}(\pd_j\CE_N\eta^1-\pd_j\CE_N\eta^2)\|_{L_\infty(\BR_+,H_{r_2}^1(\BR_-^N))} \\
&\times\|\langle t \rangle^{1/3}\pd_k u_l^1\|_{L_p(\BR_+,H_{r_2}^1(\BR_-^N))} \\
&\leq C\|\eta^1-\eta^2\|_{K_{p,q_1,q_2;1}^N}\|\Bu^1\|_{K_{p,q_1,q_2;2}^N}.
\end{align*}
On the other hand, Lemma \ref{lem:H-half} and $1/r=1/r_1+1/r_2$ show that
\begin{align*}
\|\langle t \rangle I_1\|_{H_p^{1/2}(\BR_+,L_r(\BR_-^N))}
&\leq C\|\langle t \rangle^{2/3}(\pd_j\CE_N\eta^1-\pd_j\CE_N\eta^2)\|_{H_p^1(\BR_+,L_{r_1}(\BR_-^N))} \\
&\times \|\langle t \rangle^{1/3}\pd_k u_l^1\|_{H_p^{1/2}(\BR_+,L_{r_2}(\BR_-^N))},
\end{align*}
which, combined with Lemma \ref{lem:main-embed3d}, furnishes
\begin{equation*}
\|\langle t \rangle I_1\|_{H_p^{1/2}(\BR_+,L_r(\BR_-^N))}
\leq C\|\eta^1-\eta^2\|_{K_{p,q_1,q_2;1}^N}\|\Bu^1\|_{K_{p,q_1,q_2;2}^N}.
\end{equation*}

Summing up the inequalities above for $I_1$, we have
\begin{align*}
&\|\langle t \rangle I_1\|_{L_p(\BR_+,H_{r}^1(\BR_-^N))}
+\|\langle t \rangle I_1\|_{H_p^{1/2}(\BR_+,L_r(\BR_-^N))} \\
&\leq C\|\eta^1-\eta^2\|_{K_{p,q_1,q_2;1}^N}\|\Bu^1\|_{K_{p,q_1,q_2;2}^N}.
\end{align*}
Analogously,
\begin{align*}
&\|\langle t \rangle I_2\|_{L_p(\BR_+,H_{r}^1(\BR_-^N))}
+\|\langle t \rangle I_2\|_{H_p^{1/2}(\BR_+,L_r(\BR_-^N))} \\
&\leq C\|\eta^2\|_{K_{p,q_1,q_2;1}^N}\|\Bu^1-\Bu^2\|_{K_{p,q_1,q_2;2}^N}.
\end{align*}
We therefore obtain the desired inequalities for $\SSK_{j,k,l}(\eta,\Bu)$.

{\bf Case 2}: $N\geq 4$.
We replace  $\langle t \rangle^{1/3}$ with $\langle t \rangle^{1/2}$
and $\langle t \rangle^{2/3}$ with $\langle t \rangle^{1/2}$ in Case 1,
and then obtain the desired inequalities
from Lemma \ref{lem:main-embed4d}.
This completes the proof of Lemma \ref{lem-K-est}.
\end{proof}

By Lemma \ref{lem-K-est}, we immediately obtain

\begin{prp}\label{prp:nonl-G-1}
Suppose that Assumption $\ref{asm:p-q}$ holds.
Then for any $(\eta^i,\Bu^i)\in K_{p,q_1,q_2}^N$, $i=1,2$, 
\begin{align*}
&\sum_{r\in\{q_1/2,q_2\}}\|\langle t \rangle(\SSG(\eta^1,\Bu^1)-\SSG(\eta^2,\Bu^2))\|_{L_p(\BR_+,H_r^1(\BR_-^N))} \\
&\leq M_6\|(\eta^1-\eta^2,\Bu^1-\Bu^2)\|_{K_{p,q_1,q_2}^N}
\Big(\|(\eta^1,\Bu^1)\|_{K_{p,q_1,q_2}^N}+\|(\eta^2,\Bu^2)\|_{K_{p,q_1,q_2}^N}\Big), \\
&\sum_{r\in\{q_1/2,q_2\}}
\|\langle t \rangle(\SSG(\eta^1,\Bu^1)-\SSG(\eta^2,\Bu^2))\|_{H_p^{1/2}(\BR_+,L_r(\BR_-^N))} \\
&\leq M_7\|(\eta^1-\eta^2,\Bu^1-\Bu^2)\|_{K_{p,q_1,q_2}^N}
\Big(\|(\eta^1,\Bu^1)\|_{K_{p,q_1,q_2}^N}+\|(\eta^2,\Bu^2)\|_{K_{p,q_1,q_2}^N}\Big),
\end{align*}
with positive constant $M_6$ and $M_7$.
\end{prp}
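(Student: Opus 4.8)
The plan is to reduce the two claimed estimates to the model bounds already established in Lemma \ref{lem-K-est}. First I would rewrite $\SSG(\eta,\Bu)$, given by \eqref{def:G}, as a finite linear combination with constant coefficients of the quantities $\SSK_{j,k,l}(\eta,\Bu)=(\pd_j\CE_N\eta)\pd_k u_l$ from \eqref{dfn:K-L}. Expanding $\dv\Bu=\sum_{k=1}^N\pd_k u_k$ and $\nabla\CE_N\eta\cdot\pd_N\Bu=\sum_{j=1}^N(\pd_j\CE_N\eta)\pd_N u_j$ gives
\begin{equation*}
\SSG(\eta,\Bu)=-\sum_{k=1}^N\SSK_{N,k,k}(\eta,\Bu)+\sum_{j=1}^N\SSK_{j,N,j}(\eta,\Bu),
\end{equation*}
so that $\SSG(\eta^1,\Bu^1)-\SSG(\eta^2,\Bu^2)$ is the corresponding linear combination of the differences $\SSK_{j,k,l}(\eta^1,\Bu^1)-\SSK_{j,k,l}(\eta^2,\Bu^2)$.

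Next I would apply the triangle inequality in each of the two target norms, namely $Z=L_p(\BR_+,H_r^1(\BR_-^N))$ and $Z=H_p^{1/2}(\BR_+,L_r(\BR_-^N))$, and invoke Lemma \ref{lem-K-est} term by term. Since the index set $(j,k,l)$ is finite, summing the resulting bounds and then summing over $r\in\{q_1/2,q_2\}$ produces the two asserted inequalities, with $M_6$ and $M_7$ taken to be the accumulated constants. Note that both $\SSG$ and the $\SSK_{j,k,l}$ are scalar-valued, so the target spaces in Proposition \ref{prp:nonl-G-1} and in Lemma \ref{lem-K-est} match and the term-by-term application is literally valid.

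There is essentially no obstacle here: all of the analytic content — the Hölder-type splittings, the embeddings of $K_{p,q_1,q_2}^N$ into the relevant $L_\infty$-in-time spaces (Lemmas \ref{lem:main-embed3d} and \ref{lem:main-embed4d}), and the product rule for $H_p^{1/2}$ in time (Lemma \ref{lem:H-half}) — has already been absorbed into Lemma \ref{lem-K-est}. The only point worth a brief remark in the write-up is the explicit algebraic identity above expressing $\SSG$ through the $\SSK_{j,k,l}$, after which the estimate is immediate; for this reason the detailed computation may reasonably be omitted.
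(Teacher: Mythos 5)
Your proposal is correct and is essentially the paper's own argument: the paper deduces Proposition \ref{prp:nonl-G-1} directly from Lemma \ref{lem-K-est}, leaving implicit exactly the identity $\SSG(\eta,\Bu)=-\sum_{k=1}^N\SSK_{N,k,k}(\eta,\Bu)+\sum_{j=1}^N\SSK_{j,N,j}(\eta,\Bu)$ that you write out, followed by the triangle inequality in both norms and summation over $r\in\{q_1/2,q_2\}$. Your expansion of $\SSG$ in terms of the $\SSK_{j,k,l}$ is accurate, so nothing further is needed.
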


\subsection{Estimates of $\SSH(\eta,\Bu)$}

This subsection estimates $\SSH(\eta,\Bu)$ given by \eqref{dfn:H}.
Recall $\CF(\eta)$ given by \eqref{dfn:F-eta}
and $\SSK_{j,k,l}(\eta,\Bu)$, $\SSL_{j,k,l}(\eta)$ given by \eqref{dfn:K-L}.
We define for multi-index $\alpha\in\BN_0^N$
\begin{equation}\label{dfn:K-L-v2}
\SSK_{j,k,l}^\alpha(\eta,\Bu)
=(\CF(\eta))^\alpha \SSK_{j,k,l}(\eta,\Bu), \quad 
\SSL_{j,k,l}^\alpha(\eta)
=(\CF(\eta))^\alpha\SSL_{j,k,l}(\eta).
\end{equation}

Let us first prove

\begin{lem}\label{lem:K-L-est}
Suppose that Assumption $\ref{asm:p-q}$ holds and let $r\in\{q_1/2,q_2\}$.
Let $Z=L_p(\BR_+,H_r^1(\BR_-^N))$ or $Z=H_p^{1/2}(\BR_+,L_r(\BR_-^N))$.
Then for any $(\eta^i,\Bu^i)\in K_{p,q_1,q_2}^N$, $i=1,2$,
and for any multi-index $\alpha\in\BN_0^N$ 
\begin{align*}
&\|\langle t \rangle (\SSK_{j,k,l}^\alpha(\eta^1,\Bu^1)-\SSK_{j,k,l}^\alpha(\eta^2,\Bu^2))\|_Z \\
&\leq C\|(\eta^1-\eta^2,\Bu^1-\Bu^2)\|_{K_{p,q_1,q_2}^N}
\Big(\|(\eta^1,\Bu^1)\|_{K_{p,q_1,q_2}^N}+\|(\eta^2,\Bu^2)\|_{K_{p,q_1,q_2}^N}\Big)^{|\alpha|+1}, \\
&\|\langle t \rangle (\SSL_{j,k,l}^\alpha(\eta^1)-\SSL_{j,k,l}^\alpha(\eta^2))\|_{Z} \\
&\leq C\|\eta^1-\eta^2\|_{K_{p,q_1,q_2;1}^N}
\Big(\|\eta^1\|_{K_{p,q_1,q_2;1}^N}+\|\eta^2\|_{K_{p,q_1,q_2;1}^N}\Big)^{|\alpha|+1},
\end{align*}
with a positive constant $C$.
\end{lem}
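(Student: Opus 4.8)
\textbf{Proof plan for Lemma \ref{lem:K-L-est}.}
The strategy is to reduce the estimates for $\SSK_{j,k,l}^\alpha$ and $\SSL_{j,k,l}^\alpha$ to the already-established estimates for their building blocks, namely Lemma \ref{lem-K-est} (which handles $\SSK_{j,k,l}$ and $\SSL_{j,k,l}$ without the $(\CF(\eta))^\alpha$ prefactor) together with Lemma \ref{lem:nonl-F-2} (which controls differences of the monomials $(\CF(\eta^1))^\alpha-(\CF(\eta^2))^\alpha$ in the four norms $H_p^1(\BR_+,H_q^1)$, $H_p^1(\BR_+,L_\infty)$, $L_\infty(\BR_+,H_q^1)$, $L_\infty(\BR_+,L_\infty)$). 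Since $\SSL_{j,k,l}^\alpha(\eta)$ involves only $\eta$ and is structurally identical to $\SSK_{j,k,l}^\alpha(\eta,\Bu)$ with $\Bu$ replaced by (derivatives of) $\CE_N\eta$, I will treat $\SSK_{j,k,l}^\alpha$ in detail and remark that $\SSL_{j,k,l}^\alpha$ is analogous.

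\textbf{Telescoping the difference.} First I would write the difference as a two-term telescoping sum,
\begin{align*}
\SSK_{j,k,l}^\alpha(\eta^1,\Bu^1)-\SSK_{j,k,l}^\alpha(\eta^2,\Bu^2)
&=\big((\CF(\eta^1))^\alpha-(\CF(\eta^2))^\alpha\big)\SSK_{j,k,l}(\eta^1,\Bu^1) \\
&\quad +(\CF(\eta^2))^\alpha\big(\SSK_{j,k,l}(\eta^1,\Bu^1)-\SSK_{j,k,l}(\eta^2,\Bu^2)\big) \\
&=:I_1+I_2.
\end{align*}
For $I_1$, I estimate the product in $Z=L_p(\BR_+,H_r^1(\BR_-^N))$ by distributing the time weight as $\langle t\rangle=\langle t\rangle^{1/2}\cdot\langle t\rangle^{1/2}$ (or $\langle t\rangle^{1/3}\cdot\langle t\rangle^{2/3}$ when $N=3$), putting the $(\CF(\eta^1))^\alpha-(\CF(\eta^2))^\alpha$ factor into $L_\infty(\BR_+,L_\infty(\BR_-^N))$ via Lemma \ref{lem:nonl-F-2} — which produces the factor $\|\eta^1-\eta^2\|_{K^N_{p,q_1,q_2;1}}(\|\eta^1\|+\|\eta^2\|)^{|\alpha|-1}$ — and the time-weighted $\SSK_{j,k,l}(\eta^1,\Bu^1)$ factor into $Z$ via Lemma \ref{lem-K-est} with $(\eta^2,\Bu^2)=(0,0)$, which gives $\|(\eta^1,\Bu^1)\|_{K^N_{p,q_1,q_2}}^2$. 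For the $H_p^{1/2}(\BR_+,L_r(\BR_-^N))$ norm I instead invoke Lemma \ref{lem:H-half} with the exponent split $1/r=1/r_1+1/r_2$ appropriate to $r\in\{q_1/2,q_2\}$ (Lemma \ref{lem:holder-type-ineq}), putting $(\CF(\eta^1))^\alpha-(\CF(\eta^2))^\alpha$ into $H_p^1(\BR_+,L_{r_1}(\BR_-^N))$ (again Lemma \ref{lem:nonl-F-2}, using that $H_{q_2}^1\subset L_\infty$) and $\langle t\rangle\SSK_{j,k,l}(\eta^1,\Bu^1)$ into $H_p^{1/2}(\BR_+,L_{r_2}(\BR_-^N))$ from Lemma \ref{lem-K-est}. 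Collecting powers of the norms across both factors yields the claimed exponent $|\alpha|+1$. The term $I_2$ is handled symmetrically: $(\CF(\eta^2))^\alpha$ goes into $L_\infty(\BR_+,L_\infty)$ (or $H_p^1(\BR_+,L_{r_1})$) via Lemma \ref{lem:nonl-F-2} with $\eta^1$ there replaced by $\eta^2$ and second argument zero, giving $\|\eta^2\|_{K^N_{p,q_1,q_2;1}}^{|\alpha|}$, and the difference $\SSK_{j,k,l}(\eta^1,\Bu^1)-\SSK_{j,k,l}(\eta^2,\Bu^2)$ goes into $Z$ via Lemma \ref{lem-K-est}, producing $\|(\eta^1-\eta^2,\Bu^1-\Bu^2)\|(\|(\eta^1,\Bu^1)\|+\|(\eta^2,\Bu^2)\|)$; the total power is again $|\alpha|+1$.

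\textbf{The $\SSL$ case.} For $\SSL_{j,k,l}^\alpha(\eta)=(\CF(\eta))^\alpha\SSL_{j,k,l}(\eta)$ the same telescoping produces $I_1$ and $I_2$ with $\SSK$ replaced by $\SSL$; all the time-weighted factors now involve only $\eta$, so Lemma \ref{lem-K-est} (the $\SSL$-part) and Lemma \ref{lem:nonl-F-2} again apply verbatim, and the bound is in terms of $\|\eta^1-\eta^2\|_{K^N_{p,q_1,q_2;1}}$ and $\|\eta^1\|_{K^N_{p,q_1,q_2;1}}+\|\eta^2\|_{K^N_{p,q_1,q_2;1}}$ with the power $|\alpha|+1$, as stated. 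I would remark at the end that the constants depend only on $p$, $q_1$, $q_2$, $N$ through the cited lemmas.

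\textbf{Expected main obstacle.} The routine part is the product and Hölder bookkeeping; the only genuinely delicate point is making sure the time-weight split is consistent between the $(\CF(\eta))^\alpha$ factor and the $\SSK/\SSL$ factor so that the total weight is exactly $\langle t\rangle^1$, since Lemma \ref{lem:nonl-F-2} provides bounds only with no time weight (i.e. one must absorb all of $\langle t\rangle$ into the $\SSK$ factor) — this forces using Lemma \ref{lem-K-est} with the full weight $\langle t\rangle$ on $\SSK_{j,k,l}$, which is exactly how that lemma is stated, so it works, but one should double-check the $H_p^{1/2}$-in-time product estimate: Lemma \ref{lem:H-half} requires the first factor in $H_p^1$-in-time, and $(\CF(\eta))^\alpha$ differences are indeed controlled in $H_p^1(\BR_+,L_{r_1}(\BR_-^N))$ by Lemma \ref{lem:nonl-F-2}, so no weight needs to sit on that factor. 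With that checked, the proof is a direct assembly and I would simply say "the detailed computations are routine and omitted," matching the style of the surrounding lemmas.
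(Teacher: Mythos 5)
Your plan follows the paper's proof almost verbatim: the same telescoping decomposition into $I_1+I_2$, the same use of Lemma \ref{lem-K-est} with the full weight $\langle t\rangle$ on the $\SSK$ (resp.\ $\SSL$) factor, Lemma \ref{lem:nonl-F-2} (unweighted) for the monomial differences $(\CF(\eta^1))^\alpha-(\CF(\eta^2))^\alpha$, and Lemma \ref{lem:H-half} for the $H_p^{1/2}$-in-time product. Your self-correction in the last paragraph (all of $\langle t\rangle$ must sit on the $\SSK/\SSL$ factor, none on the monomial factor) is exactly what the paper does, so the overall route is the right one.

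Two bookkeeping points in your write-up need repair, both fixable with the lemmas you already cite. First, for $Z=L_p(\BR_+,H_r^1(\BR_-^N))$ you propose to put $(\CF(\eta^1))^\alpha-(\CF(\eta^2))^\alpha$ only into $L_\infty(\BR_+,L_\infty(\BR_-^N))$; that cannot control the spatial $H_r^1$ norm of $I_1$, since the derivative may fall on the monomial factor. You must use the $L_\infty(\BR_+,H_{r_2}^1)$ control from Lemma \ref{lem:nonl-F-2} (as the paper does, via the pointwise-in-time estimate \eqref{r-holder-2}), keeping $L_\infty$-in-space control only as a by-product. Second, when $r=q_1/2$ the product estimate forces the exponent $r_2=q_1$ on the weighted $\SSK$ factor, whereas Lemma \ref{lem-K-est} gives $\|\langle t\rangle\SSK_{j,k,l}(\eta^1,\Bu^1)\|_{L_p(\BR_+,H_s^1)}$ only for $s\in\{q_1/2,q_2\}$; the paper bridges this by the interpolation inequality of Lemma \ref{lem:int-p-v2} together with Young's inequality to obtain the $H_{q_1}^1$ bound. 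Any variant of your Hölder split runs into the same intermediate exponent, so this interpolation step (or an equivalent embedding argument) must appear; once these two points are inserted, your assembly, including the treatment of $I_2$ and of $\SSL^\alpha_{j,k,l}$, gives the stated powers $|\alpha|+1$ exactly as in the paper.
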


\begin{proof}
Since $\SSL_{j,k,l}^\alpha(\eta,\Bu)$ can be treated similarly to $\SSK_{j,k,l}^\alpha(\eta,\Bu)$,
we consider in this proof $\SSK_{j,k,l}^\alpha(\eta,\Bu)$ only.
In the case $|\alpha|=0$, the assertion is already proved in Lemma \ref{lem-K-est},
so that  we prove the case $|\alpha|\geq 1$ in what follows.

We use $(r_1,r_2)$ defined in Lemma \ref{lem:holder-type-ineq}.
Let us write 
\begin{align*}
&\SSK_{j,k,l}^\alpha(\eta^1,\Bu^1)-\SSK_{j,k,l}^\alpha(\eta^2,\Bu^2) \\
&=((\CF(\eta^1))^\alpha-(\CF(\eta^2))^\alpha)\SSK_{j,k,l}(\eta^1,\Bu^1)
+ (\CF(\eta^2))^\alpha(\SSK_{j,k,l}(\eta^1,\Bu^1)-\SSK_{j,k,l}(\eta^2,\Bu^2)) \\
&=:I_1+I_2,
\end{align*}
and then \eqref{r-holder-2} gives us
\begin{align*}
\|I_1(t)\|_{H_r^1(\BR_-^N)}
&\leq 
C \|(\CF(\eta^1(t)))^\alpha-(\CF(\eta^2(t)))^\alpha\|_{H_{r_2}^1(\BR_-^N)} \\
&\times \|\SSK_{j,k,l}(\eta^1(t),\Bu^1(t))\|_{H_{r_2}^1(\BR_-^N)}.
\end{align*}
This implies
\begin{align}\label{est-I1-K1}
\|\langle t \rangle I_1\|_{L_p(\BR_+,H_r^1(\BR_-^N))}
&\leq C\|(\CF(\eta^1))^\alpha-(\CF(\eta^2))^\alpha\|_{L_\infty(\BR_+,H_{r_2}^1(\BR_-^N))} \notag \\
&\times \|\langle t\rangle \SSK_{j,k,l}(\eta^1,\Bu^1)\|_{L_p(\BR_+,H_{r_2}^1(\BR_-^N))}.
\end{align}
Lemma \ref{lem:nonl-F-2} shows that
\begin{align}\label{eq-F-eta-est}
&\|(\CF(\eta^1))^\alpha-(\CF(\eta^2))^\alpha\|_{L_\infty(\BR_+,H_{r_2}^1(\BR_-^N))} \notag \\
&\leq C
\|\eta^1-\eta^2\|_{K_{p,q_1,q_2;1}^N} \Big(\|\eta^1\|_{K_{p,q_1,q_2;1}^N}+\|\eta^2\|_{K_{p,q_1,q_2;2}^N}\Big)^{|\alpha|-1},
\end{align}
while Lemma \ref{lem-K-est} with $(\eta^2,\Bu^2)=(0,0)$ yields 
\begin{equation}\label{est-K-LH}
\|\langle t\rangle \SSK_{j,k,l}(\eta^1,\Bu^1)\|_{L_p(\BR_+,H_s^1(\BR_-^N))}
\leq C\|(\eta^1,\Bu^1)\|_{K_{p,q_1,q_2}^N}^2
\end{equation}
for $s\in\{q_1/2,q_2\}$.
Since $q_1/2<q_1<q_2$, Lemma \ref{lem:int-p-v2} \eqref{lem:int-p-1-2} gives us
\begin{align*}
&\|\langle t\rangle \SSK_{j,k,l}(\eta^1,\Bu^1)\|_{L_p(\BR_+,H_{q_1}^1(\BR_-^N))} \\
&\leq C\|\langle t\rangle \SSK_{j,k,l}(\eta^1,\Bu^1)\|_{L_p(\BR_+,H_{q_1/2}^1(\BR_-^N))}^{1-\theta}
\|\langle t\rangle \SSK_{j,k,l}(\eta^1,\Bu^1)\|_{L_p(\BR_+,H_{q_2}^1(\BR_-^N))}^\theta
\end{align*}
for some $\theta\in(0,1)$.
This inequality together with $a^{1-\theta}b^\theta\leq C(a+b)$ for $a,b\geq 0$ shows that
\begin{equation*}
\|\langle t\rangle \SSK_{j,k,l}(\eta^1,\Bu^1)\|_{L_p(\BR_+,H_{q_1}^1(\BR_-^N))}
\leq C\sum_{r\in\{q_1/2,q_2\}}
\|\langle t\rangle \SSK_{j,k,l}(\eta^1,\Bu^1)\|_{L_p(\BR_+,H_{r}^1(\BR_-^N))},
\end{equation*}
which, combined with \eqref{est-K-LH}, furnishes
\begin{equation*}
\|\langle t\rangle \SSK_{j,k,l}(\eta^1,\Bu^1)\|_{L_p(\BR_+,H_{q_1}^1(\BR_-^N))}
\leq C\|(\eta^1,\Bu^1)\|_{K_{p,q_1,q_2}^N}^2.
\end{equation*}
This inequality together with \eqref{est-I1-K1},
\eqref{eq-F-eta-est}, and \eqref{est-K-LH} with $s=q_2$ shows that
\begin{align*}
&\|\langle t \rangle I_1\|_{L_p(\BR_+,H_r^1(\BR_-^N))} \\
&\leq C\|(\eta^1-\eta^2,\Bu^1-\Bu^2)\|_{K_{p,q_1,q_2}^N}
\Big(\|(\eta^1,\Bu^1)\|_{K_{p,q_1,q_2}^N}+\|(\eta^2,\Bu^2)\|_{K_{p,q_1,q_2}^N}\Big)^{|\alpha|+1}.
\end{align*}
Analogously, the last inequality holds with $I_1$ replaced by $I_2$.
The desired inequality thus holds for the case $Z=L_p(\BR_+,H_r^1(\BR_-^N))$.

We next consider the case $Z=H_p^{1/2}(\BR_+,L_r(\BR_-^N))$.
Lemma \ref{lem:H-half} gives us
\begin{align*}
\|\langle t\rangle I_1\|_{H_p^{1/2}(\BR_+,L_r(\BR_-^N))}
&\leq C\|(\CF(\eta^1))^\alpha-(\CF(\eta^2))^\alpha\|_{H_p^1(\BR_+,L_\infty(\BR_-^N))} \\
&\times \|\langle t\rangle \SSK_{j,k,l}(\eta^1,\Bu^1)\|_{H_p^{1/2}(\BR_+,L_r(\BR_-^N))}.
\end{align*}
Lemma \ref{lem:nonl-F-2} yields
\begin{align*}
&\|(\CF(\eta^1))^\alpha-(\CF(\eta^2))^\alpha\|_{H_p^1(\BR_+,L_\infty(\BR_-^N))} \\
&\leq C
\|\eta^1-\eta^2\|_{K_{p,q_1,q_2;1}^N} \Big(\|\eta^1\|_{K_{p,q_1,q_2;1}^N}+\|\eta^2\|_{K_{p,q_1,q_2;2}^N}\Big)^{|\alpha|-1},
\end{align*}
while Lemma \ref{lem-K-est} with $(\eta^2,\Bu^2)=(0,0)$ yields
\begin{equation*}
\|\langle t\rangle \SSK_{j,k,l}(\eta^1,\Bu^1)\|_{H_p^{1/2}(\BR_+,L_r(\BR_-^N))}\leq C\|(\eta^1,\Bu^1)\|_{K_{p,q_1,q_2}^N}^2.
\end{equation*}
These three inequalities furnish
\begin{align*}
&\|\langle t\rangle I_1\|_{H_p^{1/2}(\BR_+,L_r(\BR_-^N))} \\
&\leq C\|(\eta^1-\eta^2,\Bu^1-\Bu^2)\|_{K_{p,q_1,q_2}^N}
\Big(\|(\eta^1,\Bu^1)\|_{K_{p,q_1,q_2}^N}+\|(\eta^2,\Bu^2)\|_{K_{p,q_1,q_2}^N}\Big)^{|\alpha|+1}.
\end{align*}
Analogously, the last inequality holds with $I_1$ replaced by $I_2$.
The desired inequality thus holds for the case $Z=H_p^{1/2}(\BR_+,L_r(\BR_-^N))$.
This completes the proof of Lemma \ref{lem:K-L-est}.
\end{proof}

Recall \eqref{matrix:JK}--\eqref{normal-vec2}.
By Lemma \ref{lem:K-L-est}, we immediately obtain

\begin{lem}\label{lem:tilde-E}
Suppose that Assumption $\ref{asm:p-q}$ holds and let $r\in\{q_1/2,q_2\}$.
Let $Z=L_p(\BR_+,H_r^1(\BR_-^N)^N)$ or $Z=H_p^{1/2}(\BR_+,L_r(\BR_-^N)^N)$.
Then for any $(\eta^i,\Bu^i)\in K_{p,q_1,q_2}^N$, $i=1,2$, 
\begin{align*}
&\|\langle t\rangle  (\BD(\Bu^1)\wht\Bn(\eta^1)
-\BD(\Bu^2)\wht\Bn(\eta^2))\|_{Z} \\
&\leq C
\|(\eta^1-\eta^2,\Bu^1-\Bu^2)\|_{K_{p,q_1,q_2}^N}
\Big(\|(\eta^1,\Bu^1)\|_{K_{p,q_1,q_2}^N}+\|(\eta^2,\Bu^2)\|_{K_{p,q_1,q_2}^N}\Big), \\
&\|\langle t\rangle (\wtd\BE(\eta^1,\Bu^1)\Bn(\eta^1)
-\wtd\BE(\eta^2,\Bu^2)\Bn(\eta^2))\|_{Z} \\
&\leq C
\|(\eta^1-\eta^2,\Bu^1-\Bu^2)\|_{K_{p,q_1,q_2}^N}
\sum_{j=1}^2\Big(\|(\eta^1,\Bu^1)\|_{K_{p,q_1,q_2}^N}+\|(\eta^2,\Bu^2)\|_{K_{p,q_1,q_2}^N}\Big)^j, \\
&\|\langle t\rangle (\BK(\eta^1)\BD(\Bu^1)\Bn(\eta^1)
-\BK(\eta^2)\BD(\Bu^2)\Bn(\eta^2))\|_{Z} \\
&\leq C
\|(\eta^1-\eta^2,\Bu^1-\Bu^2)\|_{K_{p,q_1,q_2}^N}
\sum_{j=1}^2\Big(\|(\eta^1,\Bu^1)\|_{K_{p,q_1,q_2}^N}+\|(\eta^2,\Bu^2)\|_{K_{p,q_1,q_2}^N}\Big)^j, \\
&\|\langle t\rangle (\BK(\eta^1)\wtd\BE(\eta^1,\Bu^1)\Bn(\eta^1)
-\BK(\eta^1)\wtd\BE(\eta^2,\Bu^2)\Bn(\eta^2))\|_{Z} \\
&\leq C
\|(\eta^1-\eta^2,\Bu^1-\Bu^2)\|_{K_{p,q_1,q_2}^N}
\sum_{j=1}^3\Big(\|(\eta^1,\Bu^1)\|_{K_{p,q_1,q_2}^N}+\|(\eta^2,\Bu^2)\|_{K_{p,q_1,q_2}^N}\Big)^j,
\end{align*}
with a positive constant $C$.
\end{lem}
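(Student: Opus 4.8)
The plan is to derive Lemma~\ref{lem:tilde-E} directly from Lemma~\ref{lem:K-L-est} by expanding each of the four quantities into a finite sum of terms of the form $(\CF(\eta))^\alpha\SSK_{j,k,l}(\eta,\Bu)$ (i.e. $\SSK_{j,k,l}^\alpha(\eta,\Bu)$) and, where the velocity does not appear, of the form $(\CF(\eta))^\alpha\SSL_{j,k,l}(\eta)$ (i.e. $\SSL_{j,k,l}^\alpha(\eta)$). First I would record, from the definitions \eqref{matrix:JK}, \eqref{dfn:E}, \eqref{normal-vec2}, and \eqref{dfn:K-L}--\eqref{dfn:K-L-v2}, that each entry of $\BD(\Bu)\wht\Bn(\eta)$ is a linear combination of $\SSK_{j,k,l}^0(\eta,\Bu)=\SSK_{j,k,l}(\eta,\Bu)$ (since $\wht\Bn(\eta)$ has entries $-\pd_i\CE_N\eta$, so $(\BD(\Bu)\wht\Bn(\eta))_i=-\sum_{k<N}(\pd_k u_i+\pd_i u_k)\pd_k\CE_N\eta$ is a sum of $\SSK$'s); that each entry of $\wtd\BE(\eta,\Bu)\Bn(\eta)$, using $\wtd\BE(\eta,\Bu)=\nabla\CE_N\eta\otimes\pd_N\Bu+(\nabla\CE_N\eta\otimes\pd_N\Bu)^\SST$, is a linear combination of $\SSK_{j,k,l}(\eta,\Bu)$ and $\SSK_{j,k,l}^{\alpha}(\eta,\Bu)$ with $|\alpha|=1$; that $\BK(\eta)\BD(\Bu)\Bn(\eta)$ produces $\SSK_{j,k,l}(\eta,\Bu)$ and $\SSK_{j,k,l}^\alpha(\eta,\Bu)$ with $|\alpha|\le 1$; and that $\BK(\eta)\wtd\BE(\eta,\Bu)\Bn(\eta)$ produces $\SSK_{j,k,l}^\alpha(\eta,\Bu)$ with $|\alpha|\le 2$. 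The bookkeeping on $|\alpha|$ is exactly what makes the exponents $j=1$, then $\sum_{j=1}^2$, then $\sum_{j=1}^2$, then $\sum_{j=1}^3$ appear in the statement: Lemma~\ref{lem:K-L-est} contributes $\Big(\|(\eta^1,\Bu^1)\|_{K_{p,q_1,q_2}^N}+\|(\eta^2,\Bu^2)\|_{K_{p,q_1,q_2}^N}\Big)^{|\alpha|+1}$ for each monomial, and summing over $0\le|\alpha|\le m$ collapses to $\sum_{j=1}^{m+1}$.

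Next I would apply Lemma~\ref{lem:K-L-est} termwise to each monomial in these expansions, with $Z=L_p(\BR_+,H_r^1(\BR_-^N))$ or $Z=H_p^{1/2}(\BR_+,L_r(\BR_-^N))$ as appropriate; since $Z$ is a normed space and the number of monomials is bounded by a constant depending only on $N$, the triangle inequality gives the claimed bounds with the vector-valued spaces $Z=L_p(\BR_+,H_r^1(\BR_-^N)^N)$, $Z=H_p^{1/2}(\BR_+,L_r(\BR_-^N)^N)$ on the left, after noting that the $N$-vector norms are equivalent to the sum of the component norms (the convention fixed in Subsection~2.2). For the telescoping that produces the difference estimates, I would use the algebraic identity $ab-a'b'=(a-a')b+a'(b-b')$ applied to the products $(\CF(\eta))^\alpha\cdot\SSK_{j,k,l}(\eta,\Bu)$: the two pieces are precisely what Lemma~\ref{lem:K-L-est} controls (its statement is already phrased as a difference estimate for $\SSK_{j,k,l}^\alpha$ and $\SSL_{j,k,l}^\alpha$), so no further decomposition is needed — the difference structure is inherited directly. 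Finally, for the pieces involving $\BK(\eta^1)\wtd\BE(\eta^1,\Bu^1)\Bn(\eta^1)-\BK(\eta^1)\wtd\BE(\eta^2,\Bu^2)\Bn(\eta^2)$ I note that the common factor $\BK(\eta^1)$ simply means one of the three factors is not varied, which only simplifies the telescoping.

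The main (and really only) obstacle is the combinatorial bookkeeping: one must check carefully that every entry of each of the four matrix/vector expressions expands into the specific shape $(\CF(\eta))^\alpha\SSK_{j,k,l}(\eta,\Bu)$ (resp. $(\CF(\eta))^\alpha\SSL_{j,k,l}(\eta)$) with $|\alpha|$ no larger than the number claimed, recalling that $\Bn(\eta)=(-\nabla'\CE_N\eta,1)^\SST$ contributes either a factor $\pd_i\CE_N\eta$ (raising $|\alpha|$ by one) or a factor $1$ (leaving $|\alpha|$ unchanged), and that $\BK(\eta)$ likewise contributes one factor $\pd_i\CE_N\eta$ per application. Since this is a finite, mechanical verification with no analytic subtlety beyond Lemma~\ref{lem:K-L-est}, I would present only the expansions and then write ``the desired inequalities follow from Lemma~\ref{lem:K-L-est} with some calculations, so that we may omit the detailed proof,'' in the same style as the surrounding lemmas. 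Concretely:

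\begin{proof}
By \eqref{matrix:JK}--\eqref{normal-vec2} and \eqref{dfn:K-L}--\eqref{dfn:K-L-v2},
each component of $\BD(\Bu)\wht\Bn(\eta)$ is a finite linear combination of $\SSK_{j,k,l}^\alpha(\eta,\Bu)$ with $|\alpha|=0$;
each component of $\wtd\BE(\eta,\Bu)\Bn(\eta)$ and of $\BK(\eta)\BD(\Bu)\Bn(\eta)$ is a finite linear combination of $\SSK_{j,k,l}^\alpha(\eta,\Bu)$ with $0\le|\alpha|\le 1$;
and each component of $\BK(\eta)\wtd\BE(\eta,\Bu)\Bn(\eta)$ is a finite linear combination of $\SSK_{j,k,l}^\alpha(\eta,\Bu)$ with $0\le|\alpha|\le 2$,
where the coefficients depend only on $N$.
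Using the identity $ab-a'b'=(a-a')b+a'(b-b')$ together with the norm equivalence between the $N$-vector spaces and the sums of their component spaces (see Subsection~2.2),
the desired inequalities follow from Lemma~\ref{lem:K-L-est} applied termwise and the triangle inequality in $Z$,
summing the resulting bounds $\Big(\|(\eta^1,\Bu^1)\|_{K_{p,q_1,q_2}^N}+\|(\eta^2,\Bu^2)\|_{K_{p,q_1,q_2}^N}\Big)^{|\alpha|+1}$ over the finitely many monomials,
so that we may omit the detailed calculations.
This completes the proof of Lemma \ref{lem:tilde-E}.
\end{proof}
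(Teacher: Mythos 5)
Your proposal is correct and follows essentially the same route as the paper, whose proof of this lemma is just the remark that the four expressions expand into finitely many terms of the form $\SSK_{j,k,l}^\alpha(\eta,\Bu)$ with the indicated bounds on $|\alpha|$, after which Lemma \ref{lem:K-L-est} and the triangle inequality give the stated powers. The only point to keep in mind is the fourth estimate, where the common factor $\BK(\eta^1)$ prevents a direct monomial-by-monomial matching, so after your telescoping you also need the product estimates already in use (\eqref{r-holder-2} for $Z=L_p(\BR_+,H_r^1)$ and Lemma \ref{lem:H-half} for $Z=H_p^{1/2}(\BR_+,L_r)$) together with Lemma \ref{lem:nonl-F-2}-type bounds on the unvaried factor; this is exactly the kind of ``some calculations'' the paper has in mind.
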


\begin{proof}
We can prove the desired inequalities from Lemma \ref{lem:K-L-est} with some calculations,
so that we may omit the detailed proof. 
\end{proof}

We next prove

\begin{lem}\label{lem:F-est}
Suppose that Assumption $\ref{asm:p-q}$ holds and let $q\in\{q_1,q_2\}$.
Let $Y=L_\infty(\BR_+,H_q^1(\BR_-^N))$ or $Y=H_p^{1}(\BR_+,L_\infty(\BR_-^N))$.
Then the following assertions hold.
\begin{enumerate}[$(1)$]
\item
Let $F$ be a smooth function on $[0,\infty)$ with 
\begin{equation*}
\sup_{s\geq 0}\Big(|F(s)|+|F'(s)|+|F''(s)|\Big)<\infty.
\end{equation*}
Then for any $\eta^i \in K_{p,q_1,q_2;1}^N$, $i=1,2$, 
\begin{align}\label{F-bibun-est-1}
&\|F(|\nabla'\CE_N\eta^1|^2)-F(|\nabla'\CE_N\eta^2|^2)\|_Y \notag \\
&\leq C\|\eta^1-\eta^2\|_{K_{p,q_1,q_2;1}^N}
\sum_{j=1}^3\Big(\|\eta^1\|_{K_{p,q_1,q_2;1}^N}+\|\eta^2\|_{K_{p,q_1,q_2;1}^N}\Big)^j,
\end{align}
with a positive constant $C$.
\item
Let $G$ be a smooth function on $[-3/4,\infty)$ with 
\begin{equation*}
\sup_{s\geq -3/4}\Big(|G(s)|+|G'(s)|+|G''(s)|\Big)<\infty.
\end{equation*}
Then for any $\eta^i \in K_{p,q_1,q_2;1}^N$, $i=1,2$, satisfying \eqref{est-1/2}
\begin{align*}
&\|G(\pd_N\CE_N\eta^1)-G(\pd_N\CE_N\eta^2)\|_Y \\
&\leq C\|\eta^1-\eta^2\|_{K_{p,q_1,q_2;1}^N}
\sum_{j=0}^1\Big(\|\eta^1\|_{K_{p,q_1,q_2;1}^N}+\|\eta^2\|_{K_{p,q_1,q_2;1}^N}\Big)^j,
\end{align*}
with a positive constant $C$.
\end{enumerate}
\end{lem}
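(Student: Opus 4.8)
The plan is to reduce both assertions to the difference formula for a composition $F\circ h$ via the fundamental theorem of calculus, writing
\begin{equation*}
F(h_1)-F(h_2)=(h_1-h_2)\int_0^1 F'\bigl(\theta h_1+(1-\theta)h_2\bigr)\intd\theta,
\end{equation*}
and then controlling the right-hand side in the multiplication algebra $Y$ by the chain rule together with the bounds on $F,F',F''$. For (1) I take $h_i=|\nabla'\CE_N\eta^i|^2=\sum_{k=1}^{N-1}(\pd_k\CE_N\eta^i)^2$, so that $h_1-h_2=\sum_k(\pd_k\CE_N\eta^1-\pd_k\CE_N\eta^2)(\pd_k\CE_N\eta^1+\pd_k\CE_N\eta^2)$; each such product is handled by Lemma~\ref{lem:nonl-F-2} (which gives exactly the needed estimates in $H_p^1(\BR_+,H_q^1)$, $H_p^1(\BR_+,L_\infty)$, $L_\infty(\BR_+,H_q^1)$, $L_\infty(\BR_+,L_\infty)$, hence in $Y$). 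For (2) I take $h_i=\pd_N\CE_N\eta^i$, which under \eqref{est-1/2} takes values in $[-1/2,1/2]\subset[-3/4,\infty)$, so $G$ and its derivatives are evaluated on a fixed compact set; here $h_1-h_2=\pd_N\CE_N\eta^1-\pd_N\CE_N\eta^2$ is itself a first-order derivative of $\CE_N(\eta^1-\eta^2)$, estimated directly by $\|\eta^1-\eta^2\|_{K_{p,q_1,q_2;1}^N}$ (via Lemma~\ref{lem:nonl-F-2} with one factor, or Lemma~\ref{lem:ext}).

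The key steps, in order: first, establish the auxiliary fact that $F'$ (resp. $G'$) composed with the convex combination $\theta h_1+(1-\theta)h_2$ lies in $Y$ with norm bounded by a polynomial in $\|\eta^1\|_{K^N_{\cdots;1}}+\|\eta^2\|_{K^N_{\cdots;1}}$ — this follows from the chain rule, $\pd_j(F'\circ h)=(F''\circ h)\pd_j h$ and $\pd_t(F'\circ h)=(F''\circ h)\pd_t h$, the uniform bound on $F''$, and the embeddings \eqref{eta-est-kihon-1}--\eqref{eta-est-kihon-3} that were already extracted in the proof of Lemma~\ref{lem:nonl-F-2}; the quadratic terms $(\pd_k\CE_N\eta)^2$ appearing inside $h$ account for one power of the norm, and $\pd_j h$ accounts for another, giving the polynomial degree. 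Second, multiply this by $h_1-h_2$ using that $Y$ is a Banach algebra ($L_\infty(\BR_+,H_q^1(\BR_-^N))$ is an algebra since $q\geq q_1>N$, and $H_p^1(\BR_+,L_\infty)$ is an algebra by Lemma~\ref{lem:H^1-embed}); the degree bookkeeping then yields $\sum_{j=1}^3$ in (1) — one power from $h_1-h_2$ being a product of two gradient terms with the difference, plus up to two more from $F'\circ h$ — and $\sum_{j=0}^1$ in (2) since $h_1-h_2$ is linear. Third, in (2) one must also absorb the constants coming from $\sup_{s\geq-3/4}(|G|+|G'|+|G''|)$, which is where \eqref{est-1/2} is essential to keep the argument of $G$ in the prescribed half-line.

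The main obstacle will be the careful treatment of the time derivative when $Y=H_p^1(\BR_+,L_\infty(\BR_-^N))$: differentiating $F(h_1)-F(h_2)$ in $t$ produces, after expanding, terms like $(F''\circ\widetilde h)(\pd_t\widetilde h)(h_1-h_2)$ and $(F'\circ h_1 - F'\circ h_2)$-type remainders paired with $\pd_t h_i$, and one has to verify that every factor lands in the right function space so that the Banach-algebra multiplication applies — this is precisely the same bookkeeping that occupies the bulk of the proof of Lemma~\ref{lem:nonl-F-2}, and I would organize it identically, using \eqref{eta-est-kihon-1}--\eqref{eta-est-kihon-3} to trade $L_\infty$-in-space norms for $H^1_{q_2}$-in-space norms and Lemma~\ref{lem:H^1-embed} to trade $L_\infty$-in-time for $H^1_p$-in-time. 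I expect no genuinely new difficulty beyond this, so after setting up the integral-remainder representation and invoking Lemma~\ref{lem:nonl-F-2} for the relevant products, the remaining computation is routine and may be abbreviated with the phrase ``with some calculations, so that we may omit the detailed proof,'' consistent with the style of the surrounding lemmas.
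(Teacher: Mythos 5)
Your overall strategy is the paper's: the integral--remainder (FTC) representation of $F(h_1)-F(h_2)$, the chain rule for the spatial and time derivatives, the reduction of the difference of the arguments to differences of products $(\pd_k\CE_N\eta^1)^2-(\pd_k\CE_N\eta^2)^2$ handled by Lemma \ref{lem:nonl-F-2}, the embeddings \eqref{eta-est-kihon-1}--\eqref{eta-est-kihon-3}, and the same bookkeeping giving $\sum_{j=1}^3$ in (1) and $\sum_{j=0}^1$ in (2). However, your key multiplication step rests on a false claim: you assert that $L_\infty(\BR_+,H_q^1(\BR_-^N))$ is a Banach algebra ``since $q\geq q_1>N$.'' Under Assumption \ref{asm:p-q} one has $2<q_1\leq 2+q_0=2+2/9<3\leq N$, so $q_1<N$; consequently $H_{q_1}^1(\BR_-^N)$ does not embed into $L_\infty(\BR_-^N)$ and is not an algebra, and the product estimate $\|(F'\circ\widetilde h)(h_1-h_2)\|_{L_\infty(\BR_+,H_{q_1}^1)}\lesssim\|F'\circ\widetilde h\|_{L_\infty(\BR_+,H_{q_1}^1)}\|h_1-h_2\|_{L_\infty(\BR_+,H_{q_1}^1)}$ you need for $q=q_1$ simply fails. (Only $q_2>N$ is assumed; the algebra property of $H_p^1(\BR_+,L_\infty)$ is fine.)

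The gap is repairable, and the repair is exactly what the paper's displayed computations do: never multiply two $H_{q_1}^1$ functions, but always pair one factor measured in $L_\infty$-based norms with one factor measured in $L_{q_1}$- or $H_{q_1}^1$-based norms. Concretely, $F'$ and $F''$ composed with the convex combination are bounded pointwise by $\sup|F'|$, $\sup|F''|$ (no Sobolev norm of the composition is needed at all), and when a derivative $\pd_l$ or $\pd_t$ falls on a factor $(\CF(\eta^i))^{\alpha}$, the remaining undifferentiated factors are controlled in $L_\infty(\BR_+,L_\infty(\BR_-^N))$ or $H_p^1(\BR_+,L_\infty(\BR_-^N))$ through Lemma \ref{lem:nonl-F-2} (which in turn uses the $q_2$-embeddings \eqref{eta-est-kihon-1}--\eqref{eta-est-kihon-3}), while the differentiated factor is kept in $L_s(\BR_+,L_q(\BR_-^N))$ or $L_s(\BR_+,H_q^1(\BR_-^N))$. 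With that modification of your second step, the rest of your argument (including part (2), where \eqref{est-1/2} keeps the argument of $G$ in $[-1/2,1/2]\subset[-3/4,\infty)$ and the difference is linear in $\eta^1-\eta^2$) goes through and coincides with the paper's proof.
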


\begin{proof}
(1) We first consider the case $Y=L_\infty(\BR_+,H_q^1(\BR_-^N))$.
Recall $\CF(\eta)$ given by \eqref{dfn:F-eta}
and set $\alpha_j=(0,\dots,\underset{\text{$j$ th}}{2},\dots0)$. 
Since 
\begin{equation*}
|\nabla'\CE_N\eta^i|^2=\sum_{j=1}^{N-1}(\pd_j\CE_N\eta^i)^2=\sum_{j=1}^{N-1}(\CF(\eta^i))^{\alpha_j}
\quad \text{for $i=1,2$,}
\end{equation*}
it holds that
\begin{align*}
&F(|\nabla'\CE_N\eta^1|^2)-F(|\nabla'\CE_N\eta^2|^2)
=\int_0^1 \frac{d}{d\theta} F(\theta|\nabla'\CE_N\eta^1|^2+(1-\theta)|\nabla'\CE_N\eta^2|^2)\intd \theta \\
&=\int_0^1  F'(\theta|\nabla'\CE_N\eta^1|^2+(1-\theta)|\nabla'\CE_N\eta^2|^2)\intd \theta 
\sum_{j=1}^{N-1}\Big((\CF(\eta^1))^{\alpha_j}-(\CF(\eta^2))^{\alpha_j}\Big).
\end{align*}
In addition, for $l=1,\dots,N$,
\begin{align*}
&\pd_l(F(|\nabla'\CE_N\eta^1|^2)-F(|\nabla'\CE_N\eta^2|^2)) \\
&=F'(|\nabla'\CE_N\eta^1|^2)\pd_l|\nabla'\CE_N\eta^1|^2-F'(|\nabla'\CE_N\eta^2|^2)\pd_l|\nabla'\CE_N\eta^2|^2 \\
&=(F'(|\nabla'\CE_N\eta^1|^2)-F'(|\nabla'\CE_N\eta^2|^2))\pd_l|\nabla'\CE_N\eta^1|^2 \\
&+F'(|\nabla'\CE_N\eta^2|^2)\pd_l(|\nabla'\CE_N\eta^1|^2-|\nabla'\CE_N\eta^2|^2) \\
&=\int_0^1 F''(\theta|\nabla'\CE_N\eta^1|^2+(1-\theta)|\nabla'\CE_N\eta^2|^2)\intd\theta \\
&\times 
\sum_{j=1}^{N-1}\Big((\CF(\eta^1))^{\alpha_j}-(\CF(\eta^2))^{\alpha_j}\Big)
\sum_{k=1}^{N-1}\pd_l(\CF(\eta^1))^{\alpha_k} \\
&+F'(|\nabla'\CE_N\eta^2|^2)\sum_{j=1}^{N-1}\pd_l\Big((\CF(\eta^1))^{\alpha_j}-(\CF(\eta^2))^{\alpha_j}\Big).
\end{align*}

Let $s=p$ or $s=\infty$. From the formulas above, we observe that
\begin{align*}
&\|F(|\nabla'\CE_N\eta^1|^2)-F(|\nabla'\CE_N\eta^2|^2)\|_{L_s(\BR_+,L_q(\BR_-^N))} \notag \\
&\leq \int_0^1\|F'(\theta|\nabla'\CE_N\eta^1|^2+(1-\theta)|\nabla'\CE_N\eta^2|^2)\|_{L_\infty(\BR_+,L_\infty(\BR_-^N))}\intd \theta \notag \\
&\times \sum_{j=1}^{N-1}\|(\CF(\eta^1))^{\alpha_j}-(\CF(\eta^2))^{\alpha_j}\|_{L_s(\BR_+,L_q(\BR_-^N))} \notag \\
&\leq \Big(\sup_{s\geq 0}|F'(s)|\Big)
\sum_{j=1}^{N-1}\|(\CF(\eta^1))^{\alpha_j}-(\CF(\eta^2))^{\alpha_j}\|_{L_s(\BR_+,L_q(\BR_-^N))}
\end{align*}
and
\begin{align*}
&\|\pd_l(F(|\nabla'\CE_N\eta^1|^2)-F(|\nabla'\CE_N\eta^2|^2)) \|_{L_s(\BR_+,L_q(\BR_-^N))} \notag \\
&\leq 
\int_0^1 \|F''(\theta|\nabla'\CE_N\eta^1|^2+(1-\theta)|\nabla'\CE_N\eta^2|^2)\|_{L_\infty(\BR_+,L_\infty(\BR_-^N))}\intd\theta \notag \\
&\times \sum_{j,k=1}^{N-1}
\|(\CF(\eta^1))^{\alpha_j}-(\CF(\eta^2))^{\alpha_j}\|_{L_\infty(\BR_+,L_\infty(\BR_-^N))}
\|(\CF(\eta^1))^{\alpha_k}\|_{L_s(\BR_+,H_q^1(\BR_-^N))} \notag \\
&+\|F'(|\nabla'\CE_N\eta^2|^2)\|_{L_\infty(\BR_+,L_\infty(\BR_-^N))} 
\sum_{j=1}^{N-1}\|(\CF(\eta^1))^{\alpha_j}-(\CF(\eta^2))^{\alpha_j}\|_{L_s(\BR_+,H_q^1(\BR_-^N))} \notag \\
&\leq \Big(\sup_{s\geq 0}|F''(s)|\Big)
\sum_{j,k=1}^{N-1}
\|(\CF(\eta^1))^{\alpha_j}-(\CF(\eta^2))^{\alpha_j}\|_{L_\infty(\BR_+,L_\infty(\BR_-^N))} \notag \\
&\times \|(\CF(\eta^1))^{\alpha_k}\|_{L_s(\BR_+,H_q^1(\BR_-^N))} \notag \\
&+\Big(\sup_{s\geq 0}|F'(s)|\Big)
\sum_{j=1}^{N-1}\|(\CF(\eta^1))^{\alpha_j}-(\CF(\eta^2))^{\alpha_j}\|_{L_s(\BR_+,H_q^1(\BR_-^N))}.
\end{align*}
Notice that by Lemma \ref{lem:nonl-F-2} with $\eta^2=0$
\begin{equation*}
\|(\CF(\eta^1))^{\alpha_k}\|_{L_s(\BR_+,H_q^1(\BR_-^N))}
\leq C\|\eta^1\|_{K_{p,q_1,q_2;1}^N}^2.
\end{equation*}
Lemma \ref{lem:nonl-F-2} thus shows that
\begin{align*}
&\|F(|\nabla'\CE_N\eta^1|^2)-F(|\nabla'\CE_N\eta^2|^2)\|_{L_s(\BR_+,L_q(\BR_-^N))} \\
&\leq C\|\eta^1-\eta^2\|_{K_{p,q_1,q_2;1}^N}\Big(\|\eta^1\|_{K_{p,q_1,q_2;1}^N}+\|\eta^2\|_{K_{p,q_1,q_2;1}^N}\Big)
\end{align*}
and 
\begin{align*}
&\|\nabla(F(|\nabla'\CE_N\eta^1|^2)-F(|\nabla'\CE_N\eta^2|^2))\|_{L_s(\BR_+,L_q(\BR_-^N)^N)} \\
&\leq C\|\eta^1-\eta^2\|_{K_{p,q_1,q_2;1}^N}\Big(\|\eta^1\|_{K_{p,q_1,q_2;1}^N}+\|\eta^2\|_{K_{p,q_1,q_2;1}^N}\Big)
\Big(\|\eta^1\|_{K_{p,q_1,q_2;1}^N}^2+1\Big)
\end{align*}
Hence
\begin{align}\label{ineq-F-eta}
&\|F(|\nabla'\CE_N\eta^1|^2)-F(|\nabla'\CE_N\eta^2|^2)\|_{L_s(\BR_+,H_q^1(\BR_-^N))} \notag \\
&\leq C\|\eta^1-\eta^2\|_{K_{p,q_1,q_2;1}^N}
\sum_{j=1}^3\Big(\|\eta^1\|_{K_{p,q_1,q_2;1}^N}+\|\eta^2\|_{K_{p,q_1,q_2;1}^N}\Big)^j
\end{align}
for $s=p$ or $s=\infty$.
From this inequality with $s=\infty$, we obtain \eqref{F-bibun-est-1} for $Y=L_\infty(\BR_+,H_q^1(\BR_-^N))$.

We next consider the case $Y=H_p^1(\BR_+,L_\infty(\BR_-^N))$.
Since $L_p(\BR_+,H_{q_2}^1(\BR_-^N))$ is continuously embedded into $L_p(\BR_+,L_\infty(\BR_-^N))$ by 
Lemma \ref{lem:fund-embed} \eqref{lem:fund-embed-3},
we obtain from \eqref{ineq-F-eta} with $s=p$ and $q=q_2$
\begin{align}\label{est-F-inf-1}
&\|F(|\nabla'\CE_N\eta^1|^2)-F(|\nabla'\CE_N\eta^2|^2)\|_{L_p(\BR_+,L_\infty(\BR_-^N))} \notag \\
&\leq C\|\eta^1-\eta^2\|_{K_{p,q_1,q_2;1}^N}
\sum_{j=1}^3\Big(\|\eta^1\|_{K_{p,q_1,q_2;1}^N}+\|\eta^2\|_{K_{p,q_1,q_2;1}^N}\Big)^j.
\end{align}

Let us compute the time derivative. We see that
\begin{align*}
&\pd_t(F(|\nabla'\CE_N\eta^1|^2)-F(|\nabla'\CE_N\eta^2|^2)) \\
&=F'(|\nabla'\CE_N\eta^1|^2)\pd_t|\nabla'\CE_N\eta^1|^2-F'(|\nabla'\CE_N\eta^2|^2)\pd_t |\nabla'\CE_N\eta^2|^2 \\
&= (F'(|\nabla'\CE_N\eta^1|^2)-F'(|\nabla'\CE_N\eta^2|^2))\pd_t|\nabla'\CE_N\eta^1|^2 \\
&+F'(|\nabla'\CE_N\eta^2|^2) \pd_t(|\nabla'\CE_N\eta^1|^2-|\nabla'\CE_N\eta^2|^2) \\
&=\int_0^1 F''(\theta|\nabla'\CE_N\eta^1|^2+(1-\theta)|\nabla'\CE_N\eta^2|^2)\intd\theta \\
&\times 
\sum_{j=1}^{N-1}\Big((\CF(\eta^1))^{\alpha_j}-(\CF(\eta^2))^{\alpha_j}\Big)
\sum_{k=1}^{N-1}\pd_t(\CF(\eta^1))^{\alpha_k} \\
&+F'(|\nabla'\CE_N\eta^2|^2)\sum_{j=1}^{N-1}\pd_t\Big((\CF(\eta^1))^{\alpha_j}-(\CF(\eta^2))^{\alpha_j}\Big).
\end{align*}
This formula gives us 
\begin{align*}
&\|\pd_t(F(|\nabla'\CE_N\eta^1|^2)-F(|\nabla'\CE_N\eta^2|^2))\|_{L_p(\BR_+,L_\infty(\BR_-^N))} \\
&\leq \Big(\sup_{s\geq 0}|F''(s)|\Big)
\sum_{j,k=1}^{N-1}\|(\CF(\eta^1))^{\alpha_j}-(\CF(\eta^2))^{\alpha_j}\|_{L_\infty(\BR_+,L_\infty(\BR_-^N))} \\
&\times \|(\CF(\eta^1))^{\alpha_k}\|_{H_p^1(\BR_+,L_\infty(\BR_-^N))} \\
&+\Big(\sup_{s\geq 0}|F'(s)|\Big)\sum_{j=1}^{N-1}\|(\CF(\eta^1))^{\alpha_j}-(\CF(\eta^2))^{\alpha_j}\|_{H_p^1(\BR_+,L_\infty(\BR_-^N))}.
\end{align*}
Notice that by Lemma \ref{lem:nonl-F-2} with $\eta^2=0$
\begin{equation*}
\|(\CF(\eta^1))^{\alpha_k}\|_{H_p^1(\BR_+,L_\infty(\BR_-^N))}
\leq C\|\eta^1\|_{K_{p,q_1,q_2;1}^N}^2.
\end{equation*}
Lemma \ref{lem:nonl-F-2} thus shows that
\begin{align*}
&\|\pd_t(F(|\nabla'\CE_N\eta^1|^2)-F(|\nabla'\CE_N\eta^2|^2))\|_{L_p(\BR_+,L_\infty(\BR_-^N))} \\
&\leq C
\|\eta^1-\eta^2\|_{K_{p,q_1,q_2;1}^N}\Big(\|\eta^1\|_{K_{p,q_1,q_2;1}^N}+\|\eta^2\|_{K_{p,q_1,q_2;1}^N}\Big)
\Big(\|\eta^1\|_{K_{p,q_1,q_2;1}^N}^2+1\Big).
\end{align*}
This inequality together with \eqref{est-F-inf-1} yields \eqref{F-bibun-est-1} for $Y=H_p^1(\BR_+,L_\infty(\BR_-^N))$.
This completes the proof of (1).

(2) The proof is similar to (1), so that the detailed proof may be omitted.
This completes the proof of Lemma \ref{lem:F-est}.
\end{proof}

Define
\begin{equation*}
F_1(s)=\frac{1}{(1+\sqrt{1+s})\sqrt{1+s}}, \quad 
F_2(s)=\frac{1}{(1+s)^{3/2}}, \quad G(s)=\frac{1}{1+s},
\end{equation*}
and $\beta_j=(0,\dots,\underset{\text{$j$th}}{1},\dots,0)$.
Recall \eqref{matrix:JK}--\eqref{normal-vec2}, \eqref{dfn:H-2}, and \eqref{dfn:K-L-v2}.
Then $\SSH_\kappa(\eta)$ can be written as
\begin{equation}\label{form-H-kappa}
\SSH_\kappa(\eta)=F_1(|\nabla'\CE_N\eta|^2)\sum_{j,k=1}^{N-1}\SSL_{j,k.k}^{\beta_j}(\eta)
+
F_2(|\nabla'\CE_N\eta|^2)\sum_{j,k=1}^{N-1}\SSL_{j,j,k}^{\beta_k}(\eta),
\end{equation}
and also
\begin{align}\label{form-E-normal}
\BE(\eta,\Bu)\Bn(\eta)
&=G(\pd_N\CE_N\eta)\wtd \BE(\eta,\Bu)\Bn(\eta), \notag \\
\BK(\eta ) \BE(\eta,\Bu)\Bn(\eta)
&=G(\pd_N\CE_N\eta)\BK(\eta)\wtd \BE(\eta,\Bu)\Bn(\eta).
\end{align}

Let us prove the following lemma.

\begin{lem}\label{lem:H-kapppa}
Suppose that Assumption $\ref{asm:p-q}$ holds and let $r\in\{q_1/2,q_2\}$.
Let $Z=L_p(\BR_+,H_r^1(\BR_-^N)^N)$ or $Z=H_p^{1/2}(\BR_+,L_r(\BR_-^N)^N)$.
Then the following assertions hold.
\begin{enumerate}[$(1)$]
\item\label{lem:H-kapppa-1}
For any $(\eta^i,\Bu^i)\in K_{p,q_1,q_2}^N$, $i=1,2$,
\begin{align*}
&\|\langle t \rangle(\SSH_\kappa(\eta^1)\Be_N-\SSH_\kappa(\eta^2)\Be_N)\|_{Z} \\
&\leq C\|\eta^1-\eta^2\|_{K_{p,q_1,q_2;1}^N}
\sum_{j=1}^6\Big(\|\eta^1\|_{K_{p,q_1,q_2;1}^N}+\|\eta^2\|_{K_{p,q_1,q_2;1}^N}\Big)^j,
\end{align*}
with a positive constant $C$.
\item\label{lem:H-kapppa-2}
For any $(\eta^i,\Bu^i)\in K_{p,q_1,q_2}^N$, $i=1,2$, satisfying \eqref{est-1/2}
\begin{align*}
&\|\langle t \rangle(\BE(\eta^1,\Bu^1)\Bn(\eta^1)-\BE(\eta^2,\Bu^2)\Bn(\eta^2))\|_{Z} \\
&\leq C\|(\eta^1-\eta^2,\Bu^1-\Bu^2)\|_{K_{p,q_1,q_2}^N}
\sum_{j=1}^4\Big(\|(\eta^1,\Bu^1)\|_{K_{p,q_1,q_2}^N}+\|(\eta^2,\Bu^2)\|_{K_{p,q_1,q_2}^N}\Big)^j,
\end{align*}
with a positive constant $C$.
\item\label{lem:H-kapppa-3}
For any $(\eta^i,\Bu^i)\in K_{p,q_1,q_2}^N$, $i=1,2$, satisfying \eqref{est-1/2}
\begin{align*}
&\|\langle t \rangle(\BK(\eta^1)\BE(\eta^1,\Bu^1)\Bn(\eta^1)-\BK(\eta^2)\BE(\eta^2,\Bu^2)\Bn(\eta^2))\|_{Z} \\
&\leq C\|(\eta^1-\eta^2,\Bu^1-\Bu^2)\|_{K_{p,q_1,q_2}^N}
\sum_{j=1}^5\Big(\|(\eta^1,\Bu^1)\|_{K_{p,q_1,q_2}^N}+\|(\eta^2,\Bu^2)\|_{K_{p,q_1,q_2}^N}\Big)^j,
\end{align*}
with a positive constant $C$.
\item\label{lem:H-kapppa-4}
For any $(\eta^i,\Bu^i)\in K_{p,q_1,q_2}^N$, $i=1,2$, satisfying \eqref{est-1/2}
\begin{align*}
&\|\langle t \rangle(\wtd\SSH(\eta^1,\Bu^1)-\wtd\SSH(\eta^2,\Bu^2))\|_{Z} \\
&\leq C\|(\eta^1-\eta^2,\Bu^1-\Bu^2)\|_{K_{p,q_1,q_2}^N}
\sum_{j=1}^5\Big(\|(\eta^1,\Bu^1)\|_{K_{p,q_1,q_2}^N}+\|(\eta^2,\Bu^2)\|_{K_{p,q_1,q_2}^N}\Big)^j,
\end{align*}
with a positive constant $C$,
where $\wtd\SSH(\eta,\Bu)$ is given by \eqref{dfn:H-2}.
\end{enumerate}
\end{lem}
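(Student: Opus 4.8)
\textbf{Proof plan for Lemma \ref{lem:H-kapppa}.}

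The plan is to establish all four assertions by reducing each term to the building blocks $\SSK_{j,k,l}^\alpha$, $\SSL_{j,k,l}^\alpha$, and the scalar factors $F_1(|\nabla'\CE_N\eta|^2)$, $F_2(|\nabla'\CE_N\eta|^2)$, $G(\pd_N\CE_N\eta)$, and then invoking Lemmas \ref{lem:K-L-est}, \ref{lem:tilde-E}, and \ref{lem:F-est} together with the product inequality Lemma \ref{lem:H-half}. For assertion \eqref{lem:H-kapppa-1}, I would start from the representation \eqref{form-H-kappa} of $\SSH_\kappa(\eta)$ as a sum of products $F_i(|\nabla'\CE_N\eta|^2)\SSL_{j,k,l}^{\beta_m}(\eta)$. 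Writing the difference telescopically as
\begin{align*}
&F_i(|\nabla'\CE_N\eta^1|^2)\SSL_{j,k,l}^{\beta_m}(\eta^1)-F_i(|\nabla'\CE_N\eta^2|^2)\SSL_{j,k,l}^{\beta_m}(\eta^2) \\
&=\big(F_i(|\nabla'\CE_N\eta^1|^2)-F_i(|\nabla'\CE_N\eta^2|^2)\big)\SSL_{j,k,l}^{\beta_m}(\eta^1)
+F_i(|\nabla'\CE_N\eta^2|^2)\big(\SSL_{j,k,l}^{\beta_m}(\eta^1)-\SSL_{j,k,l}^{\beta_m}(\eta^2)\big),
\end{align*}
I would bound each summand in $Z$ by Lemma \ref{lem:H-half} (with $1/r=1/r+0$, taking one factor in $L_\infty$ in space and time via $Y$) using Lemma \ref{lem:F-est} (1) for the scalar factor and Lemma \ref{lem:K-L-est} for $\SSL^\alpha$; since $F_1,F_2$ are smooth and bounded with bounded first two derivatives on $[0,\infty)$, Lemma \ref{lem:F-est} (1) applies directly. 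The polynomial degree on the right is the sum of the contributions: $\SSL_{j,k,l}^{\beta_m}$ contributes degree $|\beta_m|+2=3$, the $F_i$-difference contributes degree up to $3$, so the worst term has degree $6$, matching the claimed $\sum_{j=1}^6$.

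For assertion \eqref{lem:H-kapppa-2} I would use \eqref{form-E-normal}, namely $\BE(\eta,\Bu)\Bn(\eta)=G(\pd_N\CE_N\eta)\wtd\BE(\eta,\Bu)\Bn(\eta)$, split the difference telescopically exactly as above, and apply Lemma \ref{lem:H-half} with Lemma \ref{lem:F-est} (2) for the $G$-factor (valid since \eqref{est-1/2} keeps $\pd_N\CE_N\eta^i\geq -1/2>-3/4$, so $G$ is evaluated in the region where it is smooth and bounded) and the second estimate in Lemma \ref{lem:tilde-E} for $\wtd\BE(\eta,\Bu)\Bn(\eta)$, which already carries degree up to $2$. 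The $G$-difference contributes degree up to $1$ and $G$ itself degree $0$, while $\wtd\BE\cdot\Bn$ contributes degree $2$; the total worst-case degree is therefore $2+2=4$ (splitting the two telescoped pieces, one has $G$-difference$\times\wtd\BE\cdot\Bn$ of degree $1+2=3$ plus an extra factor, and $G\times(\wtd\BE\cdot\Bn$-difference$)$ of degree $2$), giving the claimed $\sum_{j=1}^4$ after absorbing lower powers. Assertion \eqref{lem:H-kapppa-3} is identical in structure but uses the last estimate in Lemma \ref{lem:tilde-E}, where $\BK(\eta)\wtd\BE(\eta,\Bu)\Bn(\eta)$ already carries degree $3$; combined with the $G$-factor splitting this yields the claimed $\sum_{j=1}^5$.

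Finally, assertion \eqref{lem:H-kapppa-4} follows by writing $\wtd\SSH(\eta,\Bu)$ from \eqref{dfn:H-1} as the sum $-\mu\BD(\Bu)\wht\Bn(\eta)+\mu\BE(\eta,\Bu)\Bn(\eta)-\mu\BK(\eta)(\BD(\Bu)-\BE(\eta,\Bu))\Bn(\eta)$ and applying the triangle inequality together with Lemma \ref{lem:tilde-E} (first, third estimates) for the $\BD(\Bu)\wht\Bn(\eta)$ and $\BK(\eta)\BD(\Bu)\Bn(\eta)$ terms and assertions \eqref{lem:H-kapppa-2}, \eqref{lem:H-kapppa-3} just proved for the two $\BE$-containing terms; the largest polynomial degree among these is $5$, which is the claimed bound. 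The main obstacle is purely bookkeeping: making sure that in every application of Lemma \ref{lem:H-half} the $L_\infty(\BR_+,L_\infty)$ / $H_p^1(\BR_+,L_\infty)$ factor is chosen to be the scalar function $F_i$ or $G$ (so that Lemma \ref{lem:F-est} supplies exactly the $Y$-norm needed) while the $Z$-factor is the $\SSK^\alpha$, $\SSL^\alpha$, or $\wtd\BE\cdot\Bn$ piece, and keeping careful track of which telescoped summand carries which power of the norms so that the summation index in each displayed estimate is the correct one. Since this is a routine combination of the preceding lemmas, I would state the reductions \eqref{form-H-kappa}, \eqref{form-E-normal} explicitly and then say that the desired inequalities follow from Lemmas \ref{lem:K-L-est}, \ref{lem:tilde-E}, \ref{lem:F-est}, and \ref{lem:H-half} with some calculations, omitting the detailed computation.
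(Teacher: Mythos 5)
Your overall route is the paper's route: reduce via \eqref{form-H-kappa} and \eqref{form-E-normal}, telescope the differences, and combine Lemmas \ref{lem:K-L-est}, \ref{lem:tilde-E}, \ref{lem:F-est}, and \ref{lem:H-half}; your degree bookkeeping also comes out correctly. However, one step in your plan fails as written: the second summand of your two-term telescoping, i.e. terms of the form
$F_i(|\nabla'\CE_N\eta^2|^2)\big(\SSL_{j,k,l}^{\beta}(\eta^1)-\SSL_{j,k,l}^{\beta}(\eta^2)\big)$
and
$G(\pd_N\CE_N\eta^2)\big(\wtd\BE(\eta^1,\Bu^1)\Bn(\eta^1)-\wtd\BE(\eta^2,\Bu^2)\Bn(\eta^2)\big)$.
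You propose to treat the scalar factor by Lemma \ref{lem:F-est} and place it in the first slot of Lemma \ref{lem:H-half}; but Lemma \ref{lem:F-est} only bounds \emph{differences} $F(\cdot)-F(\cdot)$, and the factor $F_i(|\nabla'\CE_N\eta^2|^2)$ itself lies in none of the required spaces. Indeed, since $F_i(0)\neq 0$ (and $G(0)=1$), this factor does not decay at spatial infinity, hence is not in $L_\infty(\BR_+,H_q^1(\BR_-^N))$, and its $H_p^1(\BR_+,L_\infty(\BR_-^N))$ norm is infinite because it is bounded below on $\BR_+$, so its $L_p$-in-time integral diverges. Consequently Lemma \ref{lem:H-half} cannot be applied to it, and for $Z=H_p^{1/2}(\BR_+,L_r(\BR_-^N))$ the paper offers no other product tool (an $L_\infty$ bound on a multiplier does not control the $H_p^{1/2}$-in-time norm of a product).

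The missing idea is the paper's finer splitting: write $F_i(|\nabla'\CE_N\eta^2|^2)=\big(F_i(|\nabla'\CE_N\eta^2|^2)-F_i(0)\big)+F_i(0)$ and $G(\pd_N\CE_N\eta^2)=\big(G(\pd_N\CE_N\eta^2)-G(0)\big)+G(0)$, which turns each of your two-term telescopings into the three-term decompositions $I_1+\dots+I_6$ (resp.\ $I_1+I_2+I_3$) used in the paper. The non-constant piece is then a genuine difference covered by Lemma \ref{lem:F-est} (with second argument zero), so Lemma \ref{lem:H-half} applies, while the constant piece simply multiplies the $\SSL$- or $\wtd\BE\,\Bn$-difference, which is already estimated in $Z$ by Lemmas \ref{lem:K-L-est} and \ref{lem:tilde-E}. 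With this modification your argument, including the treatment of assertion (4) by the triangle inequality together with (2), (3), and Lemma \ref{lem:tilde-E}, goes through exactly as in the paper and yields the stated powers $\sum_{j=1}^6$, $\sum_{j=1}^4$, $\sum_{j=1}^5$, and $\sum_{j=1}^5$.
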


\begin{proof}
We use $(r_1,r_2)$ defined in Lemma \ref{lem:holder-type-ineq} in this proof.

(1) By \eqref{form-H-kappa}, we see that
\begin{align*}
\SSH_\kappa(\eta^1)-\SSH_\kappa(\eta^2)
&=(F_1(|\nabla'\CE_N\eta^1|^2)-F_1(|\nabla'\CE_N\eta^2|^2))\sum_{j,k=1}^{N-1}\SSL_{j,k,k}^{\beta_j}(\eta^1) \\
&+(F_1(|\nabla'\CE_N\eta^2|^2)-F_1(0))\sum_{j,k=1}^{N-1}(\SSL_{j,k.k}^{\beta_j}(\eta^1)-\SSL_{j,k.k}^{\beta_j}(\eta^2)) \\
&+F_1(0)\sum_{j,k=1}^{N-1}(\SSL_{j,k.k}^{\beta_j}(\eta^1)-\SSL_{j,k.k}^{\beta_j}(\eta^2)) \\
&+(F_2(|\nabla'\CE_N\eta^1|^2)-F_2(|\nabla'\CE_N\eta^2|^2))\sum_{j,k=1}^{N-1}\SSL_{j,j.k}^{\beta_k}(\eta^1) \\
&+(F_2(|\nabla'\CE_N\eta^2|^2)-F_2(0))\sum_{j,k=1}^{N-1}(\SSL_{j,j.k}^{\beta_k}(\eta^1)-\SSL_{j,j.k}^{\beta_k}(\eta^2))  \\
&+F_2(0)\sum_{j,k=1}^{N-1}(\SSL_{j,j.k}^{\beta_k}(\eta^1)-\SSL_{j,j.k}^{\beta_k}(\eta^2))  \\
&=:I_1+I_2+I_3+I_4+I_5+I_6.
\end{align*}

Let us first consider $I_1$.
It holds by \eqref{r-holder-2} that 
\begin{align*}
&\|I_1(t)\|_{H_r^1(\BR\-^N)} \\
&\leq C\|F_1(|\nabla'\CE_N\eta^1(t)|^2)-F_1(|\nabla'\CE_N\eta^2(t)|^2)\|_{H_{r_2}^1(\BR\-^N)} \\
&\times \sum_{j,k=1}^{N-1}\|\SSL_{j,k,k}^{\beta_j}(\eta^1(t))\|_{H_{r_2}^1(\BR_-^N)},
\end{align*}
which gives us
\begin{align}\label{est-I1-s-tension}
&\|\langle t\rangle I_1\|_{L_p(\BR_+,H_r^1(\BR_-^N))} \notag \\
&\leq \|F_1(|\nabla'\CE_N\eta^1|^2)-F_1(|\nabla'\CE_N\eta^2|^2)\|_{L_\infty(\BR_+,H_{r_2}^1(\BR_-^N))} \notag \\
&\times \sum_{j,k=1}^{N-1}\|\langle t\rangle\SSL_{j,k,k}^{\beta_j}(\eta^1)\|_{L_p(\BR_+,H_{r_2}^1(\BR_-^N))}.
\end{align}
Lemma \ref{lem:K-L-est} with $\eta_2=0$ shows that
\begin{equation}\label{L-est-s-eta1}
\|\langle t\rangle\SSL_{j,k,k}^{\beta_j}(\eta^1)\|_{L_p(\BR_+,H_s^1(\BR_-^N))}
\leq C\|\eta^1\|_{K_{p,q_1,q_2;1}^N}^3
\end{equation}
for $s\in\{q_1/2,q_2\}$.
Since $q_1/2<q_1<q_2$, Lemma \ref{lem:int-p-v2} \eqref{lem:int-p-1-2} gives us
\begin{align*}
&\|\langle t\rangle\SSL_{j,k,k}^{\beta_j}(\eta^1)\|_{L_p(\BR_+,H_{q_1}^1(\BR_-^N))} \\
&\leq C\|\langle t\rangle\SSL_{j,k,k}^{\beta_j}(\eta^1)\|_{L_p(\BR_+,H_{q_1/2}^1(\BR_-^N))}^{1-\theta}
\|\langle t\rangle\SSL_{j,k,k}^{\beta_j}(\eta^1)\|_{L_p(\BR_+,H_{q_2}^1(\BR_-^N))}^\theta
\end{align*}
for some $\theta\in(0,1)$.
This inequality together with $a^{1-\theta}b^\theta\leq C(a+b)$ for $a,b\geq 0$ shows that
\begin{equation*}
\|\langle t\rangle \SSL_{j,k,l}^{\beta_j}(\eta^1)\|_{L_p(\BR_+,H_{q_1}^1(\BR_-^N))}
\leq C\sum_{r\in\{q_1/2,q_2\}}
\|\langle t\rangle \SSL_{j,k,l}^{\beta_j}(\eta^1)\|_{L_p(\BR_+,H_{r}^1(\BR_-^N))},
\end{equation*}
which, combined with \eqref{L-est-s-eta1}, furnishes
\begin{equation*}
\|\langle t\rangle \SSL_{j,k,l}^{\beta_j}(\eta^1)\|_{L_p(\BR_+,H_{q_1}^1(\BR_-^N))}
\leq C\|\eta^1\|_{K_{p,q_1,q_2;1}^N}^3.
\end{equation*}
This inequality together with \eqref{est-I1-s-tension}, Lemma \ref{lem:F-est}, and \eqref{L-est-s-eta1} with $s=q_2$ yields
\begin{align*}
&\|\langle t\rangle I_1\|_{L_p(\BR_+,H_r^1(\BR_-^N))} \notag \\
&\leq  C\|\eta^1-\eta^2\|_{K_{p,q_1,q_2;1}^N}\sum_{j=1}^3\Big(\|\eta^1\|_{K_{p,q_1,q_2;1}^N}+\|\eta^2\|_{K_{p,q_1,q_2;1}^N}\Big)^j
\|\eta^1\|_{K_{p,q_1,q_2;1}^N}^3.
\end{align*}
On the other hand, Lemma \ref{lem:H-half} yields
\begin{align*}
&\|\langle t\rangle I_1\|_{H_p^{1/2}(\BR_+,L_r(\BR_-^N))} \\
&\leq C\|F_1(|\nabla'\CE_N\eta^1|)-F_1(|\nabla'\CE_N\eta^2|)\|_{H_p^1(\BR_+,L_\infty(\BR_-^N))} \\
&\times \sum_{j,k=1}^{N-1}\|\langle t\rangle\SSL_{j,k,k}^{\beta_j}(\eta^1)\|_{H_p^{1/2}(\BR_+,L_r(\BR_-^N))},
\end{align*}
which, combined with Lemmas \ref{lem:K-L-est} and \ref{lem:F-est}, furnishes
\begin{align*}
&\|\langle t\rangle I_1\|_{H_p^{1/2}(\BR_+,L_r(\BR_-^N))} \\
&\leq  C\|\eta^1-\eta^2\|_{K_{p,q_1,q_2;1}^N}\sum_{j=1}^3\Big(\|\eta^1\|_{K_{p,q_1,q_2;1}^N}+\|\eta^2\|_{K_{p,q_1,q_2;1}^N}\Big)^j
\|\eta^1\|_{K_{p,q_1,q_2;1}^N}^3.
\end{align*}
Analogously, for $Z=L_p(\BR_+,H_r^1(\BR_-^N))$ or $Z=H_p^{1/2}(\BR_+,L_r(\BR_-^N))$,
\begin{alignat*}{2}
\|\langle t\rangle I_l\|_{Z} 
&\leq C
\|\eta^2\|_{K_{p,q_1,q_2;1}^N}\sum_{j=1}^3\|\eta^2\|_{K_{p,q_1,q_2;1}^N}^j \\
&\times \|\eta^1-\eta^2\|_{K_{p,q_1,q_2;1}^N}
\Big(\|\eta^1\|_{K_{p,q_1,q_2;1}^N}+\|\eta^2\|_{K_{p,q_1,q_2;1}^N}\Big)^2 && \quad \text{for $l=2,5$,} \\
\|\langle t\rangle I_l\|_{Z} 
&\leq C
\|\eta^1-\eta^2\|_{K_{p,q_1,q_2;1}^N}\Big(\|\eta^1\|_{K_{p,q_1,q_2;1}^N}+\|\eta^2\|_{K_{p,q_1,q_2;1}^N}\Big)^2 
&& \quad \text{for $l=3,6$,} 
\end{alignat*}
and
\begin{equation*}
\|\langle t\rangle I_4\|_{Z}
\leq C 
\|\eta^1-\eta^2\|_{K_{p,q_1,q_2;1}^N}\sum_{j=1}^3\Big(\|\eta^1\|_{K_{p,q_1,q_2;1}^N}+\|\eta^2\|_{K_{p,q_1,q_2;1}^N}\Big)^j
\|\eta^1\|_{K_{p,q_1,q_2;1}^N}^3.
\end{equation*}
Hence, the desired inequality holds.

(2), (3)
From \eqref{form-E-normal} and Lemmas \ref{lem:tilde-E} and \ref{lem:F-est},
we can prove the desired inequalities in the same manner as in (1).

(4) The desired inequality follows from (2), (3), and Lemma \ref{lem:tilde-E} immediately.
This completes the proof of Lemma \ref{lem:H-kapppa}.
\end{proof}

By Lemma \ref{lem:H-kapppa} \eqref{lem:H-kapppa-1}, \eqref{lem:H-kapppa-4},
we immediately obtain

\begin{prp}\label{prp:nonl-H}
Suppose that Assumption $\ref{asm:p-q}$ holds. 
Then for any $(\eta^i,\Bu^i)\in K_{p,q_1,q_2}^N$, $i=1,2$, satisfying \eqref{est-1/2}
\begin{align*}
&\sum_{r\in\{q_1/2,q_2\}}
\|\langle t\rangle (\SSH(\eta^1,\Bu^1)-\SSH(\eta^2,\Bu^2))\|_{L_p(\BR_+,H_r^1(\BR_-^N))^N)} \\
&\leq M_8\|(\eta^1-\eta^2,\Bu^1-\Bu^2)\|_{K_{p,q_1,q_2}^N}
\sum_{j=1}^6\Big(\|(\eta^1,\Bu^1)\|_{K_{p,q_1,q_2}^N}+\|(\eta^2,\Bu^2)\|_{K_{p,q_1,q_2}^N}\Big)^j, \\
&\sum_{r\in\{q_1/2,q_2\}}
\|\langle t\rangle (\SSH(\eta^1,\Bu^1)-\SSH(\eta^2,\Bu^2))\|_{H_p^{1/2}(\BR_+,L_r(\BR_-^N)^N)} \\
&\leq M_9\|(\eta^1-\eta^2,\Bu^1-\Bu^2)\|_{K_{p,q_1,q_2}^N}
\sum_{j=1}^6\Big(\|(\eta^1,\Bu^1)\|_{K_{p,q_1,q_2}^N}+\|(\eta^2,\Bu^2)\|_{K_{p,q_1,q_2}^N}\Big)^j, 
\end{align*}
with positive constants $M_8$ and $M_9$.
\end{prp}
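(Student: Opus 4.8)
The plan is to read the estimate off directly from the decomposition \eqref{dfn:H}, namely $\SSH(\eta,\Bu)=\wtd\SSH(\eta,\Bu)-c_\sigma\SSH_\kappa(\eta)\Be_N$, together with the auxiliary bounds already assembled in Lemma \ref{lem:H-kapppa}. First I would write
\[
\SSH(\eta^1,\Bu^1)-\SSH(\eta^2,\Bu^2)
=\bigl(\wtd\SSH(\eta^1,\Bu^1)-\wtd\SSH(\eta^2,\Bu^2)\bigr)
-c_\sigma\bigl(\SSH_\kappa(\eta^1)\Be_N-\SSH_\kappa(\eta^2)\Be_N\bigr),
\]
and apply the triangle inequality in each of the norms $L_p(\BR_+,H_r^1(\BR_-^N)^N)$ and $H_p^{1/2}(\BR_+,L_r(\BR_-^N)^N)$ for $r\in\{q_1/2,q_2\}$.

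Next I would invoke Lemma \ref{lem:H-kapppa} \eqref{lem:H-kapppa-4} to bound the $\wtd\SSH$-difference by $\|(\eta^1-\eta^2,\Bu^1-\Bu^2)\|_{K_{p,q_1,q_2}^N}$ times a polynomial of degree $1$ to $5$ in $\|(\eta^1,\Bu^1)\|_{K_{p,q_1,q_2}^N}+\|(\eta^2,\Bu^2)\|_{K_{p,q_1,q_2}^N}$, and Lemma \ref{lem:H-kapppa} \eqref{lem:H-kapppa-1} to bound the $\SSH_\kappa$-difference by the same type of quantity with polynomial degree $1$ to $6$; here one uses $\|\eta^1-\eta^2\|_{K_{p,q_1,q_2;1}^N}\le\|(\eta^1-\eta^2,\Bu^1-\Bu^2)\|_{K_{p,q_1,q_2}^N}$ and $\|\eta^i\|_{K_{p,q_1,q_2;1}^N}\le\|(\eta^i,\Bu^i)\|_{K_{p,q_1,q_2}^N}$ to pass from the $K^N_{\cdot;1}$-norms to the full $K^N$-norms. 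Both contributions are then dominated by
\[
C\,\|(\eta^1-\eta^2,\Bu^1-\Bu^2)\|_{K_{p,q_1,q_2}^N}
\sum_{j=1}^{6}\Bigl(\|(\eta^1,\Bu^1)\|_{K_{p,q_1,q_2}^N}+\|(\eta^2,\Bu^2)\|_{K_{p,q_1,q_2}^N}\Bigr)^{j},
\]
and summing over the two values of $r$ and absorbing $c_\sigma$ into the constant produces $M_8$ and $M_9$.

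At the level of this proposition there is essentially no obstacle: the real work has been pushed into Lemma \ref{lem:H-kapppa}, which in turn rests on the composition estimate Lemma \ref{lem:F-est} for $F_1(|\nabla'\CE_N\eta|^2)$, $F_2(|\nabla'\CE_N\eta|^2)$ and $G(\pd_N\CE_N\eta)$, on the product estimates Lemma \ref{lem:K-L-est} for $\SSK_{j,k,l}^\alpha$ and $\SSL_{j,k,l}^\alpha$, and on the algebraic identities \eqref{form-H-kappa}--\eqref{form-E-normal}. The only genuinely delicate point — already handled there — is that the $K^N$-norm must simultaneously control the interpolation indices $q_1/2$, $q_1$ and $q_2$ (via Lemma \ref{lem:int-p-v2}), so that the $H_{q_1}^1$-factors arising when Hölder's inequality splits a product landing in $H_{q_1/2}^1$ remain estimable. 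Granting that machinery, the proof of Proposition \ref{prp:nonl-H} is a one-line assembly, which is why the paper states that the detailed proof may be omitted.
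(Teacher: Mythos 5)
Your proposal is correct and follows essentially the same route as the paper, which derives Proposition \ref{prp:nonl-H} immediately from Lemma \ref{lem:H-kapppa} \eqref{lem:H-kapppa-1} and \eqref{lem:H-kapppa-4} via the decomposition \eqref{dfn:H} and the triangle inequality. The bookkeeping with the polynomial degrees (at most $5$ for $\wtd\SSH$, at most $6$ for $\SSH_\kappa$, both absorbed into $\sum_{j=1}^{6}$) and the passage from the $K_{p,q_1,q_2;1}^N$-norms to the full $K_{p,q_1,q_2}^N$-norms is exactly what is needed.
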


\section{Proof of Theorem \ref{thm:transformed}}\label{sec:nonlinear1}
This section proves Theorem \ref{thm:transformed}.
Let $p$, $q_1$, and $q_2$ satisfy Assumption \ref{asm:p-q} in what follows.
Recall $I_{q,p}$ and $K_{p,q_1,q_2}^N$ given by \eqref{dfn:ini-sp} and Subsection \ref{subsec:fspaces}, respectively.
Let $r_0>0$ and define for $N\geq 3$
\begin{align}\label{dfn:ul-space}
K_{p,q_1,q_2}^N(r_0;\eta_0,\Bu_0)
=\{&(\eta,\Bu)\in K_{p,q_1,q_2}^N : \|(\eta,\Bu)\|_{K_{p,q_1,q_2}^N}\leq r_0, \notag \\
&\eta|_{t=0} =\eta_0 \text{ on $\BR^{N-1}$, } \Bu|_{t=0}=\Bu_0 \text{ in $\BR_-^N$,}  \notag \\
&|(\pd_N\CE_N\eta)(x,t)|\leq \frac{1}{2}
\text{ for any $(x,t)\in \BR_-^N \times (0,\infty)$}\} ,
\end{align}
where $(\eta_0,\Bu_0)\in \bigcap_{r\in\{q_1/2,q_2\}} I_{r,p}$ with \eqref{comp-1} and \eqref{comp-2}.

Let us treat $N=3$ and $N\geq 4$ at the same time.
Let $L_j$ $(j=1,2,3,4)$ and $M_k$ $(k=1,\dots,9)$ be the positive constants given by
Subsection \ref{subsec:6-2} and Section \ref{sec:nonl-terms}, respectively.
In addition, $M_0$ and $C_0$ are positive constants given by Lemma \ref{lem:height-func}
and Remark \ref{rmk:equi-norm-4d}, respectively.
Define $L=L_1+L_2+L_3+L_4$.

Choose $r_0\in(0,1)$ so small that
\begin{align}\label{small-cond-pr-1}
&2M_0r_0\max(1,C_0) \leq \frac{1}{2}, \quad M_k r_0^2\leq\frac{r_0}{10L} \quad (k=1,2,4,5,6,7), \notag \\
&M_3r_0 \sum_{j=1}^8r_0^j \leq \frac{r_0}{10L}, \quad 
M_k r_0\sum_{j=1}^6 r_0^j \leq \frac{r_0}{10L} \quad (k=8,9)
\end{align}
and
\begin{align}\label{small-cond-pr-2}
&2M_kr_0\leq \frac{1}{18L} \quad (k=1,2,4,5,6,7), \notag \\
&M_3\sum_{j=1}^8(2r_0)^j\leq \frac{1}{18L}, \quad
M_k \sum_{j=1}^6(2r_0)^j \leq \frac{1}{18L} \quad (k=8,9).
\end{align}
For this fixed $r_0$, we choose $\varepsilon_0>0$ so small that $\varepsilon_0\leq r_0/(10L)$.

Let $\sum_{r\in\{q_1/2,q_2\}}\|(\eta_0,\Bu_0)\|_{I_{r,p}}\leq \varepsilon_0$
and  $(\theta,\Bv)\in K_{p,q_1,q_2}^N(r_0;\eta_0,\Bu_0)$. 
We consider the following linear system:
\begin{equation}\label{fix-eq-1}
\left\{\begin{aligned}
\pd_t\eta-u_N &=\SSD(\theta,\Bv) && \text{on $\BQ_0$, }\\
\pd_t\Bu-\mu\Delta \Bu +\nabla \Fp&=\SSF(\theta,\Bv) && \text{in $\BQ_-$,} \\
\dv\Bv=\SSG(\theta,\Bv)&=\dv\wtd\SSG(\theta,\Bv) && \text{in $\BQ_-$,} \\
(\mu\BD(\Bu)-\Fp\BI)\Be_N+(c_g-c_\sigma\Delta')\eta\Be_N
&=\SSH(\theta,\Bv) && \text{on $\BQ_0$,} \\
\eta|_{t=0}=\eta_0 \quad \text{on $\BR^{N-1}$}, 
\quad \Bu|_{t=0}&=\Bu_0 && \text{in $\BR_-^N$.}
\end{aligned}\right.
\end{equation}
By Propositions \ref{prp:nonl-D}, \ref{prp:nonl-F},
\ref{prp:nonl-tildeG}, \ref{prp:nonl-G-1}, and \ref{prp:nonl-H} with $(\eta^1,\Bu^1)=(\theta,\Bv)$ and $(\eta^2,\Bu^2)=(0,0)$,
we observe from \eqref{small-cond-pr-1} that
\begin{align*}
\sum_{r\in\{q_1/2,q_2\}}
\|\langle t \rangle\SSD(\theta,\Bv)\|_{L_p(\BR_+,W_r^{2-1/r}(\BR_0^N))} 
&\leq M_1 r_0^2 \leq \frac{r_0}{10L}, \\
\sum_{r\in\{q_1/2,q_2\}}
\|\langle t \rangle \SSD(\theta,\Bv)\|_{L_\infty(\BR_+,W_r^{1-1/r}(\BR_0^N))}
&\leq  M_2 r_0^2 \leq \frac{r_0}{10L}, \\
\sum_{r\in\{q_1/2,q_2\}}
\|\langle t\rangle\SSF(\theta,\Bv)\|_{L_p(\BR_+,L_r(\BR_-^N)^N)}
&\leq M_3r_0\sum_{j=1}^8 r_0^j \leq \frac{r_0}{10L}, \\
\sum_{r\in\{q_1/2,q_2\}}
\|\langle t\rangle\pd_t\wtd\SSG(\theta,\Bv)\|_{L_p(\BR_+,L_r(\BR_-^N)^N)}
&\leq M_4r_0^2 \leq  \frac{r_0}{10L}, \\
\sum_{r\in\{q_1/2,q_2\}}\|\langle t\rangle\wtd\SSG(\theta,\Bv)\|_{L_p(\BR_+,L_r(\BR_-^N)^N)}
&\leq M_5r_0^2 \leq   \frac{r_0}{10L}, \\
\sum_{r\in\{q_1/2,q_2\}}\|\langle t\rangle \SSG(\theta,\Bv)\|_{L_p(\BR_+,H_r^1(\BR_-^N))}
&\leq M_6 r_0^2 \leq \frac{r_0}{10L}, \\
\sum_{r\in\{q_1/2,q_2\}}\|\langle t\rangle \SSG(\theta,\Bv)\|_{H_p^{1/2}(\BR_+,H_r^1(\BR_-^N))}
&\leq M_7 r_0^2 \leq \frac{r_0}{10L}, \\
\sum_{r\in\{q_1/2,q_2\}}
\|\langle t\rangle \SSH(\theta,\Bv)\|_{L_p(\BR_+,H_r^1(\BR_-^N))^N)}
&\leq M_8 r_0\sum_{j=1}^6 r_0^j \leq \frac{r_0}{10L}, \\
\sum_{r\in\{q_1/2,q_2\}}
\|\langle t\rangle \SSH(\theta,\Bv)\|_{H_p^{1/2}(\BR_+,L_r(\BR_-^N))^N)}
&\leq M_9 r_0\sum_{j=1}^6 r_0^j \leq  \frac{r_0}{10L}.
\end{align*}
Thus
\begin{align*}
&\sum_{r\in\{q_1/2,q_2\}}\Big(
\|(\SSD(\theta,\Bv),\SSF(\theta,\Bv),\wtd\SSG(\theta,\Bv),\SSG(\theta,\Bv),\SSH(\theta,\Bv))\|_{F_{p,r}(\langle t \rangle)} \\
&+\|\langle t \rangle \SSD(\theta,\Bv)\|_{L_\infty(\BR_+,W_r^{1-1/r}(\BR_0^{N}))}\Big) 
\leq \frac{9r_0}{10L},
\end{align*}
where $F_{p,r}(\langle t \rangle)$ is defined in Subsection \ref{subsec:MR}.
In addition, the compatibility conditions \eqref{comp:t-wegiht1} and \eqref{comp:t-wegiht2} are satisfied
by \eqref{comp-1} and \eqref{comp-2}.

Let $\alpha(N)=1$ when $N=3$ and $\alpha(N)=0$ when $N\geq 4$.
Propositions \ref{prp:linear4d}, \ref{prp:linear3d} and \ref{prp:linear3d-v2} can now be applied to \eqref{fix-eq-1}, i.e.,
there exists a unique solution $(\eta,\Bu,\Fp)$ of \eqref{fix-eq-1}, which satisfies
\begin{align*}
&\|(\eta,\Bu)\|_{K_{p,q_1,q_2}^N} \\
&\leq L\sum_{r\in\{q_1/2,q_2\}}\Big(
\|(\eta_0,\Bu_0)\|_{I_{r,p}}+\alpha(N)\|\langle t \rangle^{1/3}\SSD(\theta,\Bv)\|_{L_\infty(\BR_+,W_r^{1-1/r}(\BR_0^{N}))} \\
&+\|(\SSD(\theta,\Bv),\SSF(\theta,\Bv),\wtd\SSG(\theta,\Bv),\SSG(\theta,\Bv),\SSH(\theta,\Bv))\|_{F_{p,r}(\langle t \rangle)}\Big)  \\
&\leq L\left(\frac{r_0}{10L}+\frac{9r_0}{10L}\right)=r_0.
\end{align*}
In addition, Lemma \ref{lem:height-func} gives us
\begin{align*}
&\sup_{(x,t)\in \BR_-^N \times (0,\infty)}|(\pd_N\CE_N\eta)(x,t)| \\
&\leq M_0
\Big(\|\pd_t\pd_N\CE_N\eta\|_{L_p(\BR_+,H_{q_2}^1(\BR_-^N))}
+\|\pd_N\CE_N\eta\|_{L_p(\BR_+,H_{q_2}^2(\BR_-^N))} \Big) \\
&\leq 
\left\{\begin{aligned}
&2M_0 r_0 \leq \frac{1}{2} && \text{when $N=3$,} \\
&2M_0C_0r_0 \leq \frac{1}{2} && \text{when $N\geq 4$.} 
\end{aligned}\right. 
\end{align*}
We can therefore define the mapping 
\begin{equation*}
\Phi: K_{p,q_1,q_2}^N(r;\eta_0,\Bu_0)\ni(\theta,\Bv)\mapsto (\eta,\Bu)\in K_{p,q_1,q_2}^N(r;\eta_0,\Bu_0).
\end{equation*}

Let us prove that $\Phi$ is a contraction mapping on $K_{p,q_1,q_2}^N(r_0;\eta_0,\Bu_0)$.
Let $(\theta^i,\Bv^i)\in K_{p,q_1,q_2}^N(r_0;\eta_0,\Bu_0)$ for $i=1,2$
and $(\eta^i,\Bu^i)=\Phi(\theta^i,\Bv^i)$.
Setting $\kappa=\eta^1-\eta^2$ and $\Bw =\Bu^1-\Bu^2$,
we see for some pressure $\Fq$ that
\begin{equation}\label{fix-eq-2}
\left\{\begin{aligned}
\pd_t\kappa-w_N &=\SSD(\theta^1,\Bv^1)-\SSD(\theta^2,\Bv^2) && \text{on $\BQ_0$, }\\
\pd_t\Bw-\mu\Delta \Bw +\nabla \Fq&=\SSF(\theta_1,\Bv_1)-\SSF(\theta_2,\Bv_2) && \text{in $\BQ_-$,} \\
\dv\Bw=\SSG(\theta^1,\Bv^1)-\SSG(\theta^2,\Bv^2) 
&=\dv(\wtd\SSG(\theta^1,\Bv^1)-\wtd\SSG(\theta^2,\Bv^2)) && \text{in $\BQ_-$,} \\
(\mu\BD(\Bw)-\Fq\BI)\Be_N+(c_g-c_\sigma\Delta')\kappa\Be_N
&=\SSH(\theta^1,\Bv^1)-\SSH(\theta^2,\Bv^2) && \text{on $\BQ_0$,} \\
(\kappa,\Bw)|_{t=0}&=(0,0). 
\end{aligned}\right.
\end{equation}
By Propositions \ref{prp:nonl-D}, \ref{prp:nonl-F},
\ref{prp:nonl-tildeG}, \ref{prp:nonl-G-1}, and \ref{prp:nonl-H}, we see from \eqref{small-cond-pr-2} that
\begin{align*}
&\sum_{r\in\{q_1/2,q_2\}}
\Big(\|\langle t\rangle
(\SSD(\theta^1,\Bv^1)-\SSD(\theta^2,\Bv^2))\|_{L_p(\BR_+,W_r^{2-1/r}(\BR_0^N))} \\
&+\|\langle t \rangle(\SSD(\theta^1,\Bv^1)-\SSD(\theta^2,\Bv^2))\|_{L_\infty(\BR_+,W_r^{1-1/r}(\BR_0^{N}))} \\ 
&+\|\langle t \rangle(\SSF(\theta^1,\Bv^1)-\SSF(\theta^2,\Bv^2))\|_{L_p(\BR_+,L_r(\BR_-^N)^N)} \\ 
&+\|\langle t \rangle( \pd_t\wtd\SSG(\theta^1,\Bv^1)-\pd_t\wtd\SSG(\theta^2,\Bv^2))\|_{L_p(\BR_+,L_r(\BR_-^N)^N)} \\
&+\|\langle t \rangle( \wtd\SSG(\theta^1,\Bv^1)-\wtd\SSG(\theta^2,\Bv^2))\|_{L_p(\BR_+,L_r(\BR_-^N)^N)} \\
&+\|\langle t \rangle(\SSG(\theta^1,\Bv^1)-\SSG(\theta^2,\Bv^2))\|_{L_p(\BR_+,H_r^1(\BR_-^N))} \\ 
&+\|\langle t \rangle(\SSG(\theta^1,\Bv^1)-\SSG(\theta^2,\Bv^2))\|_{H_p^{1/2}(\BR_+,L_r(\BR_-^N))} \\
&+\|\langle t \rangle(\SSH(\theta^1,\Bv^1)-\SSH(\theta^2,\Bv^2))\|_{L_p(\BR_+,H_r^1(\BR_-^N)^N)} \\ 
&+\|\langle t \rangle(\SSH(\theta^1,\Bv^1)-\SSH(\theta^2,\Bv^2))\|_{H_p^{1/2}(\BR_+,L_r(\BR_-^N)^N)} 
\Big) \\
&\leq \|(\theta^1-\theta^2,\Bv^1-\Bv^2)\|_{K_{p,q_1,q_2}^N} \bigg[2M_1r_0+2M_2r_0+M_3\sum_{j=1}^8(2r_0)^j \\
&+2M_4r_0 +2M_5r_0+2M_6r_0+2M_7 r_0+M_8\sum_{j=1}^6(2r_0)^j+M_9\sum_{j=1}^6(2r_0)^j\bigg] \\
&\leq \frac{1}{2L}\|(\theta^1-\theta^2,\Bv^1-\Bv^2)\|_{K_{p,q_1,q_2}^N} .
\end{align*}
Then we can apply Propositions \ref{prp:linear4d}, \ref{prp:linear3d}, and \ref{prp:linear3d-v2} to \eqref{fix-eq-2}
in order to obtain
\begin{align*}
\|(\kappa,\Bw)\|_{K_{p,q_1,q_2}^N}
\leq \frac{1}{2}\|(\theta^1-\theta^2,\Bv^1-\Bv^2)\|_{K_{p,q_1,q_2}^N}.
\end{align*}
This implies that $\Phi$ is a contraction mapping on $K_{p,q_1,q_2}^N(r_0;\eta_0,\Bu_0)$.
We therefore obtain a unique fixed point $(\rho_*,\Bu_*)$ of $\Phi$ in $K_{p,q_1,q_2}^N(r_0;\eta_0,\Bu_0)$,
i.e., $\Phi(\rho_*,\Bu_*)=(\rho_*,\Bu_*)\in K_{p,q_1,q_2}^N(r_0;\eta_0,\Bu_0)$.
This $(\rho_*,\Bu_*)$ is a unique solution of \eqref{nonl:fix1}--\eqref{initial-cond-flat} 
in $K_{p,q_1,q_2}^N(r;\eta_0,\Bu_0)$,
which completes the proof of Theorem \ref{thm:transformed}.

\section{Proof of Theorems \ref{thm:original} and \ref{thm:decay}}\label{sec:nonlinear2}

\subsection{Proof of Theorem \ref{thm:original}}

We prove the case $N=3$ only.
The case $N\geq 4$ can be proved  in the same manner as in the case $N=3$.
Notice that \eqref{add-pq} is assumed in addition to Assumption \ref{asm:p-q}.

Let us first prove (1). 
Recall $\Theta_0(x)=(x',x_N+(\CE_N\eta_0)(x))$
and $C_1$ is a positive constant given by Lemma \ref{lem:classic-embed-eta}.
We choose $\varepsilon_1>0$ so small that $C_1\varepsilon_1\leq 1/2$.
Lemma \ref{lem:classic-embed-eta} then yields
\begin{equation}\label{eta-smooth-prp1}
\CE_N\eta_0\in C^2(\overline{\BR_-^N})
\end{equation}
and
\begin{equation}\label{eta-smooth-prp2}
\|\nabla \CE_N\eta_0\|_{H_\infty^1(\BR_-^N)}\leq 
C_1\|\eta_0\|_{B_{q_2,p}^{3-1/p-1/q_2}(\BR^{N-1})}
\leq C_1\varepsilon_1\leq \frac{1}{2}.
\end{equation}

Let $(y',y_N)=(x',x_N+(\CE_N\eta_0)(x))$.
Since
\begin{equation*}
\frac{\pd y_N}{\pd x_N}=1+\frac{\pd}{\pd x_N}\CE_N\eta_0
\geq 1-\|\nabla\CE_N\eta_0\|_{L_\infty(\BR_-^N)}\geq \frac{1}{2},
\end{equation*}
$\Theta_0$ is a bijective mapping from $\BR_-^N$ onto $\Omega_0$.
Thus $\Theta_0$ is a $C^2$ diffeomorphism from $\BR_-^N$ onto $\Omega_0$ from \eqref{eta-smooth-prp1}.

We next prove (2).   
Define $\Bu_0=\Bv_0\circ\Theta_0$.
From \eqref{eta-smooth-prp1} and \eqref{eta-smooth-prp2},
we observe by the change of variables that for $r\in\{q_1/2,q_2\}$
\begin{equation*}
\|\Bu_0\|_{L_r(\BR_-^N)} \leq C\|\Bv_0\|_{L_r(\Omega_0)}, \quad
\|\Bu_0\|_{H_r^2(\BR_-^N)} \leq C\|\Bv_0\|_{H_r^2(\Omega_0)},
\end{equation*}
where $C$ is a positive constant independent of $\eta_0$.
This together with the real interpolation method gives us
\begin{equation}\label{int-initial-data}
\|\Bu_0\|_{B_{r,p}^{2-2/p}(\BR_-^N)} \leq C\|\Bv_0\|_{B_{r,p}^{2-2/p}(\Omega_0)}.
\end{equation}
Thus
\begin{align*}
&\sum_{r\in\{q_1,q_2\}}\|(\eta_0,\Bu_0)\|_{I_{r,p}} \\
&\leq \wtd C_1\sum_{r\in\{q_1/2,q_2\}}\Big(\|\eta_0\|_{B_{r,p}^{3-1/p-1/r}(\BR^{N-1})}+\|\Bv_0\|_{B_{r,p}^{2-2/p}(\Omega_0)}\Big),
\end{align*}
with some positive constant $\wtd C_1$ independent of $\eta_0$, $\Bu_0$, and $\Bv_0$.
Choose $\varepsilon_1$ smaller so that $\wtd C_1\varepsilon_1\leq \varepsilon_0$ if necessary,
where $\varepsilon_0$ is the positive constant given by Theorem \ref{thm:transformed}.
Then $(\eta_0,\Bu_0)$ satisfies the smallness condition \eqref{small-1},
and also the compatibility conditions \eqref{comp-1} and \eqref{comp-2} 
follow from \eqref{comp-ori-1} and \eqref{comp-ori-2}, respectively.
Theorem \ref{thm:transformed} thus gives us a global-in-time solution $(\eta,\Bu,\Fp)$
of \eqref{nonl:fix1} in $K_{p,q_1,q_2}^N(r_0;\eta_0,\Bu_0)$.

We finally prove (3). Let $G_L=\BR^{N-1}\times(-L,0)$ for $L>0$.
Since
\begin{equation*}
\eta\in H_p^1(\BR_+,W_{q_2}^{2-1/q_2}(\BR^{N-1}))\cap L_p(\BR_+,W_{q_2}^{3-1/q_2}(\BR^{N-1})),
\end{equation*}
we see from Lemma \ref{lem:classic-embed-eta} that
\begin{equation}\label{eta-smooth-add1}
\CE_N\eta\in C([0,\infty),C_B^2(\overline{G_L}))
\quad \text{for any $L>0$}.
\end{equation}
In addition, \eqref{f-trace-est} and Lemma \ref{lem:extra-embed} \eqref{lem:extra-embed-1} yield
\begin{equation*}
u_N , \SSD(\eta,\Bu)\in C([0,\infty),B_{q_2,p}^{2-2/p-1/q_2}(\BR_0^N)),
\end{equation*}
while the first equation of \eqref{nonl:fix1} gives us
\begin{equation*}
\pd_t\CE_N\eta =\CE_N u_N+\CE_N\SSD(\eta,\Bu) \quad \text{in $\BQ_-$.}
\end{equation*}
Lemma \ref{lem:extra-embed} \eqref{lem:extra-embed-2} thus furnishes
\begin{equation*}
\pd_t\CE_N\eta \in C([0,\infty),C_B^1(\overline{G_L}))
\quad \text{for any $L>0$}. 
\end{equation*}
Combining this with \eqref{eta-smooth-add1} yields
\begin{equation}\label{diffeo-final}
\CE_N\eta
\in C^1([0,\infty), C_B^1(\overline{G_L})) 
\quad \text{for any $L>0$.}
\end{equation}
Since \eqref{cond-diffeo} holds by $(\eta,\Bu)\in K_{p,q_1,q_2}^N(r_0;\eta_0,\Bu_0)$, 
\eqref{diffeo-final} shows that 
$\Theta$ is a $C^1$ diffeomorphism from $\BQ_-$ onto $\Omega_\infty$.
Furthermore, \eqref{eta-smooth-add1} guarantees that 
$\Theta(\cdot,t)$ is a $C^2$ diffeomorphism from $\BR_-^N$ onto $\Omega(t)$ for each $t>0$.
This completes the proof of Theorem \ref{thm:original}.

\subsection{Proof of Theorem \ref{thm:decay}}
Let $(\eta,\Bu,\Fp)$ be the solution of \eqref{nonl:fix1} given by the previous subsection
and let $q\in\{q_1,q_2\}$.
Let $\beta(N)=1/3$ for $N=3$ and $\beta(N)=1/2$ for $N\geq 4$.
By Lemma \ref{lem:time-sp} \eqref{lem:time-sp-1}
\begin{align*}
&\sup_{\tau\in[0,\infty)}\|\langle \tau\rangle^{\beta(N)} \eta(\tau)\|_{B_{q,p}^{3-1/p-1/q}(\BR^{N-1})} \\
&\leq C\Big(\|\langle \tau\rangle^{\beta(N)} \eta\|_{H_p^1(\BR_+,W_q^{2-1/q}(\BR^{N-1}))}
+\|\langle \tau\rangle^{\beta(N)} \eta\|_{L_p(\BR_+,W_q^{3-1/q}(\BR^{N-1}))}\Big) \\
&\leq C r_0,
\end{align*}
and thus
\begin{equation*}
\|\eta(\tau)\|_{B_{q,p}^{3-1/p-1/q}(\BR^{N-1})}=O(\tau^{-\beta(N)}) \quad (\tau\to\infty).
\end{equation*}

We next consider $\Bv=\Bu(\Theta^{-1}(y,\tau))$ for $(y,\tau)\in\Omega_\infty$. 
Similarly to the proof of \eqref{int-initial-data}, we obtain
\begin{equation*}
\|\Bv(\tau)\|_{B_{q,p}^{2-2/p}(\Omega(\tau))}\leq C\|\Bu(\tau)\|_{B_{q,p}^{2-2/p}(\BR_-^N)}.
\end{equation*}
Thus
\begin{equation*}
\sup_{\tau\in [0,\infty)}\Big(\langle \tau\rangle^{\beta(N)}\|\Bv(\tau)\|_{B_{q,p}^{2-2/p}(\Omega(\tau))}\Big) 
\leq C\sup_{\tau\in [0,\infty)}\Big(\langle \tau\rangle^{\beta(N)}\|\Bu(\tau)\|_{B_{q,p}^{2-2/p}(\BR_-^N)}\Big),
\end{equation*}
which, combined with Lemma \ref{lem:time-sp} \eqref{lem:time-sp-2}, furnishes
\begin{align*}
&\sup_{\tau\in [0,\infty)}\Big(\langle \tau\rangle^{\beta(N)}\|\Bv(\tau)\|_{B_{q,p}^{2-2/p}(\Omega(\tau))}\Big)  \\
&\leq C\Big(\|\langle \tau\rangle^{\beta(N)}\Bu\|_{H_p^1(\BR_+,L_q(\BR_-^{N})^N)}
+\|\langle \tau\rangle^{\beta(N)}\Bu\|_{L_p(\BR_+,H_q^2(\BR_-^{N})^N)}\Big) \\
&\leq C r_0.
\end{align*}
Therefore
\begin{equation*}
\|\Bv(\tau)\|_{B_{q,p}^{2-2/p}(\Omega(\tau))}=O(\tau^{-\beta(N)}) \quad (\tau\to \infty).
\end{equation*}
This completes the proof of Theorem \ref{thm:decay}.

\def\thesection{A}
\renewcommand{\theequation}{A.\arabic{equation}}
\section{}

This appendix derives \eqref{nonl:fix1} from \eqref{nonl:eq1}.

Recall \eqref{om-gam-2}, \eqref{om-gam-1}, \eqref{dfn:Q-Q0}, and \eqref{dfn:theta}.
Let $y=(y',y_N)=(y_1,\dots,y_{N-1},y_N)\in\Omega(\tau)$, $\tau>0$,
and let $x=(x',x_N)=(x_1,\dots,x_{N-1},x_N)\in\BR_-^N$ and $t>0$.
In what follows, $D_j=\pd/\pd y_j$ and $\pd_j=\pd/\pd x_j$ for $j=1,\dots,N$,
while $\pd_\tau=\pd/\pd\tau$ and $\pd_t=\pd/\pd t$.

The Jacobian matrix $\pd (y,\tau)/\pd (x,t)$ of $\Theta$ is given by
\begin{equation*}
\frac{\pd (y,\tau)}{\pd(x,t)} 
=
\begin{pmatrix}
1 & \dots & 0 & 0 & 0 \\
\vdots & \ddots & \vdots & \vdots & \vdots \\
0 & \dots & 1 & 0 & 0 \\
\pd_1\CE_N\eta & \dots & \pd_{N-1}\CE_N\eta & 1+\pd_N\CE_N\eta & \pd_t\CE_N\eta \\
0 & \dots & 0 & 0 & 1
\end{pmatrix}, \quad
\end{equation*}
and its determinant $J=\det(\pd(y,\tau)/\pd (x,t))$ is given by $J = 1+\pd_N\CE_N\eta$.
Since $J\geq 1/2$ by \eqref{cond-diffeo},
there exists the inverse matrix $(\pd (y,\tau)/ \pd (x,t))^{-1}$ of $\pd (y,\tau)/\pd (x,t)$ such that
\begin{equation*}
\bigg(\frac{\pd (y,\tau)}{\pd(x,t)}\bigg)^{-1}
=
\begin{pmatrix}
1 & \dots & 0 & 0 & 0\\
\vdots & \ddots & \vdots & \vdots & \vdots \\
0 & \dots & 1 & 0  &0 \\
-\frac{\pd_1\CE_N\eta}{1+\pd_N\CE_N\eta}
& \dots & -\frac{\pd_{N-1}\CE_N\eta}{1+\pd_N\CE_N\eta} & \frac{1}{1+\pd_N\CE_N\eta} & -\frac{\pd_t\CE_N\eta}{1+\pd_N\CE_N} \\
0 & \dots & 0 & 0 & 1
\end{pmatrix}.
\end{equation*}
Thus 
\begin{align}
D_j &=\pd_j-\bigg(\frac{\pd_j\CE_N\eta}{1+\pd_N\CE_N\eta}\bigg)\pd_N \quad (j=1,\dots,N), \label{deriv-cv} \\
\pd_\tau &=\pd_t -\bigg(\frac{\pd_t\CE_N\eta}{1+\pd_N\CE_N\eta}\bigg)\pd_N. \label{time-deriv}
\end{align}
The relation \eqref{deriv-cv} yields
\begin{equation}\label{s-deriv-change}
D_jD_k=\pd_j \pd_k-\CD_{jk}(\eta) \quad (j,k=1,\dots,N)
\end{equation}
with the second order differential operator $\CD_{jk}(\eta)$ given by \eqref{deriv-second},
and also
\begin{equation}\label{nabla-cv}
\nabla_y = (\BI-J^{-1}\BJ(\eta))\nabla_x
\end{equation}
for $\nabla_y=(D_1,\dots,D_N)^\SST$ and $\nabla_x=(\pd_1,\dots,\pd_N)^\SST$
with $\BJ(\eta)$ given in \eqref{matrix:JK}.

Recall $(\Bu,\Fp)$ in \eqref{dfn:u-p}. Let
\begin{align*}
\BD_y(\Bv) 
&= \nabla_y\Bv+(\nabla_y\Bv)^\SST =(D_i v_j+D_j v_i)_{1\leq i,j\leq N}, \\
\BD_x(\Bu)
&= \nabla_x\Bu+(\nabla_x\Bu)^\SST=(\pd_i u_j+\pd_j u_i)_{1\leq i,j\leq N}.
\end{align*}
It follows from \eqref{deriv-cv} and \eqref{dfn:E} that
\begin{equation}\label{defom-cv}
\BD_x(\Bv)=\BD_\xi(\Bu)-\BE(\eta,\Bu). 
\end{equation}
In addition,
\begin{align}\label{dv-cv}
J\dv_y\Bv
&=(1+\pd_N\CE_N\eta)\dv_x\Bu- \nabla_x\CE_N\eta \cdot \pd_N\Bu \notag \\
&=\dv_x\{J(\BI-J^{-1}\BJ(\eta))^\SST\Bu\},
\end{align}
where $\dv_y\Bv=\sum_{j=1}^ND_j v_j$ and $\dv_x\Bu=\sum_{j=1}^N\pd_j u_j$.
In fact,
the first equality of \eqref{dv-cv} follows from \eqref{deriv-cv} immediately
and the second equality of \eqref{dv-cv} is obtained as follows:
let $\varphi\in C_0^\infty(\Omega_\infty)$
and $\overline\varphi=\varphi(\Theta(x,t))$, and then \eqref{nabla-cv} yields
\begin{align*}
&(\dv_y\Bv,\varphi)_{\Omega_\infty}
=-(\Bv,\nabla_y\varphi)_{\Omega_\infty} 
=-\int_{\BQ_-}\{\Bu\cdot (\BI-J^{-1}\BJ(\eta))\nabla_x\overline{\varphi}\}J \intd xdt \\
&=\int_{\BQ_-}[\dv_x\{J(\BI-J^{-1}\BJ(\eta))^\SST\Bu\}] \overline{\varphi}\intd xdt \\
&=\int_{\Omega_\infty}
[J^{-1}\dv_x\{J(\BI-J^{-1}\BJ(\eta))^\SST\Bu\}](\Theta^{-1}(y,\tau))\varphi(y,\tau)\intd yd\tau .
\end{align*}
This gives us
\begin{equation*}
\dv_y\Bv=J^{-1}\dv_x\{J(\BI-J^{-1}\BJ(\eta))^\SST\Bu\},
\end{equation*}
and thus multiplying the both side of this equation by $J$ furnishes the second equality of \eqref{dv-cv}.

{\bf Step 1}: Derive the first equation of \eqref{nonl:fix1}.
Let $\alpha'=(\alpha_1,\dots,\alpha_{N-1})\in\BN_0^{N-1}$. Then \eqref{deriv-cv} yields
\begin{align}\label{tangent-deriv}
\frac{\pd^{|\alpha'|}}{\pd y_1^{\alpha_1}\dots\pd y_{N-1}^{\alpha_{N-1}}} \eta(y',t)
&=\frac{\pd^{|\alpha'|}}{\pd x_1^{\alpha_1}\dots\pd x_{N-1}^{\alpha_{N-1}}} \eta(x',t) \notag \\
&=\frac{\pd^{|\alpha'|}}{\pd x_1^{\alpha_1}\dots\pd x_{N-1}^{\alpha_{N-1}}}\CE_N\eta (x',0,t).
\end{align}
Notice that $\pd_\tau\eta(y',\tau)=\pd_t\eta(x',t)$ by \eqref{time-deriv}.
By these relations,
we obtain the first equation of \eqref{nonl:fix1} from the first equation of \eqref{nonl:eq1} immediately.

{\bf Step 2}: Derive the second equation of \eqref{nonl:fix1}.
One sees by \eqref{deriv-cv} that
\begin{equation*}
(\Bv\cdot\nabla_y)\Bv 
=(\Bu\cdot\nabla_x)\Bu-\frac{1}{1+\pd_N\CE_N\eta}(\Bu\cdot\nabla_x\CE_N\eta)\pd_N\Bu,
\end{equation*}
while \eqref{time-deriv} yields
\begin{equation*}
\pd_t\Bv=\pd_t\Bu -\bigg(\frac{\pd_t\CE_N\eta}{1+\pd_N\CE_N\eta}\bigg)\pd_N\Bu.
\end{equation*}
Combining these relations with \eqref{s-deriv-change} and \eqref{nabla-cv},
we obtain the following equation from the second equation of \eqref{nonl:eq1}:
\begin{equation}\label{eq:ap1}
\pd_t\Bu-\mu\Delta\Bu+(\BI-J^{-1}\BJ(\eta))\nabla\Fp=\frac{1}{(1+\pd_N\CE_N\eta)^3}\wtd \SSF(\eta,\Bu),
\end{equation}
where $\wtd \SSF(\eta,\Bu)$ is given by \eqref{dfn:F-tilde}.
By an elementary calculation
\begin{equation*}
\BJ(\eta)^2 = (\pd_N\CE_N\eta) \, \BJ(\eta),
\end{equation*}
which furnishes
\begin{equation*}
(\BI+\BJ(\eta))(\BI-J^{-1}\BJ(\eta))=\BI.
\end{equation*}
Thus $(\BI-J^{-1}\BJ(\eta))^{-1}=\BI+\BJ(\eta)$.
One multiplies the both side of \eqref{eq:ap1} by $\BI+\BJ(\eta)$
in order to obtain
\begin{equation*}
(\BI+\BJ(\eta))(\pd_t\Bu-\mu\Delta\Bu)+\nabla\Fp=
\frac{1}{(1+\pd_N\CE_N\eta)^3}(\BI+\BJ(\eta))\wtd\SSF(\eta,\Bu),
\end{equation*}
which implies
\begin{equation*}
\pd_t\Bu-\mu\Delta\Bu+\nabla\Fp
=\frac{1}{(1+\pd_N\CE_N\eta)^3}(\BI+\BJ(\eta))\wtd\SSF(\eta,\Bu)
-\BJ(\eta)(\pd_t\Bu-\mu\Delta\Bu).
\end{equation*}
The second equation of \eqref{nonl:fix1} is therefore obtained.

{\bf Step 3}: Derive the third equation of \eqref{nonl:fix1}. 
The third equation of \eqref{nonl:fix1} follows from 
\eqref{dv-cv} and the third equation of \eqref{nonl:eq1} immediately.

{\bf Step 4}: Derive the fourth equation of \eqref{nonl:fix1}.
Recall that $\Bn_{\Gamma(t)}$, $\kappa_{\Gamma(t)}$, and $\BK(\eta)$ are given by
\eqref{normal-vec}, \eqref{mean-cuv}, and \eqref{matrix:JK}, respectively.
By \eqref{tangent-deriv}, $\Bn_{\Gamma(t)}$ and $\kappa_{\Gamma(t)}$ can be written as
\begin{equation*}
\Bn_{\Gamma(t)}=\frac{1}{\sqrt{1+|\nabla'\CE_N\eta|^2}}
\begin{pmatrix}
-\nabla'\CE_N\eta \\ 
1
\end{pmatrix}, \quad
\kappa_{\Gamma(t)}
=\Delta'\eta-\SSH_\kappa(\eta) 
\quad \text{on $\BQ_0$,}
\end{equation*}
where $\nabla'=(\pd_1,\dots,\pd_{N-1})^\SST$, $\Delta'=\sum_{j=1}^{N-1}\pd_j^2$,
and $\SSH_\kappa(\eta)$ is given by \eqref{dfn:H-2}.
Thus
\begin{equation}\label{eq:normal-relat}
\sqrt{1+|\nabla'\CE_N\eta|^2}(\BI+\BK(\eta))\Bn_{\Gamma(t)}=\Be_N \quad \text{on $\BQ_0$.}
\end{equation}
One multiplies the fourth equation of \eqref{nonl:eq1} by $\sqrt{1+|\nabla'\CE_N\eta|^2}(\BI+\BK(\eta))$,
and then one has by \eqref{defom-cv} and \eqref{eq:normal-relat}
\begin{multline}\label{eq:ap2}
\sqrt{1+|\nabla'\CE_N\eta|^2}(\BI+\BK(\eta))\cdot \mu(\BD(\Bu)-\BE(\eta,\Bu))\Bn_{\Gamma(t)}
\\ -\Fp\Be_N +c_g\eta\Be_N =c_\sigma(\Delta'\eta-\SSH_\kappa(\eta))\Be_N \quad \text{on $\BQ_0$.}
\end{multline}
Notice that
\begin{equation*}
\sqrt{1+|\nabla'\CE_N\eta|^2}\Bn_{\Gamma(t)}=\Bn(\eta)=\Be_N+\wht\Bn(\eta) \quad \text{on $\BQ_0$,}
\end{equation*}
where $\Bn(\eta)$ and $\wht\Bn(\eta)$ are given by \eqref{normal-vec2}.
We therefore see by \eqref{dfn:H-1} that
\begin{align*}
&\sqrt{1+|\nabla'\CE_N\eta|^2}(\BI+\BK(\eta))\cdot \mu(\BD(\Bu)-\BE(\eta,\Bu))\Bn_{\Gamma(t)} \\
&=\mu\BD(\Bu)\Be_N-\wtd\SSH(\eta,\Bu) \quad \text{on $\BQ_0$.}
\end{align*}
Combining this relation with \eqref{eq:ap2} gives us the fourth equation of \eqref{nonl:fix1}.
This completes the derivation of \eqref{nonl:fix1}.


\end{document}